\documentclass[11pt,twoside]{article}

\usepackage{amsmath}
\usepackage{amssymb}
\usepackage{mathrsfs}
\usepackage{amsthm}

\usepackage{latexsym}

\usepackage{indentfirst}
\usepackage{color}
\usepackage{txfonts}

\usepackage{anysize}

\allowdisplaybreaks

\usepackage[colorlinks=true,
  linkcolor=red,
  citecolor=blue,
  urlcolor=magenta]{hyperref}

\def\R{{\mathbb R}}
\def\rn{{{\R}^n}}

\def\Q{\mathcal{Q}}
\def\B{\mathcal{B}}
\def\S{\mathcal{S}}
\def\A{\mathcal{A}}
\def\d{\mathrm{d}}
\def\M{\mathcal{M}}
\def\F{\mathcal{F}}
\DeclareMathOperator*{\esssup}{ess\,sup}
\def\scale{\operatorname{scale}}

\newtheorem{theorem}{Theorem}[section]
\newtheorem{lemma}[theorem]{Lemma}
\newtheorem{corollary}[theorem]{Corollary}
\newtheorem{proposition}[theorem]{Proposition}

\theoremstyle{definition}
\newtheorem{remark}[theorem]{Remark}
\newtheorem{definition}[theorem]{Definition}

\numberwithin{equation}{section}

\begin{document}
\title{\bf\Large Equivalent norms and $\varphi$-transform of matrix-weighted anisotropic Besov-type and Triebel-Lizorkin-type spaces
\footnotetext{\hspace{-0.35cm} 2020 {\it
Mathematics Subject Classification}. Primary 46E35; Secondary   42B25, 42B35. \endgraf
{\it Key words and phrases: matrix weight, anisotropic Besov-type space, anisotropic Triebel-Lizorkin-type space, $\varphi$-transform}. 
\endgraf
The work is supported by the Natural Science Foundation of Guangxi (No. 2026GXNSFBA00640069),  the Science and Technology Project of Guangxi (Guike AD25069086), the National Natural Science Foundation of China (Grant No.
12561002) and Hainan Provincial Natural Science Foundation of China (Grant No. 126MS0135).
}}
\date{}
\author{}
\author{Tengfei Bai, Pengfei Guo and Jingshi Xu\footnote{Corresponding author,
E-mail: \texttt{jingshixu@126.com}}}
\maketitle

\vspace{-0.8cm}

\begin{center}
\begin{minipage}{13cm}
{\small {\bf Abstract:}\quad
We introduce matrix-weighted anisotropic Besov-type and Triebel-Lizorkin-type spaces associated with an expansive matrix $A$. 
Inspired by the $\A_p$-dimensions of matrix weight of Bu et al. (2025), we study the properties of matrix weight associated with $A$. Using the nice properties of matrix weight, we obtain that these spaces are equivalent with their corresponding averaging spaces and establish the discrete $\varphi$-transform of these spaces.
Finally, we introduce matrix-weighted anisotropic  Triebel-Lizorkin spaces for the limiting case $p=\infty$ and obtain their  $\varphi$-transform. The relation between matrix-weighted anisotropic  Triebel-Lizorkin spaces and matrix-weighted anisotropic Besov-type and Triebel-Lizorkin-type spaces is also studied.
}
\end{minipage}
\end{center}

\vspace{0.2cm}

\tableofcontents

\section{Introduction}
Many fields in analysis   require the study of specific function spaces. After a long history of development,
Besov  and Triebel-Lizorkin spaces provide a unified framework for many function spaces.
Indeed in 1951, Nikol'ski\u i \cite{Ni51} introduced the Nikol'ski\u i-Besov spaces $B^s_{p,\infty} (\rn)$. Besov  \cite{Be59,Be61} introduced  Besov spaces by adding the third index $q$. Around 1970, Lizorkin \cite{Li72,Li74} and Triebel \cite{Tr73} independently studied Triebel-Lizorkin spaces. Peetre \cite{Pe73, Pe75, Pe76} extended the ranges of admissible parameters $p,q$ to values less than one. 
Frazier and Jawerth \cite{FJ85, FJ90}  studied   the $\varphi$-transform on Besov and Triebel-Lizorkin spaces.
We refer to monographs \cite{Tr83, Tr92, Tr06, Tr20} of Triebel  as well as the monograph \cite{Sa18} of Sawano for the  theory of these spaces.

The study of Euclidean spaces equipped with non-isotropic dilation structures is a  direction of extending classical function spaces arising in harmonic analysis. 
In \cite{Bo03}, Bownik first introduced  anisotropic Hardy spaces. Then he  \cite{Bo05} developed the theory of weighted anisotropic Besov spaces (homogeneous and inhomogeneous) associated with expansive dilations with the use of the discrete $\varphi$-transform of Frazier and Jawerth.  Weighted anisotropic Triebel-Lizorkin spaces were introduced and studied  with the use of the discrete $\varphi$-transform  by Bownik  and Ho in \cite{BH06}.
In \cite{Bo07}, Bownik dealt with the analysis  of  anisotropic Triebel-Lizorkin spaces on $\rn$ coming with $A$-doubling measure where $A$ is a  dilation matrix. The real and complex interpolation of the above spaces and their duality can be found in \cite{Bo08, LBYY12}.

The Besov-type and Triebel-Lizorkin-type spaces were introduced by Yang et al. \cite{SYY10,YY08,YY10},  which include many well-known function spaces, such as Besov spaces, Triebel-Lizorkin spaces, BMO spaces, $Q$ spaces, Morrey spaces, Campanato spaces, Besov-Morrey spaces, Triebel-Lizorkin-Morrey spaces. We refer to the monograph \cite{YSY05} by Yuan et al. for  many characterizations and  applications of these spaces.

In the last three decades, the theory of matrix-weighted function spaces has been developed. 
Indeed, in 1997, Treil and Volberg \cite{TV97} introduced Muckenhoupt $\mathcal A_{2}$ matrix weights.
Later, 
Goldberg \cite{Go03} established the $\mathcal  A_{p}$ condition in
terms of matrix weights for $p \in (1,\infty)$. He also obtained that the matrix $\mathcal A_{p}$ condition
implies $L^{p}$-boundedness of the Hardy-Littlewood maximal operator.
In \cite{CMR16},  Cruz-Uribe et al.  studied degenerate Sobolev spaces where the degeneracy is controlled by a
matrix $\mathcal A_p$ weight. As applications, they obtained the weak solutions of degenerate $p$-Laplace equations and mappings of finite distortion.
In \cite{FR04,FR21,Ro03}, Frazier and Roudenko
introduced the matrix weight class $\mathcal A_{p}$ for $p\in(0,1]$ and matrix-weighted Besov and Triebel-Lizorkin spaces. For matrix weight $W\in \mathcal A_{p},$
they showed that homogeneous (and inhomogeneous) matrix-weighted Triebel-Lizorkin
spaces and Besov spaces are equivalent with their discrete spaces and the corresponding averaging spaces. For $p\in(1,\infty)$ and $W\in \mathcal  A_{p}$, they showed that
matrix-weighted Lebesgue space $L^{p}(W)=\dot{F}_{p}^{0,2}(W)$ and
that the matrix-weighted Sobolev space $L_{k}^{p}(W)=F_{p}^{k,2}(W)$. 
The trace operator, an extension operator and duality of matrix-weighted Besov space were obtained in \cite{ FR08, Ro04}.

After this, matrix-weighted function spaces  received more and more attention.
In \cite{BYY23}, Bu, Yang and Yuan established wavelet and molecular characterizations of matrix-weighted Besov spaces defined on spaces of homogeneous type (in the sense of R. Coifman and G. Weiss).
In \cite{WYZ24}, Wang, Yang and Zhang studied the Peetre maximal characterization, the Lusin-area function characterization, the $g_\lambda^\ast$-function characterization of homogeneous matrix weighted Triebel-Lizorkin spaces. 
The matrix-weighted Besov-Triebel-Lizorkin spaces with logarithmic smoothness were studied in \cite{LYY24}.

In \cite{BHYY24,BHYY251, BHYY252},  Bu et al. introduced the matrix-weighted
Besov-type spaces and Triebel-Lizorkin-type spaces and the related sequence spaces $\dot{b}_{p,q}^{s,\tau}(W)$
and $\dot{f}_{p,q}^{s,\tau}(W)$. They proposed a new concept of $\mathcal A_{p}$-dimension of matrix weights. The boundedness of $\varphi$-transform,   pseudo-differential operators, trace operators and Calder\'{o}n--Zygmund operators was obtained. The optimal characterizations of molecules and wavelets, trace theorems and the sharp boundedness of almost diagonal operators were also proved.
The new characterizations of matrix weights classes $\mathcal A_{p,\infty}$ were studied in \cite{BYYH26}
and the  concepts of variable scalar 
weights  $A_{ p(\cdot), \infty } $  and variable matrix  weights $\mathcal A_{ p(\cdot) , \infty } $ were introduced in \cite{YYZ26}.
We refer to \cite{BHYY26} for the inhomogeneous Besov-type and Triebel-Lizorkin-type spaces with matrix weights classes $\mathcal A_{p,\infty}$, 
to \cite{BYYZ26, YYZ25} for generalized matrix-weighted Besov-Triebel-Lizorkin-type spaces with matrix $\mathcal A_{p,\infty}$ weights,
to \cite{CYYZ26} for the variable Hardy space with the  $\mathcal A_{ p(\cdot), \infty } $ matrix weight by means of the matrix-weighted grand maximal function,
to \cite{BCYY25} for matrix-weighted Hardy spaces via the matrix-weighted grand non-tangential maximal function,
to \cite{BX242, BX24, BX25, MX25,  WGX25, WGX252} for the theory of other matrix-weighted function spaces,
to the survey \cite{BYYZ25} for the development of matrix-weighted function spaces,
to \cite{Cz25} for recent results on matrix weighted norm inequalities.

In \cite{LYY26}, Li, Yang and Yuan introduced the vector-valued Haj\l asz gradient sequences and established some related matrix-weighted Poincar\'e-type inequalities on a space of homogeneous type. As an application, they introduced the matrix-weighted logarithmic Besov spaces on a space of homogeneous type
and established their pointwise characterization via Haj\l asz gradient sequences.
In \cite{BCHYY26}, Bu et al. developed the  theory for a general class of multi-parameter function spaces of Besov-Triebel-Lizorkin type with a matrix weight. They proved the equivalence of different quasi-norms, molecular and wavelet characterisations,  Sobolev-type embedding theorems, the boundedness of almost diagonal operators and multi-parameter singular integrals.

Recently, Liu and Wang \cite{LW25}  studied the matrix-weighted homogeneous (inhomogeneous) anisotropic Besov spaces and the boundedness of $\varphi$-transform. 
 Nielsen \cite{Ni25} studied  matrix $\A_p$ weights  relative to a family
of anisotropic balls in $\rn$ defined by a pseudo-metric. Later, he \cite{Ni26} introduced and researched anisotropic matrix-weighted smoothness spaces in both continuous and discrete settings.

Motivated by above literature, we introduce and study matrix-weighted anisotropic Besov-type and Triebel-Lizorkin-type spaces associated with a dilation.
The main results of this paper are their equivalent norms and $\varphi$-transform characterizations.
This paper is organized as follows. In Section \ref{preliminaries},
we recall some results about the expansive dilations and some lemmas.
 In Section \ref{sec matrix weight}, we study the matrix weight associated with an expansive matrix and a key estimate via reducing operators. 
In Section \ref{sec BF spaces}, we introduce matrix-weighted anisotropic Besov-type and Triebel-Lizorkin-type space and obtain their equivalent norms. Moreover, we establish the $\varphi$-transform characterization of these spaces. As applications, we obtain that these spaces are independent of the choice of $\varphi$ and the boundedness of the lifting operator.
In Section \ref{sec p infty}, we introduce the averaging matrix-weighted Triebel-Lizorkin space and the corresponding sequence space and obtain the $\varphi$-transform characterization of these spaces.

Throughout this paper,  let $c, C$ denote constants that are independent of the main parameters involved but whose value may differ from line to line.
For $A,B>0$, by $A\lesssim B$, we mean that $A\leq CB$ with some positive constant $C$ independent of appropriate quantities. By $ A \approx B$, we mean that $A\lesssim B$ and $B\lesssim A$.
For any measurable function $f$ on $\rn$ and any measurable set $E \subset \rn$ with $0< |E| <\infty$, define $\fint_E f(x) \d x := |E|^{-1} \int_E f(x) \d x$ where $|E|$ is the Lebesgue measure of $E$.
For $a,b\in \mathbb R$, let $a \vee b := \max\{ a,b\} $  and $a \wedge b : =  \min\{ a,b\} $.
Let $\mathbb N:= \{1,2,\ldots\}$, $\mathbb Z_+ := \mathbb N \cup \{0\}$ and $\mathbb Z$ be the set of all integers.

\section{Preliminaries} \label{preliminaries}
\subsection{Basic facts about expansive dilations}
For $m \in \mathbb N$,
let $M_m (\mathbb C)$ be the set of all $m \times m$ complex-valued matrices. For a matrix $A :=[ a_{ij} ] \in M_m (\mathbb C) $, the conjugate of $A$, denoted by $\bar A$, is the matrix in $M_m (\mathbb C)$ whose $(i,j)$ entry is $\bar a_{ij} $, the transpose of $A$, denoted by $A^T $, is the matrix in $M_m (\mathbb C)$ whose $ (i,j)$ entry is $a_{ji}$, and the conjugate transpose of $A$ denoted by $A ^\ast : = \overline {A^T }$.

A real $n \times n$ matrix $A$ is an expansive matrix, sometimes called shortly a dilation, if $\min _{\lambda  \in \sigma (A)}  |\lambda| >1  $, where $\sigma (A)$ is the set of all eigenvalues (the spectrum) of $A$.
A basic notion is a quasi-norm $\rho_A$  associated with $A$, which induces a quasi-distance making $\rn$ a space of homogeneous type.

\begin{definition}\label{def rho_A}
	A quasi-norm associated with an expansive matrix $A$ is a measurable	mapping  $\rho_A :\rn \to [0,\infty) $ satisfying
	
	(i) $\rho_A (x) >0$ for $x \neq 0$,
	
	(ii) $\rho _A ( Ax) = |\det A| \rho_A (x)$ for $x\in \rn$,
	
	(iii) $\rho_A (x +y) \le C ( \rho_A (x) + \rho_A (y)) $ for $x,y \in \rn$ where $C\ge 1$ is a constant.
\end{definition}

In the standard dyadic case $A = 2I_n$  where $I_n$ is the $n\times n$ identity matrix, a quasi-norm $\rho_A$  satisfies $\rho_A (2x) = 2^n \rho_A(x)$  instead of the usual scalar homogeneity.  In particular, $\rho_A (x) = |x|^n$  is an example for a quasi-norm for $A= 2 I_n$, where $|\cdot| $ is the Euclidean norm in $\rn$.

We refer the reader to \cite{Bo03, Ho02} for a list of properties of quasi-norms associated with  an expansive matrix. Recall some basic facts needed in this work. From \cite[Lemma 2.4]{Bo03}, all quasi-norms associated to a fixed dilation $A$ are equivalent. Moreover, there always exists a quasi-norm $\rho_A$, which  is $C^\infty$ on $\rn$ except the origin; see \cite{LP94}. For our purposes it is enough to restrict to a quasi-norm $\rho_A$ given by 
\begin{equation} \label{eq rho_A baisic}
	\rho_A (x)  = \sum _{k= -\infty}^\infty |\det A|^k \chi_{ O _k} (x),
\end{equation}
where $O_k = A^k ( B(0,1) )  \backslash \bigcup_{j =-\infty }^{k-1} A^j ( B(0,1)) $, and $B(x,r) = \{y\in \rn : |x-y| <r   \} $. Equivalently,
\begin{equation*}
	\rho_A (x) = |\det A|^k, \quad \operatorname{where} \; k =\inf \{ j \in \mathbb Z :A^{-j}x \in B(0,1) \}
\end{equation*}
for $x \neq 0$ and $\rho_A (x) = 0$ for $x=0$. It is clear that $\rho_A $ given by (\ref{eq rho_A baisic}) satisfies (iii) of Definition \ref{def rho_A} with the constant $C = |\det A|^{j_0}$, where $j_0$ is the smallest integer such that $ \bigcup_{j\le 0} A^{j } ( B (0,2) )  \subset A^{j_0}(  B (0,1) ) $. Moreover, the 
above quasi-norm satisfies 
\begin{equation} \label{eq  rho A B approx r}
	| \{ x \in \rn : \rho_A (x) < r\} | \approx r  \quad \operatorname{for \; any} \; r>0.
\end{equation}
Since all quasi-norms associated a fixed dilation $A$ are equivalent, (\ref{eq  rho A B approx r}) holds for any quasi-norm $\rho_A$ associated with $A$.

It should be remarked that the quasi-norm $\rho_A$ given by (\ref{eq rho_A baisic}) might produce $\rho_A$-balls $\{x\in \rn: \rho_A (x)<r\}$, which are not convex. However, it is possible to modify the above construction to guarantee that $\rho_A$-balls are convex; see \cite[p. 5]{Bo03}. Therefore, we will simply assume that $\rho_A$-balls are convex.

\begin{definition}
	Let $\B$ be the collection of all  $\rho_A$-balls	\begin{equation*}
		B_{\rho_A} (x, r) = \{ y \in \rn : \rho_A (x-y)  <r\} ,  \quad  x\in \rn , r>0.
	\end{equation*}
For $  B_{\rho_A} (x, r) \in \B $ and $a >0$, let $ a B_{\rho_A} (x, r) = B_{\rho_A} (x, ar) $. For a $\rho_A$-ball $B$, let $c_B$ be its center and $r_B$ be its radius.
	
	For $j \in \mathbb Z$ and $k \in \mathbb Z^n$, let $Q_{j,k} = A ^{-j}  ( [0,1) ^n + k )$ be the dilated cube, and $x_{ Q_{j,k} }  =A ^{-j} k  $  be its lower-left corner. For $Q=Q_{j,k} \in \Q$, define its center $c_Q := A ^{-j}  ( (1/2, \ldots, 1/2) + k )$.
	Let $\mathcal Q_j   = \{Q_{j,k} : k \in \mathbb Z^n  \}  $ for $j \in \mathbb Z$.
	Let $\mathcal Q  =\mathcal Q _A = \{Q_{j,k} : j \in \mathbb Z, k \in \mathbb Z^n  \}  $ be the set of all dilated cubes.
	
	The scale of a ball $B =B_{\rho_A} (x, r)  $ is defined  as $\scale (B) = \lfloor \log_{ |\det A| } r  \rfloor$. The scale of dilated cube $Q \in \Q$ is defined as $\scale (Q) = \log_{ |\det A| }  |Q|$.
\end{definition}

By renormalizing $\rho_A$, it is convenient to assume that  $|B_{\rho_A} (x,1) | =1 $. Consequently,
\begin{equation*}
|B_{\rho_A} (x,  |\det A|^j  ) | = |\det A|^j \quad \operatorname{for \; any}  \; j \in \mathbb Z
\end{equation*}
Therefore,
\begin{equation*}
	|\det A|^{\scale (B) } \le |B| \le 	|\det A|^{\scale (B) +1} 
\end{equation*}
and for any $Q\in \Q$, $B\in  \B$ with $\scale (Q) = \scale (B)$,
\begin{equation*}
	|Q| \le |B| \le |\det A| |Q|.
\end{equation*}

\begin{lemma}[Lemma 2.2, \cite{BH06}]  \label{lemma rho A and |x|}
	Suppose $A$ is an expansive matrix, and $\lambda_- $ and $ \lambda_+$ are any positive real numbers such that  $1< \lambda_{-} <\min_{\lambda  \in \sigma(A)} |\lambda|   \le \max_{\lambda \in \sigma (A) } <\lambda_+ $.  
	Let 
	\begin{equation*}
	\zeta_+ := \frac{\log_e \lambda_+}{\log_e |\det A|}  \quad  \operatorname{and} \quad \zeta_- := \frac{\log_e \lambda_-}{\log_e |\det A|}.
	\end{equation*}
 Then for any quasi-norm $\rho_A$, there exists a constant $C$ such that
	\begin{equation*}
		C^{-1} \rho_A (x) ^{\zeta_-}  \le |x| \le C \rho_A (x) ^{\zeta_+}, \quad {\rm if} \; \rho_A (x) \ge 1
	\end{equation*}
and 
	\begin{equation*}
	C^{-1} \rho_A (x) ^{\zeta_+}  \le |x| \le C \rho_A (x) ^{\zeta_-},\quad {\rm if}\; \rho_A (x) \le 1.
\end{equation*}
Furthermore, if $A$ is diagonalizable over $\mathbb C$, then we may take $\lambda_- =  \min_{\lambda  \in \sigma(A)} |\lambda|$ and $\lambda_+ =  \max_{\lambda  \in \sigma(A)} |\lambda|$.
\end{lemma}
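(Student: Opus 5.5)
The plan has three stages: a norm estimate for the powers $A^j$, a reduction to the concrete quasi-norm of (\ref{eq rho_A baisic}), and a layer-by-layer comparison of $\rho_A(x)$ with $|x|$.

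\emph{Step 1: norms of $A^j$.} Fix $\lambda_\pm$ strictly between $1$ and the spectral radius bounds as in the statement. By Gelfand's formula, $\|A^j\|^{1/j}\to\max_{\lambda\in\sigma(A)}|\lambda|<\lambda_+$. The spectral radius of $A^{-1}$ is $1/\min_{\lambda\in\sigma(A)}|\lambda|<1/\lambda_-$, so $\|A^{-j}\|^{1/j}$ tends to a limit below $1/\lambda_-$. Hence there is a constant $c\ge1$ such that
\begin{equation*}
c^{-1}\lambda_-^{j}|x|\le |A^jx|\le c\,\lambda_+^{j}|x|,\qquad j\ge0,
\end{equation*}
and, reading the same bounds with $A^{-j}$ in place of $A^j$,
\begin{equation*}
c^{-1}\lambda_+^{-j}|x|\le |A^{-j}x|\le c\,\lambda_-^{-j}|x|,\qquad j\ge0.
\end{equation*}
The lower bounds come from inverting the upper bound for the inverse power.

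\emph{Step 2: reduction to one quasi-norm.} By \cite[Lemma 2.4]{Bo03}, all quasi-norms associated with $A$ are equivalent. Replacing $\rho_A$ by an equivalent one changes $\rho_A^{\zeta_\pm}$ only by a multiplicative constant. The cases split at $\rho_A=1$, but near the threshold both powers of $\rho_A$ are comparable to $1$, so they can be absorbed into $C$. It therefore suffices to treat $\rho_A$ given by (\ref{eq rho_A baisic}).

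\emph{Step 3: the comparison.} For $x\neq0$ with $\rho_A(x)=|\det A|^k$, the definition gives $A^{-k}x\in B(0,1)$ and $A^{-k+1}x\notin B(0,1)$. So $|A^{-k}x|<1$, and $|A^{-k}x|\ge \|A\|^{-1}$, which makes $|A^{-k}x|\approx1$. Write $x=A^k(A^{-k}x)$ and apply Step 1.

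If $k\ge0$ (equivalently $\rho_A(x)\ge1$), Step 1 gives
\begin{equation*}
\lambda_-^k\lesssim|x|\lesssim\lambda_+^k.
\end{equation*}
Since $\lambda_\pm^k=(|\det A|^k)^{\zeta_\pm}=\rho_A(x)^{\zeta_\pm}$ by the definition of $\zeta_\pm$, this is the first estimate.

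If $k<0$, Step 1 applied to $A^{k}=A^{-|k|}$ gives
\begin{equation*}
\lambda_+^{k}\lesssim|x|\lesssim\lambda_-^{k},
\end{equation*}
which is the second estimate. The point $x=0$ is trivial.

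\emph{Diagonalizable case and main obstacle.} If $A=PDP^{-1}$ with $D$ diagonal over $\mathbb C$, then $\|A^j\|\le\|P\|\|P^{-1}\|\max_{\lambda\in\sigma(A)}|\lambda|^j$, and similarly for $A^{-j}$. So Step 1 holds with $\lambda_\pm$ equal to the extreme eigenvalue moduli themselves, and Step 3 goes through unchanged. The only real obstacle is the uniform bound in Step 1 for non-diagonalizable $A$. There, Jordan blocks produce polynomial factors $j^{n}$ in $\|A^j\|$, and these are absorbed only because $\lambda_\pm$ are strict; Gelfand's formula (or a direct Jordan-form estimate) handles this.
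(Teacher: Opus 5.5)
Your proof is correct and follows the standard route. The paper gives no proof of its own and only cites \cite{BH06}, whose argument goes back to Bownik's memoir \cite{Bo03}: use the equivalence of all quasi-norms to pass to the step quasi-norm, then combine the spectral-radius bounds $c^{-1}\lambda_-^{j}|x|\le|A^jx|\le c\lambda_+^{j}|x|$ for $j\ge0$ (and their inverse-power analogues) with $|A^{-k}x|\approx 1$ when $\rho_A(x)=|\det A|^k$, and your treatment of the diagonalizable case, where Step 1 holds with the extreme eigenvalue moduli themselves, is exactly how that refinement is obtained.
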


\begin{lemma}[Lemma 2.9, \cite{Bo07}]  \label{lemma basic cube ball cover}
	Let $A$ be an expansive matrix. Let $\B$  be the set of all $\rho_A$-balls. Then there exist constants $C_\B$,  $C_{\Q} >0$	such that:

{\rm (i)} for any $Q \in \Q $, we have
	\begin{equation*}
		B_0 \subset Q \subset B_1, 
	\end{equation*}
where 
\begin{equation*}
	B_0 :=B_{\rho_A} (c_Q, |Q| |\det A|^{-C_{\B} }) , B_1 :=B_{\rho_A} (c_Q, |Q| |\det A|^{C_{\B} });
\end{equation*}
	
	{\rm (ii)} for any $B\in \B$, the collection
	\begin{equation*}
		\Q_B = \{ Q\in \Q: Q \cap B  \neq  \emptyset, \scale (Q) = \scale (B) \}
	\end{equation*}
	has at most  $C_\Q$ elements. 
\end{lemma}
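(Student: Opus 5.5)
Both parts reduce to two ingredients: the scaling identity $|\det A|^{\scale(Q)}=|Q|$, and a compactness argument at one fixed scale after rescaling by a power of $A$. The rescaling works because $\rho_A(A^{j}x)=|\det A|^j\rho_A(x)$, so $A^{j}$ maps $\rho_A$-balls to $\rho_A$-balls:
\begin{equation*}
A^{j}B_{\rho_A}(x,r)=B_{\rho_A}(A^jx,|\det A|^j r).
\end{equation*}
It also maps $Q_{i,k}$ to $Q_{i-j,k}$, and it maps centres to centres. Hence every statement reduces to the case $Q=Q_{0,k}$, that is $|Q|=1$. Since translation by an integer vector preserves both the ball and cube structures, we may further take $k=0$. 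So it suffices to treat $Q_0=[0,1)^n$ with centre $c=(1/2,\dots,1/2)$.

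For (i), the key fact is that $\rho_A$ is a quasi-norm, positive away from $0$. Work first with the explicit $\rho_A$ from (\ref{eq rho_A baisic}), for which $\{\rho_A<|\det A|^{m}\}=A^{m}(B(0,1))$ up to the boundary layers. Since $c$ is an interior point of $Q_0$, there is a Euclidean ball $B(c,\delta)\subset Q_0$. The sets $A^{m}(B(0,1))$ shrink to $\{0\}$ as $m\to-\infty$, because $A^{-1}$ is contractive in a suitable norm by Lemma~\ref{lemma rho A and |x|}. So for $m$ negative enough, $c+A^{m}B(0,1)\subset B(c,\delta)\subset Q_0$, which gives $B_0\subset Q_0$ once $C_\B\ge -m$. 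Conversely, $Q_0-c$ is bounded and $\bigcup_m A^m B(0,1)=\rn$, so $Q_0-c\subset A^{M}B(0,1)$ for some $M$, which gives $Q_0\subset B_1$ once $C_\B\ge M$. The constants are uniform because, after the reduction, there is only one cube to handle. For a general quasi-norm, equivalence to this one (\cite[Lemma 2.4]{Bo03}) only shifts $C_\B$ by a fixed amount, as does the renormalization $|B_{\rho_A}(x,1)|=1$.

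For (ii), let $B=B_{\rho_A}(x,r)$ with $s=\scale(B)$, so $|\det A|^{s}\le r<|\det A|^{s+1}$. Rescale by $A^{s}$, which sends cubes of scale $s$ to cubes of scale $0$, i.e.\ the unit lattice cubes $[0,1)^n+k$, and sends $B$ to a ball of radius in $[1,|\det A|)$. Such a ball is contained in $B_{\rho_A}(y,|\det A|)$, a translate of a fixed bounded set, by Lemma~\ref{lemma rho A and |x|} ($\rho_A\le |\det A|$ implies $|x|\le C|\det A|^{\zeta_+}=:R$). A Euclidean ball of radius $R$ meets at most $(2R+2)^n$ unit lattice cubes, which gives $C_\Q$.

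The main obstacle is bookkeeping in (i): making sure the two inclusions hold with a single constant $C_\B$ simultaneously for all cubes. The translation/dilation reduction is what handles this, and it must be checked carefully. In particular, one must confirm that the centre $c_Q$ transforms correctly under $A^{-j}$, namely $c_{Q_{j,k}}=A^{-j}(c+k)$, which is immediate from the definition.
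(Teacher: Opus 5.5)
The paper gives no proof of this lemma; it simply quotes \cite[Lemma 2.9]{Bo07}. Your argument is correct and is the standard one. You use the homogeneity $\rho_A(A^jx)=|\det A|^j\rho_A(x)$ and translation invariance to reduce to $[0,1)^n$ (respectively, to unit lattice cubes and a ball of radius in $[1,|\det A|)$). Then Lemma~\ref{lemma rho A and |x|} converts $\rho_A$-smallness and $\rho_A$-boundedness into their Euclidean counterparts, and a lattice count finishes (ii).

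There are two small corrections.

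\textbf{Direction of rescaling in (ii).} The cubes of scale $s$ are $Q_{-s,k}=A^{s}([0,1)^n+k)$, so you must rescale by $A^{-s}$, not $A^{s}$. Your own rule $A^jQ_{i,k}=Q_{i-j,k}$ confirms this.

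\textbf{Sublevel sets in (i).} For the step quasi-norm (\ref{eq rho_A baisic}) one has $\{\rho_A<|\det A|^m\}=\bigcup_{j\le m-1}A^j(B(0,1))$. This is not $A^m(B(0,1))$ in general, because dilates of a Euclidean ball need not be nested. It is cleaner to read both inclusions directly off Lemma~\ref{lemma rho A and |x|}:
\begin{itemize}
\item For the inner inclusion, $\rho_A(z)<\varepsilon\le 1$ forces $|z|\le C\rho_A(z)^{\zeta_-}\le C\varepsilon^{\zeta_-}$. Choosing $\varepsilon$ so that $C\varepsilon^{\zeta_-}<1/2$ gives $B_{\rho_A}(c,\varepsilon)\subset(0,1)^n$.
\item For the outer inclusion, $z\in[0,1)^n-c$ gives $|z|\le\sqrt n/2$, hence $\rho_A(z)\le K:=\max\{1,(C\sqrt n/2)^{1/\zeta_-}\}$. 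So $[0,1)^n\subset B_{\rho_A}(c,|\det A|^{C_\B})$ once $|\det A|^{C_\B}>K$.
\end{itemize}
Note the strict inequality in the ball, which is why your ``$C_\B\ge M$'' should be one notch larger. Since Lemma~\ref{lemma rho A and |x|} holds for every quasi-norm, this route also makes your separate appeal to the equivalence of quasi-norms and to the renormalization unnecessary.
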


\begin{definition}
	We say that a cube $Q\in \Q$  is stacked below the cube $P\in\Q$, and write $Q \preceq P$, if there is a chain of cubes  $Q = Q_0,  Q_1 ,\ldots, Q_s=P \in \Q $ such that
	\begin{equation*}
		\scale (Q_i) < \scale (Q_{i+1})  \;\operatorname{and }\; |Q_i \cap Q_{i+1} | >0 \quad \operatorname{for \; all}\;  i= 0, \ldots , s-1.
	\end{equation*}
	The relation  $\preceq$ induces a partial order in $\Q$.	
	For a dilated cube $Q\in \Q$,  let $Q ^\sharp : = \{P\in\Q :  P \preceq Q, | P \cap Q| >0  \} $.
\end{definition}

\begin{remark}
	
	Suppose that $Q^\prime$ is a subfamily of $\Q$. Let $ \max(Q^\prime)$ be the set of maximal elements
	in $Q^\prime$ with respect to the relation $\preceq$. If a subfamily $Q^\prime$ does not contain arbitrary large cubes, i.e., $\sup_{Q\in  \Q ^\prime} \scale (Q)<\infty$,
	then for any cube $Q \in \Q^\prime $, there is always a cube $P\in  \max(Q^\prime) $ with $Q \preceq P$.	
	In general, a maximal cube   $P$ is not unique unless, for example, the dilation  $A = 2I_n $ and we work with nicely nested dyadic cubes.

\end{remark}

\begin{lemma} [Lemma  6.5, \cite{Bo07}] \label{lemma stacked cube}
	There is a universal constant $\eta\in \mathbb N$ such that whenever we have two cubes $Q, P \in \Q$ with $Q \preceq P = A^{j_0} ( [0,1)^n+k_0 )$, then 
	\begin{equation*}
		Q\subset \bigcup_{|k-k_0 |< \eta }  A^{j_0} ( [0,1)^n+k ) .
	\end{equation*}
\end{lemma}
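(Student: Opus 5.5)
The plan is to rescale so that $P$ becomes a unit lattice cube, and then show that every cube in a chain ending at $P$ stays within a uniformly bounded Euclidean distance of $P$. The bound comes from a geometric series of diameters.

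\emph{Step 1 (normalization).} Apply $A^{-j_0}$. This map sends dilated cubes to dilated cubes and shifts every scale by the same amount. It therefore preserves the relation $\preceq$, since it preserves strict inequalities of scales and positivity of the measure of intersections. So we may assume $j_0=0$, i.e.\ $P=[0,1)^n+k_0$, which has scale $0$. Applying $A^{j_0}$ to the conclusion at the end recovers the general case.

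\emph{Step 2 (diameter decay).} Take a chain $Q=Q_0,Q_1,\ldots,Q_s=P$ as in the definition of $\preceq$. The scales are strictly increasing integers ending at $0$, so $\scale(Q_i)=-m_i$ with $m_0>m_1>\cdots>m_{s-1}\ge 1$ all distinct. Each $Q_i$ is a translate of $A^{-m_i}[0,1)^n$, hence $\operatorname{diam}(Q_i)\le \sqrt n\,\|A^{-m_i}\|$. Fix $\lambda$ with $1<\lambda<\min_{\mu\in\sigma(A)}|\mu|$. Every eigenvalue of $A^{-1}$ has modulus less than $\lambda^{-1}$, so the spectral radius of $A^{-1}$ is less than $\lambda^{-1}$. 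Gelfand's formula then gives a constant $C_A$ with $\|A^{-m}\|\le C_A\lambda^{-m}$ for all $m\ge 0$. This is the only place a nontrivial input is needed; it is the analytic fact behind Lemma~\ref{lemma rho A and |x|}, and one could alternatively derive it from that lemma.

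\emph{Step 3 (chaining distances).} Let $x\in Q$. Since $|Q_i\cap Q_{i+1}|>0$, we can pick $y_i\in Q_i\cap Q_{i+1}$ for $i=0,\ldots,s-1$. Put $y_{-1}:=x$, so that consecutive points $y_{i-1},y_i$ both lie in $Q_i$. Then $y_{s-1}\in P$, and
\begin{equation*}
|x-y_{s-1}|\le \sum_{i=0}^{s-1}\operatorname{diam}(Q_i)\le \sqrt n\, C_A\sum_{m=1}^\infty \lambda^{-m}=:D,
\end{equation*}
where we used that the $m_i$ are distinct positive integers. The bound $D$ depends only on $A$ and $n$. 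Consequently $x$ lies within Euclidean distance $D$ of $[0,1)^n+k_0$. Hence $x\in[0,1)^n+k$ for some $k\in\mathbb Z^n$ with $|k-k_0|\le D+\sqrt n+1$.

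\emph{Step 4 (conclusion).} Choosing $\eta\in\mathbb N$ with $\eta>D+\sqrt n+1$ gives $Q\subset\bigcup_{|k-k_0|<\eta}([0,1)^n+k)$. Undoing the normalization of Step~1 yields the stated inclusion with the same $\eta$. The constant $\eta$ is universal because $D$ is independent of $Q$, $P$ and the length $s$ of the chain.

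The only real obstacle is Step~2: the uniform exponential decay of $\|A^{-m}\|$, which is what makes the sum over an arbitrarily long chain converge. Everything else is bookkeeping. The distinctness of the scales, guaranteed by their strict monotonicity along the chain, is what prevents the series from counting any term twice.
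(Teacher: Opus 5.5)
Your proof is correct. The paper gives no proof of this lemma; it is imported from Bownik (Lemma 6.5 of the cited 2007 paper). So there is no argument here to compare against, and yours serves as a self-contained proof.

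Each step checks out:
\begin{itemize}
\item $A^{-j_0}$ maps $\Q$ onto $\Q$, shifts every scale by $-j_0$, and multiplies Lebesgue measure by $|\det A|^{-j_0}$. Hence it preserves $\preceq$.
\item The strictly increasing integer scales along the chain force $m_0>\dots>m_{s-1}\ge 1$ to be distinct positive integers. This is exactly what makes $D$ independent of the chain length $s$.
\item Gelfand's formula gives $\|A^{-m}\|\le C_A\lambda^{-m}$ for all $m\ge 0$.
\item The points $y_{-1}=x,\,y_0,\dots,y_{s-1}\in P$, with consecutive ones lying in a common $Q_i$, give $|x-y_{s-1}|\le D$.
\end{itemize}

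The lattice step has slack. Writing $k-k_0=(x-y_{s-1})+\bigl((y_{s-1}-k_0)-(x-k)\bigr)$, the second vector lies in $(-1,1)^n$, so $|k-k_0|<D+\sqrt n$. Your bound $D+\sqrt n+1$ is therefore valid, just not sharp.

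Your argument only uses $Q_i\cap Q_{i+1}\neq\emptyset$ rather than $|Q_i\cap Q_{i+1}|>0$, so it proves slightly more than stated. The resulting $\eta$ depends only on $n$ and $A$.
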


\begin{lemma} \label{lemma use covered Q}
	For any $Q\in \Q$, there exists a finite set of disjoint  dilation cubes $\{ Q_\ell\}_{\ell =1}^{C_\Q} $ such that 
	$ Q^\sharp \subset \bigcup_{ \ell=1 }^{C_\Q}  Q_\ell $, $ |Q| \approx \sum_{\ell=1}^{C_\Q  }|Q_\ell|$ and $Q_\ell $ is  near $Q$ for each $\ell \in  \{ 1, \ldots, C_\Q \}$.
\end{lemma}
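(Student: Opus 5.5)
The plan is to read the statement off Lemma \ref{lemma stacked cube}, applied to $Q$ itself as the top cube of the chain. Write $Q=Q_{j,k_0}=A^{-j}([0,1)^n+k_0)$, so that in the notation of Lemma \ref{lemma stacked cube} we have $Q=A^{j_0}([0,1)^n+k_0)$ with $j_0=-j$. By definition, every $P\in Q^\sharp$ satisfies $P\preceq Q$. Lemma \ref{lemma stacked cube} then gives
\begin{equation*}
P\subset \bigcup_{|k-k_0|<\eta} A^{-j}\big([0,1)^n+k\big)=\bigcup_{|k-k_0|<\eta} Q_{j,k},
\end{equation*}
where $\eta\in\mathbb N$ is universal. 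Hence $\bigcup_{P\in Q^\sharp}P$ is contained in this union.

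The family $\{Q_{j,k}: |k-k_0|<\eta\}$ is the candidate for $\{Q_\ell\}$, and I would check the required properties in turn.

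\emph{Disjointness.} The cubes $Q_{j,k}$ with $k\in\mathbb Z^n$ are pairwise disjoint, being images under $A^{-j}$ of disjoint translates of $[0,1)^n$.

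\emph{Cardinality.} The number of $k\in\mathbb Z^n$ with $|k-k_0|<\eta$ is at most $N:=(2\eta+1)^n$, which depends only on $A$ and $n$. This constant plays the role of $C_\Q$ in the statement. If one insists on exactly $C_\Q$ cubes, one can enlarge the index set by adjoining further neighbouring cubes of the same scale; they remain disjoint and near $Q$.

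\emph{Measure comparison.} All chosen cubes have scale $\scale(Q)$, so $|Q_\ell|=|Q|$. Since $Q$ itself is among them (take $k=k_0$), we get
\begin{equation*}
|Q|\le \sum_\ell |Q_\ell| = N|Q|,
\end{equation*}
which gives $|Q|\approx\sum_\ell|Q_\ell|$.

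\emph{Nearness.} Here I take ``$Q_\ell$ is near $Q$'' to mean that $Q_\ell$ has the same scale as $Q$ and lies in a fixed dilate of a ball containing $Q$. The cubes $Q_{j,k}$ with $|k-k_0|<\eta$ lie in the set $A^{-j}\big(B(k_0,\eta+\sqrt n)\big)$. By Lemma \ref{lemma rho A and |x|} applied at scale $0$, followed by the dilation $A^{-j}$ using property (ii) of Definition \ref{def rho_A}, this set is contained in a $\rho_A$-ball centred at $c_Q$ of radius $C|Q|$, with $C$ independent of $j$ and $k_0$. Lemma \ref{lemma basic cube ball cover}(i) then shows that this ball is comparable to the ball $B_1$ associated with $Q$.

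The only point needing care is matching the conventions. First, Lemma \ref{lemma stacked cube} is stated with $A^{j_0}$, while the cubes $Q_{j,k}$ use $A^{-j}$, so the index must be translated as above. Second, the precise meaning of ``near'' and of the constant in the statement must be fixed, and I would state them explicitly as above. Beyond that, the argument is a direct corollary of Lemma \ref{lemma stacked cube}, and I do not expect a genuine obstacle.
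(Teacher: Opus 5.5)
Your proposal is correct and matches the paper's proof: the paper also applies Lemma \ref{lemma stacked cube} to $Q=A^{j_0}([0,1)^n+k_0)$ and takes the same-scale cubes $A^{j_0}([0,1)^n+k)$ with $|k-k_0|<\eta$, relabelled as $Q_1,\ldots,Q_{C_\Q}$. Your checks of disjointness, cardinality, the measure comparison and ``nearness'' are details the paper leaves implicit, and your remark on the count $(2\eta+1)^n$ versus $C_\Q$ correctly points out loose notation in the statement.
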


\begin{proof}	
	Fix $Q = A^{j_0} ( [0,1)^n+k_0 ) $ for some $j_0 \in \mathbb Z$ and $k_0 \in \mathbb Z^n$.
	Since $Q ^\sharp : = \{P\in\Q :  P \preceq Q, | P \cap Q| >0  \} $, by Lemma \ref{lemma stacked cube}, there is a universal constant $\eta\in \mathbb N$ such that
	\begin{equation*}
	Q ^\sharp  \subset \bigcup_{|k-k_0 |< \eta }  A^{j_0} ( [0,1)^n+k ).
	\end{equation*}
 We rearrange these cubes and denote them as $Q_1, \ldots, Q_{C_\Q} $. Thus we finish the proof.
\end{proof}

\subsection{Discrete wavelet transforms}
Let $\S(\rn)$ be the spaces of all Schwartz functions on $\rn$  and let $\S^\prime (\rn)$ be its dual.
Suppose that $\varphi,\psi \in \mathcal S(\rn)$  such that 
\begin{equation} \label{eq varphi supp}
	\operatorname{supp} \F \varphi , \operatorname{supp}  \F \psi \subset [-\pi,\pi]^n \backslash \{0\},
\end{equation}
\begin{equation} \label{eq varphi > 0}
	\sup_{j \in \mathbb Z}  | \F \varphi (  (A^\ast) ^j \xi ) | >0 , 
\end{equation}
\begin{equation} \label{eq varphi psi =1}
	\sum_{j\in \mathbb Z} \overline{\mathcal F \phi (  (A^\ast) ^j \xi )} \F \psi ( (A^\ast) ^j \xi ) =1, 
\end{equation}
for all $\xi \in \rn \backslash\{0\}$ where $A^\ast$ is the adjoint (transpose) of $A$. Here, supp $\F\phi = \overline{\{  \xi \in \rn: \F\phi (\xi )  \neq 0\}}$ and the Fourier transform of $f$ is 
\begin{equation*}
	\F f (\xi) = \int_\rn f (x) e^{-2\pi x \cdot \xi} \d x.
\end{equation*} 
The inverse Fourier of $f$  is defined by 
\begin{equation*}
		\F ^{-1} f (\xi) = \int_\rn f (x) e^{2\pi x \cdot \xi} \d x.
\end{equation*}
For $\varphi \in \mathcal S (\rn)$, define
\begin{equation*}
	\varphi_j (x) = | \det A|^j \varphi (A^j x) \quad  \operatorname{for} \; j \in \mathbb Z,
\end{equation*}
and 
\begin{equation*}
	\varphi_Q (x) =  | \det A|^{j/2} \varphi (A^j x -k) = |Q|^{1/2} \varphi_j (x- x_Q) \quad  \operatorname{for} \; Q = Q_{j,k}  \in \Q.
\end{equation*}
Hence $\F \varphi_j (\xi) = \F \varphi ( (A^\ast)^{-j} \xi )$ and supp $\F \varphi_j \subset (A^\ast)^{j} [ -\pi,\pi]^n$.

The closed subspace $\S_\infty (\rn)$ of the Schwartz class $\S (\rn)$ is given by 
\begin{equation*}
	\S_\infty (\rn) :=\left \{ \phi\in \S(\rn) :\int_\rn \phi (x) x^\alpha  \d x = 0 \; {\rm for \; any \; }  \alpha \in \mathbb N_0^n \right \}.
\end{equation*}
Denote by $\S_\infty (\rn) ^\prime$ the space of all continuous linear functionals on $\S_\infty (\rn) $ equipped with the weak-$\ast$ topology. It is well known that $\S' / \mathcal P :=\S' / \mathcal P (\rn) $ can be identified with $\S_\infty (\rn) ^\prime$ where $\mathcal P$ is the class of all polynomials in $\rn$.

\begin{lemma}[Lemma 2.8, \cite{BH06}] \label{lemma identity}
	Suppose that $A$ is an expansive matrix. If $g\in \S ' (\rn)$, $h\in \S(\rn)$ and supp $\F g, \F h  \subset (A^\ast)^j  [-\pi,\pi]^n$ for some $j \in \mathbb Z$, then
	\begin{equation*}
		g*h (x) = \sum_{k\in \mathbb Z} |\det A|^{-j} g (A^{-j} k)  h (x - A^{-j} k),
	\end{equation*}
with convergence in $\mathcal S' (\rn)$.
Consequently, if $\varphi, \psi \in \S^{\prime} (\rn)$ satisfy (\ref{eq varphi supp}), (\ref{eq varphi psi =1}), then  for any $f \in \S_\infty ^\prime (\rn)$,
\begin{equation*}
	f = \sum_{Q\in\Q} \langle f , \varphi_Q \rangle \psi_Q
\end{equation*}
where the convergence of the above series, as well as the equality, is in $ \S_\infty ^\prime (\rn)$. More precisely, there exists a sequence of polynomials $\{P_k\}_{k=1}^\infty  \subset \mathcal P$  and $P \in \mathcal P$  such that 
\begin{equation*}
	f = \lim_{k\to \infty}   \left(  \sum_{Q\in \Q, |\det A|^{-k} \le |Q| \le  |\det A|^{k}    }  \langle f , \varphi_Q \rangle \psi_Q + P_k \right) +P,
\end{equation*}
with convergence in $\S ^\prime (\rn)$.
\end{lemma}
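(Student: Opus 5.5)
We prove the first identity by reducing it to the classical sampling theorem via a change of variables. Then we obtain the reproducing formula by combining this identity with the Calderón-type condition (\ref{eq varphi psi =1}).

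\textbf{Step 1: the sampling identity.} Set $G(x):=g(A^{-j}x)$ and $H(x):=h(A^{-j}x)$. Then
\begin{equation*}
\F G(\xi)=|\det A|^{j}\,\F g((A^\ast)^{j}\xi),
\end{equation*}
so $\operatorname{supp}\F G$ and $\operatorname{supp}\F H$ lie in $[-\pi,\pi]^n$. This cube sits inside a fundamental domain of $\mathbb Z^n$ for the normalization $e^{-2\pi i x\cdot\xi}$, since it is contained in $(-1/2,1/2)^n$ up to the usual $2\pi$ bookkeeping. Periodize $\F G$ over $\mathbb Z^n$ and multiply by $\F H$; because the supports meet only the central translate, the product equals $\F G\,\F H$. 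Poisson summation then gives $\F G\cdot\F H=\sum_k G(k)e^{-2\pi i k\cdot\xi}\F H$ in $\S'$. Inverting yields $G*H=\sum_k G(k)H(\cdot-k)$.

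Undoing the dilation gives $g*h(x)=\sum_k|\det A|^{-j}g(A^{-j}k)h(x-A^{-j}k)$. Convergence in $\S'$ holds because $g(A^{-j}\cdot)$ is a smooth function of polynomial growth, as it has compactly supported Fourier transform (Paley--Wiener--Schwartz), while $h$ is Schwartz. Hence the series converges absolutely when tested against $\S$ functions.

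\textbf{Step 2: the reproducing formula.} Take $f\in\S'$ representing an element of $\S_\infty'$, and put $\tilde\varphi(x)=\overline{\varphi(-x)}$. Condition (\ref{eq varphi psi =1}) says $\sum_j\overline{\F\varphi_j}\,\F\psi_j=1$ on $\rn\setminus\{0\}$, with $\F\varphi_j(\xi)=\F\varphi((A^\ast)^{-j}\xi)$; here we reindex $j\to -j$ consistently. Apply Step 1 with $g=f*\tilde\varphi_j$ and $h=\psi_j$; both have Fourier support in $(A^\ast)^j[-\pi,\pi]^n$. Since $f*\tilde\varphi_j(A^{-j}k)=\langle f,\varphi_j(\cdot-A^{-j}k)\rangle$, the normalization $\varphi_Q=|Q|^{1/2}\varphi_j(\cdot-x_Q)$ with $|Q|=|\det A|^{-j}$ gives
\begin{equation*}
f*\tilde\varphi_j*\psi_j=\sum_{Q\in\Q_j}\langle f,\varphi_Q\rangle\psi_Q .
\end{equation*}

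It remains to show that $\sum_j f*\tilde\varphi_j*\psi_j$ converges to $f$ modulo polynomials. For $\phi\in\S_\infty$, the partial sums $\sum_{|j|\le k}\overline{\F\varphi_j}\F\psi_j\,\F\phi$ converge to $\F\phi$ in $\S$. This works because $\F\phi$ vanishes to infinite order at $0$ and decays rapidly, and because (\ref{eq varphi supp}) makes the sum locally finite away from $0$ with bounded overlap. The expansiveness of $A$ and Lemma \ref{lemma rho A and |x|} control the scales $(A^\ast)^j$ in both the large-$j$ and small-$j$ tails. This gives convergence in $\S_\infty'$.

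\textbf{Step 3: lifting to $\S'$.} The statement about polynomials $P_k$ and $P$ follows from the standard fact that convergence in $\S_\infty'$ lifts to convergence in $\S'$ after subtracting suitable polynomials, since $\S_\infty'\cong\S'/\mathcal P$. This is the argument of Frazier--Jawerth and Bownik, which we quote.

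\textbf{Main obstacle.} The delicate point is the $\S$-convergence of the multiplier partial sums near the origin and at infinity. This requires quantitative bounds on $\|(A^\ast)^{j}\|$ for $j\to\pm\infty$, which Lemma \ref{lemma rho A and |x|} supplies.
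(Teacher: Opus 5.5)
The paper does not prove this lemma; it only quotes it from Bownik--Ho. Your route is the standard Frazier--Jawerth argument that Bownik--Ho follow: dilate to $j=0$, periodize and use Poisson summation, then apply the sampling identity scale by scale to $f=\sum_j f*\tilde\varphi_j*\psi_j$. The problem is the key geometric claim in Step 1, which is false as written.

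In the normalization $e^{-2\pi i x\cdot\xi}$ that you fix, Poisson summation periodizes $\F G$ over $\mathbb Z^n$. Its fundamental cell is $[-1/2,1/2]^n$, and $[-\pi,\pi]^n\not\subset(-1/2,1/2)^n$. No ``$2\pi$ bookkeeping'' repairs this, because in that normalization the identity itself fails. Take $n=1$, $A=2$, $j=0$, let $\F g$ be a bump supported in $[0.9,1.1]$, and let $\F h\in C_c^\infty((0.4,2.6))$ with $\F h\equiv1$ on $[1/2,5/2]$. Then the Fourier transform of $\sum_k g(k)h(\cdot-k)$ contains $\F g(\xi-1)\F h(\xi)\neq0$ near $\xi=2$, while $\F(g*h)=\F g\,\F h$ vanishes there.

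The lemma holds for $\F f(\xi)=\int f(x)e^{-ix\cdot\xi}\,\d x$, which is the Frazier--Jawerth and Bownik--Ho convention; the convention displayed in the paper is inconsistent with the lemma as stated. In that normalization the dual lattice of $A^{-j}\mathbb Z^n$ is $2\pi(A^\ast)^j\mathbb Z^n$. After your dilation you periodize over $2\pi\mathbb Z^n$, and $[-\pi,\pi]^n$ is a closed fundamental cell.

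Even then, your assertion that the supports meet only the central translate is wrong. The translate $[-\pi,\pi]^n+2\pi m$ with $|m|_\infty=1$ shares a face with $[-\pi,\pi]^n$. You need one more observation: $\F H$ is smooth and vanishes off the cube, so it vanishes to infinite order on $\partial[-\pi,\pi]^n$. Hence it annihilates each finite-order compactly supported distribution $\F G(\cdot-2\pi m)$ with $m\neq0$. Only then does the periodized product reduce to $\F G\,\F H$.

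The rest is sound in outline, but Step 3 is not a formal consequence of the algebraic identification $\S_\infty'\cong\S'/\mathcal P$ and needs an argument. There are two ways to supply it.

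\emph{Functional-analytic route.} Let $u_k:=\sum_{|j|\le k}f*\tilde\varphi_j*\psi_j-f$. Banach--Steinhaus on $\S_\infty$ gives $|\langle u_k,\phi\rangle|\le C\|\phi\|_{\S_N}$ for $\phi\in\S_\infty$, uniformly in $k$. Hahn--Banach extends these functionals to $\S$ with the same bound, and the difference from $u_k$ is a polynomial. Weak-$\ast$ compactness and metrizability of that equicontinuous set, whose cluster points here are polynomials, then produce $P_k\in\mathcal P$ with $u_k-P_k\to0$ in $\S'$.

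\emph{Constructive route (Frazier--Jawerth, Bownik).} The high-frequency part $\sum_{j\ge0}$ converges in $\S'$ with no correction. The low-frequency tail has symbol $\Theta((A^\ast)^k\cdot)$, with $\Theta\in C_c^\infty$ and $\Theta=1$ near $0$. From it you subtract the polynomial coming from the Taylor expansion of the test function at the origin, to an order exceeding that of $\F f$. The convergence then follows from balancing $\|(A^\ast)^k\|\lesssim\lambda_+^k$ against the support size $\lesssim\lambda_-^{-k}$.

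By contrast, the $\S$-convergence of the multipliers, which you single out as the main obstacle, is routine.
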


\subsection{Hardy-Littlewood maximal operator}
For a function $f\in L^1_{\operatorname{loc}}$, the Hardy-Littlewood maximal operator $\M_{\rho_A}$ is defined by 
\begin{equation*}
	\M_{\rho_A} f (x) = \sup_{x\in B \in \B} \frac{1}{|B|}  \int_B |f(y) | \d y.
\end{equation*}



The following estimate is contained in the proof of \cite[Theorem 3.6]{Bo03}.

\begin{lemma} \label{lemma a >1 less M}
	For any expansive matrix $A$, any $j\in \mathbb Z$, $f\in L_{\operatorname{loc}} ^1$  and $ x \in \rn$, if $a>1$, then for $x\in \rn $,
	\begin{equation*}
		\int_\rn \frac{ |\det A|^j   | f(y)| }{ (1 + \rho_A  ( A^j (x-y) ) ) ^a } \d y \lesssim \M_{\rho_A} f (x).
	\end{equation*}
\end{lemma}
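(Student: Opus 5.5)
We prove the estimate of Lemma \ref{lemma a >1 less M} by a dyadic decomposition of $\rn$ into $\rho_A$-annuli centred at $x$ at the scale $|\det A|^{-j}$. For $y\in\rn$ we use property (ii) of Definition \ref{def rho_A}, $\rho_A(A^j(x-y)) = |\det A|^{j}\rho_A(x-y)$, so the weight equals
\begin{equation*}
	\frac{|\det A|^j}{(1+|\det A|^j\rho_A(x-y))^a}.
\end{equation*}
We split the integral into the region $E_0 := \{y: \rho_A(x-y) < |\det A|^{-j}\}$ and the annuli
\begin{equation*}
	E_i := \{y: |\det A|^{-j+i-1}\le \rho_A(x-y) < |\det A|^{-j+i}\}, \qquad i\in\mathbb N.
\end{equation*}
These cover $\rn$, since $\rho_A(x-y)$ is finite.

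On $E_0$ the denominator is at least $1$. Hence that piece is bounded by
\begin{equation*}
	|\det A|^j\int_{B_{\rho_A}(x,|\det A|^{-j})}|f| .
\end{equation*}
By the normalization $|B_{\rho_A}(x,|\det A|^{k})| = |\det A|^{k}$, the ball has measure $|\det A|^{-j}$, so this piece is at most $\M_{\rho_A}f(x)$.

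On $E_i$ we have $|\det A|^j\rho_A(x-y)\ge |\det A|^{i-1}$, so the weight is at most $|\det A|^{j}|\det A|^{-(i-1)a}$. Since $E_i\subset B_{\rho_A}(x,|\det A|^{-j+i})$, and this ball has measure $|\det A|^{-j+i}$ and contains $x$,
\begin{equation*}
	\int_{E_i}\frac{|\det A|^j|f(y)|}{(1+\rho_A(A^j(x-y)))^a}\d y \le |\det A|^{a}|\det A|^{-i(a-1)}\,\M_{\rho_A}f(x).
\end{equation*}

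Summing over $i\ge 1$ gives a geometric series $\sum_i |\det A|^{-i(a-1)}$. It converges precisely because $a>1$ and $|\det A|>1$; the latter holds because $A$ is expansive. This yields the bound with a constant depending only on $a$ and $A$.

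The only delicate points are bookkeeping. First, the ball $B_{\rho_A}(x,r)$ in $\M_{\rho_A}$ is centred at $x$, which is allowed since $x\in B$. Second, any other quasi-norm is equivalent to the normalized one by \cite[Lemma 2.4]{Bo03}. This changes only the constants via the comparison $|\{\rho_A<r\}|\approx r$ in (\ref{eq  rho A B approx r}). So I expect no real obstacle beyond verifying that the homogeneity (ii) and the volume normalization fit together as above.
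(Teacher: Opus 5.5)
Your proof is correct. The annular decomposition $\{|\det A|^{-j+i-1}\le\rho_A(x-y)<|\det A|^{-j+i}\}$, the bound $|\det A|^{a}|\det A|^{-i(a-1)}\M_{\rho_A}f(x)$ on each piece, and the geometric summation using $a>1$ and $|\det A|>1$ all check out. The paper gives no proof of its own and refers to the proof of \cite[Theorem 3.6]{Bo03}, which as far as I recall is this standard summation over dilated $\rho_A$-annuli, so your argument is essentially the intended one.
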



\begin{lemma}[Theorem 2.4, \cite{BH06}]  \label{lemma M weak 1,1 strong p,p}
	Let $A$ be an expansive matrix. Then $\M_{\rho_A}$ is of weak type (1,1) on $L^1$ and is bounded on $L^p$ for $1<p \le \infty$.
\end{lemma}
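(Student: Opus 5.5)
This follows the pattern of the Euclidean case: I will prove the weak type $(1,1)$ bound with a Vitali-type covering lemma for $\rho_A$-balls, then interpolate. The case $p=\infty$ is immediate, since $\M_{\rho_A} f(x)\le \|f\|_{L^\infty}$ for every $x$ (averages never exceed the essential supremum).

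For the weak $(1,1)$ estimate, I fix $\lambda>0$ and $f\in L^1$, and set $E_\lambda=\{x:\M_{\rho_A}f(x)>\lambda\}$. This set is open, because the balls in the definition are open, and $\rho_A$-balls are open: this holds for the convex modification of $\rho_A$ we assume, or one can pass to an equivalent continuous quasi-norm. Let $K\subset E_\lambda$ be compact. Each $x\in K$ lies in some $B_x\in\B$ with $\int_{B_x}|f|>\lambda|B_x|$. I extract a finite subcover and run the greedy Vitali selection: repeatedly pick the ball of largest radius among those disjoint from the balls already chosen. This yields disjoint balls $B_1,\dots,B_m$ such that every ball of the cover meets some $B_i$ of at least its own radius.

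The key geometric step, which is the only real obstacle, is an enlargement property. If $B_{\rho_A}(x,r)\cap B_{\rho_A}(y,s)\neq\emptyset$ with $r\le s$, then $B_{\rho_A}(x,r)\subset B_{\rho_A}(y,\kappa s)$ for a constant $\kappa$ depending only on the quasi-triangle constant $C$ in Definition \ref{def rho_A}(iii). To see this, I need the symmetry $\rho_A(-x)=\rho_A(x)$, which holds for (\ref{eq rho_A baisic}) since $B(0,1)$ is symmetric. Applying the quasi-triangle inequality twice then gives $\kappa=C+2C^2$ or similar. Consequently $K\subset\bigcup_i \kappa B_i$. By (\ref{eq  rho A B approx r}) we have $|\kappa B_i|\lesssim|B_i|$, with a constant depending only on $\kappa$ and $A$. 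Hence
\[
|K|\lesssim \sum_i|B_i|\le \lambda^{-1}\sum_i\int_{B_i}|f|\le \lambda^{-1}\|f\|_{L^1},
\]
and inner regularity gives the same bound for $|E_\lambda|$. Measurability of $\M_{\rho_A}f$ follows from the openness of $E_\lambda$.

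For $1<p<\infty$, $\M_{\rho_A}$ is sublinear, of weak type $(1,1)$, and bounded on $L^\infty$, so the Marcinkiewicz interpolation theorem gives strong type $(p,p)$. Concretely, split $f=f\chi_{\{|f|>\lambda/2\}}+f\chi_{\{|f|\le\lambda/2\}}$. Then $\{\M_{\rho_A}f>\lambda\}\subset\{\M_{\rho_A}(f\chi_{\{|f|>\lambda/2\}})>\lambda/2\}$, and integrating $p\lambda^{p-1}|\cdot|\,\d\lambda$ with Fubini gives $\|\M_{\rho_A}f\|_{L^p}\lesssim\|f\|_{L^p}$. Alternatively, one can simply note that $(\rn,\rho_A,\d x)$ is a space of homogeneous type by (\ref{eq  rho A B approx r}) and the quasi-triangle inequality, and invoke the Coifman–Weiss maximal theorem.
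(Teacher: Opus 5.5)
Your proof is correct. The greedy Vitali selection, combined with the engulfing property $B_{\rho_A}(x,r)\subset B_{\rho_A}(y,\kappa s)$ (which follows from the symmetry of $\rho_A$ and the quasi-triangle inequality) and with $|\kappa B|\lesssim|B|$, yields the weak $(1,1)$ bound; Marcinkiewicz interpolation with the trivial $L^\infty$ estimate then gives $1<p<\infty$.

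The paper gives no proof of its own and simply cites Bownik--Ho. That result follows from $(\rn,\rho_A,\d x)$ being a space of homogeneous type, which is exactly the covering-plus-interpolation argument you write out (and the Coifman--Weiss route you mention at the end).
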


\begin{lemma}[Theorem 2.5, \cite{BH06}] \label{lemma M rho A FS}
	Let $A$ be an expansive matrix. 
	Let $1<p <\infty$ and $ 1<q\le \infty$. Then 
	\begin{equation*}
		\left\|  \left( \sum_{j \in \mathbb Z} ^\infty ( \M_{\rho_A} f_j )^q   \right) ^{1/q}   \right\|_{L^p }   \lesssim  \left\|  \left( \sum_{j \in \mathbb Z} ^\infty  |f_j| ^q   \right) ^{1/q}   \right\|_{L^p } .
	\end{equation*}
\end{lemma}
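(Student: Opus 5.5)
The statement is the vector-valued Fefferman--Stein inequality for $\M_{\rho_A}$. I would prove it by the classical scheme: first the diagonal cases $q=p$ and $q=\infty$, then interpolation for $p<q<\infty$, then duality for $1<q<p$. The underlying ingredients are Lemma \ref{lemma M weak 1,1 strong p,p} and the fact that $(\rn,\rho_A,\d x)$ is a space of homogeneous type. The ball doubling it provides follows from \eqref{eq  rho A B approx r}, and the quasi-triangle inequality is Definition \ref{def rho_A}(iii).

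\textbf{The endpoint cases $q=p$ and $q=\infty$.}
\begin{itemize}
\item For $q=p$, the inequality is immediate. By Fubini and Lemma \ref{lemma M weak 1,1 strong p,p},
\begin{equation*}
\Big\|\Big(\sum_j (\M_{\rho_A} f_j)^p\Big)^{1/p}\Big\|_{L^p}^p=\sum_j\|\M_{\rho_A} f_j\|_{L^p}^p\lesssim \sum_j\|f_j\|_{L^p}^p .
\end{equation*}
\item For $q=\infty$, use the pointwise bound $\sup_j \M_{\rho_A} f_j\le \M_{\rho_A}(\sup_j|f_j|)$. Then apply Lemma \ref{lemma M weak 1,1 strong p,p} again.
\end{itemize}

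\textbf{The range $p<q<\infty$.} I would treat $f\mapsto\{\M_{\rho_A} f_j\}_j$ as a sublinear operator from $\ell^q$-valued functions to $\ell^q$-valued functions. The key step is an $\ell^q$-valued weak type $(1,1)$ estimate. I would get it by a Calder\'on--Zygmund decomposition of $F=(\sum_j|f_j|^q)^{1/q}$ at height $\lambda$ with respect to $\rho_A$-balls; the Whitney-type covering comes from the Vitali lemma in the homogeneous-type setting.
\begin{itemize}
\item Split $f_j=g_j+b_j$, where $g_j$ equals $f_j$ off the bad set and the average of $f_j$ on each bad ball $B_i$ inside it.
\item Minkowski's inequality gives $(\sum_j|g_j|^q)^{1/q}\lesssim\lambda$. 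Then the $L^q(\ell^q)$ bound from the case $q=p$, with exponent $q$, controls the good part by $\lambda^{-q}\|G\|_q^q\lesssim\lambda^{-1}\|F\|_1$.
\item For the bad part, enlarge each $B_i$ to a dilate $B_i^*$. For $x\notin\bigcup B_i^*$, any $\rho_A$-ball $B$ through $x$ that meets $B_i$ has $|B|\gtrsim|B_i|$, by the quasi-triangle inequality. This yields the pointwise bound $\M_{\rho_A} b_j(x)\lesssim \M_{\rho_A}\big(\sum_i |B_i|^{-1}\|b_j\|_{L^1(B_i)}\chi_{B_i}\big)(x)$.
\end{itemize}
Summing over $j$ in $\ell^q$ and using Minkowski reduces the bad part to a scalar maximal function of $\lambda\chi_{\bigcup B_i}$. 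The weak $(1,1)$ bound of Lemma \ref{lemma M weak 1,1 strong p,p} then finishes it. Marcinkiewicz interpolation between weak $(1,1)$ and strong $(q,q)$ gives all $1<p<q$.

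\textbf{The range $1<q<p$.} Here I would use duality with the weighted inequality
\begin{equation*}
\int \M_{\rho_A} f\, w\le C\int |f|\,\M_{\rho_A} w,
\end{equation*}
which follows from the weak $(1,1)$ argument applied with measure $w\,\d x$. Write $\|\sum_j(\M_{\rho_A} f_j)^q\|_{L^{p/q}}$ by testing against $w\in L^{(p/q)'}$. Apply the weighted inequality to each $|f_j|^q$, using $(\M_{\rho_A} f_j)^q\le \M_{\rho_A}(|f_j|^q)$ since $q>1$. Then Hölder's inequality and the $L^{(p/q)'}$-boundedness of $\M_{\rho_A}$ close the estimate.

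The main obstacle is the weak $(1,1)$ vector-valued estimate, specifically the geometric step for balls outside the enlarged bad balls. I would handle it with the engulfing property $B\cap B_i\ne\emptyset$, $r_B\ge r_{B_i}\Rightarrow B_i\subset K B$, where $K$ comes from the constant $C$ in Definition \ref{def rho_A}(iii). Alternatively, as the theorem is cited from \cite{BH06}, one can simply invoke the general Fefferman--Stein theorem on spaces of homogeneous type.
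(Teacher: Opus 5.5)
The paper gives no proof of this lemma. It is quoted from Theorem 2.5 of Bownik--Ho, which is the Fefferman--Stein inequality on the space of homogeneous type $(\rn,\rho_A,\d x)$, so your fallback of invoking that theorem is exactly the paper's treatment. Your overall scheme is also the classical one: the cases $q=p$ and $q=\infty$, weak type plus interpolation for $p<q$, and duality for $q<p$. However, two steps of your self-contained argument fail as written.

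\textbf{The bad part of the weak $(1,1)$ estimate.} Off $\Omega^*$ you have $\M_{\rho_A} b_j\lesssim \M_{\rho_A}\tilde b_j$, with $\tilde b_j=\sum_i\big(|B_i|^{-1}\int_{B_i}|b_j|\big)\chi_{B_i}$. Minkowski only yields $(\sum_j\tilde b_j^q)^{1/q}\lesssim\lambda\chi_{\Omega}$. It does not yield $(\sum_j(\M_{\rho_A}\tilde b_j)^q)^{1/q}\lesssim\M_{\rho_A}(\lambda\chi_\Omega)$, because each $j$ selects its own ball. Moving the $\ell^q$ sum inside the maximal operator is precisely what the lemma asserts. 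For instance, on $\R$ with $A=2$, take $\tilde b_j=\lambda\chi_{[2^j,(1+\epsilon)2^j]}$ for $1\le j\le N$. At $x=0$ the left side is $\gtrsim\lambda\epsilon N^{1/q}$, while the right side is $\lesssim\lambda\epsilon$. The repair is to treat the bad part like the good part. Apply the already established $L^q(\ell^q)$ bound to $\{\tilde b_j\}_j$ and use Chebyshev. This bounds the exceptional set off $\Omega^*$ by $\lambda^{-q}\|(\sum_j\tilde b_j^q)^{1/q}\|_{L^q}^q\lesssim|\Omega|\lesssim\lambda^{-1}\|F\|_{L^1}$. A smaller point: a Vitali covering gives disjoint balls, not a partition of $\Omega$. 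To define $g_j$ by averages and to count overlaps in your pointwise bound, you need disjoint Whitney pieces $E_i$ with $B_i\subset E_i\subset KB_i$.

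\textbf{The duality step.} The inequality $\int\M_{\rho_A}f\,w\le C\int|f|\,\M_{\rho_A}w$ is false: for $w\equiv1$ it would make $\M_{\rho_A}$ bounded on $L^1$. Running the weak-type argument with the measure $w\,\d x$ gives only $w(\{\M_{\rho_A}f>\lambda\})\lesssim\lambda^{-1}\int|f|\,\M_{\rho_A}w$. So your route through $(\M_{\rho_A}f_j)^q\le\M_{\rho_A}(|f_j|^q)$ and an $L^1$ weighted bound breaks down. Instead, interpolate that weak-type bound with the trivial estimate $\|\M_{\rho_A}f\|_{L^\infty(w)}\le\|f\|_{L^\infty(\M_{\rho_A}w)}$. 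This gives $\int(\M_{\rho_A}f)^q\,w\le C_q\int|f|^q\,\M_{\rho_A}w$ for $q>1$, which you then apply directly to each $f_j$. Your H\"older step with $\|\M_{\rho_A}w\|_{L^{(p/q)'}}\lesssim\|w\|_{L^{(p/q)'}}$ then completes the case $1<q<p$.
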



%
%
Let $j\in \mathbb Z$.
Then for  $y \in A ^{-j} [0,1)^n$,  $Q, R \in \Q_j$, and any $x\in Q$, we have
\begin{equation} \label{eq baisic geo}
	1 +   |\det A|^j \rho_A (x - x_R )   \approx 	1 +   |\det A|^j  \rho_A (x_Q - x_R )   \approx 	1 +   |\det A|^j  \rho_A (x_Q - x_R + y ) .
\end{equation}
 We will often use (\ref{eq baisic geo}) without explicit reference.

\begin{definition}
	Given a sequence $ u = \{  u_Q \}_{Q\in \Q} $, $ 0< r<\infty $ and $\lambda > 0 $, define the sequence $ u ^\ast_{ r, \lambda}  = \{ (u ^\ast_{ r, \lambda}  )_Q \}_{ Q \in \Q} $ by 
	\begin{equation*}
		(u ^\ast_{ r, \lambda}  )_Q :=  \left( \sum_{P\in \Q, |P|=|Q| }  \frac{|u_P |^r   }{ (1+ |Q|^{-1} \rho_A (x_Q -x_P )  ) ^\lambda }   \right)^{1/r} .
	\end{equation*}
\end{definition}
Clearly, we always have $ |u_Q | \le  (u ^\ast_{ r, \lambda}  )_Q $  for any $Q \in \Q$.

\begin{lemma}[Lemma 6.2, \cite{BH06}] \label{lemma seq HL pointwise}
	Suppose $0 <a \le r <\infty$, $\lambda > r/a$, and $i,j \in \mathbb Z$. Then for any sequence $s = \{ s_P \}_{P \in \Q}$ and for each cube $Q \in \Q$ with $|Q| =  |\det A|^{-j}$ and each $x \in Q$, we have 
	\begin{equation*}
		\left( \sum_{P  \in \Q_i}  \frac{|s_P|^r}{ \left ( 1 +  \frac{ \rho_A (x_Q - x_P)}{  |P| \vee |Q|  }  \right ) ^\lambda }   \right) ^{1/r} \le C |\det A|^{ (i-j)_+  /a }   \left(  \M_{\rho_A}   \left( \sum_{ P \in \Q_i}  |s_P|^a \chi_P  \right) (x)    \right)^{1/a}
	\end{equation*}
	where $C$ depends only on $ \lambda - r/a $.	
	Moreover, if $i=j$, then 
	\begin{equation*}
		\sum_{Q \in \Q_j}   (s ^\ast_{r, \lambda } )_Q \chi_Q   \le C    \left(  \M_{\rho_A}   \left( \sum_{ Q \in \Q_i}  |s_Q| \chi_Q  \right) ^a     \right)^{1/a} .
	\end{equation*}	
\end{lemma}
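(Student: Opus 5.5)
We prove the first estimate by a dyadic (in the $\rho_A$-sense) annular decomposition combined with the embedding $\ell^a\hookrightarrow\ell^r$; the second estimate is then the special case $i=j$.

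\textbf{Step 1: pass to exponent $a$.} Fix $x\in Q$ with $|Q|=|\det A|^{-j}$, and write $m:=|P|\vee|Q|=|\det A|^{-i}\vee|\det A|^{-j}$ for $P\in\Q_i$. Since $a\le r$, the $\ell^a$ quasi-norm dominates the $\ell^r$ quasi-norm. Hence
\begin{equation*}
\left(\sum_{P\in\Q_i}\frac{|s_P|^r}{(1+\rho_A(x_Q-x_P)/m)^{\lambda}}\right)^{1/r}
\le\left(\sum_{P\in\Q_i}\frac{|s_P|^a}{(1+\rho_A(x_Q-x_P)/m)^{\lambda a/r}}\right)^{1/a},
\end{equation*}
so it suffices to bound the right-hand sum. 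The hypothesis $\lambda>r/a$ becomes $\lambda a/r>1$.

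\textbf{Step 2: annular splitting.} Split $\Q_i$ into the families
\begin{equation*}
\mathcal E_0=\{P\in\Q_i:\rho_A(x_Q-x_P)<m\},\qquad
\mathcal E_k=\{P\in\Q_i:|\det A|^{k-1}m\le\rho_A(x_Q-x_P)<|\det A|^{k}m\},\quad k\ge1.
\end{equation*}
On $\mathcal E_k$ the weight is at most $|\det A|^{-(k-1)\lambda a/r}$.

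\textbf{Step 3: geometric containment.} This is the main point to check. By Lemma \ref{lemma basic cube ball cover}(i), every $P$ is contained in a $\rho_A$-ball around $c_P$ of radius $\lesssim|P|$, and $\rho_A(x_P-c_P)\lesssim|P|$. Likewise $\rho_A(x-x_Q)\lesssim|Q|$ because $x\in Q$. For $y\in P$ with $P\in\mathcal E_k$, the quasi-triangle inequality (iii) of Definition \ref{def rho_A}, applied a bounded number of times, gives
\begin{equation*}
\rho_A(x-y)\lesssim|Q|+|\det A|^k m+|P|\lesssim|\det A|^k m.
\end{equation*}
So every $P\in\mathcal E_k$ lies in a ball $B_k\ni x$ with $|B_k|\approx|\det A|^k m$, with constants independent of $k,i,j$.

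\textbf{Step 4: compare with the maximal function.} Since the cubes in $\Q_i$ are disjoint and all have measure $|P|=|\det A|^{-i}$,
\begin{equation*}
\sum_{P\in\mathcal E_k}|s_P|^a=|P|^{-1}\int_{B_k}\sum_{P\in\mathcal E_k}|s_P|^a\chi_P
\le\frac{|B_k|}{|P|}\,\M_{\rho_A}\Big(\sum_{P\in\Q_i}|s_P|^a\chi_P\Big)(x).
\end{equation*}
The prefactor satisfies $|B_k|/|P|\lesssim|\det A|^k\,m/|P|=|\det A|^{k}|\det A|^{(i-j)_+}$.

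\textbf{Step 5: sum over $k$.} Summing over $k\ge0$ gives the factor
\begin{equation*}
\sum_{k\ge0}|\det A|^{k(1-\lambda a/r)},
\end{equation*}
which is finite because $\lambda a/r>1$, and depends only on $\lambda-r/a$ (for fixed $r$ and $a$). Taking the $1/a$ power yields the first estimate, with the factor $|\det A|^{(i-j)_+/a}$.

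\textbf{Step 6: the case $i=j$.} At a point $x\in Q\in\Q_j$, the sum $\sum_{Q'\in\Q_j}(s^\ast_{r,\lambda})_{Q'}\chi_{Q'}(x)$ has exactly one nonzero term, namely $(s^\ast_{r,\lambda})_Q$. Since $|P|=|Q|$ for $P\in\Q_j$, this term equals the left-hand side of the first estimate with $i=j$. Because the cubes in $\Q_j$ are disjoint,
\begin{equation*}
\sum_{Q\in\Q_j}|s_Q|^a\chi_Q=\Big(\sum_{Q\in\Q_j}|s_Q|\chi_Q\Big)^a.
\end{equation*}
Substituting this into the first estimate, whose exponential factor equals $1$ when $i=j$, gives the second claim.
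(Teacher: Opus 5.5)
Your argument is correct. It is essentially the standard Frazier--Jawerth annular-decomposition proof that Bownik and Ho give for this lemma; the paper only cites it from their work and supplies no proof of its own.

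The one cosmetic difference is where you apply $\ell^a\hookrightarrow\ell^r$. You apply it once to the whole weighted sum, which turns the decay exponent into $\lambda a/r>1$ and gives the series $\sum_k|\det A|^{-k(\lambda a/r-1)}$. The usual proof instead bounds the weight on each annulus first and applies the embedding annulus by annulus, giving $\sum_k|\det A|^{-k(\lambda-r/a)}$. Both series converge under exactly the hypothesis $\lambda>r/a$.
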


\section{Matrix weights associated with an expansive matrix} \label{sec matrix weight}
For any $A \in M_m (\mathbb C)$, let 
\begin{equation*}
	\| A\|:= \sup_{\vec z \in \mathbb C^m , |\vec z|= 1} |A \vec z|
\end{equation*}
where $ |\vec z| = ( \sum_{i=1}^m ||z_i|^2 )^{1/2}  $.
The matrix $ A \in M_m (\mathbb C)$ is called a Hermitian matrix if $A^\ast  =A$  and called a unitary matrix if $A A^\ast = I_m$, where $I_m$  is the identity matrix in $ M_m (\mathbb C)$. The diagonal matrix $\operatorname{diag} ( \lambda_1, \ldots, \lambda_m )  \in  M_m (\mathbb C)$ is defined by 
\begin{equation*}
	\operatorname{diag} ( \lambda_1, \ldots, \lambda_m )  : = \begin{pmatrix}
		\lambda_1 & 0 & \dots & 0 & 0\\
		0	 & \lambda_2 & \dots & 0 & 0\\
		\vdots	 & 	\vdots & \ddots & 	\vdots& 	\vdots\\
		0	 & 	0 & \dots & 		\lambda_{m-1} & 	0\\
	0	 & 	0 & \dots & 0& 		\lambda_m
	\end{pmatrix} .
\end{equation*}

A matrix $ A \in  M_m (\mathbb C) $ is said to be positive definite if,  for any $\vec z \in \mathbb C^m \backslash \{ \vec 0\}$, $\vec z ^\ast A\vec z >0 $, and $A$  is called nonnegative definite if, $\vec z ^\ast A\vec z \ge 0 $ for any $\vec z \in \mathbb C^m$. From \cite[Theorem 4.1.4]{HJ13}, the  nonnegative definite matrix is always Hermitian.

Let $B \in M_m (\mathbb C)$ be a positive definite matrix and have eigenvalues $ \{ \lambda_i \}_{i=1}^m$. From \cite[Theorem 2.5.6 (c)]{HJ13}, there exists a unitary matrix $U \in M_m (\mathbb C)$ such that 
\begin{equation*}
	B = U \operatorname{diag} (\lambda_1, \ldots, \lambda_m ) U^\ast .
\end{equation*} 
By \cite[Theorem 4.1.8]{HJ13}, for each $i\in \{ 1, \ldots, m\}$, the eigenvalue $ 0<\lambda_i <\infty $. Hence, for any $\alpha\in \mathbb R$, we can define
\begin{equation*}
	B^\alpha :=  U \operatorname{diag} (\lambda_1 ^\alpha, \ldots, \lambda_m  ^\alpha ) U^\ast .
\end{equation*}

Then we recall the definition of matrix weights.

\begin{definition} \label{def matrix weight}
	A matrix-valued function $W :\rn \to M_m (\mathbb C)$ is called a matrix weight if the following holds:
	
	(i) for any $x\in \rn$, $W(x)$  is nonnegative definite;
	
	(ii) for a.e. $x\in \rn$, $W(x)$ is invertible;
	
	(iii) the entries of $W$ are all locally integrable.
\end{definition}

\begin{lemma}[Lemma 2.3, \cite{BHYY25}] \label{lemma matrix norm AB=BA}
 Let $ A, B \in M_m (\mathbb C) $ be two nonnegative definite matrices. Then 
$\|AB\| =\|BA\|$.
\end{lemma}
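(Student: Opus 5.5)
The argument rests on identifying $\|M\|$ with $\|M^\ast\|$ for an arbitrary matrix $M\in M_m(\mathbb C)$, together with the fact that nonnegative definite matrices are Hermitian (\cite[Theorem 4.1.4]{HJ13}).

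\emph{Step 1.} For any $M\in M_m(\mathbb C)$ we show $\|M\|=\|M^\ast\|$. For unit vectors $\vec z,\vec w\in\mathbb C^m$, the Cauchy--Schwarz inequality gives $|\vec w^\ast M\vec z|\le |M\vec z|$, and choosing $\vec w=M\vec z/|M\vec z|$ (when $M\vec z\neq \vec 0$) gives equality. Hence
\begin{equation*}
\|M\|=\sup_{|\vec z|=|\vec w|=1}|\vec w^\ast M\vec z| .
\end{equation*}
Since $|\vec w^\ast M\vec z|=|\overline{\vec w^\ast M\vec z}|=|\vec z^\ast M^\ast\vec w|$, the right-hand side is symmetric under swapping the roles of $\vec z$ and $\vec w$ and replacing $M$ by $M^\ast$. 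Therefore $\|M\|=\|M^\ast\|$.

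\emph{Step 2.} Since $A$ and $B$ are nonnegative definite, they are Hermitian, so $A^\ast=A$ and $B^\ast=B$. Consequently
\begin{equation*}
(AB)^\ast=B^\ast A^\ast=BA .
\end{equation*}
Applying Step 1 with $M=AB$ gives $\|AB\|=\|(AB)^\ast\|=\|BA\|$, which is the claim.

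No step here is a real obstacle. The only point needing care is the duality formula in Step 1, in particular the degenerate case $M\vec z=\vec 0$. In that case $|M\vec z|=0$, so such $\vec z$ contribute nothing to either supremum and the formula still holds.

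Note that only the Hermitian property of $A$ and $B$ is used; positivity itself plays no role.
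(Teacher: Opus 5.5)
Your proof is correct and is the standard argument for this fact: $\|M\|=\|M^\ast\|$ together with $A^\ast=A$, $B^\ast=B$ gives $\|AB\|=\|(AB)^\ast\|=\|BA\|$. The paper itself gives no proof and only cites \cite{BHYY25}; your self-contained duality argument for $\|M\|=\|M^\ast\|$ is sound, and you are right that only the Hermitian property of $A$ and $B$ is used.
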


\subsection{Basic properties of matrix $\A_p$ weights}

\begin{definition}
	Let $ p \in (0,\infty)$. A matrix weight $W$ is called an $\A_p (\rn , \mathbb C ^m, A)$ matrix weight  associated with a expansive matrix $A$ if $W$ satisfies that, when $p \in (0,1]$,
	\begin{equation*}
		[W]_{\A_p (\rn , \mathbb C ^m, A) }:= \sup_{Q \in \Q} \esssup_{y\in Q} \fint_Q  \| W^{1/p} (x)  W^{-1/p} (y) \|^p \d x <\infty,
	\end{equation*}
or that, when $p \in(1,\infty)$,
	\begin{equation*}
	[W]_{\A_p (\rn , \mathbb C ^m, A) }:= \sup_{Q \in \Q} \fint_Q  \left(   \fint_Q  \| W^{1/p} (x)  W^{-1/p} (y) \|^{p'} \d y  \right) ^{p/ p'} \d x <\infty,
\end{equation*}
where $p' = p/ (p-1)$.
\end{definition}

\begin{remark}
	If we replace dilated cubes $\{Q\}_{Q\in \Q}$ by  $\rho_A$-balls $B$, then we obtain the equivalent class.
	For instance, for $ p \in (0,1]$, let
	\begin{equation*}
		[W]_{\A_p (\rn , \mathbb C ^m, A, \B) }   := \sup_{B \in \B} \esssup_{y\in B} \fint_B  \| W^{1/p} (x)  W^{-1/p} (y) \|^p \d x <\infty.
	\end{equation*}
By  Lemma \ref{lemma basic cube ball cover} (i), for any  $Q\in \Q$, 
\begin{equation*}
	\fint_Q  \| W^{1/p} (x)  W^{-1/p} (y) \|^p \d x \lesssim 	\fint_B \| W^{1/p} (x)  W^{-1/p} (y) \|^p \d x.
\end{equation*}
From this, we obtain $ [W]_{\A_p (\rn , \mathbb C ^m, A) } \lesssim [W]_{\A_p (\rn , \mathbb C ^m, A, \B) } $.
By  Lemma \ref{lemma basic cube ball cover} (ii), for any $\rho_A $ ball $B$, there exists at most $C_\Q$ dilated cube $Q$ with $\scale (Q) = \scale (B) $ such that $B \subset \bigcup_{i=1}^{C_\Q} Q_i$. Then 
\begin{equation*}
	\fint_B  \| W^{1/p} (x)  W^{-1/p} (y) \|^p \d x \le \sum_{i=1}^{C_\Q} \frac{1}{|B|} \int_{Q_i} \| W^{1/p} (x)  W^{-1/p} (y) \|^p \d x \lesssim  \sum_{i=1}^{C_\Q}  \fint_{Q_i} \| W^{1/p} (x)  W^{-1/p} (y) \|^p \d x .
\end{equation*}
From this, we obtain $[W]_{\A_p (\rn , \mathbb C ^m, A, \B) }  \lesssim  [W]_{\A_p (\rn , \mathbb C ^m, A) } $. The proof of  case $p \in (1,\infty)$ is similar.
\end{remark}

In what follows, if there exists no confusion, we denote $\A_p (\rn , \mathbb C ^m, A)$ simply by $\A_p$.

If $m=1$, matrix $\A_p (\rn , \mathbb C ^m, A)$ weights become the usual scalar $A_{p\vee 1}$ weight. 
We refer to \cite[Chapter 7]{Gra14} for the theory of scalar weights.

\begin{definition}
	Let  $p\in (0,\infty)$,  $W$ be a matrix weight, and  $E \subset \rn$  a bounded
	measurable set satisfying  $ |E | \in (0,\infty) $. The matrix  $A_E \in M_m (\mathbb C) $ is called a reducing 	operator of order $p$ for $W$ if $A_E$ is positive definite and, for any $\vec z \in \mathbb C^m$,
	\begin{equation} \label{eq reducing matrix}
		|A_E \vec z| \approx \left( \fint_E  |W^{1/p}  (x) \vec z|^p \d x \right)^{1/p},
	\end{equation}
where the positive equivalence constants depend only on $m$ and $p.$
\end{definition}
The existence of $A_E$ is guaranteed by \cite[Proposition 1.2]{Go03} and \cite[p. 1237]{FR04}.

\begin{lemma}[Lemma 2.10, \cite{BHYY25}] \label{lemma reduce replaced z by M}
	Let $p\in (0,\infty)$, $W$ be a matrix weight, and $E \subset \rn$ a bounded measurable set with $|E| \in (0,\infty)$. If $A_E$ is a reducing operator of order $p$ for $W$, then, for any matrix $M \in M_m (\mathbb C)$, 
	\begin{equation*}
		\| A_E M \| \approx \left( \fint_E  \|W^{1/p}  (x) M \|^p \d x \right)^{1/p}
	\end{equation*}
where the positive equivalence constants depend only on $m$ and $p.$
\end{lemma}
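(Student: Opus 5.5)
This is Lemma 2.10 of \cite{BHYY25}, and the plan is to deduce it directly from the defining property (\ref{eq reducing matrix}) of the reducing operator, applied column by column, using only that all norms on the finite-dimensional space $M_m(\mathbb C)$ are equivalent.

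Fix $M\in M_m(\mathbb C)$ and let $\vec e_1,\ldots,\vec e_m$ be the standard basis of $\mathbb C^m$. For any $B\in M_m(\mathbb C)$ we have the elementary comparison
\begin{equation*}
\max_{1\le i\le m}|B\vec e_i|\le \|B\|\le \sum_{i=1}^m |B\vec e_i| \le m\max_{1\le i\le m}|B\vec e_i| .
\end{equation*}
The upper bound follows by writing a unit vector $\vec z=\sum_i z_i\vec e_i$ and using $|z_i|\le 1$.

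Apply this with $B=A_EM$ and with $B=W^{1/p}(x)M$ for each $x$. For each column $M\vec e_i$, the property (\ref{eq reducing matrix}) with $\vec z=M\vec e_i$ gives
\begin{equation*}
|A_EM\vec e_i|\approx \left(\fint_E|W^{1/p}(x)M\vec e_i|^p\,\d x\right)^{1/p}.
\end{equation*}
Summing over $i$ then yields
\begin{equation*}
\|A_EM\|\approx\sum_{i=1}^m\left(\fint_E|W^{1/p}(x)M\vec e_i|^p\,\d x\right)^{1/p}.
\end{equation*}
Finally, compare the right-hand side with $\left(\fint_E\|W^{1/p}(x)M\|^p\,\d x\right)^{1/p}$.

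For the lower bound, each term satisfies $|W^{1/p}(x)M\vec e_i|\le\|W^{1/p}(x)M\|$. Hence the sum is at most $m$ times the target quantity.

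For the upper bound, use $\|W^{1/p}(x)M\|^p\le m^p\max_i|W^{1/p}(x)M\vec e_i|^p\le m^p\sum_i|W^{1/p}(x)M\vec e_i|^p$. Integrating, we get
\begin{equation*}
\left(\fint_E\|W^{1/p}M\|^p\right)^{1/p}\le m\left(\sum_i\fint_E|W^{1/p}M\vec e_i|^p\right)^{1/p}.
\end{equation*}
This is bounded by $C(m,p)\sum_i\left(\fint_E|W^{1/p}M\vec e_i|^p\right)^{1/p}$, via the comparison of $\ell^{p}$ and $\ell^1$ norms on $\mathbb R^m$. When $p<1$ that comparison costs a factor $m^{1/p-1}$; when $p\ge1$ the constant is $1$.

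All constants depend only on $m$ and $p$, as required. There is no real obstacle here. The only point needing care is keeping track of the $p<1$ quasi-norm constants, and those are harmless because only finitely many ($m$) terms appear.
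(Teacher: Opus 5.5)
Your argument is correct and is essentially the standard one. The paper gives no proof of its own and cites \cite{BHYY25}, where the lemma is obtained the same way: the column comparison $\|B\|\approx\sum_{i=1}^m|B\vec e_i|$ is combined with the defining property (\ref{eq reducing matrix}) at $\vec z=M\vec e_i$, and the finite sum is then exchanged with the $L^p$-average, with the same constant bookkeeping as yours (including the factor $m^{1/p-1}$ for $p<1$) and the same column device the paper uses in Lemmas \ref{lemma reverse holder matrix} and \ref{lemma reverse A_Q W}.
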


Using Lemma \ref{lemma reduce replaced z by M}, we obtain an equivalent characterization of $\A_p$-matrix weights for $p \in (0,1]$. The proof is similar to \cite[Proposition 2.11]{BHYY25}.
\begin{corollary}
	Let $p \in (0,1]$ and $A$ be a dilation. Then there exists a constant $C>0$, depending only on $m$  and $p$, such that for any matrix weight $W$,
	\begin{equation*}
		[W]_{\A_p } \le [W]_{\A_p}^\ast \le C [W]_{\A_p },
	\end{equation*}
where
\begin{equation*}
	 [W]_{\A_p}^\ast : = \sup_{Q \in \Q}  \fint_Q   \esssup_{y\in Q} \| W^{1/p} (x)  W^{-1/p} (y) \|^p \d x .
\end{equation*}
\end{corollary}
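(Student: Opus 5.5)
The first inequality $[W]_{\A_p} \le [W]_{\A_p}^\ast$ is elementary. For each fixed $Q\in\Q$ and a.e. $y\in Q$ we have
\begin{equation*}
\|W^{1/p}(x)W^{-1/p}(y)\|^p \le \esssup_{y'\in Q}\|W^{1/p}(x)W^{-1/p}(y')\|^p
\end{equation*}
for a.e. $x$. Averaging in $x$ over $Q$, then taking the essential supremum over $y\in Q$ and the supremum over $Q$, gives the claim. The only care needed is measurability of the inner essential supremum as a function of $x$. For this I would use the matrix identity in the next paragraph, which expresses it through a norm of $W^{1/p}(x)$ against a fixed matrix, so it is measurable.

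For the reverse inequality, fix $Q\in\Q$ and let $A_Q$ be a reducing operator of order $p$ for $W$ on $Q$. The key step is to bound $\esssup_{y\in Q}\|W^{1/p}(x)W^{-1/p}(y)\|$ for fixed $x$ by a single matrix norm of the form $\|W^{1/p}(x)A_Q^{-1}\|\cdot \esssup_{y\in Q}\|A_QW^{-1/p}(y)\|$. This follows by inserting $A_Q^{-1}A_Q$ and using submultiplicativity. Therefore
\begin{equation*}
\fint_Q \esssup_{y\in Q}\|W^{1/p}(x)W^{-1/p}(y)\|^p\,\d x \le \Big(\fint_Q \|W^{1/p}(x)A_Q^{-1}\|^p\,\d x\Big)\,\esssup_{y\in Q}\|A_QW^{-1/p}(y)\|^p.
\end{equation*}

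Now apply Lemma~\ref{lemma reduce replaced z by M} twice. First, with $M=A_Q^{-1}$, the $x$-average is $\approx \|A_QA_Q^{-1}\|^p=1$. Second, for a.e. $y\in Q$, with $M=W^{-1/p}(y)$ we get
\begin{equation*}
\|A_QW^{-1/p}(y)\|^p\approx \fint_Q\|W^{1/p}(x)W^{-1/p}(y)\|^p\,\d x \le [W]_{\A_p}.
\end{equation*}
Here $W^{-1/p}(y)$ exists for a.e. $y$ by Definition~\ref{def matrix weight}(ii). Taking the essential supremum over $y$ and the supremum over $Q$ yields $[W]_{\A_p}^\ast\le C[W]_{\A_p}$, where $C$ depends only on $m$ and $p$ through the constants in Lemma~\ref{lemma reduce replaced z by M}.

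The main technical point is handling the null sets. Lemma~\ref{lemma reduce replaced z by M} must be applied to the matrices $M=W^{-1/p}(y)$ for every $y$ outside a null set, but that is automatic since the lemma holds for any fixed matrix. Everything else is routine.
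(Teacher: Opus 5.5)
Your proof is correct and is the same argument the paper uses. The paper defers to \cite[Proposition 2.11]{BHYY25}: insert $A_Q^{-1}A_Q$, apply Lemma~\ref{lemma reduce replaced z by M} with $M=A_Q^{-1}$ to make the $x$-average $\approx 1$, and apply it with $M=W^{-1/p}(y)$ to recover $[W]_{\A_p}$.

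One small correction concerns measurability. The display in your second paragraph is only an upper bound, not an identity, so it does not show that $x\mapsto\esssup_{y\in Q}\|W^{1/p}(x)W^{-1/p}(y)\|$ is measurable. Argue instead that $N\mapsto\esssup_{y\in Q}\|N W^{-1/p}(y)\|$ is a $[0,\infty]$-valued seminorm on $M_m(\mathbb C)$, hence Borel, and compose it with the measurable map $x\mapsto W^{1/p}(x)$.
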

Similar to \cite[Proposition 2.12]{BHYY25}, \cite[Lemma 2.14, Corolary 2.16]{BHYY25}  and \cite[Lemma 5.4]{FR04} respectively, we have the following three lemmas.

\begin{lemma}
	Let $0 <p <q<\infty$. Then $\A_p \subset \A_q$. Moreover, there exists a constant $C>0$, depending only on $m,p,q$, such that for any matrix weight $W$,  $[W]_{\A_q}  \le C [W]_{\A_p}$.
\end{lemma}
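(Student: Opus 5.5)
The plan is to compare both quantities with reducing operators and then use the monotonicity of $L^r$-averages in $r$. The argument is the same as in \cite[Proposition 2.12]{BHYY25}; the anisotropic structure enters only through the family $\Q$ over which suprema are taken, so nothing specific to $A$ is needed. For a cube $Q\in\Q$, let $A_Q$ be a reducing operator of order $p$ for $W$, and let $A_Q^{(q)}$ be one of order $q$. The key reformulation, obtained from Lemma \ref{lemma reduce replaced z by M}, expresses each $\A$-constant through these operators.

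\emph{Case $0<p<q\le 1$.} By the Corollary, $[W]_{\A_p}\approx[W]^\ast_{\A_p}$, and Lemma \ref{lemma reduce replaced z by M} with $M=W^{-1/p}(y)$ gives
\begin{equation*}
[W]_{\A_p}\approx \sup_{Q\in\Q}\esssup_{y\in Q}\|A_Q W^{-1/p}(y)\|^p .
\end{equation*}
The same holds for $q$. The obstacle is that $W^{1/q}$ is not a rescaling of $W^{1/p}$ as a matrix, so $\|W^{1/q}(x)W^{-1/q}(y)\|$ cannot be compared with $\|W^{1/p}(x)W^{-1/p}(y)\|$ by a crude inequality. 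I would resolve this with the Cordes inequality $\|B^sC^s\|\le\|BC\|^s$ for nonnegative definite $B,C$ and $s\in[0,1]$, applied with $s=p/q$, $B=W^{1/p}(x)$, $C=W^{-1/p}(y)$. This yields
\begin{equation*}
\|W^{1/q}(x)W^{-1/q}(y)\|^q\le \|W^{1/p}(x)W^{-1/p}(y)\|^{p}.
\end{equation*}
Averaging over $x\in Q$ and taking the supremum in $y$ then gives $[W]_{\A_q}\le[W]_{\A_p}$ directly, with no reducing operators needed.

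\emph{Case $q>1$.} I would use the Cordes inequality again with $s=p/q$. This gives
\begin{equation*}
\|W^{1/q}(x)W^{-1/q}(y)\|^{q'}\le\|W^{1/p}(x)W^{-1/p}(y)\|^{pq'/q},
\end{equation*}
and $\|W^{1/q}(x)W^{-1/q}(y)\|^{q}\le \|W^{1/p}(x)W^{-1/p}(y)\|^{p}$. The argument then splits according to $p$.

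If $p\le1$: for a.e. $y\in Q$ the inner average satisfies
\begin{equation*}
\fint_Q\|\cdot\|^{q'}\,\d y\le \Bigl(\esssup_{y\in Q}\|W^{1/p}(x)W^{-1/p}(y)\|^{p}\Bigr)^{q'/q}.
\end{equation*}
Raising to the power $q/q'$ and averaging in $x$ gives a bound by $[W]^\ast_{\A_p}\lesssim[W]_{\A_p}$, using the Corollary.

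If $1<p<q$: set $F(x,y)=\|W^{1/p}(x)W^{-1/p}(y)\|^p$. The quantity to control is $\fint_Q(\fint_Q F^{q'/q}\,\d y)^{q/q'}\d x$. Since $q'/q<p'/p$, Hölder's inequality (monotonicity of averages in the exponent) in $y$ bounds the inner term by $(\fint_Q F^{p'/p}\,\d y)^{p/p'}$, and averaging in $x$ gives exactly $[W]_{\A_p}$.

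The main obstacle is justifying the matrix inequality $\|B^sC^s\|\le\|BC\|^s$. I will quote it as Cordes' inequality (equivalent to Heinz–Kato) rather than prove it, noting that Lemma \ref{lemma matrix norm AB=BA} is consistent with its symmetric use. Every constant that appears depends only on $m,p,q$, through the Corollary.
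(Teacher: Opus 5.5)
Your proof is correct and takes the same route as the argument the paper defers to (\cite[Proposition 2.12]{BHYY25}). The Cordes inequality with $s=p/q$ gives $\|W^{1/q}(x)W^{-1/q}(y)\|^q\le\|W^{1/p}(x)W^{-1/p}(y)\|^p$, and the three cases then go through exactly as you describe: $p<q\le 1$ directly, $p\le 1<q$ via $[W]^\ast_{\A_p}\lesssim[W]_{\A_p}$, and $1<p<q$ via $q'/q<p'/p$ and H\"older. Your opening reformulation through reducing operators is never used and can be dropped.
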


\begin{lemma} \label{lemma p> 1 A_Q -1  approx W -1/p}
	Let $p \in(1,\infty) $, $p' = p / (p-1)$ and $W \in \A_p$. Then $\widetilde W : = W ^{-1/ (p-1)}  \in \A_{p'}$.
	Furthermore, if $\{A_Q\}_{Q\in \Q} $ and $\{ \tilde A_Q\}_{Q\in \Q} $ denote the  reducing operators, respectively, of order $p$ for $W$ and of order $p'$ for $\widetilde W$, then 
	\begin{equation*}
		[W]^{1/p}_{\A_p} \approx [\widetilde W]^{1/p'}_{\A_{p'}}  \approx \sup_{Q\in \Q} \| A_Q \tilde A_{Q} \|,
	\end{equation*}
where the positive equivalence constants depend only on $m$ and $p$. Moreover, for any $\vec z \in \mathbb C^m$ and any matrix $M \in M_m (\mathbb C)$, 
\begin{equation*}
	|A_Q ^{-1} \vec z| \approx |\tilde A_Q  \vec z| \approx \left( \fint_Q |W^{-1/p} (x) \vec z |^{p'} \d x \right)^{1/p'}  ,
\end{equation*}
and 
\begin{equation*}
		\|A_Q ^{-1} M \| \approx \|\tilde A_Q  M\| \approx \left( \fint_Q \|W^{-1/p} (x) M \|^{p'} \d x \right)^{1/p'},
\end{equation*}
where the positive equivalence constants depend only on $m$, $p$ and $[W]_{\A_p}$.
\end{lemma}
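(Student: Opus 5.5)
The plan follows the Euclidean argument of \cite[Lemma 2.14, Corollary 2.16]{BHYY25} (going back to Goldberg \cite{Go03}), replacing cubes by dilated cubes $Q\in\Q$. Only averages over a single $Q$ are used, never the geometry of $\rho_A$, so the anisotropy causes no difficulty. Fix $Q\in\Q$ and write $A_Q$, $\tilde A_Q$ for the reducing operators of order $p$ for $W$ and of order $p'$ for $\widetilde W=W^{-1/(p-1)}$. Note that $\widetilde W^{1/p'}=W^{-1/p}$, so by (\ref{eq reducing matrix}) we have $|\tilde A_Q\vec z|\approx(\fint_Q|W^{-1/p}(x)\vec z|^{p'}\d x)^{1/p'}$ by definition. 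Hence the middle equivalences reduce to proving $|A_Q^{-1}\vec z|\approx|\tilde A_Q\vec z|$. The matrix versions then follow from Lemma \ref{lemma reduce replaced z by M} applied to $\widetilde W$ with order $p'$, together with $\|A_Q^{-1}M\|\approx\|\tilde A_QM\|$, which comes from the vector version by taking the supremum over $\vec z=M\vec w$ with $|\vec w|=1$.

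\textbf{Step 1: characterize $[W]_{\A_p}$ through reducing operators.} For fixed $x$, apply Lemma \ref{lemma reduce replaced z by M} to $\widetilde W$ (order $p'$) with $M=W^{1/p}(x)$:
\begin{equation*}
\Big(\fint_Q\|W^{1/p}(x)W^{-1/p}(y)\|^{p'}\d y\Big)^{1/p'}=\Big(\fint_Q\|W^{-1/p}(y)W^{1/p}(x)\|^{p'}\d y\Big)^{1/p'}\approx\|\tilde A_QW^{1/p}(x)\|.
\end{equation*}
The first equality holds because $\|B\|=\|B^\ast\|$ and both factors are Hermitian. Next, $\|\tilde A_QW^{1/p}(x)\|=\|W^{1/p}(x)\tilde A_Q\|$, and Lemma \ref{lemma reduce replaced z by M} for $W$ (order $p$) with $M=\tilde A_Q$ gives
\begin{equation*}
\fint_Q\Big(\fint_Q\|W^{1/p}(x)W^{-1/p}(y)\|^{p'}\d y\Big)^{p/p'}\d x\approx\|A_Q\tilde A_Q\|^p.
\end{equation*}
Taking the supremum over $Q$ yields $[W]_{\A_p}^{1/p}\approx\sup_Q\|A_Q\tilde A_Q\|$. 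The argument is symmetric in $(W,p)\leftrightarrow(\widetilde W,p')$, since $\widetilde W^{-1/p'}=W^{1/p}$. Using Lemma \ref{lemma matrix norm AB=BA} (valid since both reducing operators are positive definite), the same computation gives $[\widetilde W]^{1/p'}_{\A_{p'}}\approx\sup_Q\|\tilde A_QA_Q\|=\sup_Q\|A_Q\tilde A_Q\|$. In particular $\widetilde W\in\A_{p'}$.

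\textbf{Step 2: prove $|A_Q^{-1}\vec z|\approx|\tilde A_Q\vec z|$.} For the lower bound $|\vec z|\lesssim\|A_Q\tilde A_Q\|^{-1}$-free form, write $\langle\vec z,\vec w\rangle=\fint_Q\langle W^{-1/p}(x)\vec z,W^{1/p}(x)\vec w\rangle\d x$ and apply H\"older with exponents $p',p$. This gives $|\langle\vec z,\vec w\rangle|\lesssim|\tilde A_Q\vec z|\,|A_Q\vec w|$ for all $\vec z,\vec w$. Substituting $\vec w=A_Q^{-1}\vec u$ and taking the supremum over unit $\vec u$ shows $|A_Q^{-1}\vec z|\lesssim|\tilde A_Q\vec z|$, with constants depending only on $m,p$. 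For the reverse inequality, $|\tilde A_Q\vec z|=|\tilde A_QA_QA_Q^{-1}\vec z|\le\|\tilde A_QA_Q\|\,|A_Q^{-1}\vec z|$, and Step 1 gives $\|\tilde A_QA_Q\|\lesssim[W]_{\A_p}^{1/p}$. This is the only place where the $\A_p$ constant enters, which matches the stated dependence on $[W]_{\A_p}$.

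The main obstacle is keeping the order of the noncommuting matrix products correct in Step 1. This is handled by the adjoint identity $\|B\|=\|B^\ast\|$ and by Lemma \ref{lemma matrix norm AB=BA}. Everything else is routine bookkeeping.
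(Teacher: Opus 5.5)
Your proposal is correct and is essentially the argument the paper relies on: the paper gives no proof of its own but defers to \cite[Lemma 2.14, Corollary 2.16]{BHYY25}. Your proof transplants that argument to dilated cubes, namely two applications of Lemma~\ref{lemma reduce replaced z by M} (for $\widetilde W$ with $M=W^{1/p}(x)$, then for $W$ with $M=\tilde A_Q$), the symmetry $(W,p)\leftrightarrow(\widetilde W,p')$, and the H\"older duality bound $|\langle \vec z,\vec w\rangle|\lesssim |\tilde A_Q\vec z|\,|A_Q\vec w|$. The one thing you should add is a check that $\widetilde W$ is a genuine matrix weight and that $\tilde A_Q$ exists, i.e.\ $\fint_Q\|W^{-1/p}(y)\|^{p'}\,\d y<\infty$ (note $\|\widetilde W(y)\|=\|W^{-1/p}(y)\|^{p'}$); this follows from $\|W^{-1/p}(y)\|\le\|W^{-1/p}(x)\|\,\|W^{1/p}(x)W^{-1/p}(y)\|$ at any point $x\in Q$ where $W(x)$ is invertible and the inner $\A_p$ average is finite, which holds for a.e.\ $x\in Q$.
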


\begin{lemma} \label{lemma p le 1 A_Q ^-1 z W^ -1/p}
	Let $p \in (0,1]$, $ W \in \A_p$, $\{A_Q \}_{Q\in \Q}$ be a reducing operator of order $p$ for $W$. Then, for any $\vec z \in \mathbb C^m$, any $M\in M_m (\mathbb C)$,
	\begin{equation*}
		|A_Q ^{-1} \vec z | \approx \esssup_{x\in Q}  |W^{-1/p} (x) \vec z | 
	\end{equation*} 
and 
\begin{equation*}
	\|A_Q ^{-1} M \| \approx \esssup_{x\in Q}  \|W^{-1/p} (x) M \| 
\end{equation*}
where the positive equivalence constants depends only on $m$, $p$ and $[W]_{\A_p}$.
\end{lemma}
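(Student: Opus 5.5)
The plan is to prove both equivalences by sandwiching $|A_Q^{-1}\vec z|$ between two quantities, starting from the scalar case, and then pass to matrices $M$.

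\emph{Lower bound.} By the definition of a reducing operator of order $p$, for any $\vec w\in\mathbb C^m$ we have $|A_Q\vec w|\approx(\fint_Q|W^{1/p}(x)\vec w|^p\,\d x)^{1/p}$. We write $\vec z = W^{1/p}(y)W^{-1/p}(y)\vec z$ for a.e.\ $y\in Q$. Duality in $\mathbb C^m$ with the self-adjoint matrices $A_Q^{-1}$ and $W^{-1/p}(y)$ gives, for a.e.\ $y\in Q$,
\begin{equation*}
|W^{-1/p}(y)\vec z|=\sup_{|\vec u|=1}|\langle A_Q^{-1}\vec z, A_Q W^{-1/p}(y)\vec u\rangle|\le |A_Q^{-1}\vec z|\,\|A_Q W^{-1/p}(y)\|.
\end{equation*}
Lemma \ref{lemma reduce replaced z by M} with $M=W^{-1/p}(y)$ yields
\begin{equation*}
\|A_QW^{-1/p}(y)\|^p\approx\fint_Q\|W^{1/p}(x)W^{-1/p}(y)\|^p\,\d x\le[W]_{\A_p}.
\end{equation*}
Taking the essential supremum over $y\in Q$ then gives $\esssup_{y\in Q}|W^{-1/p}(y)\vec z|\lesssim [W]_{\A_p}^{1/p}|A_Q^{-1}\vec z|$.

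\emph{Upper bound.} We apply the reducing-operator equivalence to $\vec w=A_Q^{-1}\vec z$ and use $p\le1$, so that $|\cdot|^p$ can be handled pointwise. For a.e.\ $x\in Q$,
\begin{equation*}
|\vec z|=|W^{-1/p}(x)W^{1/p}(x)A_Q^{-1}\vec z|\le \Bigl(\esssup_{y\in Q}\|W^{-1/p}(y)\|\Bigr)\,|W^{1/p}(x)A_Q^{-1}\vec z|,
\end{equation*}
but this only produces a bound in terms of $\|W^{-1/p}\|$, not $|W^{-1/p}\vec z|$. The cleaner route is duality. We have $|A_Q^{-1}\vec z|=\sup_{|\vec u|=1}|\langle \vec z, A_Q^{-1}\vec u\rangle|$, and for a unit $\vec u$ we write $\vec v=A_Q^{-1}\vec u$, so that $1=|A_Q\vec v|\approx(\fint_Q|W^{1/p}(x)\vec v|^p\,\d x)^{1/p}$. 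Then
\begin{equation*}
|\langle\vec z,\vec v\rangle|=|\langle W^{-1/p}(x)\vec z, W^{1/p}(x)\vec v\rangle|\le \Bigl(\esssup_{y\in Q}|W^{-1/p}(y)\vec z|\Bigr)\,|W^{1/p}(x)\vec v|
\end{equation*}
for a.e.\ $x\in Q$. Raising to the power $p$ and averaging over $x\in Q$ gives $|\langle \vec z,\vec v\rangle|^p\lesssim(\esssup_Q|W^{-1/p}\vec z|)^p$. Hence $|A_Q^{-1}\vec z|\lesssim\esssup_{y\in Q}|W^{-1/p}(y)\vec z|$. This direction uses only the definition of $A_Q$ and needs no $\A_p$ hypothesis.

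\emph{Matrix version.} We use $\|A_Q^{-1}M\|=\sup_{|\vec e|=1}|A_Q^{-1}M\vec e|$ and apply the vector result to $\vec z=M\vec e$. Taking the supremum over $\vec e$ gives $\|A_Q^{-1}M\|\approx\sup_{\vec e}\esssup_{x\in Q}|W^{-1/p}(x)M\vec e|$. The main obstacle is exchanging $\sup_{\vec e}$ with $\esssup_x$, since there are uncountably many $\vec e$. One direction, $\sup_{\vec e}\esssup_x\le\esssup_x\|W^{-1/p}(x)M\|$, is immediate. For the other, we take an orthonormal basis $\{\vec e_i\}_{i=1}^m$ and use $\|N\|\le\sum_{i=1}^m|N\vec e_i|$, so that
\begin{equation*}
\esssup_x\|W^{-1/p}(x)M\|\le\sum_{i=1}^m\esssup_x|W^{-1/p}(x)M\vec e_i|\le m\,\sup_{\vec e}\esssup_x|W^{-1/p}(x)M\vec e|.
\end{equation*}
This introduces only a constant depending on $m$, which completes the equivalence with constants depending on $m$, $p$ and $[W]_{\A_p}$.
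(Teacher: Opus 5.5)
Your proof is correct and is essentially the standard argument that the paper defers to (Frazier--Roudenko's Lemma 5.4). The lower bound comes from Lemma \ref{lemma reduce replaced z by M} with $M=W^{-1/p}(y)$ together with the $\A_p$ condition; the upper bound comes from duality against the reducing-operator norm, which needs neither $\A_p$ nor $p\le 1$; and the matrix version follows from an orthonormal basis. You can simply delete the abandoned first attempt in your upper-bound paragraph.
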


\subsection{The $\A_p$-dimension of matrix weights associated with a dilation}
\begin{definition} \label{def A_p dimension}
	Let $ p\in(0,\infty)$,  $d \in \mathbb R$  and $W $ be a matrix weight. Then $W $ is said
	to have the $\A_p$-dimension $d$ associated with a dilation $A$, denoted by $ W \in \mathbb D_{p,d,A}$, if there exists a positive
	constant $C $ such that, for any balls $B \in \B $ and  any  $ i \in \mathbb Z_+$, when  $p \in (0,1]$,
	\begin{equation*}
		\esssup_{ y \in  |\det A|^i B } \fint_{B }  \|  W^{1/p} (x)  W^{-1/p} (y) \| ^p \d x \le C |\det A|^{id},
	\end{equation*}
or that,  when $p \in (1,\infty)$,
\begin{equation*}
 \fint_B   \left(  \fint_{|\det A|^i B  }   \|  W^{1/p} (x)  W^{-1/p} (y) \| ^{p'} \d y \right) ^{p/p'}  \d x \le C |\det A|^{id},
\end{equation*}
where $p' = p/(p-1)$.
\end{definition}
Next we have the following basic properties of $\A_p$-dimension.
\begin{proposition}\label{prop baisic dimension}
	Let $p \in (0,\infty)$. Then the following statements hold. 
	
	{\rm (i)}
 For any $d \in (-\infty,0)$,  $\mathbb D_{p,d,A} = \emptyset$.
	
	{\rm (ii)} For any $d \in [0,1)$,  $\mathbb D_{p,d,A} \subset \A_p$.
	
		{\rm (iii)} For any $d \in [1,\infty)$,  $\mathbb D_{p,d,A} = \A_p$.
	
	{\rm (iv)} For any $0 \le d_1 < d_2 < \infty $,  $\mathbb D_{p,d_1,A} \subset \mathbb D_{p,d_2,A}$.
	
	{\rm (v)} For any $ p< q<\infty$ and $d \in [0,\infty) $, $\mathbb D_{p,d,A} \subset \mathbb D_{q,d,A}$ .
\end{proposition}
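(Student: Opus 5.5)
We treat the five statements separately. The tools are Definition \ref{def A_p dimension} with $i=0$, the fact that $\big||\det A|^iB\big|=|\det A|^i|B|$ (up to the normalisation of $\rho_A$), and the remark after the definition of $\A_p$ that the classes defined with dilated cubes and with $\rho_A$-balls coincide.

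We start with the easy parts. For (iv), observe that $|\det A|^{id_1}\le |\det A|^{id_2}$ whenever $i\in\mathbb Z_+$ and $d_1<d_2$, because $|\det A|>1$. So the defining inequality with $d_1$ implies the one with $d_2$. For (ii), and more generally for $\mathbb D_{p,d,A}\subset\A_p$ for every $d\ge 0$, take $i=0$ in Definition \ref{def A_p dimension}. This gives exactly the ball version of the $\A_p$ condition, with constant $C|\det A|^0=C$. By the remark comparing $[W]_{\A_p}$ with its ball version, $W\in\A_p$.

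For (iii) we still need $\A_p\subset\mathbb D_{p,1,A}$. Fix $B\in\B$ and $i\in\mathbb Z_+$, and put $\lambda:=|\det A|^i$. When $p\in(0,1]$, we have $\fint_B\le (|\lambda B|/|B|)\fint_{\lambda B}\lesssim |\det A|^{i}\fint_{\lambda B}$. Since $y\in\lambda B$, the ball $\A_p$ condition on $\lambda B$ then gives the bound $C[W]_{\A_p}|\det A|^{i}$. When $p\in(1,\infty)$, the inner average is already over $\lambda B$. Enlarging the outer average from $B$ to $\lambda B$ costs the same factor $|\det A|^i$, and the result is bounded by the ball $\A_p$ constant. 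Hence $\A_p\subset\mathbb D_{p,1,A}$, and (iv) extends this to every $d\ge1$.

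For (v), we follow the proof that $\A_p\subset\A_q$ (the lemma quoted from \cite[Proposition 2.12]{BHYY25}). Rewrite each dimension quantity through reducing operators: by Lemma \ref{lemma reduce replaced z by M}, the $p\le1$ quantity is comparable to $\esssup_{y\in\lambda B}\|A_BW^{-1/p}(y)\|^p$. For $p>1$ it is comparable to $\|A_B\tilde A_{\lambda B}\|^p$, using Lemma \ref{lemma p> 1 A_Q -1  approx W -1/p} applied with the ball $\lambda B$ in place of a cube. We then compare orders $p<q$ by Hölder's and Jensen's inequalities, exactly as in that proof. The only new point is to check that the factor $|\det A|^{id}$ is carried through with the exponent scaling correctly; this needs the same power-of-norm interpolation $\|W^{1/q}(x)W^{-1/q}(y)\|\le \|W^{1/p}(x)W^{-1/p}(y)\|^{p/q}$ (Heinz/Löwner–Heinz) that is used there.

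For (i), the plan is to prove a uniform lower bound: the dimension quantity is $\gtrsim 1$ for every $B$ and $i$. Then $1\lesssim C|\det A|^{id}$, and letting $i\to\infty$ with $d<0$ gives a contradiction. Write $N(x,y):=W^{1/p}(x)W^{-1/p}(y)$. The key pointwise fact is $1=\|I_m\|\le\|N(x,y)\|\,\|N(y,x)\|$.

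When $p\le1$, raise this to the power $p/2$, integrate over $(x,y)\in B\times B$, and apply Cauchy–Schwarz together with the symmetry in $x,y$. This gives $1\le\fint_B\fint_B\|N(x,y)\|^p\,\d x\,\d y$. The right side is at most $\esssup_{y\in B}\fint_B\|N(x,y)\|^p\,\d x$, which is at most the dimension quantity for $(B,i)$ because $B\subset\lambda B$.

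The case $p>1$ is where we expect the main obstacle, since the $x$- and $y$-averages are taken over different balls. Here we would pass to reducing operators, so that the quantity is comparable to $\|A_B\tilde A_{\lambda B}\|^p$, and then aim for $\|A_B\tilde A_{\lambda B}\|\gtrsim1$ by duality. The first attempt: for unit $\vec z$, write $|\vec z|^2=\langle W^{1/p}(y)\vec z,W^{-1/p}(y)\vec z\rangle$ and average over $y\in\lambda B$. By Hölder this gives $1\le\big(\fint_{\lambda B}|W^{1/p}\vec z|^p\big)^{1/p}\big(\fint_{\lambda B}|W^{-1/p}\vec z|^{p'}\big)^{1/p'}\approx|A_{\lambda B}\vec z|\,|\tilde A_{\lambda B}\vec z|$. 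What remains is to transfer $A_{\lambda B}$ back to $A_B$, which loses only constants: if this cannot be done directly, we would run the Cauchy–Schwarz argument of the case $p\le1$ with the exponents $p$ and $p'$ in place of $p/2$ and $p/2$.
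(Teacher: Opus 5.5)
Parts (ii)--(iv), your outline of (v) (given at the same level of detail as the paper's appeal to \cite{BHYY25}), and the case $p\in(0,1]$ of (i) are fine. For $p\le 1$, your symmetric Cauchy--Schwarz argument even gives the explicit lower bound $1$. The paper only uses that the quantity for $B_{\rho_A}(0,1)$ with $i=0$ is positive and that the $p\le 1$ quantity is nondecreasing in $i$.

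The gap is in (i) for $p\in(1,\infty)$. Your plan rests on the claim that, for every fixed $B$, the quantity $\fint_B(\fint_{\lambda B}\|W^{1/p}(x)W^{-1/p}(y)\|^{p'}\,\d y)^{p/p'}\,\d x\approx\|A_B\tilde A_{\lambda B}\|^p$, with $\lambda=|\det A|^i$, stays $\gtrsim 1$ as $i\to\infty$. This is false already for $m=1$. Take $A=2I_n$ (so $\rho_A(x)\approx|x|^n$) and $w(x)=|x|^\alpha$ with $0<\alpha<n(p-1)$, so $w\in A_p$. Let $B$ be centred at $0$ with Euclidean radius $R$; then $\lambda B$ has Euclidean radius $\approx 2^iR$. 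The quantity equals $\fint_B w\,(\fint_{\lambda B}w^{-1/(p-1)})^{p-1}\approx R^{\alpha}(2^iR)^{-\alpha}=2^{-i\alpha}\to 0$. This does not contradict (i), since balls far from the origin give a quantity $\approx 1$ for every $i$.

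Consequently, the step ``transfer $A_{\lambda B}$ back to $A_B$, losing only constants'' cannot work. It amounts to the reverse-doubling bound $|A_{\lambda B}\vec z|\lesssim|A_B\vec z|$ uniformly in $i$, whereas only $|A_B\vec z|\lesssim\lambda^{1/p}|A_{\lambda B}\vec z|$ holds; in the example $A_{\lambda B}/A_B\approx 2^{i\alpha/p}$. Your Cauchy--Schwarz fallback fails for the same reason. Because $x$ and $y$ range over different balls, H\"older pairs the dimension quantity with the swapped quantity $\fint_B(\fint_{\lambda B}\|W^{1/p}(y)W^{-1/p}(x)\|^{p}\,\d y)^{p'/p}\,\d x\approx\|A_{\lambda B}\tilde A_B\|^{p'}$. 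That quantity is unbounded in $i$, so you only get a lower bound that decays in $i$.

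A lower bound can therefore only hold for balls chosen depending on $i$. The paper obtains it by running the scales the other way: the $y$-ball stays fixed and the $x$-ball shrinks to a point. Put $E:=B_{\rho_A}(0,1)$. For $\rho_A(x_0)<1$ there is a fixed $k$ with $E\subset B_{\rho_A}(x_0,|\det A|^{k})$ and comparable measures. Let $B_i:=B_{\rho_A}(x_0,|\det A|^{-i})$, whose $|\det A|^{i+k}$-dilate is $B_{\rho_A}(x_0,|\det A|^{k})$. The dimension condition then gives $\fint_{B_i}\|W^{1/p}(x)\tilde A_E\|^p\,\d x\lesssim |\det A|^{(i+k)d}\to 0$ when $d<0$. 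Since $\|W^{1/p}(x)\tilde A_E\|^p\le\|W(x)\|\,\|\tilde A_E\|^p$ is locally integrable, the Lebesgue differentiation theorem gives $W^{1/p}(x_0)\tilde A_E=0$ for a.e.\ $x_0$ with $\rho_A(x_0)<1$. This contradicts the a.e.\ invertibility of $W$.
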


\begin{proof}
	By the definition of $\A_p$-dimensions, we  obtain  (ii), (iii) and  (iv). 
Applying an argument similar to that used in the proof of  \cite[Proposition 2.12]{BHYY25}, we  obtain (v).

Now we prove (i). 
Case $p \in (0,1]$. 
\begin{align*}
	0  & < \esssup _{y \in B_{\rho_A}(0,1)}  \fint_{B_{\rho_A} (0,1)}  \| W^{1/p} (x) W^{-1/p} (y) \|^p \d x \\
	& \le \sup_{B \in \B} \esssup_{ y \in  |\det A|^i B } \fint_{B }  \|  W^{1/p} (x)  W^{-1/p} (y) \| ^p \d x \le C |\det A|^{id},
\end{align*}
which contradicts $d \in (-\infty,0 )$, and hence $\mathbb D_{p,d,A} = \emptyset$.

Case $p \in (1,\infty)$. If there exists $W\in \mathbb D_{p,d,A}$, then, by Definition  \ref{def matrix weight} (iii), we have that $ \|W^{1/p }  A_{B_{\rho_A}(0,1)} ^{-1} \| \in L^1_{\operatorname{loc}} $, where $A_{B_{\rho_A}(0,1)}$ is the reducing operator of order $p$ for $W$. By the Lebesgue differentiation theorem,
Lemma \ref{lemma p> 1 A_Q -1  approx W -1/p}, for a.e. $x_0 \in \rn$ with $\rho_A (x_0) <1$,
\begin{align*}
	 \|W^{1/p } (x_0)  A_{B_{\rho_A}(0,1)} ^{-1} \| ^p   &= \lim_{i \to \infty}  \fint_{ B_{\rho_A} (x_0 , |\det A|^{-i})}   \|W^{1/p }(x)  A_{B_{\rho_A}(0,1)} ^{-1} \| ^p \d x \\
	 & \approx  \lim_{i \to \infty}  \fint_{ B_{\rho_A} (x_0 ,|\det A|^{-i} )}  \left( \fint_{B_{\rho_A }(0,1) }  \|W^{1/p }(x) W^{-1/p} (y) \|^{p'} \d y     \right)^{p/p'}   \d x \\
	 & \lesssim  \lim_{i \to \infty}  \fint_{ B_{\rho_A} (x_0 , |\det A|^{-i} )}  \left( \fint_{B_{\rho_A}(x_0, C ) }  \|W^{1/p }(x) W^{-1/p} (y) \|^{p'} \d y     \right)^{p/p'}   \d x \\
	 &\lesssim \lim_{i \to \infty} | \det A |^{id} =0
\end{align*}
and hence all entries of $W(x_0)$ are $0$, which contradicts Definition  \ref{def matrix weight} (ii). Thus  $\mathbb D_{p,d,A} = \emptyset$.
This finishes the proof of (i) and hence Proposition \ref{prop baisic dimension}.
\end{proof}

\begin{proposition}
	Let $ p\in(0,\infty)$, $W \in \A_p$,  $\{ A_B\}_{B \in \B} $ be a family of reducing
	operators of order $p$ for $W$, and $d \in [0,\infty) $. Then $W \in \mathbb D_{p,d, A}$  if and
	only if there exists a positive constant $C $ such that, for any $ B \in \B$ and any $i\in \mathbb Z_+$,
	$  \| A_B A_{ |\det A|^iB } ^{-1} \|^p  \le C |\det A|^{id}$.
\end{proposition}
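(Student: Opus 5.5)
The plan is to convert both sides of the $\A_p$-dimension inequality into expressions in reducing operators. Then the equivalence reduces to comparing $\|A_B A_{|\det A|^iB}^{-1}\|$ with the defining quantities, one ball $B$ and one $i\in\mathbb Z_+$ at a time. Set $B_i:=|\det A|^iB$. The cases $p\in(1,\infty)$ and $p\in(0,1]$ are treated separately, using Lemma \ref{lemma reduce replaced z by M} together with Lemma \ref{lemma p> 1 A_Q -1  approx W -1/p} and Lemma \ref{lemma p le 1 A_Q ^-1 z W^ -1/p}, respectively. These lemmas are stated for $Q\in\Q$, but their proofs only use the $\A_p$ condition on the averaging set. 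By the preceding Remark, the $\A_p$ condition over $\rho_A$-balls is equivalent to the one over cubes, so they hold verbatim for $B\in\B$, with constants depending only on $m,p,[W]_{\A_p}$.

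\emph{Case $p\in(1,\infty)$.} For fixed $x$, apply the ball version of Lemma \ref{lemma p> 1 A_Q -1  approx W -1/p} with $M=W^{1/p}(x)$, together with Lemma \ref{lemma matrix norm AB=BA}, to get
\begin{equation*}
\Big(\fint_{B_i}\|W^{1/p}(x)W^{-1/p}(y)\|^{p'}\d y\Big)^{1/p'}
=\Big(\fint_{B_i}\|W^{-1/p}(y)W^{1/p}(x)\|^{p'}\d y\Big)^{1/p'}\approx \|A_{B_i}^{-1}W^{1/p}(x)\|=\|W^{1/p}(x)A_{B_i}^{-1}\|.
\end{equation*}
Raising to the power $p$, averaging over $x\in B$ and applying Lemma \ref{lemma reduce replaced z by M} with $M=A_{B_i}^{-1}$ yields
\begin{equation*}
\fint_B\Big(\fint_{B_i}\|W^{1/p}(x)W^{-1/p}(y)\|^{p'}\d y\Big)^{p/p'}\d x\approx \fint_B\|W^{1/p}(x)A_{B_i}^{-1}\|^p\d x\approx\|A_BA_{B_i}^{-1}\|^p .
\end{equation*}
Both directions of the equivalence then follow at once, since the constants are independent of $B$ and $i$.

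\emph{Case $p\in(0,1]$.} Lemma \ref{lemma matrix norm AB=BA} again lets us swap factors. For fixed $x$, the ball version of Lemma \ref{lemma p le 1 A_Q ^-1 z W^ -1/p} with $M=W^{1/p}(x)$ gives
\begin{equation*}
\esssup_{y\in B_i}\|W^{1/p}(x)W^{-1/p}(y)\|\approx\|A_{B_i}^{-1}W^{1/p}(x)\|=\|W^{1/p}(x)A_{B_i}^{-1}\|.
\end{equation*}
The definition, however, has $\esssup_y$ outside $\fint_B\,\d x$. The main obstacle is interchanging $\esssup_y$ with the average over $x$.

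One direction is trivial: $\esssup_y\fint_B\le\fint_B\esssup_y$. For the reverse, I would avoid an abstract interchange lemma and instead redo the reducing-operator argument with a fixed vector. For $y\in B_i$, Lemma \ref{lemma reduce replaced z by M} with $M=W^{-1/p}(y)$ gives
\begin{equation*}
\fint_B\|W^{1/p}(x)W^{-1/p}(y)\|^p\d x\approx\|A_BW^{-1/p}(y)\|^p,
\end{equation*}
so the defining quantity is comparable to $\esssup_{y\in B_i}\|A_BW^{-1/p}(y)\|^p=\esssup_{y\in B_i}\|W^{-1/p}(y)A_B\|^p$. By Lemma \ref{lemma p le 1 A_Q ^-1 z W^ -1/p} on $B_i$ with $M=A_B$, this is $\approx\|A_{B_i}^{-1}A_B\|^p=\|A_BA_{B_i}^{-1}\|^p$. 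This route sidesteps the interchange issue entirely.

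Thus in both cases the defining quantity of $\mathbb D_{p,d,A}$ is comparable to $\|A_BA_{B_i}^{-1}\|^p$ uniformly in $B$ and $i$, which proves the proposition. The only point needing genuine care is justifying the transfer of Lemmas \ref{lemma p> 1 A_Q -1  approx W -1/p} and \ref{lemma p le 1 A_Q ^-1 z W^ -1/p} from cubes to balls. This is where $W\in\A_p$ is used, via the cube--ball covering in Lemma \ref{lemma basic cube ball cover}.
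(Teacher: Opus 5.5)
Your reduction is the one the paper intends (it omits the proof and defers to \cite[Proposition 2.25]{BHYY25}), and it is the right one. For $p\in(0,1]$ you apply Lemma \ref{lemma reduce replaced z by M} on $B$ with $M=W^{-1/p}(y)$, swap the factors, and then use the $\esssup$-characterization of $\|A_{B_i}^{-1}M\|$ on $B_i$ with $M=A_B$; this indeed avoids any interchange of $\esssup_y$ and $\fint_B$. For $p\in(1,\infty)$ you use the $L^{p'}$-characterization of $\|A_{B_i}^{-1}M\|$ with $M=W^{1/p}(x)$, and then Lemma \ref{lemma reduce replaced z by M} with $M=A_{B_i}^{-1}$. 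The halves of these equivalences that give the ``only if'' direction, namely $\|A_E^{-1}M\|\lesssim\esssup_{y\in E}\|W^{-1/p}(y)M\|$ and $\|A_E^{-1}M\|\lesssim(\fint_E\|W^{-1/p}M\|^{p'})^{1/p'}$, follow from duality alone and need no $\A_p$ hypothesis. Only the ``if'' direction uses $W\in\A_p$.

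The step that does not hold up is exactly the one you single out, and the defect comes from the paper's Remark rather than from you. The covering argument there controls only the $x$-average. After writing $B\subset\bigcup_\ell Q_\ell$, the point $y\in B$ lies in one cube $Q_{\ell'}$ while $x$ is averaged over a different cube $Q_\ell$. Nothing in the dilated-cube condition bounds $\fint_{Q_\ell}\|W^{1/p}(x)W^{-1/p}(y)\|^p\,\d x$ for such $y$.

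The claimed equivalence is in fact false. For $A=2I_n$ the dilated cubes are the dyadic cubes, none of which has $0$ in its interior. Take $n=m=1$ and $p\le 1$, where the $\rho_A$-balls centered at $0$ are intervals $(-h,h)$ with $h$ arbitrarily large. The weight $w=\chi_{[0,\infty)}+|x|^{\alpha}\chi_{(-\infty,0)}$ with $\alpha\in(-1,0)$ satisfies the cube condition, since every dyadic interval lies in $[0,\infty)$ or in $(-\infty,0)$. Yet $\fint_{(-h,h)}w\,/\,\operatorname{ess\,inf}_{(-h,h)}w\ge h^{-\alpha}/2\to\infty$.

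This affects the statement itself, not just your write-up. The inequality $\fint_B|W^{1/p}\vec z|^p\le(|B_i|/|B|)\fint_{B_i}|W^{1/p}\vec z|^p$ gives $\|A_BA_{B_i}^{-1}\|^p\lesssim|\det A|^{i}$ for every matrix weight. On the other hand, the case $i=0$ of the definition of $\mathbb D_{p,d,A}$ is precisely the $\A_p$ condition over $\rho_A$-balls. So with $d=1$ this $w$ is a counterexample to the ``if'' direction. Your proof is complete once $W\in\A_p$ is read with $\rho_A$-balls in place of dilated cubes. That is the hypothesis the ``if'' direction genuinely needs, and the appeal to Lemma \ref{lemma basic cube ball cover} cannot supply it.
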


\begin{proof}
	The proof is similar to \cite[Proposition 2.25]{BHYY25} and we omit it here.
\end{proof}

\begin{lemma} \label{lemma W 1/p e is scalar weight}
	Let $ p\in(0,\infty)$, $W \in \A_p$. Then for any $\vec e \in \mathbb C^m$,  $|W^{1/p} (x) \vec  e| ^p $ is the scalar $A_{ p \vee 1  } $ weight.
\end{lemma}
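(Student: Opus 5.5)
Write $w(x):=|W^{1/p}(x)\vec e|^p$ for a fixed nonzero $\vec e\in\mathbb C^m$; the case $\vec e=\vec 0$ is trivial or excluded. We must check the anisotropic scalar $A_{p\vee1}$ condition over all $Q\in\Q$. By the Remark following the definition of $\A_p$, this is equivalent to checking it over $\rho_A$-balls. Local integrability of $w$ follows from Definition \ref{def matrix weight}(iii), because $|W^{1/p}\vec e|^p\le \|W^{1/p}\|^p|\vec e|^p$ and $\|W^{1/p}\|^p\lesssim \|W\|$ is controlled by the entries of $W$. The plan is to compare both the average of $w$ and the average of $w^{-1}$ (or its $\esssup$) on $Q$ with the reducing operator $A_Q$. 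The key pointwise tool is the inequality
\begin{equation*}
|\vec e|=|W^{-1/p}(y)W^{1/p}(y)\vec e|\le \|A_QW^{-1/p}(y)\|\,|A_Q^{-1}W^{1/p}(y)\vec e|,
\end{equation*}
together with its dual $|A_Q\vec e|\le \|A_Q W^{-1/p}(y)\|\,|W^{1/p}(y)\vec e|$.

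\emph{Case $p\in(0,1]$.} We need $\fint_Q w \lesssim \operatorname{ess\,inf}_{Q} w$. By \eqref{eq reducing matrix}, $\fint_Q w\approx |A_Q\vec e|^p$. For a.e.\ $y\in Q$ we have $|A_Q\vec e|\le \|A_QW^{-1/p}(y)\|\,|W^{1/p}(y)\vec e|$. By Lemma \ref{lemma reduce replaced z by M}, $\|A_QW^{-1/p}(y)\|^p\approx \fint_Q\|W^{1/p}(x)W^{-1/p}(y)\|^p\,\d x\le [W]_{\A_p}$ for a.e.\ $y\in Q$. Hence $\fint_Q w\lesssim [W]_{\A_p}\, w(y)$ for a.e.\ $y\in Q$, which is the $A_1$ condition.

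\emph{Case $p\in(1,\infty)$.} We need $\bigl(\fint_Q w\bigr)\bigl(\fint_Q w^{-1/(p-1)}\bigr)^{p-1}\lesssim 1$. Again $\fint_Q w\approx|A_Q\vec e|^p$. For the second factor, use $|A_Q\vec e|\le\|A_QW^{-1/p}(y)\|\,|W^{1/p}(y)\vec e|$, which gives $w(y)^{-1/(p-1)}\le |A_Q\vec e|^{-p'}\|A_QW^{-1/p}(y)\|^{p'}$. Averaging in $y$ yields
\begin{equation*}
\fint_Q w^{-1/(p-1)}\le |A_Q\vec e|^{-p'}\fint_Q\|A_QW^{-1/p}(y)\|^{p'}\d y .
\end{equation*}
By Lemma \ref{lemma matrix norm AB=BA} (both matrices are nonnegative definite), $\|A_QW^{-1/p}(y)\|=\|W^{-1/p}(y)A_Q\|$. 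Lemma \ref{lemma p> 1 A_Q -1  approx W -1/p} (with $M=A_Q$) gives $\fint_Q\|W^{-1/p}(y)A_Q\|^{p'}\d y\approx\|\tilde A_QA_Q\|^{p'}$, and the same lemma bounds $\|\tilde A_QA_Q\|=\|A_Q\tilde A_Q\|\lesssim[W]_{\A_p}^{1/p}$. Raising to the power $p-1=p/p'$ and multiplying by $|A_Q\vec e|^p$ cancels the $|A_Q\vec e|$ factors and yields a bound depending only on $m$, $p$ and $[W]_{\A_p}$.

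The only delicate point is the $p>1$ step: identifying the $L^{p'}$ average of $\|W^{-1/p}A_Q\|$ with $\|\tilde A_QA_Q\|$ and controlling the latter by $[W]_{\A_p}$. This is exactly what Lemma \ref{lemma p> 1 A_Q -1  approx W -1/p} provides, so we rely on it rather than redoing the duality argument. The constants are independent of $Q$ and of $\vec e$, so $w$ is uniformly an anisotropic $A_{p\vee1}$ weight.
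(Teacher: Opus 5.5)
Your proof is correct and is essentially the standard argument the paper defers to; the paper omits the proof, citing Goldberg for $p>1$ and Frazier--Roudenko for $p\le 1$. For $p\le1$, the factorization $|A_Q\vec e|\le\|A_QW^{-1/p}(y)\|\,|W^{1/p}(y)\vec e|$ together with the definition of $[W]_{\A_p}$ gives the $A_1$ bound. For $p>1$, reducing $\fint_Q\|W^{-1/p}A_Q\|^{p'}$ to $\|\tilde A_QA_Q\|^{p'}\lesssim[W]_{\A_p}^{p'/p}$ is legitimate and not circular, since that part of the dual-weight lemma follows directly from the definition of reducing operators. For $p\le 1$ you could even skip $A_Q$: averaging $|W^{1/p}(x)\vec e|^p\le\|W^{1/p}(x)W^{-1/p}(y)\|^p\,|W^{1/p}(y)\vec e|^p$ over $x\in Q$ gives the $A_1$ constant $[W]_{\A_p}$ exactly.
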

\begin{proof}
	The proof of case $ p \in (1,\infty)$ is  similar to \cite[Corollary 2.2]{Go03} and the proof of case $p \in (0,1] $  is  similar to  \cite[Lemma 2.1]{FR04}. We omit it here.
\end{proof}

Now we have the following reverse H\"older inequality.

\begin{lemma} \label{lemma reverse holder matrix}
	Let $ p\in(0,\infty)$, $W \in \A_p$.  Then there exist $r (W) \in (1,\infty) $  and a
	positive constant $C$ such that, for any  $r \in [1,r (W)]$, any $B \in \B$, and any	nonnegative definite matrix  $M$,
\begin{equation*}
	\left(  \fint_B \|W^{1/p} (x) M\|^{pr} \d x \right) ^{1/(rp)} \le C \left(  \fint_B \|W^{1/p} (x) M\|^{p} \d x\right)^{1/p} .
\end{equation*}
\end{lemma}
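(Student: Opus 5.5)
We reduce the matrix statement to the scalar reverse H\"older inequality for $A_{p\vee 1}$ weights on the space of homogeneous type $(\rn,\rho_A,\d x)$, applied to finitely many scalar weights. Fix an orthonormal basis $\{\vec e_i\}_{i=1}^m$ of $\mathbb C^m$.

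By Lemma \ref{lemma W 1/p e is scalar weight}, each $w_i(x):=|W^{1/p}(x)\vec e_i|^p$ is a scalar $A_{p\vee 1}$ weight with respect to $\rho_A$-balls. The equivalence of cube- and ball-based classes is shown in the remark after the definition of $\A_p$, and the same argument applies to scalar weights. Since $\M_{\rho_A}$ satisfies the conclusions of Lemma \ref{lemma M weak 1,1 strong p,p}, the standard Calder\'on--Zygmund/Gehring argument on spaces of homogeneous type with doubling Lebesgue measure (cf.\ (\ref{eq rho A B approx r})) gives the scalar reverse H\"older inequality. That is, there exist $r_i>1$ and $C_i$ such that
\begin{equation*}
\left(\fint_B w_i^{r}\right)^{1/r}\le C_i\fint_B w_i \quad \text{for all } B\in\B,\ r\in[1,r_i].
\end{equation*}
We set $r(W):=\min_i r_i$. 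For $r<r_i$ the estimate follows from the endpoint $r_i$ by H\"older's inequality.

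For a general nonnegative definite $M$, we write $M\vec e_j=\sum_i c_{ij}\vec e_i$ with $|c_{ij}|\le\|M\|$. A cleaner route, however, is to use the columns directly. For any matrix $N$ we have $\|N\|\approx\sum_{j=1}^m|N\vec e_j|$, with constants depending only on $m$. Hence
\begin{equation*}
\|W^{1/p}(x)M\|^{pr}\approx \sum_j |W^{1/p}(x)\vec z_j|^{pr}, \qquad \vec z_j:=M\vec e_j .
\end{equation*}
The function $x\mapsto|W^{1/p}(x)\vec z_j|^p$ is again a scalar $A_{p\vee1}$ weight by Lemma \ref{lemma W 1/p e is scalar weight}. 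The difficulty is that its reverse H\"older exponent and constant must be uniform in $\vec z_j$.

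This uniformity is the main obstacle. To handle it, we first note that the $A_{p\vee1}$ constant of $|W^{1/p}\vec z|^p$ is bounded by a constant depending only on $m,p,[W]_{\A_p}$, uniformly in $\vec z$. This is visible from the proofs of \cite[Corollary 2.2]{Go03} and \cite[Lemma 2.1]{FR04}, which use only $[W]_{\A_p}$ and reducing operators via Lemmas \ref{lemma p> 1 A_Q -1  approx W -1/p} and \ref{lemma p le 1 A_Q ^-1 z W^ -1/p}. Second, the reverse H\"older exponent and constant in the scalar theory depend only on the $A_{p\vee1}$ constant and on the doubling data of $(\rn,\rho_A)$. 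This gives a uniform $r(W)$ and $C$. (If the first fact proved delicate for $p\le1$, one can instead use the $A_\infty$ characterization, which follows from the uniform $A_{p\vee1}$ bound.)

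With uniformity in hand, we conclude as follows:
\begin{equation*}
\left(\fint_B\|W^{1/p}M\|^{pr}\right)^{1/(pr)}\lesssim \sum_j\left(\fint_B|W^{1/p}\vec z_j|^{pr}\right)^{1/(pr)}\lesssim\sum_j\left(\fint_B|W^{1/p}\vec z_j|^{p}\right)^{1/p}\lesssim\left(\fint_B\|W^{1/p}M\|^p\right)^{1/p}.
\end{equation*}
The last inequality holds because $|W^{1/p}\vec z_j|\le\|W^{1/p}M\|$.
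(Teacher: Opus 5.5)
Your argument is correct and is essentially the paper's proof. You split $\|W^{1/p}(x)M\|$ into the columns $W^{1/p}(x)M\vec e_j$. You then use that $|W^{1/p}(x)\vec y|^p$ is a scalar $A_{p\vee 1}$ weight uniformly in $\vec y$, so the scalar reverse H\"older inequality holds with exponent and constant independent of $\vec y$, and you sum over $j$.

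Your explicit justification of that uniformity is the right point to flag; the paper only asserts it. The opening detour through $w_i=|W^{1/p}\vec e_i|^p$ is unnecessary and can be dropped.
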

\begin{proof}
	Let $\{\vec e_i\}_{ i=1}^m  $ be an orthonormal basis on $\mathbb C^m$.  Then for any matrix $B \in \M_m (\mathbb C)$, $ \|B\| \approx \sum_{i=1}^m |B \vec e_i | $.  By Lemma \ref{lemma W 1/p e is scalar weight},  for all $\vec y \in \mathbb C^m$, the scalar weight $|W^{1/p} (x) \vec y |^p $ are uniformly in  $A_{p \vee 1}$. 
	Hence, the following reverse H\"older condition holds: there exists $\gamma >0 $ such that 
	\begin{equation*}
		\left( \fint_Q |W^{1/p} (x)  \vec y |^{p (1+\gamma)} \d x  \right)^{1/ (1+\gamma)} \lesssim  \fint_Q |W^{1/p} (x)  \vec y |^{p } \d x,
	\end{equation*}
	where the the implicit positive constant is independent of $\vec y $ and $Q \in \Q$. Let $r = r(W) = 1+\gamma$ and 
	 we have
\begin{align*}
		\left(  \fint_B \|W^{1/p} (x) M\|^{pr} \d x \right) ^{1/(rp)}  & \approx \sum_{i=1}^m 	\left(  \fint_B |W^{1/p} (x) M \vec e_i|^{pr} \d x \right) ^{1/(rp)} \\
		& \lesssim \sum_{i=1}^m 	\left(  \fint_B |W^{1/p} (x) M \vec e_i|^{p} \d x \right) ^{1/p}\\
		& \approx 	\left(  \fint_B \|W^{1/p} (x) M \|^{p} \d x \right) ^{1/p}.
\end{align*}
Thus the proof is complete.
\end{proof}

\begin{proposition}
	Let $A$ be a dilation.
		Let $ p\in(0,\infty)$, $W \in \A_p$. Then there exists  $d_W \in [0,1 )$ such that $W \in \mathbb D_{p,d_W, A}$.
\end{proposition}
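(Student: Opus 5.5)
The plan is to reduce the claim to a doubling-type estimate for the scalar weights $w_{\vec z}(x):=|W^{1/p}(x)\vec z|^p$, which by Lemma \ref{lemma W 1/p e is scalar weight} lie in $A_{p\vee 1}$ uniformly in $\vec z$, and then to use the reverse H\"older inequality (Lemma \ref{lemma reverse holder matrix}) to gain a power strictly less than one. I will work with the reducing-operator characterization: by the proposition immediately preceding Lemma \ref{lemma W 1/p e is scalar weight}, it suffices to find $d\in[0,1)$ such that
\begin{equation*}
\| A_B A_{|\det A|^i B}^{-1}\|^p \lesssim |\det A|^{id}
\end{equation*}
for all $B\in\B$ and $i\in\mathbb Z_+$.

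First I set $M:=A_{|\det A|^iB}^{-1}$, which is positive definite, and write $\tilde B:=|\det A|^iB$. By Lemma \ref{lemma reduce replaced z by M},
\begin{equation*}
\|A_B M\|^p\approx \fint_B\|W^{1/p}(x)M\|^p\,\d x \le \frac{|\tilde B|}{|B|}\fint_{\tilde B}\|W^{1/p}(x)M\|^p\,\d x\approx |\det A|^{i}\,\|A_{\tilde B}M\|^p = |\det A|^i .
\end{equation*}
This trivial bound only gives $d=1$, in line with Proposition \ref{prop baisic dimension}(iii). To improve it, I replace the trivial containment step by H\"older's inequality with exponent $r=r(W)>1$ from Lemma \ref{lemma reverse holder matrix}, applied on $\tilde B$:
\begin{equation*}
\int_B\|W^{1/p}M\|^p \le |B|^{1-1/r}\Big(\int_{\tilde B}\|W^{1/p}M\|^{pr}\Big)^{1/r}=|B|^{1-1/r}|\tilde B|^{1/r}\Big(\fint_{\tilde B}\|W^{1/p}M\|^{pr}\Big)^{1/r}.
\end{equation*}
By the reverse H\"older inequality the last factor is $\lesssim \fint_{\tilde B}\|W^{1/p}M\|^p\approx\|A_{\tilde B}M\|^p=1$. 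Dividing by $|B|$ gives
\begin{equation*}
\|A_BM\|^p\lesssim (|\tilde B|/|B|)^{1/r}=|\det A|^{i/r},
\end{equation*}
so I can take $d_W:=1/r(W)\in(0,1)$.

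The main obstacle is checking that Lemma \ref{lemma reverse holder matrix} really holds uniformly over all $\rho_A$-balls $B\in\B$ with a single exponent $r(W)$ and a constant independent of $B$. Its proof passes through the scalar reverse H\"older inequality for $A_{p\vee1}$ weights on dilated cubes, with constants uniform in $\vec y$. I would transfer this to balls via Lemma \ref{lemma basic cube ball cover} and the ball/cube equivalence remark, which requires the scalar $A_\infty$ theory on the space of homogeneous type $(\rn,\rho_A,\d x)$. I also need $M$ nonnegative definite, which holds here. A minor point is the case $p\in(0,1]$, where the reducing operator characterization from the previous proposition applies verbatim, so no separate argument is needed.
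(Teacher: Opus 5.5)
Your argument is correct and is essentially the paper's proof: H\"older's inequality with the reverse H\"older exponent $r(W)$, enlarging $B$ to $|\det A|^iB$, and then applying Lemma \ref{lemma reverse holder matrix} on the larger ball to get $d_W=1/r(W)$. For $p\in(1,\infty)$ the paper does exactly your computation with $M=A_{|\det A|^iB}^{-1}$. For $p\in(0,1]$ it instead works directly with $M=W^{-1/p}(y)$ and closes with $[W]_{\A_p}$. Your route through the reducing-operator characterization is therefore slightly more uniform, but it depends on that proposition, whose proof the paper omits.
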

\begin{proof}
	Let $r = r(W) >1$ is the same as in Lemma \ref{lemma reverse holder matrix}. Next we consider two cases on $p$.
	
	Case $p \in (0,1]$.
	By H\"older's inequality and Lemma \ref{lemma reverse holder matrix}, we have
	\begin{align*}
		\esssup_{y \in |\det A|^i B} \fint_B \| W^{1/p} (x) W^{-1/p} (y) \| ^p \d x  & \lesssim  \esssup_{y \in |\det A|^i B}   \left( \fint_B \| W^{1/p} (x) W^{-1/p} (y) \| ^{pr} \d x \right)^{1/r}  \\
		& \lesssim   \esssup_{y \in |\det A|^i B}     \left( |\det A|^i  \fint_{ |\det A|^i B } \| W^{1/p} (x) W^{-1/p} (y) \| ^{pr} \d x \right)^{1/r}  \\
		& \lesssim |\det A|^{i/r} \esssup_{y \in |\det A|^i B}     \fint_{ |\det A|^i B } \| W^{1/p} (x) W^{-1/p} (y) \| ^{p} \d x  \\
		&\le |\det A|^{i/r}  [W]_{\A_p} .
	\end{align*}
Hence $W$ has the $\A_p$-dimension $1/r \in [0,1)$ associated with the dilation $A$.

Case $ p\in (1,\infty).$ For any $\rho_A$-balls $B$, let $A_B $  be a reducing operator of order $p$ for $W$.
From Lemmas  \ref{lemma matrix norm AB=BA}, \ref{lemma p> 1 A_Q -1  approx W -1/p}, \ref{lemma reverse holder matrix}, \ref{lemma reduce replaced z by M},  and H\"older's inequality, we obtain

\begin{align*}
	\fint_B   \left(  \fint_{|\det A|^i B}   \|  W^{1/p} (x)  W^{-1/p} (y) \| ^{p'} \d y \right) ^{p/p'}  \d x &
	=  \fint_B    \left(  \fint_{|\det A|^i B}   \|    W^{-1/p} (y) W^{1/p} (x) \| ^{p'} \d y \right) ^{p/p'}  \d x \\
	&\approx  \fint_B       \|   A_{ |\det A|^i B}^{-1}  W^{1/p} (x) \|  ^{p}  \d x  \\
	&=\fint_B       \|   W^{1/p} (x)   A_{ |\det A|^i B}^{-1}  \|  ^{p}  \d x \\
	&\le \left(  \fint_B       \|   W^{1/p} (x)   A_{ |\det A|^i B}^{-1}  \|  ^{p r}  \d x\right)^{1/r}\\
	& \lesssim |\det A|^{i/r}  \left(  \fint_{|\det A|^i B}       \|   W^{1/p} (x)   A_{ |\det A|^i B}^{-1}  \|  ^{p r}  \d x\right)^{1/r}\\
	& \lesssim |\det A|^{i/r}  \fint_{|\det A|^i B}       \|   W^{1/p} (x)   A_{ |\det A|^i B}^{-1}  \|  ^{p }  \d x\\
	& \approx |\det A|^{i/r} .
\end{align*}
Hence $W$ has the $\A_p$-dimension $1/r \in [0,1)$ associated with the dilation $A$. Thus we finish the proof.
\end{proof}
\begin{proposition}
	Let $A$ be a dilation.
	Let $ p\in(0,\infty)$, $W \in \A_p$.
	
	{\rm (i)} If $p\in (0,1] $, then for any $i \in \mathbb Z_+$,
		\begin{align*}
		\esssup_{y \in  B} \fint_{|\det A|^i B} \| W^{1/p} (x) W^{-1/p} (y) \| ^p \d x \lesssim 1.
	\end{align*}

{\rm (ii)} If  $ p\in(1,\infty)$ and $d\in \mathbb R$, then for any $i \in \mathbb Z_+$,
\begin{equation} \label{Bi Q le det B id}
 	\fint_{ |\det A|^i B}    \left(  \fint_{B }   \|  W^{1/p} (x)  W^{-1/p} (y) \| ^{p'} \d y \right) ^{p/p'}  \d x  \lesssim |\det A|^{id}
\end{equation}
if and only if the dual wight $ \widetilde W= W ^{-1/(p-1)}  \in \A_{p'} $ has the   $\A_{p'}$-dimension $d / (p-1)$ associated with the dilation $A$.
\end{proposition}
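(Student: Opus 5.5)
For (i), I will reduce everything to the $\A_p$ condition on the single larger ball $|\det A|^i B$. For a.e. $y\in B\subset |\det A|^i B$ we have
\begin{equation*}
\fint_{|\det A|^i B}\|W^{1/p}(x)W^{-1/p}(y)\|^p\,\d x\le \esssup_{y'\in |\det A|^iB}\fint_{|\det A|^iB}\|W^{1/p}(x)W^{-1/p}(y')\|^p\,\d x .
\end{equation*}
The right-hand side is at most $[W]_{\A_p(\rn,\mathbb C^m,A,\B)}$. By the Remark following the definition of $\A_p$, this ball constant is $\lesssim [W]_{\A_p}$. This gives the bound $\lesssim 1$ uniformly in $i$. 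The only point to keep in mind is that the essential supremum over the smaller set $B$ is dominated by the one over $|\det A|^iB$, which is immediate.

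For (ii), the plan is to rewrite both sides via reducing operators and compare them. Let $A_E$ be the reducing operators of order $p$ for $W$ and $\tilde A_E$ those of order $p'$ for $\widetilde W$. Note that $\widetilde W^{1/p'}=W^{-1/p}$ and $\widetilde W^{-1/p'}=W^{1/p}$. We also have $\widetilde W\in\A_{p'}$ by Lemma \ref{lemma p> 1 A_Q -1  approx W -1/p}, stated for cubes but transferable to balls by the Remark.

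First, I handle the left-hand side of (\ref{Bi Q le det B id}). Using Lemma \ref{lemma matrix norm AB=BA} to swap the factors, and Lemma \ref{lemma reduce replaced z by M} (order $p'$ for $\widetilde W$, with $M=W^{1/p}(x)$), the inner average becomes $\|\tilde A_B W^{1/p}(x)\|^{p}$. Swapping again and applying Lemma \ref{lemma reduce replaced z by M} for $W$ on $|\det A|^iB$ with $M=\tilde A_B$, the left-hand side is $\approx \|A_{|\det A|^iB}\tilde A_B\|^p$. Next, the dimension condition for $\widetilde W$ with exponents $p'$ and $(p')'=p$ reads
\begin{equation*}
\fint_B\Big(\fint_{|\det A|^iB}\|W^{-1/p}(x)W^{1/p}(y)\|^{p}\,\d y\Big)^{p'/p}\d x\lesssim |\det A|^{id/(p-1)}.
\end{equation*}
By the same two lemmas, its left side is $\approx \|\tilde A_B A_{|\det A|^iB}\|^{p'}$.

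Since both $A_E$ and $\tilde A_E$ are positive definite, Lemma \ref{lemma matrix norm AB=BA} gives $\|A_{|\det A|^iB}\tilde A_B\|=\|\tilde A_B A_{|\det A|^iB}\|=:N$. So (\ref{Bi Q le det B id}) is equivalent to $N^p\lesssim |\det A|^{id}$, that is $N\lesssim|\det A|^{id/p}$. The dimension condition is equivalent to $N^{p'}\lesssim |\det A|^{id/(p-1)}$, that is $N\lesssim |\det A|^{id/(p(p-1))\cdot (p-1)}=|\det A|^{id/p}$, because $p'/p\cdot 1/(p-1)$ rescales consistently: $id/(p-1)\cdot 1/p'=id/p$. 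The two conditions therefore coincide, with constants depending on $m,p,[W]_{\A_p}$.

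The main obstacle is bookkeeping rather than analysis. Each application of Lemma \ref{lemma reduce replaced z by M} must use the correct weight, order and set; in particular the inner average is over $B$ in (\ref{Bi Q le det B id}) but over $|\det A|^iB$ in the dual condition, and this is exactly what makes the two products $A_{|\det A|^iB}\tilde A_B$ match. I would also check that the reducing-operator equivalences for balls need no $\A_p$ assumption beyond what Lemma \ref{lemma reduce replaced z by M} provides, since that lemma holds for any matrix weight. Finally, the case $d<0$ needs no separate treatment: both sides are then simultaneously impossible, by the same equivalence combined with Proposition \ref{prop baisic dimension}(i).
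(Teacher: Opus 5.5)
Your proposal is correct and follows the paper's proof. Part (i) is the same domination of the essential supremum over $B$ by the one over $|\det A|^iB$, bounded by the $\A_p$ constant. Part (ii) is the paper's comparison of the double average in (\ref{Bi Q le det B id}) with its dual counterpart for $\widetilde W$. You make this explicit by identifying both with powers of $\|A_{|\det A|^iB}\tilde A_B\| = \|\tilde A_B A_{|\det A|^iB}\|$ via Lemmas \ref{lemma matrix norm AB=BA} and \ref{lemma reduce replaced z by M}, which gives $d/p=\tilde d/p'$, i.e.\ $\tilde d=d/(p-1)$.
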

\begin{proof}
	(i)
	Note that 
	\begin{align*}
		\esssup_{y \in  B} \fint_{|\det A|^i B} \| W^{1/p} (x) W^{-1/p} (y) \| ^p \d x \le \esssup_{y \in |\det A|^i B} \fint_{|\det A|^i B} \| W^{1/p} (x) W^{-1/p} (y) \| ^p \d x \le  [W]_{\A_p} .
	\end{align*}
(ii)
  Let  $\{ A_B\}_{B\in\B} $ be the reducing operator of order $p$ for $W$  and $\{ \widetilde A_B\}_{B\in\B} $ be the reducing operator of order $p'$ for $\widetilde W= W ^{-1/(p-1)}$. Then for any $\rho_A$-balls $R,B$, we have 
\begin{align*}
	\left(  	\fint_R    \left(  \fint_{B }   \|  W^{1/p} (x)  W^{-1/p} (y) \| ^{p'} \d y \right) ^{p/p'}  \d x \right)^{1/p}  \approx \left(  	\fint_Q    \left(  \fint_{R}   \|  \widetilde W^{1/p'} (y)  \widetilde{W}^{-1/p'} (x) \| ^{p} \d x \right) ^{p'/p}  \d y \right)^{1/p'} .
\end{align*}

For $R = |\det A|^i B$, equation (\ref{Bi Q le det B id}) is equivalent to the boundedness of the left-hand side above by $C |\det A |^{ id /p }  $. On the other hand, the condition that $\widetilde{W}$ has the   $\A_{p'}$-dimension $\tilde d$ associated with the dilation $A$ is equivalent to the boundedness of the right-hand side above by  $C  |\det A |^{ i  \tilde d /p' } $. Since both sides are comparable to each other, it follows that (\ref{Bi Q le det B id}) holds with dimension $ d$ if
and only if $ \widetilde{W} $ has the   $\A_{p'}$-dimension $\tilde d$ associated with the dilation $A$ such that $ d/p = \tilde d / p' $.
\end{proof}

\begin{lemma} \label{lemma A B_0  AB_1 estimate}
	Let $p\in (0,\infty)$.
	Let $W \in \A_p$ have the $\A_{p}$-dimension $ d \in [0,1)$, and let $\{A_B\}_{B\in \B}$ be a family of reducing operators of order $p$ for $W$.
	If $p\in (1,\infty)$, let further $\widetilde W : =W^{-1/(p-1)}$ (which belongs to $\A_{p'}$) have the $\A_{p'}$-dimension $ \tilde d \in [0,1)$, while if $p\in (0,1] $, let $ \tilde d =0 $. 
	Let $\Delta : =d/p + \tilde d/p' $. Then there exists a  constant $C>0$ such that, for any $\rho_A$-balls   $B_0, B_1 \in \B $,
	\begin{align*}
		\| A_{B_0} A_{B_1}  ^{-1} \|  \le C \max\left\{ \left( \frac{ r_{B_1}  }{   r_{B_0}   }\right) ^{d/p}  , \left( \frac{ r_{B_0} }{   r_{B_1} }\right) ^{ \tilde d/p'} \right\}  \left( 1 +  \frac{ \rho_A ( c_{B_0} -  c_{B_1}  ) }{ r_{B_0}  \vee r_{B_1} } \right)^{ \Delta } .
	\end{align*}
\end{lemma}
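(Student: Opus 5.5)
The plan is to reduce the estimate to two nested configurations and then join them through a common enlarged ball, following the Euclidean argument of Bu et al.

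\emph{Step 1 (nested balls with the same center).} Let $B\in\B$ and $i\in\mathbb Z_+$. By the equivalence proposition above, the $\A_p$-dimension $d$ of $W$ gives
\begin{equation*}
\| A_B A_{|\det A|^iB}^{-1}\| \lesssim |\det A|^{id/p}.
\end{equation*}
For the reverse direction we need a bound on $\| A_{|\det A|^iB} A_B^{-1}\|$.

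For $p\in(1,\infty)$, Lemma \ref{lemma p> 1 A_Q -1  approx W -1/p} (read with balls) gives $A_E^{-1}\approx \tilde A_E$ as norms. Combining this with Lemma \ref{lemma matrix norm AB=BA}, we get
\begin{equation*}
\|A_{|\det A|^iB}A_B^{-1}\| \approx \|\tilde A_B \tilde A_{|\det A|^iB}^{-1}\|.
\end{equation*}
Applying the same proposition to $\widetilde W$ with dimension $\tilde d$ then bounds this by $|\det A|^{i\tilde d/p'}$.

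For $p\in(0,1]$ we have $\tilde d=0$, and $\|A_{|\det A|^iB}A_B^{-1}\|\lesssim 1$ should follow directly. By Lemmas \ref{lemma reduce replaced z by M} and \ref{lemma p le 1 A_Q ^-1 z W^ -1/p},
\begin{equation*}
\|A_{|\det A|^iB}A_B^{-1}\|^p\approx \fint_{|\det A|^iB}\|W^{1/p}(x)A_B^{-1}\|^p\,\d x\lesssim \esssup_{y\in B}\fint_{|\det A|^iB}\|W^{1/p}(x)W^{-1/p}(y)\|^p\,\d x ,
\end{equation*}
where passing from $A_B^{-1}$ to the essential supremum uses $\|MA_B^{-1}\|=\|A_B^{-1}M^*\|$ with $M^*$ Hermitian. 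Part (i) of the last proposition then bounds the right-hand side by a constant.

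The radii in this step are only powers of $|\det A|$. To reach arbitrary radii, replace a ball by a concentric ball of comparable radius; by the doubling property \eqref{eq  rho A B approx r} together with the definition of reducing operators, this changes $A_B$ only up to constants.

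\emph{Step 2 (general balls).} Let $B_0,B_1$ be arbitrary and set $r:=r_{B_0}\vee r_{B_1}$ and $t:=1+\rho_A(c_{B_0}-c_{B_1})/r$. Choose a ball $B^\ast$ centered at $c_{B_0}$ with radius $\approx r t$, large enough (via the quasi-triangle inequality with constant $C$) to contain a concentric enlargement of both $B_0$ and $B_1$. We also want $B^\ast$ comparable to a concentric dilate of $B_1$, namely the ball with center $c_{B_1}$ and radius $\approx rt$. Then write
\begin{equation*}
\|A_{B_0}A_{B_1}^{-1}\|\le \|A_{B_0}A_{B^\ast}^{-1}\|\,\|A_{B^\ast}A_{B_1}^{-1}\|.
\end{equation*}
The first factor is at most $(rt/r_{B_0})^{d/p}$ by the first half of Step 1. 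The second factor is at most $(rt/r_{B_1})^{\tilde d/p'}$ by the second half, after swapping $B^\ast$ for the concentric enlargement of $B_1$ at comparable cost.

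\emph{Step 3 (case check).} If $r_{B_1}\ge r_{B_0}$, the product is $(r_{B_1}/r_{B_0})^{d/p}\,t^{d/p+\tilde d/p'}$. The case $r_{B_0}>r_{B_1}$ is symmetric and yields $(r_{B_0}/r_{B_1})^{\tilde d/p'}t^{\Delta}$. In both cases the bound is at most the maximum in the statement times $t^\Delta$, as claimed.

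The main obstacle is the comparison between balls that are not nested, i.e.\ the swap of $B^\ast$ for a dilate of $B_1$. The problem is that $\rho_A$-balls of comparable radius whose centers are close are only mutually contained after enlargement by a fixed factor $|\det A|^{k_0}$. Such an enlargement costs a constant, by Step 1 with bounded $i$, but the argument must be arranged so that it is applied only a bounded number of times and the constants stay uniform.
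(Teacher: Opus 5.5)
Your proof is correct, but it is organized differently from the paper's.

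\textbf{The paper's route.} It first treats intersecting balls directly from the integral conditions.
\begin{itemize}
\item For $p\in(0,1]$ it factors $\|A_{B_0}A_{B_1}^{-1}\|^p\le\|A_{B_0}W^{-1/p}(y)\|^p\|W^{1/p}(y)A_{B_1}^{-1}\|^p$. It bounds the first factor by $\lambda^d$ using $B_1\subset\lambda B_0$ and the definition of $\mathbb D_{p,d,A}$, then averages over $y\in B_1$.
\item For $p\in(1,\infty)$ it uses H\"older's inequality and the identification \eqref{eq A_B 1 A B2 -1 approx} of $\|A_{B_0}A_{B_1}^{-1}\|$ with a double average. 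It then enlarges either the inner ball (dimension $d$ of $W$) or the outer ball (dimension $\tilde d$ of the dual weight $\widetilde W$).
\item Disjoint balls are reduced to the intersecting case through a single ball $B_2$ containing both.
\end{itemize}

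\textbf{Your route.} You prove only the concentric estimates and then always factor through concentric enlargements of $B_0$ and of $B_1$. One direction comes from the reducing-operator characterization of $\mathbb D_{p,d,A}$. The other comes from transferring to $\widetilde W$ via $\|A_E\tilde A_E\|\lesssim1$, $\|A_E^{-1}\tilde A_E^{-1}\|\lesssim1$ and Lemma \ref{lemma matrix norm AB=BA}.

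\textbf{Comparison.} Your route avoids the intersecting/disjoint case split. It also makes transparent why $d/p$ governs one direction and $\tilde d/p'$ the other. The cost is reliance on the reducing-operator characterization of the dimension, whose proof the paper omits, and on the duality between $A_E$ and $\tilde A_E$. The paper instead works straight from the integral definitions.

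\textbf{A slip in Step 1, case $p\in(0,1]$.} Write $F:=|\det A|^iB$. Moving $A_B^{-1}$ into an essential supremum pointwise in $x$ yields $\fint_F\esssup_{y\in B}\|W^{1/p}(x)W^{-1/p}(y)\|^p\,\d x$. This dominates $\esssup_{y\in B}\fint_F\|W^{1/p}(x)W^{-1/p}(y)\|^p\,\d x$ rather than being dominated by it. So the displayed inequality does not follow as you justified it, and part (i) of the proposition does not apply as written. The fix is to apply the lemmas in the other order:
\begin{itemize}
\item first $\|A_FA_B^{-1}\|=\|A_B^{-1}A_F\|\approx\esssup_{y\in B}\|A_FW^{-1/p}(y)\|$, by Lemmas \ref{lemma matrix norm AB=BA} and \ref{lemma p le 1 A_Q ^-1 z W^ -1/p};
\item only then Lemma \ref{lemma reduce replaced z by M}.
\end{itemize}
This is exactly the order the paper uses in the proof of Lemma \ref{lemma cube AQ AR-1}, and it puts the supremum outside the integral.

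\textbf{The obstacle you single out is harmless.} Take $B_1^\ast$ concentric with $B_1$, of radius $|\det A|^{i'}r_{B_1}\approx rt$, chosen via the quasi-triangle inequality so that $B^\ast\subset B_1^\ast$. Then $|B_1^\ast|\lesssim|B^\ast|$, and $\|A_{B^\ast}A_{B_1^\ast}^{-1}\|\lesssim1$ follows directly from the definition of reducing operators. So only one containment is ever needed. If you also take $B^\ast=|\det A|^{i}B_0$, no other comparison of balls is required.

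Relatedly, Lebesgue doubling by itself gives only $|A_B\vec z|\lesssim|A_{B'}\vec z|$ for $B\subset B'$ of comparable size. The reverse comparison needs the $\A_p$ condition (your Step 1 with bounded $i$).
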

\begin{proof}
	First consider $B_0 \cap B_1 \neq \emptyset$. Then $B_1 \subset \lambda B_0 $ where $ \lambda \approx  \max \{   r_{B_1} / r_{B_0} , 1 \} $.
	
	If $ p \in (0,1]$, for  a.e. $y \in B_1$, we have
		\begin{equation} \label{eq AQ AR p le 1}
		\| A_{B_0} A_{B_1}  ^{-1} \|^p  \le 	\| A_{B_0} W^{-1/p} (y)  \|^p  \|  W^{1/p} (y)A_{B_1}  ^{-1} \|^p.
	\end{equation}
	By Lemma \ref{lemma reduce replaced z by M} and Definition \ref{def A_p dimension}, we obtain
	\begin{equation*}
		\|A_{B_0}  W^{-1/p} (y)  \|^p \approx \fint_{B_0} \| W^{1/p} (x)  W^{-1/p} (y) \|^p \d x \le C \lambda^d  
	\end{equation*}
since $y \in B_1 \subset \lambda B_0$.
	Taking an integral average of (\ref{eq AQ AR p le 1}) over $y \in B_1$ and using Lemma \ref{lemma reduce replaced z by M}, we get
	\begin{equation*}
			\| A_{B_0} A_{B_1}  ^{-1} \|^p \lesssim \lambda^d   \fint_{B_1} \|W^{1/p} (y) A_{B_1} ^{-1} \|^p \d y  \approx \lambda^d  .
	\end{equation*}
	
	If $ p \in (1,\infty)$, using H\"older's inequality, we have
	\begin{align*}
		\| A_{B_0} A_{B_1}  ^{-1} \| & \le \fint_{B_0} 	\| A_{B_0}  W^{-1/p} (y)  \|  \|  W^{1/p} (y) A_{B_1}  ^{-1}  \| \d y \\
		& \le \left(   \fint_{B_0} 	\| A_{B_0} W^{-1/p} (y)  \|^{p'}  \d y  \right)^{1/p'} \left(   \fint_{B_0} 	\|  W^{1/p} (y) A_{B_1}^{-1} \| ^{p}  \d y  \right)^{1/p} .
	\end{align*}
From Lemmas \ref{lemma matrix norm AB=BA} and \ref{lemma p> 1 A_Q -1  approx W -1/p}, we have	
	\begin{equation*}
		\left(   \fint_{B_0}	\| A_{B_0}  W^{-1/p} (y)  \|^{p'}  \d y  \right)^{1/p'} \lesssim 1.
	\end{equation*}
	Using Lemmas \ref{lemma matrix norm AB=BA} and \ref{lemma p> 1 A_Q -1  approx W -1/p}, we obtain 
	\begin{align} \label{eq A_B 1 A B2 -1 approx}
	I_2:=	\left(   \fint_{B_0} 	\|  W^{1/p} (y) A_{B_1}^{-1} \| ^{p}  \d y  \right)^{1/p} &\approx \|A_{B_0} A_{B_1}^{-1} \| \approx \left(  \fint_{B_0}  \left(   \fint_{B_1}	\|  W^{1/p} (y) W^{-1/p} (x) \| ^{p'}  \d x  \right)^{p/p'}  \d y  \right)^{1/p}   .
	\end{align}
If $r_{B_1} \ge r_	{B_0} $, then $ B_1 \subset  a B_0$ and $|B_1| \approx |aB_0|$ where $a\approx   r_{B_1} / r_{B_0}$. Hence 
\begin{align*}
	I_2  & \lesssim  \left(  \fint_{B_0}  \left(   \fint_{ a B_0 }	\|  W^{1/p} (y) W^{-1/p} (x) \| ^{p'}  \d x  \right)^{p/p'}  \d y  \right)^{1/p}  \\
	&\lesssim  a^{d/p } \approx \left( \frac{ r_{B_1} }{ r_{B_0}}\right) ^{d/p} .
\end{align*}
If $r_{B_1} \le r_	{B_0} $, then $ B_0 \subset  b B_1$   and $|B_0 | \approx |b B_1|  $ where $b \approx   r_{B_0} / r_{B_1}$. Hence 
\begin{align*}
	I_2  & \lesssim  \left(  \fint_{b B_1}  \left(   \fint_{ B_0 }	\|  W^{1/p} (y) W^{-1/p} (x) \| ^{p'}  \d x  \right)^{p/p'}  \d y  \right)^{1/p}  \\
	&\lesssim  b^{ \tilde d/p' }= \left( \frac{ r_{B_0} }{ r_{B_1}}\right) ^{\tilde d/p'} .
\end{align*}

	Thus, for $p\in (0,\infty)$, $B_0 \cap  B_1 \neq \emptyset$,  we have
	\begin{equation*}
		\| A_{B_0} A_{B_1}  ^{-1} \| \lesssim \begin{cases}
			\left( \frac{ r_{B_1} }{ r_{B_0}}\right) ^{d/p} & r_{B_1} \ge r_	{B_0} ,\\
			\left( \frac{ r_{B_0} }{ r_{B_1}}\right) ^{\tilde d/p'}  & r_{B_1} \le r_	{B_0}   .
		\end{cases}
	\end{equation*}

Case $B_0 \cap  B_1 =\emptyset$.  Chose the smallest $\rho_A$-ball $B_2$ such that $B_0 \subset B_2 $ and $B_1 \subset B_2 $. Then $r _{B_2} \approx  r_{B_1} +r_{B_0} + \rho_A ( c_{B_0} -  c_{B_1}  )$ from some geometrical observations. From this, 
\begin{align*}
	\| A_{B_0} A_{B_1}  ^{-1} \| & \le \| A_{B_0} A_{B_2}  ^{-1} \| 	\| A_{B_2} A_{B_1}  ^{-1} \| \\
	&\lesssim \left( \frac{ r_{B_2} }{ r_{B_0}}\right) ^{d/p} \left( \frac{ r_{B_2} }{ r_{B_1}}\right) ^{\tilde d/p'} \\
	& =   \left( \frac{ r_{B_0}  \vee r_{B_1}  }{   r_{B_0}   }\right) ^{d/p}  \left( \frac{ r_{B_0}  \vee r_{B_1}  }{   r_{B_1} }\right) ^{ \tilde d/p'} \left( \frac{ r_{B_2} }{   r_{B_0}  \vee r_{B_1} }\right) ^{d/p + \tilde d/p'} \\
	& \approx \max\left\{ \left( \frac{ r_{B_1}  }{   r_{B_0}   }\right) ^{d/p}  , \left( \frac{ r_{B_0} }{   r_{B_1} }\right) ^{\tilde d/p'} \right\}  \left( 1 +  \frac{ \rho_A ( c_{B_0} -  c_{B_1}  ) }{ r_{B_0}  \vee r_{B_1} } \right)^{ d/p + \tilde d/p' } .
\end{align*}
Thus the proof is completed.
\end{proof}

\begin{lemma} \label{lemma cube AQ AR-1}
		Let $p\in (0,\infty)$.
	Let $W \in \A_p$ have the $\A_{p}$-dimension $ d \in [0,1)$, and let $\{A_B\}_{B\in \B}$ be a family of reducing operators of order $p$ for $W$.
	If $p\in (1,\infty)$, let further $\widetilde W : =W^{-1/(p-1)}$ (which belongs to $\A_{p'}$) have the $\A_{p'}$-dimension $ \tilde d \in [0,1)$, while if $p\in (0,1] $, let $ \tilde d =0 $. 
		Let $\Delta : =d/p + \tilde d/p' $. Then there exists a  constant $C>0$ such that, for any dilated cubes $Q, R \in \Q$,
	\begin{align*}
		\| A_{Q} A_{R}  ^{-1} \|  \lesssim \max\left\{ \left( \frac{ |R|  }{   |Q|   }\right) ^{d/p}  , \left( \frac{ |Q| }{   |R| }\right) ^{ \tilde  d/p'} \right\}  \left( 1 +  \frac{ \rho_A ( c_{Q} -  c_{R}  ) }{ |Q|  \vee |R| } \right)^{ \Delta } .
	\end{align*}
\end{lemma}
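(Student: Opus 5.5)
The plan is to reduce Lemma \ref{lemma cube AQ AR-1} to the ball version, Lemma \ref{lemma A B_0  AB_1 estimate}. By Lemma \ref{lemma basic cube ball cover}(i), every $Q\in\Q$ is sandwiched between two concentric $\rho_A$-balls,
\[
B_0(Q)\subset Q\subset B_1(Q), \qquad B_i(Q)=B_{\rho_A}\bigl(c_Q,|Q||\det A|^{\pm C_\B}\bigr),
\]
whose measures are comparable to $|Q|$ with constants depending only on $A$.

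The first step is to show that $\|A_Q\vec z\|$ and $\|A_{B_1(Q)}\vec z\|$ are comparable, uniformly in $\vec z\in\mathbb C^m$. By \eqref{eq reducing matrix}, $|A_E\vec z|^p\approx\fint_E|W^{1/p}(x)\vec z|^p\,\d x$. Since $Q\subset B_1(Q)$ and $|B_1(Q)|\approx|Q|$, we get $|A_Q\vec z|\lesssim|A_{B_1(Q)}\vec z|$ directly.

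For the reverse inequality, I use that $|W^{1/p}(\cdot)\vec z|^p$ is a scalar $A_{p\vee1}$ weight with constants uniform in $\vec z$ (Lemma \ref{lemma W 1/p e is scalar weight}). Such a weight is doubling, uniformly in $\vec z$, with respect to $\rho_A$-balls and dilated cubes. Since $B_1(Q)$ is contained in a fixed dilate of $B_0(Q)\subset Q$, doubling gives $\int_{B_1(Q)}\lesssim\int_Q$.

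An alternative route for the reverse inequality, avoiding the scalar theory, uses Lemma \ref{lemma A B_0  AB_1 estimate} itself. It gives $\|A_{B_1(Q)}A_{B_0(Q)}^{-1}\|\lesssim 1$, because the radii are comparable and the centers coincide. Combined with $|A_{B_0(Q)}\vec z|\lesssim|A_Q\vec z|$ (from $B_0(Q)\subset Q$), this yields $|A_{B_1(Q)}\vec z|\lesssim|A_Q\vec z|$.

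It follows that $\|A_QM\|\approx\|A_{B_1(Q)}M\|$ for every matrix $M$, since the operator norm is a supremum over vectors. Likewise, $\|MA_Q^{-1}\|\approx\|MA_{B_1(Q)}^{-1}\|$. To see this, write $\|MA_Q^{-1}\|=\|A_Q^{-1}M^\ast\|$, which holds because $A_Q$ is Hermitian. Then use that $|A_Q\vec z|\approx|A_{B}\vec z|$ for all $\vec z$ implies $|A_Q^{-1}\vec w|\approx|A_B^{-1}\vec w|$ for all $\vec w$. This duality of comparable norms holds because the dual norm of $\vec z\mapsto|A\vec z|$ is $\vec w\mapsto|A^{-1}\vec w|$ when $A$ is self-adjoint.

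With these comparisons,
\[
\|A_QA_R^{-1}\|\lesssim\|A_{B_1(Q)}A_R^{-1}\|\lesssim\|A_{B_1(Q)}A_{B_1(R)}^{-1}\|.
\]
Now apply Lemma \ref{lemma A B_0  AB_1 estimate} with $r_{B_1(Q)}\approx|Q|$, $r_{B_1(R)}\approx|R|$, and centers $c_Q$, $c_R$. The ratios $r_{B_1(R)}/r_{B_1(Q)}$ become $|R|/|Q|$ up to fixed constants, which are absorbed when raised to the powers $d/p$ and $\tilde d/p'$. The term $1+\rho_A(c_Q-c_R)/(r_{B_1(Q)}\vee r_{B_1(R)})$ is comparable to $1+\rho_A(c_Q-c_R)/(|Q|\vee|R|)$. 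This gives exactly the claimed bound.

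The main point needing care is the comparability $A_Q\approx A_{B_1(Q)}$ in the operator sense, namely the lower bound for $A_Q$. I would use the scalar doubling argument via Lemma \ref{lemma W 1/p e is scalar weight}, and fall back on the ball lemma applied to the two concentric balls if needed. Everything else is bookkeeping with the constants $|\det A|^{\pm C_\B}$.
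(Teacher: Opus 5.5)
Your proof is correct. It shares the paper's overall architecture: you enclose each cube in a concentric $\rho_A$-ball of comparable measure via Lemma~\ref{lemma basic cube ball cover}(i) and then invoke the ball estimate, Lemma~\ref{lemma A B_0  AB_1 estimate}. The difference is in how you transfer from cubes to balls.

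\textbf{The paper's route.} It never compares $A_Q$ with an individual ball operator. Instead it rewrites $\|A_QA_R^{-1}\|=\|A_R^{-1}A_Q\|$ as a mixed average of $\|W^{1/p}(y)W^{-1/p}(x)\|$ over $Q\times R$. For $p>1$ this is an $L^p$--$L^{p'}$ average obtained from Lemma~\ref{lemma p> 1 A_Q -1  approx W -1/p}; for $p\le 1$ it is an $\esssup$--$L^p$ expression obtained from Lemma~\ref{lemma p le 1 A_Q ^-1 z W^ -1/p}. It then enlarges $Q$ and $R$ to the containing balls, using only positivity of the integrand and comparability of measures, and reads the result back as $\|A_{B_0}A_{B_1}^{-1}\|$. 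This needs only the inclusions, with no doubling. The cost is that it depends on the representation formulas for $A_R^{-1}$ and requires a case split in $p$.

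\textbf{Your route.} You need the two-sided comparability $|A_Q\vec z|\approx|A_{B_1(Q)}\vec z|$. You correctly obtain its nontrivial half in either of two ways. The first is the scalar $A_{p\vee 1}$ property of $|W^{1/p}(\cdot)\vec z|^p$ (Lemma~\ref{lemma W 1/p e is scalar weight}), uniform in $\vec z$, which the paper also uses in its reverse H\"older lemma. The second, more economical, applies the ball lemma to the concentric pair $B_1(Q)\supset B_0(Q)$, where the bound is just the constant $|\det A|^{2C_\B\tilde d/p'}$. Your duality step from $A_Q\approx A_B$ to $A_Q^{-1}\approx A_B^{-1}$ is sound, since both operators are positive definite.

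\textbf{What each buys.} Your argument is uniform in $p$. It also yields a reusable statement the paper's proof does not extract: cube-indexed and ball-indexed reducing operators are interchangeable up to constants.
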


\begin{proof}
	For $Q,R\in \Q$,  by Lemma \ref{lemma basic cube ball cover}, let $B_0, B_1\in \B $ such that 
	$Q \subset B_0$, $R\subset B_1$,   $|Q| \approx |B_0| \approx r_{B_0}$ and  $|R| \approx |B_1| \approx r_{B_1}$.
	
	Case $p \in (1,\infty)$.
	Then by (\ref{eq A_B 1 A B2 -1 approx}) and  Lemma \ref{lemma A B_0  AB_1 estimate}, we obtain
	\begin{align*}
		\| A_{Q} A_{R}  ^{-1} \|  & \approx \left(  \fint_{Q}  \left(   \fint_{R}	\|  W^{1/p} (y) W^{-1/p} (x) \| ^{p'}  \d x  \right)^{p/p'}  \d y  \right)^{1/p} \\
		& \lesssim \left(  \fint_{B_0}  \left(   \fint_{B_1}	\|  W^{1/p} (y) W^{-1/p} (x) \| ^{p'}  \d x  \right)^{p/p'}  \d y  \right)^{1/p} \\
		& \approx \|A_{B_0} A_{B_1}^{-1} \| \\
		& \lesssim \max\left\{ \left( \frac{ |R|  }{   |Q|   }\right) ^{d/p}  , \left( \frac{ |Q| }{   |R| }\right) ^{ \tilde  d/p'} \right\}  \left( 1 +  \frac{ \rho_A ( c_{Q} -  c_{R}  ) }{ |Q|  \vee |R| } \right)^{ \Delta }  .
	\end{align*}
Case $p \in (0,1]$.
By Lemmas \ref{lemma matrix norm AB=BA}, \ref{lemma p le 1 A_Q ^-1 z W^ -1/p}, \ref{lemma A B_0  AB_1 estimate}, we get
\begin{align*}
\| A_{Q} A_{R}  ^{-1} \| & = \| A_{R}  ^{-1} A_{Q}  \| \approx \esssup_{x\in R} \|W^{-1/p} (x) A_Q\| \\
&\approx \esssup_{x\in R}  \left(  \fint_Q   \|W^{-1/p} (x) W^{1/p} (y)\|^p \d y \right)^{1/p} \\
&\lesssim \esssup_{x\in B_1}  \left(  \fint_{B_0}   \|W^{-1/p} (x) W^{1/p} (y)\|^p \d y \right)^{1/p} \\
&\approx \| A_{B_0} A_{B_1}  ^{-1} \|  \\
& \lesssim \max\left\{ \left( \frac{ |R|  }{   |Q|   }\right) ^{d/p}  , \left( \frac{ |Q| }{   |R| }\right) ^{ \tilde  d/p'} \right\}  \left( 1 +  \frac{ \rho_A ( c_{Q} -  c_{R}  ) }{ |Q|  \vee |R| } \right)^{ \Delta }  .
\end{align*}
Thus we finish the proof.
\end{proof}

\begin{lemma} \label{lemma reverse A_Q W}
	Let $ p\in (0,\infty) $, $W \in \A_p$, and $\{ A_Q\}_{Q \in \Q}$ be a sequence of reducing operators of order $p$ for $W$.
	
	{\rm (i)}
If $ p \in (0,1]$, then 
	\begin{equation*}
		\sup_{Q \in \Q} \esssup_{x\in \Q} \|A_Q W ^{-1/p} (x) \| \approx [W]_{\A_p}^{1/p} .
	\end{equation*}
	
		{\rm (ii)}
  If $ p \in (0,  \infty) $, then there exists  $\delta_W >0$  such that for all $r \in (0, p+\delta_W)$
	\begin{equation*}
		\sup_{Q\in \Q}	\fint_Q \| W^{1/p} (x) A_Q ^{-1} \|^{r} \d x   <\infty.
	\end{equation*}
	
		{\rm (iii)}   If $ p \in (0,1]$, then there exists $\delta_W >0$  such that for all $r \in (0, p+\delta_W)$
	\begin{equation*}
		\sup_{Q \in \Q}  \fint_Q  \sup_{P\in Q ^\sharp, x\in P}	\| W^{1/p} (x) A_P^{-1} \| ^r \d x  <\infty.
	\end{equation*}
	
	{\rm (iv)} If $ p \in (1,\infty)$, then there exists a positive constant $\delta_W$, depending only on $n,m,p$ and $[W]_{\A_p}$, such that 	
	for $r \in (0, p' +\delta_W)$
	\begin{equation*}
		\sup_{Q}  \left(  \fint_Q    \|A_Q W ^{-1/p} (x) \| ^r \d x \right)^{1/r}   \lesssim [W]_{\A_p}^{1/p} .
	\end{equation*}

		{\rm (v)}   If $ p \in (1,\infty)$, then there exists $\delta_W >0$   such that for all $r \in (0, p+\delta_W)$
	\begin{equation*}
		\sup_{Q \in \Q}  \fint_Q  \sup_{P\in Q ^\sharp, x\in P}	\| W^{1/p} (x) A_P^{-1} \| ^r \d x  <\infty .
	\end{equation*}
	
	{\rm (vi)} If $ p \in (1,\infty)$, then there exists a positive constant $\delta_W$, depending only on $n,m,p$ and $[W]_{\A_p}$,    such that 	
	for $r \in (0, p' +\delta_W)$
	\begin{equation*}
		\sup_{Q}  \left(  \fint_Q  \sup_{P\in Q ^\sharp, x\in P}  \|A_P W ^{-1/p} (x) \| ^r \d x \right)^{1/r}   <\infty .
	\end{equation*}	
\end{lemma}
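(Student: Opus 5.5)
Each item is obtained from the reducing-operator lemmas together with the reverse H\"older inequality (Lemma \ref{lemma reverse holder matrix}) and the cube comparison of Lemma \ref{lemma cube AQ AR-1}.

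\emph{Item (i).} For $p\in(0,1]$, Lemma \ref{lemma matrix norm AB=BA} gives $\|A_Q W^{-1/p}(x)\| = \|W^{-1/p}(x)A_Q\|$. Lemma \ref{lemma reduce replaced z by M} with $M=W^{-1/p}(x)$ then gives
\begin{equation*}
\|A_Q W^{-1/p}(x)\|^p \approx \fint_Q \|W^{1/p}(y)W^{-1/p}(x)\|^p\,\d y .
\end{equation*}
Taking the essential supremum over $x\in Q$ and the supremum over $Q$ yields exactly $[W]_{\A_p}$.

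\emph{Item (ii).} By Lemma \ref{lemma reduce replaced z by M} with $M=A_Q^{-1}$,
\begin{equation*}
\fint_Q\|W^{1/p}A_Q^{-1}\|^p \approx \|A_QA_Q^{-1}\|^p = 1 .
\end{equation*}
The reverse H\"older inequality (Lemma \ref{lemma reverse holder matrix}, transferred from balls to cubes via Lemma \ref{lemma basic cube ball cover}) with the nonnegative definite matrix $M=A_Q^{-1}$ upgrades the exponent $p$ to $pr(W)$. H\"older's inequality handles all smaller $r$. So $\delta_W = p(r(W)-1)$ works.

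\emph{Item (iv).} Lemma \ref{lemma p> 1 A_Q -1  approx W -1/p} shows $\widetilde W\in\A_{p'}$ with $\widetilde W^{1/p'}=W^{-1/p}$. It also gives $\|A_Q W^{-1/p}(x)\| = \|W^{-1/p}(x)A_Q\|$ and, for the dual reducing operators,
\begin{equation*}
\|\widetilde A_Q A_Q\|\lesssim [W]_{\A_p}^{1/p}.
\end{equation*}
Write $W^{-1/p}A_Q = (\widetilde W^{1/p'}\widetilde A_Q^{-1})(\widetilde A_Q A_Q)$. Item (ii) applied to $\widetilde W$ with exponent $p'$ bounds the $r$-averages of the first factor for $r<p'+\delta$. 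This gives (iv).

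\emph{Items (iii), (v), (vi).} These are the real content. They need control of a supremum over the stacked family $Q^\sharp$.

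First, by Lemma \ref{lemma stacked cube} (or Lemma \ref{lemma use covered Q}), every $P\in Q^\sharp$ satisfies $P\subset \widehat Q := \bigcup_{\ell} Q_\ell$. This is a bounded union of cubes of scale $\scale(Q)$ near $Q$, so $|\widehat Q|\approx|Q|$.

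Second, fix $x$ and let $P\ni x$ run over $Q^\sharp$ at scale $j$. Write
\begin{equation*}
\|W^{1/p}(x)A_P^{-1}\|\le \|W^{1/p}(x)A_{P'}^{-1}\|\,\|A_{P'}A_P^{-1}\|,
\end{equation*}
where $P'$ is a comparison cube. The natural first attempt is $P'=Q_\ell\ni x$, with Lemma \ref{lemma cube AQ AR-1} bounding $\|A_{P'}A_P^{-1}\|$. This fails: since $|P|\le|Q|$, the lemma gives
\begin{equation*}
\|A_{Q_\ell}A_P^{-1}\|\lesssim (|Q|/|P|)^{\tilde d/p'},
\end{equation*}
which grows as $P$ shrinks. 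So a crude bound is not uniform in $P$, and the supremum over infinitely many scales is the obstacle.

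I would instead dominate the supremum by a sum over scales, using smaller exponents. For each scale $j\ge \scale$-level below $Q$, the cubes $P\in\Q_j$ inside $\widehat Q$ tile it, and
\begin{equation*}
\int_{\widehat Q}\sup_{P\ni x}\|W^{1/p}(x)A_P^{-1}\|^r\,\d x \le \sum_j\sum_{P\in\Q_j,\,P\subset\widehat Q}\int_P\|W^{1/p}A_P^{-1}\|^r .
\end{equation*}
Each inner sum is at most $C|\widehat Q|$ by item (ii). This alone diverges in $j$, so a gain is needed.

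For $p\le1$, my main route is to relate $\|W^{1/p}(x)A_P^{-1}\|$ to $\|W^{1/p}(x)W^{-1/p}(y)\|$ for $y\in P$ via Lemma \ref{lemma p le 1 A_Q ^-1 z W^ -1/p}. The maximal operator $\M_{\rho_A}$ is bounded on $L^{s}$ for $s>1$ (Lemma \ref{lemma M weak 1,1 strong p,p}), so taking $s=r(W)$ gives a bound of the form
\begin{equation*}
\sup_{P\ni x}\|W^{1/p}(x)A_P^{-1}\|^p \lesssim \M_{\rho_A}\bigl(\|W^{1/p}(\cdot)\,\vec e\|^p\chi_{\widehat Q}\bigr)(x)\times\text{const},
\end{equation*}
in the spirit of Frazier--Roudenko \cite[Lemma 5.4]{FR04}. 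This is the step I expect to be hardest. The reduction to such a maximal-function bound, applied to the scalar $A_{p\vee1}$ weights of Lemma \ref{lemma W 1/p e is scalar weight}, is the key estimate; once it holds, the $L^{r/p}$ boundedness of $\M_{\rho_A}$ for $r/p>1$ close to $1$, combined with reverse H\"older, gives (iii).

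Items (v) and (vi) then follow the same scheme. For (v) one uses $W$ with the exponent $p$. For (vi) one uses the dual weight $\widetilde W$ with the exponent $p'$, together with the identity from item (iv).
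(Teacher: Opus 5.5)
Your items (i), (ii) and (iv) are fine and match the paper; your factorization $W^{-1/p}A_Q=(\widetilde W^{1/p'}\widetilde A_Q^{-1})(\widetilde A_QA_Q)$ in (iv) is exactly the device the paper uses for (vi). The gap is in (iii), (v) and (vi), for which the proposal contains no proof.

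\textbf{Item (iii).} You reject the direct comparison for a reason that does not apply when $p\in(0,1]$. In that range Lemma~\ref{lemma cube AQ AR-1} is stated with $\tilde d=0$. So for every $P\in Q^\sharp$ (which satisfies $|P|\le|Q|$ and $P\subset\bigcup_\ell Q_\ell$) the lemma gives $\|A_QA_P^{-1}\|\lesssim1$ uniformly in the scale of $P$; the factor $(|Q|/|P|)^{\tilde d/p'}$ is identically $1$. The paper gets this bound directly. By Lemma~\ref{lemma p le 1 A_Q ^-1 z W^ -1/p},
$\|A_QA_P^{-1}\|\approx\esssup_{y\in P}\|W^{-1/p}(y)A_Q\|\le\max_\ell\esssup_{y\in Q_\ell}\|W^{-1/p}(y)A_{Q_\ell}\|\,\|A_{Q_\ell}^{-1}A_Q\|\lesssim[W]_{\A_p}^{1/p}$,
using item (i) and the same-scale comparison. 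Hence $\|W^{1/p}(x)A_P^{-1}\|\le\|W^{1/p}(x)A_Q^{-1}\|\,\|A_QA_P^{-1}\|\lesssim\|W^{1/p}(x)A_Q^{-1}\|$ for every $P\in Q^\sharp$ containing $x$, and (iii) follows at once from (ii).

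The maximal-function bound you propose instead is never proved; you yourself call it the hardest step. Your sketch also gives no mechanism for it. Its left side involves all the operators $A_P$ with $P\ni x$, and $L^{r(W)}$-boundedness of $\M_{\rho_A}$ yields nothing pointwise about them. The natural way to justify it runs through the uniform bound $\|A_QA_P^{-1}\|\lesssim1$ that you discarded, after which $\M_{\rho_A}$ is not needed.

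\textbf{Items (v) and (vi).} For (v) the growth you point out is genuine, since for $p>1$ one may have $\tilde d>0$. A real idea is therefore needed, and the proposal does not supply one. The sum over scales diverges, as you note, and ``the same scheme'' refers back to the unproved maximal bound.

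The paper's mechanism is a stopping-time bootstrap:
\begin{itemize}
\item Assume $\int_Q\sup_{P\in Q^\sharp,\,P\ni x}\|W^{1/p}(x)A_P^{-1}\|^r\,\d x\le B|Q|$ for all $Q$, with some finite $B$.
\item Fix a large threshold $D$ and let $\{R_j\}$ be the maximal cubes in $Q^\sharp$ with $\|A_QA_{R_j}^{-1}\|>D$.
\item Off $\bigcup_jR_j$ the supremum is at most $D\|W^{1/p}(x)A_Q^{-1}\|$, which item (ii) controls.
\item The packing bound $\sum_j|R_j|\le|Q|/2$ for large $D$ comes from three facts: $|R_j|\,\|A_QA_{R_j}^{-1}\|^{p'}\approx\int_{R_j}\|W^{-1/p}A_Q\|^{p'}$ (Lemma~\ref{lemma p> 1 A_Q -1  approx W -1/p}); the $R_j$ are disjoint; and $\int_{Q_\ell}\|W^{-1/p}A_{Q_\ell}\|^{p'}\lesssim|Q|$ on the boundedly many cubes $Q_\ell$, whose union contains every $R_j$.
\item On each $R_j$ one applies the a priori bound over $R_j^\sharp$, which gives $B\le C_D+B/2$.
\end{itemize}

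Your reduction of (vi) to (v), applied to $\widetilde W$ with exponent $p'$, is the same as the paper's. It therefore inherits the gap in (v).
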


\begin{proof}
	Let $\{ \vec  e_i\}_{i=1}^m  $ are the standard unit Euclidean basis vectors in $\mathbb C^m$. Then for any matrix $B \in \M_m (\mathbb C)$, $ \|B\| \approx \sum_{i=1}^m |B \vec e_i | $.
	
	(i) For a.e. $x\in Q $,  we have
	\begin{align*}
		\|A_Q W ^{-1/p} (x) \| & \approx \sum_{i=1}^m |A_Q W ^{-1/p} (x) \vec e_i|  \\
		&\approx \sum_{i=1}^m   \left( \fint_Q  | W^{1/p} (y) W ^{-1/p} (x)  \vec e_i| ^p \d y   \right) ^{1/p} \\
		& \approx \left( \fint_Q  \| W^{1/p} (y) W ^{-1/p} (x) \| ^p \d y   \right) ^{1/p} .
	\end{align*}
	Taking supremum over $Q\in \Q$, we obtain (i).
	
	(ii)   By Lemma \ref{lemma W 1/p e is scalar weight}, for all $\vec y \in \mathbb C^m$, the scalar weight $|W^{1/p} (x) \vec y |^p $ are uniformly in  $A_{p \vee 1 }$. Hence, the following reverse H\"older condition holds: there exists $\gamma >0 $ such that 
	\begin{equation*}
		\left( \fint_Q |W^{1/p} (x)  \vec y |^{p (1+\gamma)} \d x  \right)^{1/ (1+\gamma)} \lesssim  \fint_Q |W^{1/p} (x)  \vec y |^{p } \d x,
	\end{equation*}
	where the  implicit positive constant is independent of $\vec y $ and $Q \in \Q$. Applying the above inequality with $\vec y = A_Q^{-1} \vec e_i$, 
	\begin{align*}
		\fint_Q \| W^{1/p} (x) A_Q ^{-1} \|^{p (1+\gamma)} \d x &\lesssim 	\fint_Q \left(  \sum_{ i =1 }^m |  W^{1/p} (x) A_Q ^{-1} \vec e_i|  \right)^{p (1+\gamma)} \d x \\
		&\lesssim \sum_{ i =1 }^m	\fint_Q \left(   |  W^{1/p} (x) A_Q ^{-1} \vec e_i|  \right)^{p (1+\gamma)} \d x \\
		&\lesssim \sum_{ i =1 }^m	 \left(  \fint_Q \left(   |  W^{1/p} (x) A_Q ^{-1} \vec e_i|  \right)^{p (1+\gamma)} \d x \right)^{1+\gamma} \\
		&\lesssim \sum_{ i =1 }^m	 \left(  \fint_Q \left(   | A_Q A_Q ^{-1} \vec e_i|  \right)^{p (1+\gamma)} \d x \right)^{1+\gamma} <\infty  .
	\end{align*}
	Letting $\delta_W := p \gamma$, we obtain (ii).
	
	(iii) 
	Let $Q, P\in\Q$ and $ P \preceq Q $.
	By Lemmas \ref{lemma p le 1 A_Q ^-1 z W^ -1/p} and  \ref{lemma matrix norm AB=BA},
	we obtain
	\begin{align*}
		\| A_Q A_P^{-1} \| \approx \sum_{i=1}^m | A_P^{-1} A_Q \vec e_i| \approx \sum_{i=1}^m  \esssup_{x\in P} | W^{-1/p } (x)  A_Q \vec e_i| \approx  \esssup_{x\in P} \| W^{-1/p}   (x) A_Q \| .
	\end{align*}
	By Lemma \ref{lemma use covered Q}, $Q^\sharp \subset \bigcup_{ \ell=1 }^{C_\Q}  Q_\ell  $ where $ |Q_\ell |= |Q| $ and $Q_\ell$ is near $Q$.
	Hence by Lemma \ref{lemma cube AQ AR-1} and    (i),
	\begin{align*}
		\| A_Q A_P^{-1} \| & \approx \esssup_{x\in P} \| W^{-1/p}   (x) A_Q \|  \le \esssup_{x\in \bigcup_{ \ell=1 }^{C_\Q}  Q_\ell } \| W^{-1/p}   (x) A_Q \|  \\
		&\lesssim \sup_{\ell \in \{ 1, \ldots, C_\Q\} } \esssup_{x\in  Q_\ell } \| W^{-1/p}   (x) A_{Q_\ell} \| \| A_{Q_\ell}^{-1} A_Q \| 
		\lesssim   \esssup_{x\in  Q_\ell } \| W^{-1/p}   (x) A_{Q_\ell} \| \lesssim   [W]_{\A_p} ^{1/p} .
	\end{align*}
	Hence,
	for a.e. $x\in P$,
	\begin{align*}
		\| W^{1/p} (x) A_P^{-1} \| \le 	\| W^{1/p} (x) A_Q ^{-1}  \| \|A_Q A_P^{-1} \| \lesssim  \| W^{1/p} (x) A_Q ^{-1}  \| .
	\end{align*}	
	Thus,
	\begin{align*}
		\sup_{P\in Q ^\sharp, x\in P}  	\| W^{1/p} (x) A_P^{-1} \| \lesssim \| W^{1/p} (x) A_Q ^{-1}  \| .
	\end{align*}
	Hence, by  (ii), we obtain
	\begin{equation*}
		\sup_{Q \in \Q}  \fint_Q  \sup_{P\in Q ^\sharp, x\in P}	\| W^{1/p} (x) A_P^{-1} \| ^r \d x \lesssim C_r, 
	\end{equation*}
	for $r \in (0, p+\delta_W)$.

	(iv)  This is proven in an identical manner with the starting point that  $ W^{-1/(p-1)} $ is an $\A_{p'}$ matrix weight.
	
	(v)
	Let $r\in (p +\delta_W)$. Assume that $ \int_Q  \sup_{P\in Q ^\sharp, x\in P}	\| W^{1/p} (x) A_P^{-1} \| ^r \d x \le B |Q |$ with some finite  $B $. Then we derive an a priori bound for $B$.
	Let  $D <\infty$ be a large constant to be specified later. Denote by  $\{ R_j\}$ the set of maximal cubes with respect to the partial order $\preceq$ such that $\|A_Q A_{R_j}^{-1} \|> D $. Then for $x\in Q\backslash \bigcup_j  R_j $, 
	\begin{equation*}
		\sup_{P\in Q ^\sharp, x\in P}	\| W^{1/p} (x) A_P^{-1} \| \le D \| W^{1/p} (x) A_Q^{-1} \| .
	\end{equation*}
	Thus by   (ii), for $r \in (0, p  +\delta_W)$, we have 
	\begin{align*}
		&  \left (  \frac{1}{|Q|} \int_{Q\backslash \bigcup R_j}  \sup_{P\in Q ^\sharp, x\in P}	\| W^{1/p} (x) A_P^{-1} \| ^r \d x \right)^{1/r} \\
		& \le \left (  D \frac{1}{|Q|} \int_{Q\backslash \bigcup R_j} \| W^{1/p} (x) A_P^{-1} \| ^r \d x  \right)^{1/r}  \lesssim D^{1/r}.
	\end{align*}
	We claim that $ \sum_j |R_j|< |Q|/2 $ if $D $ is sufficiently large. From Lemma \ref{lemma p> 1 A_Q -1  approx W -1/p},
	\begin{equation*}
		\| A_Q A_{R_j}^{-1} \| =	\|A_{R_j}^{-1} A_Q  \|    \approx \left( \fint_{R_j} |W^{-1/p} (x) A_Q   |^{p'} \d x \right)^{1/p'}.
	\end{equation*}
	That is  $ |R_j|  	\| A_Q A_{R_j}^{-1} \| ^{p'} \approx  \int_{R_j} |W^{-1/p} (x) A_Q   |^{p'} \d x $.
	Since $\{R_j\}$ is  the maximal cubes with respect to the partial order  $ \preceq$,  $\{R_j\}$  is  disjoint.
	And by Lemma \ref{lemma use covered Q}, $\bigcup_j  R_j  \subset  \bigcup_{ \ell=1 }^{C_\Q}  Q_\ell$  where $ |Q_\ell |= |Q| $ and $Q_\ell$ is near $Q$.
	Hence by Lemma \ref{lemma cube AQ AR-1},
	\begin{align*}
		\sum_j   	D^{p'} |R_j|  \lesssim \sum_j \int_{R_j} \|W^{-1/p} (x) A_Q   \|^{p'} \d x \le \sum_{\ell=1}^{C_\Q} \int_{Q_\ell}   \|W^{-1/p} (x) A_{Q_\ell}   \|^{p'} \d x \lesssim |Q| .
	\end{align*}
	This estimate shows that for  $D$ large enough, $\sum_j   	 |R_j| \le |Q|/2$, and the value
	of $D$ may be chosen independently of  $Q$.
	
	Inside each cube $R_j$ , we may assume that
	\begin{equation*}
		\sup_{P\in Q ^\sharp, x\in P}	\| W^{1/p} (x) A_P^{-1} \| = \sup_{P\in {R_j} ^\sharp, x\in P}	\| W^{1/p} (x) A_P^{-1} \| ,
	\end{equation*}
	otherwise the bound  $	\sup_{P\in Q ^\sharp, x\in P}	\| W^{1/p} (x) A_P^{-1} \| \le D \| W^{1/p} (x) A_Q^{-1} \|$ still holds. Then
	\begin{align*}
		\int_{\bigcup_j R_j  }  \sup_{P\in Q ^\sharp, x\in P}	\| W^{1/p} (x) A_P^{-1} \| ^r \d x &= \sum_j \int_{R_j} \sup_{P\in {R_j} ^\sharp, x\in P}	\| W^{1/p} (x) A_P^{-1} \| ^r \d x \\
		& \le B \sum_j |R_j| < \frac{1}{2} B |Q|.
	\end{align*}
	Putting these pieces together, we would discover that $B \le  C+ \frac{1}{2} B $ where
	$C <\infty $ is determined by the constants in the reverse H\"older inequality.
	
	(vi) Consider the dual weight $\widetilde W:= W^{-1/(p-1) }   \in \A_{p'}$. Let $\{ \tilde{A}_R \}_{R\in \Q}$ be the reducing operatorof order  $p'$  for $\tilde W$. Then for any $R\in \Q$  and a.e. $x\in \rn$,
	\begin{equation} \label{eq A_R W -1/p le tilde W}
		\| A_R W^{-1/p} (x) \| \le \| A_R  \tilde{A}_R \| \| \tilde{A}_R^{-1} \widetilde  W^{1/p'} (x) \| \lesssim [W]_{\A_p} ^{1/p}  \|  \widetilde  W^{1/p'} (x)\tilde{A}_R ^{-1} \|  
	\end{equation}
	due to Lemma \ref{lemma p> 1 A_Q -1  approx W -1/p}. We apply    (v)  to $p'$  and $\widetilde W \in \A_{p'}  $   in place of $p$  and $W\in  \A_p$. This shows that
	\begin{equation*}
		\sup_{Q\in \Q}  \left(  \fint_Q  \sup_{R\in Q ^\sharp, x\in R}  \|  \tilde  W^{1/p'} (x)\tilde{A}_R ^{-1} \|   ^r \d x \right)^{1/r}   \lesssim C_r .
	\end{equation*}
	for $r\in ( 0, p' + \delta_W)$.  Combine this and (\ref{eq A_R W -1/p le tilde W}), we obtain (vi).
	
	This finishes the proof of Lemma \ref{lemma reverse A_Q W}.
\end{proof}

\section{Matrix-weighted anisotropic Besov-type and Triebel-Lizorkin-type spaces}\label{sec BF spaces}
In this section, we introduce matrix-weighted anisotropic Besov-type and Triebel-Lizorkin-type spaces.

\begin{definition}
	Let $A$ be a dilation. Let $\tau \in [0,\infty)$,   $p, q \in (0,\infty]$,   and $ \{f_j\}_{j\in \mathbb Z} $ be a sequence of measurable functions on $\rn$.
	For $Q \in \Q$, we abbreviate $\widehat Q := Q \times \{ - \scale (Q), - \scale (Q) +1, \ldots \}$ so that
	\begin{equation*}
		\| \{f_j\}_{j\in \mathbb Z } \|_{ L \dot B _{ p,q }  (\widehat Q)} = \left( \sum_{j \ge - \scale (Q)} ^\infty  \|f_j\|_{L^p(Q)}^q    \right)^{1/q}
	\end{equation*}
and
	\begin{equation*}
	\| \{f_j\}_{j\in \mathbb Z } \|_{ L \dot F _{ p,q }  (\widehat Q)} = \left\| \left(   \sum_{j \ge - \scale (Q)} ^\infty |f_j|^q \right)^{1/q} \right\|_{L^p(Q)}    .
\end{equation*}

We may drop $\hat Q $ from these symbols when $\hat Q = \rn \times \mathbb Z$.

Define 
\begin{equation*}
	\| \{f_j\}_{j\in \mathbb Z } \|_{ L \dot A _{ p,q }^\tau }  := \sup_{Q \in \Q} |Q|^{-\tau } \| \{f_j\}_{j\in \mathbb Z } \|_{ L \dot A _{ p,q }  (\widehat Q)}
\end{equation*}
for both choices of $ L \dot A _{ p,q }^\tau  \in \{ L \dot B _{ p,q }^\tau , L \dot F _{ p,q }^\tau  \}$.
\end{definition}

Next we define the following eight function spaces.
\begin{definition} \label{def 8 func spaces}
	Let $A $ be a dilation and let  $W$ be a matrix weight. 
	Let $s\in \mathbb R$, $\tau \in [0,\infty) $, $p\in  (0,\infty)$,
	$q \in (0,\infty]$.
	Let $\varphi\in \mathcal S (\rn)$ satisfy (\ref{eq varphi supp}) and (\ref{eq varphi > 0}).
	
	Suppose that for each $ Q \in \Q $,  $A_Q$  is an $m\times m$ non-negative definite matrix.
	
	For each $j \in \mathbb Z$, we define 
	\begin{equation} \label{eq def A_j}
		\mathbb A_j := \sum_{Q\in \Q_j} A_Q \chi_Q.
	\end{equation}
	
	(i) The homogeneous matrix weighted anisotropic Triebel-Lizorkin-type space $ \dot F_{p,q}^{s,\tau} (W,A)$ 
	and the homogeneous matrix weighted anisotropic Besov-type space $ \dot B_{p,q}^{s,\tau} (W,A)$ 
	 are the sets of all $\vec f \in  ( \mathcal S_\infty ^\prime (\rn) )^m$ such that
	\begin{equation} \label{eq def A W A}
		\|\vec f\|_{\dot A_{p,q}^{s,\tau} (W,A) } := 	\left\| \left\{ |\det A|^{js} |W^{1/p}  \varphi_j * \vec f|\right \}_{j\in \mathbb Z } \right\|_{ L \dot A _{ p,q }^\tau  } <\infty ,
	\end{equation}
where $ \dot A_{p,q}^{s,\tau} (W,A) \in \{ \dot F_{p,q}^{s,\tau} (W,A), \dot B_{p,q}^{s,\tau } (W,A) \}$  and $ L \dot A _{ p,q }^\tau \in \{ L \dot B _{ p,q }^\tau, L \dot F _{ p,q }^\tau  \}$.

(ii)	The homogeneous $\{A_Q\}_{Q\in \Q}$ anisotropic Triebel-Lizorkin-type space $ \dot F_{p,q}^{s,\tau} (\{A_Q\}_{Q\in \Q},A)$ 
and the homogeneous $\{A_Q\}_{Q\in \Q}$ anisotropic Besov-type space $ \dot B_{p,q}^{s, \tau} (\{A_Q\}_{Q\in \Q}, A)$ 
are the sets of all $\vec f \in  ( \mathcal S_\infty ^\prime (\rn) )^m$ such that
\begin{equation*}
	\|\vec f\|_{\dot A_{p,q}^{s,\tau} (\{A_Q\}_{Q\in \Q} ,A) } := 	\left\| \left\{ |\det A|^{js} |\mathbb A_j  \varphi_j * \vec f|\right \}_{j\in \mathbb Z } \right\|_{ L \dot A _{ p,q }^\tau } <\infty ,
\end{equation*}
where $ \dot A_{p,q}^{s, \tau} (\{A_Q\}_{Q\in \Q},A) \in \{ \dot F_{p,q}^{s,\tau} (\{A_Q\}_{Q\in \Q},A), \dot B_{p,q}^{s,\tau} (\{A_Q\}_{Q\in \Q},A) \}$  and $ L \dot A _{ p,q }^\tau \in \{ L \dot B _{ p,q }^\tau, L \dot F _{ p,q }^\tau \}$.

For a sequence $ \vec s :  =  \{ \vec s_Q  \}_{Q\in \Q} \subset \mathbb C $, set $ \vec s_{Q,j} = \{ \vec s_Q \}_{Q\in \Q_j}  $.

(iii)
The discrete  anisotropic Besov-type space $ \dot b_{p,q}^{s,\tau} (W, A)$ 
and the discrete  anisotropic Triebel-Lizorkin-type space $ \dot f_{p,q}^{s,\tau} (W, A)$ 
are the sets of all sequences $\vec s :  =  \{ \vec s_Q  \}_{Q\in \Q} \subset \mathbb C $ such that
\begin{equation*}
	\|\vec s\|_{\dot a_{p,q}^{s,\tau} (W,A) } := 	\left\| \left\{ |\det A|^{j (s +1/2)} |W^{1/p}  \vec s_{Q,j} |\right \}_{j\in \mathbb Z } \right\|_{ L \dot A _{ p,q }^\tau } <\infty ,
\end{equation*}
where   $\dot a_{p,q}^{s,\tau} (W,A) \in \{ \dot b_{p,q}^{s,\tau} (W,A), \dot f_{p,q}^{s,\tau} (W,A) \}$  and $ L \dot A _{ p,q }^\tau \in \{ L \dot B _{ p,q }^\tau, L \dot F _{ p,q }^\tau \}$.

(iv)
 The discrete  anisotropic Besov-type space $ \dot b_{p,q}^{s,\tau} (\{A_Q\}_{Q\in \Q}, A)$ 
and the discrete  anisotropic Triebel-Lizorkin-type space $ \dot f_{p,q}^{s,\tau} (\{A_Q\}_{Q\in \Q}, A)$ 
are the sets of all sequences $\vec s :  =  \{ \vec s_Q  \}_{Q\in \Q} \subset \mathbb C $ such that
\begin{equation*}
	\|\vec s\|_{\dot a_{p,q}^{s,\tau} (\{A_Q\}_{Q\in \Q},A) } := 	\left\| \left\{ |\det A|^{j (s +1/2)} |\mathbb A_j  \vec s_{Q,j} |\right \}_{j\in \mathbb Z } \right\|_{ L \dot A _{ p,q }^\tau } <\infty ,
\end{equation*}
where   $\dot a_{p,q}^{s,\tau} (\{A_Q\}_{Q\in \Q},A) \in \{ \dot b_{p,q}^{s,\tau} (\{A_Q\}_{Q\in \Q},A), \dot f_{p,q}^{s,\tau} (\{A_Q\}_{Q\in \Q},A) \}$  and $ L \dot A _{ p,q }^\tau \in \{ L \dot B _{ p,q }^\tau, L \dot F _{ p,q }^\tau \}$.
\end{definition}

\begin{remark}
	(i)
	Let $ m =1$, $W \equiv 1$ and $ A_Q \equiv 1$ for $Q\in \Q$. Then we denote $ \dot a_{p,q}^{s,\tau} (A) := \dot a_{p,q}^{s,\tau} (\{A_Q\}_{Q\in \Q},A) = \dot a_{p,q}^{s,\tau} (W,A) $ for brevity.
	
	(ii) To emphasize the dependence on $\varphi$, we will use the notation $\dot A_{p,q}^{s,\tau} (W,A, \varphi)$ for (\ref{eq def A W A}). Later we will show that this definition is independent of $\varphi$.

\end{remark}

In what follows, the symbol $\hookrightarrow $ stands for continuous embedding. 
\begin{lemma}
		Let  $W$ be a matrix weight and  let $A $ be a dilation. 
	Let $s\in \mathbb R$, $\tau \in [0,\infty) $, $p\in  (0,\infty)$,
$q \in (0,\infty]$. Then
	\begin{equation*}
\dot B_{p, p \wedge q}^{s,\tau} (W,A)	 \hookrightarrow 	\dot F_{p,q}^{s,\tau} (W,A) \hookrightarrow \dot B_{p, q\vee q}^{s,\tau} (W,A),
	\end{equation*}
and 
	\begin{equation*}
	\dot b_{p, p \wedge q}^{s,\tau} (W,A)	 \hookrightarrow 	\dot f_{p,q}^{s,\tau} (W,A) \hookrightarrow \dot b_{p, q\vee q}^{s,\tau} (W,A).
\end{equation*}
\end{lemma}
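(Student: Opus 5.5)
The plan is to reduce both chains of embeddings to one pointwise-in-$Q$ inequality between the mixed norms $L\dot B_{p,q}(\widehat Q)$ and $L\dot F_{p,q}(\widehat Q)$, valid for an arbitrary sequence of measurable functions. I read the right-hand space in the statement as $\dot B^{s,\tau}_{p,p\vee q}$ (the printed index $q\vee q$ appears to be a typo).

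Fix $Q\in\Q$ and any sequence $\{f_j\}_{j\in\mathbb Z}$ of nonnegative measurable functions. The goal is
\begin{equation*}
\|\{f_j\}\|_{L\dot B_{p,p\vee q}(\widehat Q)}\le \|\{f_j\}\|_{L\dot F_{p,q}(\widehat Q)}\le \|\{f_j\}\|_{L\dot B_{p,p\wedge q}(\widehat Q)} .
\end{equation*}
Multiplying by $|Q|^{-\tau}$ and taking the supremum over $Q\in\Q$ then gives the same inequalities for the $L\dot A^\tau_{p,q}$ quasi-norms, with constant $1$.

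For the function spaces, apply this with $f_j:=|\det A|^{js}|W^{1/p}\varphi_j*\vec f|$. These are the same functions in all three quasi-norms (\ref{eq def A W A}), so the embeddings $\dot B^{s,\tau}_{p,p\wedge q}(W,A)\hookrightarrow \dot F^{s,\tau}_{p,q}(W,A)\hookrightarrow\dot B^{s,\tau}_{p,p\vee q}(W,A)$ follow at once. For the sequence spaces, apply it with $f_j:=|\det A|^{j(s+1/2)}|W^{1/p}\vec s_{Q,j}|$. Here $\vec s_{Q,j}$ is understood as the function $\sum_{P\in\Q_j}\vec s_P\chi_P$, as implicit in Definition \ref{def 8 func spaces}(iii). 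No property of $W$ (and in particular no $\A_p$ assumption) is needed.

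For the right inequality, let $r:=p\wedge q$. Since $\ell^r\subset\ell^q$ with norm $1$, we have $(\sum_{j}f_j^q)^{1/q}\le(\sum_j f_j^r)^{1/r}$ pointwise, where all sums run over $j\ge-\scale(Q)$. Hence
\begin{equation*}
\Big\|\Big(\sum_j f_j^q\Big)^{1/q}\Big\|_{L^p(Q)}\le\Big\|\sum_j f_j^r\Big\|_{L^{p/r}(Q)}^{1/r}\le\Big(\sum_j\|f_j^r\|_{L^{p/r}(Q)}\Big)^{1/r}=\Big(\sum_j\|f_j\|_{L^p(Q)}^r\Big)^{1/r},
\end{equation*}
where the middle step is the triangle inequality in $L^{p/r}(Q)$, a genuine norm because $p/r\ge1$. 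When $q=\infty$ we have $r=p$, and the same argument works using $\sup_j f_j\le(\sum_j f_j^p)^{1/p}$.

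For the left inequality, let $t:=p\vee q$. If $q\le p$, then $t=p$. Fubini gives $(\sum_j\|f_j\|^p_{L^p(Q)})^{1/p}=\|(\sum_j f_j^p)^{1/p}\|_{L^p(Q)}$, and then $\ell^q\subset\ell^p$ finishes this case. If $q>p$, then $t=q$. Writing $\|f_j\|_{L^p(Q)}^p=\int_Q f_j^p$, we get
\begin{equation*}
\Big(\sum_j\|f_j\|_{L^p(Q)}^q\Big)^{1/q}=\Big\|\Big\{\int_Q f_j^p\Big\}_j\Big\|_{\ell^{q/p}}^{1/p}\le\Big(\int_Q\big\|\{f_j^p\}_j\big\|_{\ell^{q/p}}\Big)^{1/p}=\Big\|\Big(\sum_j f_j^q\Big)^{1/q}\Big\|_{L^p(Q)}.
\end{equation*}
The inequality here is Minkowski's integral inequality, applicable since $q/p\ge1$. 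For $q=\infty$ this reads $\sup_j\|f_j\|_{L^p(Q)}\le\|\sup_j f_j\|_{L^p(Q)}$, which is trivial.

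There is no real obstacle. The only points needing care are the bookkeeping of the cases $q\le p$ versus $q>p$ and $q=\infty$, and observing that the sum over $j\ge-\scale(Q)$ on $\widehat Q$ is the same in every norm, so that the supremum over $Q$ can be taken at the very end.
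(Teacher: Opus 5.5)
Your proposal is correct and takes the same route as the paper. The paper just defers to the classical argument in Triebel's book (p.~47), which is your cube-by-cube use of $\ell^{r}\subset\ell^{q}$, the triangle inequality in $L^{p/r}$ and Minkowski's integral inequality, followed by multiplying by $|Q|^{-\tau}$ and taking the supremum over $Q\in\Q$. You are also right to read the misprinted index $q\vee q$ as $p\vee q$, and right that no $\A_p$ assumption on $W$ is needed.
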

\begin{proof}
	The proof is similar to \cite[p. 47]{Tri83} and we omit it here.
\end{proof}

\subsection{Equivalent norms}

For any sequence  $\mathbb A := \{A_Q\} _{Q\in \Q} $ of matrices, 
any $\varphi \in \S_\infty(\rn)$, and any $f \in ( \mathcal S_\infty ^\prime (\rn) )^m$, define
\begin{equation*}
	\sup_{ \mathbb A , \varphi} \vec f := \left\{  \sup_{ \mathbb A , \varphi, Q  } \vec f \right\}_{Q \in \Q}, 
\end{equation*}
where for each $Q\in \Q$,
\begin{equation*}
	\sup_{ \mathbb A , \varphi, Q  } \vec f := \sup_{y\in Q} |A_Q  \varphi_Q * \vec f (y) | =  |Q|^{1/2} \sup_{y\in Q} |A_Q  \varphi_{ -\scale (Q) } * \vec f (y) | . 
\end{equation*}

The following theorem is the main result of this subsection.

\begin{theorem}\label{theorem W app a app A_Q}
		Let $A $ be a dilation. 	Let $s\in \mathbb R$, $\tau \in [0,\infty) $, $p\in  (0,\infty)$,
		$q \in (0,\infty]$.
	Let  $W \in \A_p$ and $\mathbb A:= \{ A_Q\}_{Q\in \Q}$ be a sequence of reducing operators of order $p$ for $W$. 
	Let $\varphi\in \mathcal S (\rn)$ satisfy (\ref{eq varphi supp}) and (\ref{eq varphi > 0}).
	Then $\vec f \in \dot A_{p,q}^{s,\tau} (W,A)$ if and only  if $ \vec f \in \dot A_{p,q}^{s,\tau} (\{A_Q\}_{Q\in \Q} ,A)$. Moreover, for any $\vec f\in  (\S_\infty^\prime (\rn))^m $, 
	\begin{equation*}
		\| \vec f \|_{ \dot A_{p,q}^{s,\tau} (W,A) }  \approx  \left\|  \sup_{ \mathbb A , \varphi} \vec f      \right \|_{\dot a_{p,q}^{s,\tau} ( A) }  \approx   \|\vec f\|_{\dot A_{p,q}^{s,\tau} (\{A_Q\}_{Q\in \Q} ,A) } ,
	\end{equation*}
where the positive equivalence constants are independent of $\vec f$.
\end{theorem}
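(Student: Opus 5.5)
The plan is to prove the chain of three inequalities
\begin{equation*}
\|\vec f\|_{\dot A(W)} \lesssim \|\sup_{\mathbb A,\varphi}\vec f\|_{\dot a(A)},\qquad
\|\vec f\|_{\dot A(\{A_Q\})}\lesssim \|\sup_{\mathbb A,\varphi}\vec f\|_{\dot a(A)}\lesssim \|\vec f\|_{\dot A(\{A_Q\})},
\end{equation*}
together with $\|\sup_{\mathbb A,\varphi}\vec f\|_{\dot a(A)}\lesssim \|\vec f\|_{\dot A(W)}$, following the Frazier--Roudenko and Bu et al.\ strategy adapted to dilated cubes.

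\textbf{Step 1 (the key pointwise bound).} Fix $j$ and $Q\in\Q_j$. Since $\operatorname{supp}\F(\varphi_j*\vec f)\subset (A^\ast)^j[-\pi,\pi]^n$, I would apply Lemma \ref{lemma identity} with a function $h$ whose Fourier transform equals $1$ on that support. This gives, for $y\in Q$,
\begin{equation*}
\varphi_j*\vec f(y)=\sum_{R\in\Q_j}|R|\,\varphi_j*\vec f(x_R+z)\,h_j(y-x_R-z)
\end{equation*}
for every $z\in A^{-j}[0,1)^n$. Multiplying by $A_Q$, inserting $A_QA_R^{-1}$, and using Lemma \ref{lemma cube AQ AR-1} (equal scales) yields $\|A_QA_R^{-1}\|\lesssim(1+|Q|^{-1}\rho_A(x_Q-x_R))^{\Delta}$. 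The Schwartz decay of $h$ then gives
\begin{equation*}
\sup_{y\in Q}|A_Q\varphi_j*\vec f(y)|\lesssim \Big(\sum_{R\in\Q_j}\frac{\inf_{z\in R}|A_R\varphi_j*\vec f(z)|^{r}}{(1+|Q|^{-1}\rho_A(x_Q-x_R))^{\lambda}}\Big)^{1/r},
\end{equation*}
for any small $r$ and large $\lambda$. For $r<1$ this is the usual Peetre-type trick: raise the representation to the power $r$, or take the $r$-th power of the sup against itself, and average over $z\in R$. The right side is $(u^\ast_{r,\lambda})_Q$ with $u_R=\inf_R|\mathbb A_j\varphi_j*\vec f|$.

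\textbf{Step 2 (sup-sequence versus the $\{A_Q\}$ space).} The lower bound $\|\vec f\|_{\dot A(\{A_Q\})}\lesssim\|\sup\|$ is trivial, since $|\mathbb A_j\varphi_j*\vec f|\le\sum_{Q\in\Q_j}|Q|^{-1/2}\sup_{\mathbb A,\varphi,Q}\vec f\,\chi_Q$. For the reverse direction I would use Step 1 and bound $(u^\ast_{r,\lambda})_Q\chi_Q$ pointwise by $\big(\M_{\rho_A}(|\mathbb A_j\varphi_j*\vec f|^r)\big)^{1/r}$ via Lemma \ref{lemma seq HL pointwise}. Then I would apply Lemma \ref{lemma M rho A FS} with $r<\min(p,q)$ inside the $\tau$-localized norm. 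To handle the localization to $Q$, split the function into its part on a dilate of $Q$ (covered by Lemma \ref{lemma use covered Q}) and the far part. The far part decays like $|\det A|^{-(\lambda-\ldots)k}$ on scale-$k$ annuli and is summed using $\tau$ together with the $\sup_P|P|^{-\tau}$ control over neighbouring larger cubes. I expect this $\tau$-localization bookkeeping to be the main obstacle. I would imitate the Yang--Yuan argument, summing over cubes $P$ at scale $\scale(Q)$ intersecting $B_{\rho_A}(c_Q,|\det A|^k|Q|)$ and choosing $\lambda$ large relative to $\tau$ and $\Delta$.

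\textbf{Step 3 (the $W$ space).} For $x\in Q$,
\begin{equation*}
|W^{1/p}(x)\varphi_j*\vec f(x)|\le\|W^{1/p}(x)A_Q^{-1}\|\,|Q|^{-1/2}\sup_{\mathbb A,\varphi,Q}\vec f.
\end{equation*}
In the Besov case, integrating over $Q$ and using Lemma \ref{lemma reverse A_Q W}(ii) bounds $\|W^{1/p}\varphi_j*\vec f\|_{L^p(P)}$ by the sequence norm. In the Triebel case, the weight factor varies with $j$, so I would use Lemma \ref{lemma reverse A_Q W}(iii),(v). The function $\sup_{P\in Q^\sharp,\,x\in P}\|W^{1/p}(x)A_P^{-1}\|$ is in $L^{r}$ with $r>p$ uniformly, and H\"older's inequality with exponent $r/p$ against $\big(\sum_j\cdots\big)^{1/q}$ reduces to the unweighted $\dot f$ norm with exponent slightly smaller than $p$, which is handled by the maximal estimate of Step 2.

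For the converse $\|\sup\|\lesssim\|\vec f\|_{\dot A(W)}$, I would write
\begin{equation*}
|A_R\varphi_j*\vec f(z)|\le\|A_RW^{-1/p}(z)\|\,|W^{1/p}(z)\varphi_j*\vec f(z)|.
\end{equation*}
For $p\le1$ the first factor is bounded uniformly by Lemma \ref{lemma reverse A_Q W}(i). For $p>1$ I would average over $z\in R$ in Step 1, using $r$-th powers and H\"older's inequality with Lemma \ref{lemma reverse A_Q W}(iv),(vi). This again reduces the estimate to maximal-function bounds on $|W^{1/p}\varphi_j*\vec f|^{r}$ with $r<p$, followed by the same localization argument as in Step 2.
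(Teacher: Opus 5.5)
\textbf{Gap in the Triebel--Lizorkin half of Step 3.} The failing step is the estimate $\|\vec f\|_{\dot F^{s,\tau}_{p,q}(W,A)}\lesssim\|\sup_{\mathbb A,\varphi}\vec f\|_{\dot f^{s,\tau}_{p,q}(A)}$. Put $\Gamma:=\sup_{j\ge-\scale(P)}\gamma_j$ and $G:=(\sum_{j\ge-\scale(P)}k_j^q)^{1/q}$, where $k_j$ is your sup-function, constant on the cubes of $\Q_j$. H\"older gives $\|\Gamma G\|_{L^p(P)}\le\|\Gamma\|_{L^r(P)}\|G\|_{L^t(P)}$ with $1/t=1/p-1/r$. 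So $t=pr/(r-p)>p$, and $t\to\infty$ as $r\downarrow p$. The exponent on $G$ goes \emph{up}, not down, and a local $L^t$ bound on $G$ with $t>p$ is not controlled by the $\dot f^{s,\tau}_{p,q}$ norm.

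Concretely, take $W\equiv I_m$ and a single nonzero term $k_j=c\chi_R$ with $R\in\Q_j$, $R\subset P$. H\"older yields $c|P|^{1/r}|R|^{1/t}$, while the true size is $c|R|^{1/p}=c|R|^{1/r}|R|^{1/t}$, an unbounded loss of $(|P|/|R|)^{1/r}$. The loss is structural: majorizing every $\gamma_j$ by the single function $\Gamma$ forgets that $k_j$ only ever meets $\|W^{1/p}A_Q^{-1}\|$ for $Q\in\Q_j$, i.e.\ the weight at its own scale.

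\textbf{What is needed instead.} The paper (Theorem~\ref{theorem f W lesssim A_Q}) uses the Frazier--Roudenko inequality $\|\{\gamma_jE_j(f_j)\}\|_{L^p(\ell^q)}\lesssim\|\{E_j(f_j)\}\|_{L^p(\ell^q)}$ (Lemma~\ref{lemma gamma_j bound Lp ell q}, Corollary~\ref{cor gamma j E j Lp ell q}). It is applied with $f_j=k_j=E_j(k_j)$ and localized to $P$ in Corollary~\ref{cor gamma_j  W A_Q LF} through the covering $Q^\ast$ of Lemma~\ref{lemma use covered Q}.

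For $q\le p$ the inequality follows by duality in $L^{p/q}$. One has $\int k_j^q\gamma_j^qg=\int k_j^qE_j(\gamma_j^qg)$ and $E_j(\gamma_j^qg)\le(E_j\gamma_j^{qs})^{1/s}(E_j|g|^{s'})^{1/s'}$ with $p/q<s\le p(1+\delta)/q$. This uses only the same-scale bound of Lemma~\ref{lemma reverse A_Q W}(ii) plus the maximal theorem.

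For $q>p$ this duality breaks down, since $p/q<1$. The inequality is then proved under the stronger hypothesis \eqref{eq gamma_j P subset Q}, with the supremum over all finer scales. That is exactly what Lemma~\ref{lemma reverse A_Q W}(iii),(v) supply, with the $\epsilon$-power reduction when $p\le1$. So (iii),(v) are the right input, but they must enter through this vector-valued inequality, not through a global H\"older step.

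\textbf{The rest of your plan.} It essentially matches the paper:
\begin{itemize}
\item Steps 1--2 are Theorem~\ref{theorem sup A_Q less inf}, via \eqref{eq AQ r trick} and Lemma~\ref{lemma r trick all  tau}.
\item Your converse estimate is Lemma~\ref{lemma AQ les W} combined with Theorem~\ref{theorem sup A_Q less inf}.
\item The Besov half of Step 3 is fine once you cover the cubes of $\Q_j$ that straddle $\partial P$; dilated cubes are not nested.
\end{itemize}

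One small correction in Step 1: the shift $z$ in the reproducing identity is common to all $R$. Averaging over it therefore gives $\fint_R|A_R\varphi_j*\vec f|^r$, not $\inf_R$. This is harmless, since you only use the resulting bound by $\M_{\rho_A}(|\mathbb A_j\varphi_j*\vec f|^r)$.
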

To prove Theorem \ref{theorem W app a app A_Q}, we need some preparation.
\begin{lemma} \label{lemma r trick all  tau}
	Let $A $ be a dilation. 	Let $s\in \mathbb R$, $\tau \in [0,\infty) $, $p\in  (0,\infty)$,
$q \in (0,\infty]$ and $M \in (1,\infty)$. 
Suppose that two sequences $\{g_j\}_{j\in\mathbb Z} $ and $\{h_j\}_{j \in \mathbb Z}$  of measurable functions on $\rn$ satisfy: there exist $r\in (0, \min\{p,q\})$ and a positive constant $c$ such that, for any $j\in \mathbb Z$ and $x\in \rn$,
	\begin{equation*}
	|g_j (x) |^r \le c  |\det A|^j \int_{\rn}  |h_j (z) |^r   \frac{1}{  (1 + |\det A|^j \rho_A  (x- z ) ) ^M }  \d z .
\end{equation*}
Then there exists a constant $C>0$, independent with $\{g_j\}_{j\in\mathbb Z} $ and $\{h_j\}_{j \in \mathbb Z}$, such that
\begin{equation*}
	\left\| 	\{ |\det A|^{js} g_j  \}_{j\in\mathbb Z}   \right\|_{L \dot A_{p,q}^\tau }   \le C \left\| 	\{ |\det A|^{js} h_j  \}_{j\in\mathbb Z}   \right\|_{L \dot A_{p,q}^\tau }  .
\end{equation*}
\end{lemma}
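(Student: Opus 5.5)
Fix a dilated cube $Q\in\Q$ with $\scale(Q)=-j_Q$, i.e.\ $|Q|=|\det A|^{-j_Q}$. We only need to bound $|Q|^{-\tau}\|\{|\det A|^{js}g_j\}\|_{L\dot A_{p,q}(\widehat Q)}$ by the right-hand side, uniformly in $Q$. The standard device (as in the Besov-type setting of Yang--Yuan and Bu et al.) is to split the integral defining $|g_j(x)|^r$, for $x\in Q$ and $j\ge j_Q$, into a local part and annular pieces. Let $Q^\ast$ be the union of the boundedly many cubes of $\Q_{j_Q}$ near $Q$; this is the $\rho_A$-neighbourhood $\{z:\rho_A(x-z)\lesssim |Q|\}$ of $Q$ in the sense of Lemma~\ref{lemma basic cube ball cover}. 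For $k\ge1$ put $E_k:=\{z:\rho_A(x_Q-z)\approx |\det A|^{k}|Q|\}$, covered by $\approx|\det A|^{k}$ cubes $P\in\Q_{j_Q}$. Then split $h_j=h_j\chi_{Q^\ast}+\sum_{k\ge1}h_j\chi_{E_k}$.

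\textbf{Local part.} For $z\in Q^\ast$, the hypothesis and Lemma~\ref{lemma a >1 less M} (with $a=M>1$) give $|g_j^{(0)}(x)|^r\lesssim\M_{\rho_A}(|h_j\chi_{Q^\ast}|^r)(x)$. Since $r<\min\{p,q\}$, we have $p/r>1$ and $q/r>1$. In the $F$-case we apply the Fefferman--Stein inequality (Lemma~\ref{lemma M rho A FS}) in $L^{p/r}(\ell^{q/r})$; in the $B$-case we apply the $L^{p/r}$ boundedness of Lemma~\ref{lemma M weak 1,1 strong p,p} termwise. This bounds the local contribution by $\|\{|\det A|^{js}h_j\}\|_{L\dot A_{p,q}(\widehat{Q^\ast})}$, and hence, summing over the $C_\Q$ cubes $P$ of size $|Q|$ composing $Q^\ast$, by $|Q|^{\tau}$ times the right-hand norm.

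\textbf{Far part.} For $z\in E_k$ and $x\in Q$ with $j\ge j_Q$, we have $1+|\det A|^j\rho_A(x-z)\gtrsim |\det A|^{j-j_Q+k}$. Hence
\begin{equation*}
|g_j^{(k)}(x)|^r\lesssim |\det A|^{j}|\det A|^{-(j-j_Q+k)M}\int_{E_k}|h_j|^r .
\end{equation*}
For each of the $\approx|\det A|^k$ cubes $P\subset E_k$ of size $|Q|$, Hölder's inequality ($p/r>1$) gives $\int_P|h_j|^r\le|P|^{1-r/p}\|h_j\|_{L^p(P)}^r$. The key point is that we may take the $\ell^q$ (or the $L^p\ell^q$) norm over $j\ge j_Q$ inside, before Hölder. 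In the $F$-case we first use Minkowski in $L^{p/r}(\ell^{q/r})$ and bound $\sup_{x\in Q}$, then use $\|(\sum_{j\ge j_Q}|\det A|^{jsq}|h_j|^q)^{1/q}\|_{L^p(P)}\le|P|^\tau\|\cdot\|_{L\dot F^\tau_{p,q}}$. The factors $|\det A|^{j}|Q|^{1-r/p}|Q|^{r/p}$ together with $|\det A|^{-(j-j_Q)M}$ are summable in $j$ (since $M>1$), giving a contribution of order
\begin{equation*}
|Q|^{\tau}\sum_{k\ge1}|\det A|^{k(1-M)/r}\,|\det A|^{k\cdot(\text{small})}\,\|\cdot\|_{L\dot A^\tau_{p,q}} .
\end{equation*}
Here the count of $|\det A|^k$ cubes was absorbed into the factor $|\det A|^{-kM}\cdot|\det A|^{k}$. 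Finally we combine the pieces with the $r$-triangle inequality.

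\textbf{Main obstacle.} The delicate step is the far part for large $\tau$. If we used cubes of size $|Q|$ in $E_k$, each carries only $|Q|^\tau$ rather than $(|\det A|^k|Q|)^\tau$, which is favourable, so only $M>1$ is needed. I expect the main technical care to go into making the $j$-sum and the $k$-sum converge simultaneously with exponent exactly $M>1$ (no extra room), and into handling $q=\infty$ and the $F$-case Minkowski step with $q/r>1$. If the bookkeeping fails, I would instead use larger cubes $P\in\Q_{j_Q-k}$ (one or boundedly many covering $E_k$), which yields the factor $|\det A|^{k\tau}|\det A|^{k(1-M)}$, and check whether the statement's generality (all $\tau\ge0$, only $M>1$) forces the first route.
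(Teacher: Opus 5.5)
Your argument is correct, and its core is the same as the paper's. For a fixed cube you decompose $\rn$ only into cubes of the same scale, so every piece costs $|Q|^{\tau}$ times the right-hand norm whatever $\tau$ is, and $M>1$ alone gives summability.

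The difference is in how the pieces are estimated.
\begin{itemize}
\item The paper fixes a cube $P$ and sorts the cubes $R\in\Q_{-\scale(P)}$ into shells $E_i$ with $\rho_A(x_P-x_R)/|P|\in(i,i+1]$. It bounds every piece, near or far, by $(1+i)^{-M}$ times a maximal function of $|\det A|^{jsr}|h_j|^r\chi_R$, and applies Lemma~\ref{lemma M rho A FS} piece by piece.
\item You use $\M_{\rho_A}$ (via Lemma~\ref{lemma a >1 less M}) and Fefferman--Stein only for the boundedly many cubes forming $Q^\ast$. The far cubes, grouped into $|\det A|$-adic annuli of $\approx|\det A|^{k}$ cubes each, you treat by H\"older's inequality alone, keeping the extra decay $|\det A|^{(j+\scale(Q))(1-M)}$ in $j$.
\end{itemize}

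The paper's version is shorter. Yours makes explicit two points that its one-line estimate passes over.
\begin{itemize}
\item For cubes near $x$ and $j>-\scale(P)$, the kernel $|\det A|^j(1+|\det A|^j\rho_A(x-z))^{-M}$ cannot be replaced by $(1+i)^{-M}|R|^{-1}$. The maximal function at $x$ is genuinely needed there, while for far cubes no maximal function is needed at all.
\item The shells $E_i$ need not have uniformly bounded cardinality. For example, with $A=2I_n$, $n\ge 2$, $\rho_A(x)=|x|_\infty^n$, the shell $E_{m^n-1}$ contains $\approx m^{n-1}$ cubes. What is really used is $\sum_{k\in\mathbb Z^n}(1+\rho_A(k))^{-M}<\infty$ for $M>1$, which your explicit count of $\approx|\det A|^k$ cubes per annulus supplies directly.
\end{itemize}

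Two clean-ups to your write-up.
\begin{itemize}
\item The worry in your last paragraph is unfounded. The far bound factors as $|\det A|^{(j+\scale(Q))(1-M)}\cdot|\det A|^{k(1-M)}$, so the sums in $j$ and in $k$ converge independently for $M>1$. The larger-cube fallback is not needed.
\item Your final display should be sharpened. The $k$-th annulus contributes at most a constant times $|\det A|^{k(1-M)}|Q|^{\tau r}\|\{|\det A|^{js}h_j\}_{j\in\mathbb Z}\|_{L\dot A^{\tau}_{p,q}}^{r}$ to the $r$-th power of the quasi-norm over $\widehat Q$. There is no extra ``small'' factor and no $1/r$ in the exponent. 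These contributions, together with the local one, are added using the triangle inequality in $L^{p/r}(\ell^{q/r})$, respectively $\ell^{q/r}(L^{p/r})$, which are genuine norms because $r<\min\{p,q\}$.
\end{itemize}
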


\begin{proof}
Fix $P\in \Q$. For any $j \in \{ - \scale (P), - \scale (P) + 1, \ldots  \}$ and $x\in P$,
\begin{align*}
	\left| |\det A|^{js} g_j (x) \right|^r \lesssim   |\det A|^j \int_{\rn}   \frac{|\det A|^{jsr}  |h_j (z) |^r }{  (1 + |\det A|^j \rho_A  (x- z ) ) ^M }  \d z .
\end{align*}
Set  
\begin{equation*}
	E_i = \{ R \in \Q_{- \scale (P)   } :  i  <   \rho_A  (x_P -  x_R ) / |P|  \le   (i+ 1)  \} \quad \operatorname{for} \;  i\in \mathbb N,
\end{equation*}
and 
\begin{equation*}
	E_0 = \{ R \in \Q_{ - \scale (P)   } :      \rho_A  (x_P -  x_R )  \le  |P|  \} \quad \operatorname{for} \;  i\in \mathbb N .
\end{equation*}
By Lemma \ref{lemma rho A and |x|}, $\sharp E_i \le C_A < \infty$ for each $i\in \mathbb Z_+$. Indeed, suppose that $P = A^{\scale (P)   }  ([0,1)^n + k_0 ) $ for some fixed $k_0\in \mathbb Z^n$.  For $E_i, i\in\mathbb N$, $ \rho_A(  A^{\scale (P)   } ( k_0 -k_1)  ) /|P| =  \rho_A( k_0 -k_1) \ge 1$ where $k_1  \in \mathbb Z^n$. Hence,  $  C^{-1}  i^{\zeta_-} \le  |k_0 -k_1| \le C (i+1)^{\zeta_+}$ where $\zeta_-, \zeta_+$  are  the same as in Lemma \ref{lemma rho A and |x|}. From this,we obtain $\sharp E_i \le C_A < \infty$ for each $i\in  \mathbb Z_+$.

Thus
\begin{align*}
	\left| |\det A|^{js} g_j (x) \right|^r  & \lesssim  \sum_{i=0}^\infty  \sum_{Q \in E_i}   |\det A|^j \int_{Q}   \frac{|\det A|^{jsr}  |h_j (z) |^r }{  (1 + |\det A|^j \rho_A  (x- z ) ) ^M }  \d z  \\
	&\lesssim  \sum_{i=0}^\infty  \frac{1}{ (1+i)^M } \sum_{Q \in E_i}   \frac{1}{|Q|} \int_{Q}  |\det A|^{jsr}  |h_j (z) |^r   \d z  \\
	&\lesssim \sum_{i=0}^\infty  \frac{1}{ (1+i)^M }   \sum_{Q\in E_i} \M_{\rho_A} \left( |\det A|^{jsr}  |h_j  |^r  \chi_Q \right) (x_Q) .
\end{align*}
By Minkowski's inequality, Lemma \ref{lemma M rho A FS} and $M>1$, we obtain
\begin{align*}
	\left\| \left\{ \left| |\det A|^{js} g_j  \right|^r   \right \}_{j \in\mathbb Z}    \right\|_{ L \dot F_{p/r, q/r } (\hat P)  }  &\lesssim 	\left\| \left \{ \sum_{i=0}^\infty   \frac{1}{(1 +  i ) ^M}  \sum_{Q\in E_i} \M_{\rho_A} \left( |\det A|^{jsr}  |h_j  |^r  \chi_Q \right)   \right\}_{j \in\mathbb Z}    \right\|_{ L \dot F_{p/r, q/r } (\hat P)   }  \\
	& \lesssim   \sum_{i=0}^\infty   \frac{1}{(1 +  i ) ^M}  \sum_{Q\in E_i} \left\| \left \{ \M_{\rho_A} \left( |\det A|^{jsr}  |h_j  |^r  \chi_Q \right)  \right\}_{j \ge -\scale (P)}    \right\|_{ L \dot F_{p/r, q/r }    }  \\
	& \lesssim   \sum_{i=0}^\infty   \frac{1}{(1 +  i ) ^M}  \sum_{Q\in E_i} \left\| \left \{ \left( |\det A|^{jsr}  |h_j (z) |^r  \chi_Q \right)  \right\}_{j \ge -\scale (P)}    \right\|_{ L \dot F_{p/r, q/r }    }  \\
	&\lesssim \left(  |P|^\tau \left\|  \{ |\det A|^{js} h_j  \}_{j\in\mathbb Z}   \right\|_{L \dot F_{p,q}^\tau  } \right) ^r .
\end{align*}
Taking the supremum over $P\in \Q$, we obtain
\begin{equation*}
 \left\| 	\{ |\det A|^{js} g_j  \}_{j\in\mathbb Z}   \right\|_{L \dot F_{p,q}^\tau }   \lesssim \left\| 	\{ |\det A|^{js} h_j  \}_{j\in\mathbb Z}   \right\|_{L \dot F_{p,q}^\tau }  .
\end{equation*}
 The proof of case of $ L \dot B_{p,q}^\tau$ is similar. Thus we finish the proof.
\end{proof}

First we establish the relations between $\|  \sup_{ \mathbb A , \varphi} \vec f       \|_{\dot a_{p,q}^{s,\tau} ( A) } $ and $\| \vec f\|_{ \dot A_{p,q}^{s,\tau} (\{A_Q\}_{Q\in \Q} ,A)  }$.

\begin{theorem} \label{theorem sup A_Q less inf}
	Let $A $ be a dilation. 	Let $s\in \mathbb R$, $\tau \in [0,\infty) $, $p\in  (0,\infty)$,
	$q \in (0,\infty]$.
		Let  $W \in \A_p$ and $\mathbb A:= \{ A_Q\}_{Q\in \Q}$ be a sequence of reducing operators of order $p$ for $W$. 
	Let $\varphi\in \mathcal S (\rn)$ satisfy (\ref{eq varphi supp}) and (\ref{eq varphi > 0}).
	Then for any $\vec f \in (\S_\infty^\prime (\rn))^m $, 
	\begin{align*}
		\| \vec f\|_{ \dot A_{p,q}^{s,\tau} (\{A_Q\}_{Q\in \Q} ,A)  }  \le \left\|  \sup_{ \mathbb A , \varphi} \vec f      \right \|_{\dot a_{p,q}^{s,\tau} ( A) }  \lesssim 	\| \vec f\|_{ \dot A_{p,q}^{s,\tau} (\{A_Q\}_{Q\in \Q} ,A)  } .
	\end{align*}
\end{theorem}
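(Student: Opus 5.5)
The first inequality is pointwise; for the second I would combine a Plancherel--Pólya type estimate for band-limited functions with a reduction to Lemma \ref{lemma r trick all  tau}.

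\textbf{First inequality.} Fix $j\in\mathbb Z$ and $x\in Q\in\Q_j$. Since $\mathbb A_j(x)=A_Q$ and $|Q|^{1/2}=|\det A|^{-j/2}$,
\begin{equation*}
|\det A|^{js}|\mathbb A_j\varphi_j*\vec f(x)| \le |\det A|^{js}\sup_{y\in Q}|A_Q\varphi_j*\vec f(y)| = |\det A|^{j(s+1/2)}\sup_{\mathbb A,\varphi,Q}\vec f .
\end{equation*}
The right-hand side equals $|\det A|^{j(s+1/2)}\sum_{Q\in\Q_j}(\sup_{\mathbb A,\varphi,Q}\vec f)\chi_Q(x)$. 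This is exactly the function entering the definition of the $\dot a^{s,\tau}_{p,q}(A)$ norm of $\sup_{\mathbb A,\varphi}\vec f$ (with $m=1$, weight $1$). Monotonicity of $\|\cdot\|_{L\dot A^\tau_{p,q}}$ then gives the first inequality with constant $1$.

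\textbf{Second inequality: pointwise estimate.} Here I would use Lemma \ref{lemma r trick all  tau} with
\begin{equation*}
g_j:=|\det A|^{j/2}\sum_{Q\in\Q_j}\Bigl(\sup_{\mathbb A,\varphi,Q}\vec f\Bigr)\chi_Q,\qquad h_j:=|\mathbb A_j\varphi_j*\vec f|.
\end{equation*}
The task is to show that for some $r\in(0,\min\{p,q\})$ and some $M>1$,
\begin{equation*}
|g_j(x)|^r\lesssim|\det A|^j\int_{\rn}|h_j(z)|^r(1+|\det A|^j\rho_A(x-z))^{-M}\,\d z .
\end{equation*}
Fix $Q\in\Q_j$, $x\in Q$ and $y\in Q$. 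The function $\varphi_j*\vec f$ has Fourier support in $(A^*)^j[-\pi,\pi]^n$. Using the anisotropic Peetre maximal inequality (which one derives, as in \cite{Bo05,BH06}, from Lemma \ref{lemma identity} together with a reproducing function), one gets for any $N>0$
\begin{equation*}
|A_Q\varphi_j*\vec f(y)|^r\lesssim|\det A|^j\int_{\rn}\frac{|A_Q\varphi_j*\vec f(z)|^r}{(1+|\det A|^j\rho_A(y-z))^{N}}\,\d z .
\end{equation*}
The constant is uniform because the vector estimate holds for each fixed matrix $A_Q$.

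\textbf{Second inequality: replacing $A_Q$ by $\mathbb A_j(z)$.} Split the integral over $z\in R$, $R\in\Q_j$. There
\begin{equation*}
|A_Q\varphi_j*\vec f(z)|\le\|A_QA_R^{-1}\|\,|A_R\varphi_j*\vec f(z)|,
\end{equation*}
and Lemma \ref{lemma cube AQ AR-1} with $|Q|=|R|$ gives $\|A_QA_R^{-1}\|\lesssim(1+|\det A|^j\rho_A(c_Q-c_R))^{\Delta}$. By (\ref{eq baisic geo}) this factor is comparable to $(1+|\det A|^j\rho_A(x-z))^{\Delta}$. Choosing $N>M+r\Delta$ with $M>1$ yields the required bound for $g_j$. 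Lemma \ref{lemma r trick all  tau} then gives
\begin{equation*}
\|\{|\det A|^{js}g_j\}\|_{L\dot A^\tau_{p,q}}\lesssim\|\{|\det A|^{js}h_j\}\|_{L\dot A^\tau_{p,q}},
\end{equation*}
which is exactly the second inequality.

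\textbf{Main obstacle.} The delicate point is the Plancherel--Pólya/Peetre step with arbitrary $r$ and $N$. It needs a tempered-growth assumption on $\varphi_j*\vec f$: one has to check that the sup is finite, or argue a priori via the standard Peetre-maximal-function argument with $\sup_z|F(z)|(1+\rho_A(\cdot))^{-N}$ finite, as in \cite{BH06}. Otherwise one cannot absorb the $r$-power. I would follow the anisotropic Peetre argument, applied to each component after multiplying by the fixed matrix $A_Q$.
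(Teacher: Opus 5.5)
Your proposal is correct and follows the paper's proof. The first inequality is pointwise. The second comes from bounding $|A_Q\varphi_j*\vec f(y)|^r$, uniformly for $y\in Q$, by $|\det A|^j\int|\mathbb A_j\varphi_j*\vec f(z)|^r(1+|\det A|^j\rho_A(x-z))^{-M}\,\d z$, using the swap $A_Q\to A_R$ from Lemma \ref{lemma cube AQ AR-1} together with (\ref{eq baisic geo}), and then invoking Lemma \ref{lemma r trick all  tau}. Your factor $|\det A|^{j/2}$ in $g_j$ is the correct normalization.

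The obstacle you flag does not arise in the paper's route. The paper applies Lemma \ref{lemma identity} to $\varphi_j*\vec f(\cdot+y)$ with a reproducing $\gamma$ and uses $r<1$ subadditivity on the sampled series. It swaps $A_Q$ for $A_R$ at the sample points $x_R+y\in R$, then averages over $y\in A^{-j}[0,1)^n$. This yields the integral bound with constants independent of $\vec f$, and needs no a priori finiteness of a Peetre maximal function.
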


\begin{proof}
	The first  inequality comes from the definition, so let us show the second inequality.
	Let $\gamma \in \mathcal S(\rn) $  such that $\F \gamma (\xi) = 1$ on the support of $\F \varphi$ and supp $\F \gamma  \subset  [-\pi, \pi]^n $. (Here we may suppose that $\F\varphi$  is compactly supported in $[-\pi, \pi]^n \backslash\{0\}$.) Let 
	$\gamma_j (x) = |\det A|^{j} \gamma( A ^j x) $. Then  $\F \gamma_j (\xi) = \F \gamma ( (A^\ast)^{-j}  \xi)$ and supp $\F \gamma_j \subset (A^\ast)^{j} [-\pi,\pi]^n  $. 
	
	Hence, for any $\vec g = (g_1, \ldots, g_m)^T$ with supp $\F  g_i \subset( A^\ast) ^{j} [-\pi, \pi]^n $  for $i\in \{1,\ldots,m\}$, we have
	$
		\vec g = \vec g * \gamma_j.
	$	
	By Lemma \ref{lemma identity}, we have the identity
		\begin{equation*}
		\vec g (x)  = \vec g * \gamma_j (x) = \sum_{k\in \mathbb Z} |\det A|^{-j} g (A^{-j} k)  h (x - A^{-j} k).
	\end{equation*}
	We apply this identity with $\vec g (x) =\varphi_j * \vec f (x+y) $, for an arbitrary $y\in \rn$,  to
	obtain
	
		\begin{equation*}
	\varphi_j * \vec f (x +y ) =  \sum_{k\in \mathbb Z} |\det A|^{-j}  \varphi_j * \vec f (A^{-j} k +y) \gamma_j (x - A^{-j} k) .
	\end{equation*}
Leting $z =x+y$ in above identity, we obtain for any $z,y \in \rn$,
	\begin{align} \label{eq identity varphi_j f}
		\nonumber
	\varphi_j * \vec f (z  ) & =  \sum_{k\in \mathbb Z} |\det A|^{-j}  \varphi_j * \vec f (A^{-j} k +y) \gamma_j (z - A^{-j} k -y) \\
	& = \sum_{R \in \Q_j }  |\det A|^{-j} \varphi_j * \vec f (x_R  +y )  \gamma_j  (z - x_R -y),
\end{align}
where $ x_R =A^{-j} k $ is the left corner of $R $.

Let $ r \in (0,\min\{ 1,p,q \}) $ and $M > \Delta + 1 /r $ where $\Delta$ is the same as in Lemma \ref{lemma cube AQ AR-1}.
Using (\ref{eq identity varphi_j f}), (\ref{eq baisic geo}), Lemma \ref{lemma cube AQ AR-1} and $\gamma \in\mathcal S (\rn)$, 
we have that for $j\in \mathbb Z$, $Q\in \Q_j $, $x\in Q$, any $y \in A ^{-j} [0,1)^n$,
\begin{align*}
	\nonumber
	|A_Q (\varphi_j * \vec f ) (x) |^r & \lesssim   \sum_{R \in \Q_j }   |A_Q  \varphi_j * \vec f (x_R +y ) |^r   \frac{1}{  (1 + |\det A|^j \rho_A  ( x- x_R -y ) ) ^{Mr} } \\
	\nonumber
	& \approx   \sum_{R \in \Q_j }   |A_Q  \varphi_j * \vec f (x_R +y ) |^r   \frac{1}{  (1 + |\det A|^j \rho_A  ( x_Q- x_R  ) ) ^{Mr} } \\
	& \lesssim \sum_{R \in \Q_j }   |A_R \varphi_j * \vec f (x_R +y) |^r   \frac{1}{  (1 + |\det A|^j \rho_A  ( x_Q- x_R ) ) ^{(M -\Delta) r} } .
\end{align*}
Taking the average over $y \in  A^{-j}  [0,1)^n$, we obtain
\begin{align}\label{eq AQ r trick}
	\nonumber
	|A_Q (\varphi_j * \vec f ) (x) |^r &  \lesssim   \sum_{R \in \Q_j }   |\det A|^j \int_R  |A_R \varphi_j * \vec f (z) |^r   \frac{1}{  (1 + |\det A|^j \rho_A  ( x_Q- x_R ) ) ^{(M -\Delta) r} }  \d z \\
	&\approx  |\det A|^j\int_\rn  |\mathbb A_j \varphi_j * \vec f (z) |^r   \frac{1}{  (1 + |\det A|^j \rho_A  ( x- z ) ) ^{(M -\Delta) r} }  \d z ,
\end{align}
where $\mathbb A_j$ is the same as in (\ref{eq def A_j}).
Taking the supremum over $x\in Q$, we obtain
\begin{equation*} 
	\sup_{ \mathbb A , \varphi, Q  } 	|A_Q (\varphi_j * \vec f ) (x) |^r  \lesssim |\det A|^j\int_\rn    \frac{ |\mathbb A_j \varphi_j * \vec f (z) |^r}{  (1 + |\det A|^j \rho_A  ( x- z ) ) ^{(M -\Delta) r} }  \d z .
\end{equation*}
For each $j \in \mathbb Z$, 
let 
\begin{equation*}
	g_j := \sum_{Q \in \Q_j } \sup_{ \mathbb A , \varphi, Q  } \vec f \chi_Q  \quad \operatorname{and} \quad 	h_j = \mathbb A_j \varphi_j *\vec f .
\end{equation*}
By Lemma \ref{lemma r trick all  tau}, we obtain
\begin{align*}
	\left\|  \sup_{ \mathbb A , \varphi} \vec f      \right\|_{\dot a_{p,q}^{s,\tau} (A) } =  \left\| 	\{ |\det A|^{js} g_j  \}_{j\in\mathbb Z}   \right\|_{L \dot A_{p,q}^\tau }   \lesssim \left\| 	\{ |\det A|^{js} h_j  \}_{j\in\mathbb Z}   \right\|_{L \dot A_{p,q}^\tau }
=  \left\|   \vec f     \right  \|_{\dot A_{p,q}^{s,\tau} (\{A_Q\}_{Q\in \Q} ,A) }  .
\end{align*}

This finishes the proof of Theorem \ref{theorem sup A_Q less inf}.
\end{proof}

\begin{lemma} \label{lemma AQ les W}
		 Let $A $ be a dilation. 
Let $s\in \mathbb R$, $\tau \in [0,\infty) $, $p\in  (0,\infty)$,
$q \in (0,\infty]$.
	 Let  $W \in \A_p$ and $\mathbb A: =\{ A_Q\}_{Q\in \Q}$ be a sequence of reducing operators of order $p$ for $W$.
	Let $\varphi\in \mathcal S (\rn)$ satisfy (\ref{eq varphi supp}) and (\ref{eq varphi > 0}).
	Then for any $\vec f \in (\S_\infty^\prime (\rn))^m$,	
	\begin{equation} \label{eq A_Q less W}
		\| \vec f\|_{  \dot A_{p,q}^{s,\tau} (\{A_Q\}_{Q\in \Q} ,A) }  \lesssim \| \vec f\|_{\dot A_{p,q}^{s,\tau} (W ,A)} .
	\end{equation}
\end{lemma}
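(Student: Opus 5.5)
The plan is to reduce \eqref{eq A_Q less W} to a pointwise maximal-type inequality of the form used in Lemma \ref{lemma r trick all  tau}, with $h_j := |\det A|^{js}\,|W^{1/p}\varphi_j*\vec f|$ on the right-hand side instead of $\mathbb A_j\varphi_j*\vec f$. The proof is then closed by the Fefferman--Stein inequality (Lemma \ref{lemma M rho A FS}) at a reduced exponent.

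\textbf{Case $p\in(0,1]$.} This case is immediate. For $Q\in\Q_j$ and a.e. $x\in Q$,
\begin{equation*}
|A_Q\varphi_j*\vec f(x)|\le \|A_QW^{-1/p}(x)\|\,|W^{1/p}(x)\varphi_j*\vec f(x)|\lesssim [W]_{\A_p}^{1/p}|W^{1/p}(x)\varphi_j*\vec f(x)|
\end{equation*}
by Lemma \ref{lemma reverse A_Q W}(i). Hence $|\mathbb A_j\varphi_j*\vec f|\lesssim|W^{1/p}\varphi_j*\vec f|$ a.e. for every $j$. Since the quasi-norm of $L\dot A^\tau_{p,q}$ is monotone, \eqref{eq A_Q less W} follows.

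\textbf{Case $p\in(1,\infty)$.} Here $\|A_QW^{-1/p}(x)\|$ is not bounded, only $L^{p'+\delta_W}$-integrable on average (Lemma \ref{lemma reverse A_Q W}(iv)), so the factor has to be absorbed by H\"older's inequality inside an averaging. I fix a small $r\in(0,1)$ and $t>1$ with $rt<\min\{p,q\}$ and $rt'<p'+\delta_W$; this is possible since $r$ may be taken as small as needed. As in the proof of Theorem \ref{theorem sup A_Q less inf}, using the reproducing identity \eqref{eq identity varphi_j f} and averaging over $y\in A^{-j}[0,1)^n$, I obtain for $Q\in\Q_j$ and $x\in Q$
\begin{equation*}
|A_Q\varphi_j*\vec f(x)|^r\lesssim\sum_{R\in\Q_j}\frac{1}{(1+|\det A|^j\rho_A(x_Q-x_R))^{(M-\Delta)r}}\fint_R|A_R\varphi_j*\vec f(z)|^r\,\d z .
\end{equation*}
In this step Lemma \ref{lemma cube AQ AR-1} converts $A_Q$ into $A_R$ at the cost of the factor $(1+\cdots)^{\Delta r}$, and $M$ is taken large. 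Inside each $R$ I then write $|A_R\vec v|\le\|A_RW^{-1/p}(z)\|\,|W^{1/p}(z)\vec v|$ and apply H\"older with exponents $t',t$:
\begin{equation*}
\fint_R|A_R\varphi_j*\vec f|^r\le\Bigl(\fint_R\|A_RW^{-1/p}\|^{rt'}\Bigr)^{1/t'}\Bigl(\fint_R|W^{1/p}\varphi_j*\vec f|^{rt}\Bigr)^{1/t}\lesssim\Bigl(\fint_R|W^{1/p}\varphi_j*\vec f|^{rt}\Bigr)^{1/t}.
\end{equation*}
The first factor is uniformly bounded by Lemma \ref{lemma reverse A_Q W}(iv), because $rt'<p'+\delta_W$.

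\textbf{Closing the argument.} Set $u:=rt$. With the weights summable (since $(M-\Delta)r>1$), Jensen's inequality together with the counting $\sharp E_i\le C_A$ from the proof of Lemma \ref{lemma r trick all  tau} gives
\begin{equation*}
|\mathbb A_j\varphi_j*\vec f(x)|^{u}\lesssim\sum_{i\ge0}(1+i)^{-(M-\Delta)r}\sum_{R\in E_i}\M_{\rho_A}\bigl(|W^{1/p}\varphi_j*\vec f|^{u}\chi_R\bigr)(x).
\end{equation*}
This is exactly the structure handled in Lemma \ref{lemma r trick all  tau}, with $r$ replaced by $u<\min\{p,q\}$. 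I then repeat that proof verbatim: Minkowski's inequality in $L^{p/u}\ell^{q/u}$ (or in $\ell^{q/u}L^{p/u}$ in the Besov case), then Lemma \ref{lemma M rho A FS} with $p/u,q/u>1$, then summation of the tail over $i$, then the supremum over $P\in\Q$ with the factor $|P|^{-\tau}$. This yields \eqref{eq A_Q less W}.

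The main obstacle is the H\"older step. One must check that the $r$-trick still holds with $A_Q$ frozen while $z$ ranges over distant cubes; this is where Lemma \ref{lemma cube AQ AR-1} and the choice $M>\Delta+1/r$ are needed. One must also ensure that the exponent $r$ can be chosen small enough that simultaneously $rt<\min\{p,q\}$ and $rt'<p'+\delta_W$. Both conditions hold once $t$ is fixed with $t'$ moderate and $r$ is then taken small. If the direct per-cube averaging were problematic, a fallback is to use the supremum over $Q^\sharp$ from Lemma \ref{lemma reverse A_Q W}(vi) instead of (iv).
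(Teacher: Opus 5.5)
Your proposal is correct and is essentially the paper's proof: the pointwise bound from Lemma \ref{lemma reverse A_Q W}(i) for $p\in(0,1]$, and for $p\in(1,\infty)$ the estimate \eqref{eq AQ r trick}, absorption of $\|A_RW^{-1/p}\|$ via H\"older's inequality and Lemma \ref{lemma reverse A_Q W}(iv), and then Lemma \ref{lemma r trick all  tau} at the reduced exponent. The only difference is that the paper applies H\"older once, globally, with exponents $p'$ and $p$ against the finite measure $|\det A|^j(1+|\det A|^j\rho_A(x-z))^{-(M-\Delta)r}\,\d z$ (your $t=p$), rather than cube by cube followed by Jensen; note also that right after your Jensen step you already have $|\mathbb A_j\varphi_j*\vec f(x)|^{u}\lesssim|\det A|^j\int_{\rn}|W^{1/p}(z)\varphi_j*\vec f(z)|^{u}(1+|\det A|^j\rho_A(x-z))^{-(M-\Delta)r}\,\d z$, which is exactly the hypothesis of Lemma \ref{lemma r trick all  tau}, so you can cite it directly instead of re-running its proof with $x$-dependent sets $E_i$ (your maximal-function form also silently drops a harmless factor $(1+i)$).
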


\begin{proof}
	 We consider the following two cases on $p$.
	
	Case $p \in (0,1]$. 
	By   Lemma \ref{lemma reverse A_Q W} (i), for any $j \in \mathbb Z$ and a.e. $x\in \rn$,
	\begin{equation*}
		|\mathbb A_j (x) \varphi_j *\vec f (x) | \le \| \mathbb A_j (x) W^{-1/p} (x) \|  | W^{1/p} (x)   \varphi_j *\vec f (x) |  \lesssim  | W^{1/p} (x)   \varphi_j *\vec f (x) | ,
	\end{equation*}
where $\mathbb A_j$   is the same as in (\ref{eq def A_j}). This finishes the proof of (\ref{eq A_Q less W}) in this case.
	
	Case $p \in (1,\infty)$.  From (\ref{eq AQ r trick}) and (\ref{eq baisic geo}), we have that for $j\in \mathbb Z$, $Q\in \Q_j $, $x\in Q$, 
	\begin{align*}
		|A_Q \varphi_j *\vec f (x) |^r 
		&\lesssim 		
		  \sum_{R \in \Q_j }   
		 |\det A|^j  \int_R 
		 | A_R \varphi_j * \vec f (z ) |^r   \frac{  1}{  (1 + |\det A|^j \rho_A  ( x- z) ) ^{(M -\Delta) r} }   \d z \\
		 	&\le		
		  \sum_{R \in \Q_j }   |\det A|^j 
		 \int_R  \|A_R W^{-1/p} (z)\| ^r
		   \frac{ |  W^{1/p} (z) \varphi_j * \vec f (z ) |^r }{  (1 + |\det A|^j \rho_A  ( x- z) ) ^{(M -\Delta) r} }    \d z  \\
		   & =  \int_\rn  |\det A|^j     \| \mathbb A_j (z) W^{-1/p} (z)\| ^r
		   \frac{ |  W^{1/p} (z) \varphi_j * \vec f (z ) |^r }{  (1 + |\det A|^j \rho_A  ( x- z) ) ^{(M -\Delta) r} }    \d z ,
	\end{align*}
where $\Delta $ is the same as in Lemma \ref{lemma cube AQ AR-1}.
Let $r$  be sufficiently small such that we can use  Lemma \ref{lemma reverse A_Q W} (iv).
By H\"older's inequality, we get
\begin{align*}
	|A_Q \varphi_j *\vec f (x) |^r  & \lesssim \left(  |\det A|^j  \int_\rn  \frac{ \| \mathbb A_j (z) W^{-1/p} (z)\| ^{rp'} }{ (1 + |\det A|^j \rho_A  ( x- z) ) ^{(M -\Delta) r}  }  \d z \right)^{1/p'}   \\
	&\quad \times \left( |\det A|^j  \int_\rn \frac{ |  W^{1/p} (z) \varphi_j * \vec f (z ) |^{rp} }{  (1 + |\det A|^j \rho_A  ( x- z) ) ^{(M -\Delta) r  } }    \d z \right)^{1/p}.
\end{align*}
Let $ (M -\Delta) r >1 $.
Using Lemma \ref{lemma reverse A_Q W} (iv),
\begin{align*}
&	 \left(  |\det A|^j  \int_\rn  \frac{ \| \mathbb A_j (z) W^{-1/p} (z)\| ^{rp'} }{ (1 + |\det A|^j \rho_A  ( x- z) ) ^{(M -\Delta) r}  }  \d z \right)^{1/p'}   \\
& =  \left(  |\det A|^j  \sum_{R \in \Q_j }  \int_R \frac{ \|  A_R W^{-1/p} (z)\| ^{rp'} }{ (1 + |\det A|^j \rho_A  ( x- z) ) ^{(M -\Delta) r}  }   \d z \right)^{1/p'} \\
& \lesssim \left(  \sum_{R \in \Q_j }  (1 + |\det A|^j \rho_A  ( x- x_R) ) ^{ - (M -\Delta) r}    \right)^{1/p'}  <\infty .
\end{align*}
Hence 
\begin{align*}
	|A_Q \varphi_j *\vec f (x) |^{rp}  & \lesssim   |\det A|^j  \int_\rn \frac{ |  W^{1/p} (z) \varphi_j * \vec f (z ) |^{rp} }{  (1 + |\det A|^j \rho_A  ( x- z) ) ^{(M -\Delta) r  } }    \d z .
\end{align*}
Let $r$ be sufficiently small such that   $rp \in (0,  \min\{1, p,q\} ) $.
Let $g_j (x) := |A_Q \varphi_j *\vec f (x) | $and $ h_j (x) : =|  W^{1/p} (x) \varphi_j * \vec f (x ) | $.
By Lemma \ref{lemma r trick all  tau}, we obtain
\begin{align*}
	\| \vec f\|_{  \dot A_{p,q}^{s,\tau} (\{A_Q\}_{Q\in \Q} ,A) }  =  \left\| 	\{ |\det A|^{js} g_j  \}_{j\in\mathbb Z}   \right\|_{L \dot A_{p,q}^\tau }   \lesssim \left\| 	\{ |\det A|^{js} h_j  \}_{j\in\mathbb Z}   \right\|_{L \dot A_{p,q}^\tau }
	=   \| \vec f\|_{  \dot A_{p,q}^{s,\tau} (W ,A) } .
\end{align*}
This finishes the proof of (\ref{eq A_Q less W}) in this case and hence Lemma \ref{lemma AQ les W}.
\end{proof}

For $j\in\mathbb Z $ and $f \in L^1_{\operatorname{loc}}$, define 
\begin{equation*}
	E_j (f) : = \sum_{Q \in \Q_j}  \chi_Q \fint_Q f (x) \d x.
\end{equation*}

Similar to \cite[Lemma 3.6]{FR21} and \cite[Theorem 3.7]{FR21}, respectively, we have the following two lemmas.

\begin{lemma}\label{lemma alpha g L1 le g L1}
	Suppose  $\alpha = \{ \alpha_j\}_{j\in\mathbb Z} $ is a sequence of non-negative measurable functions
	on $\rn$ such that
	\begin{equation*}
		\sup_{Q \in \Q} \fint_Q \sup_{j\in \mathbb Z:  j \ge - \scale (Q) } \alpha_j (x) \d x <\infty .
	\end{equation*}
	Then, for any sequence $\{ g_j \}_{j\in\mathbb Z} $  of functions on $\rn$  such that for every  $j \in \mathbb Z$, $g_j$ is 	constant on each  cube $ Q$  with $ |Q| = |\det A|^{-j} $ , we have
	
	\begin{equation*}
		\|\sup_{j\in \mathbb Z}  |\alpha_j g_j | \|_{L^1} \lesssim 	\|\sup_{j\in \mathbb Z}  | g_j | \|_{L^1} .
	\end{equation*}
\end{lemma}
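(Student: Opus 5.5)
The plan is to follow the stopping-time argument of \cite[Lemma 3.6]{FR21}, adapted to the dilated cubes $\Q$ and the partial order $\preceq$. Set $G(x):=\sup_{j\in\mathbb Z}|g_j(x)|$ and assume $\|G\|_{L^1}<\infty$. Fix $\lambda>0$. Let $\mathcal D_\lambda$ be the family of cubes $Q\in\Q$ with $|g_{-\scale(Q)}|>\lambda$ on $Q$; this is meaningful because $g_{-\scale(Q)}$ is constant on $Q$. Then
\[
\{G>\lambda\}=\bigcup_{Q\in\mathcal D_\lambda}Q .
\]
Every $Q\in\mathcal D_\lambda$ satisfies $\lambda|Q|\le\int_Q G\le\|G\|_{L^1}$, so $\sup_{Q\in\mathcal D_\lambda}\scale(Q)<\infty$. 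By the Remark following the definition of $\preceq$, every $Q\in\mathcal D_\lambda$ is therefore stacked below some maximal cube $P\in\max(\mathcal D_\lambda)=:\{P_k\}_k$.

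Next, for $x\in\{|\alpha_j g_j|>\lambda\}$ with $|\alpha_j(x)g_j(x)|\le\sup_i|\alpha_ig_i|$, choose the cube $Q\in\Q_j$ containing $x$. Then $Q\in\mathcal D_{\lambda/\alpha_j(x)}$ rather than $\mathcal D_\lambda$, so the layer-cake argument has to be arranged differently. I would therefore use a dyadic decomposition in the values of $g$. For $\ell\in\mathbb Z$, let $\mathcal D_\ell:=\mathcal D_{2^\ell}$ with maximal cubes $\{P_{\ell,k}\}_k$. For each $x$ and each $j$ with $2^\ell<|g_j(x)|\le 2^{\ell+1}$, the cube $Q\in\Q_j$ containing $x$ lies in $\mathcal D_\ell$. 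Hence $Q\preceq P_{\ell,k}$ for some $k$, and since $x\in Q$ we have $j\ge-\scale(P_{\ell,k})$ up to the bounded ambiguity coming from Lemma \ref{lemma stacked cube}. By that lemma, $Q$ lies in the union $\widetilde P_{\ell,k}$ of the at most $(2\eta)^n$ translates of $P_{\ell,k}$ at the same scale. This gives
\[
\sup_j|\alpha_j(x)g_j(x)|\le\sum_{\ell}2^{\ell+1}\sum_k\chi_{\widetilde P_{\ell,k}}(x)\,\sup_{j\ge-\scale(P_{\ell,k})-c}\alpha_j(x).
\]
Integrating and applying the hypothesis on each of the translate cubes (each of scale comparable to that of $P_{\ell,k}$; a bounded shift of scale is absorbed by passing to a bounded number of subcubes), I get
\[
\|\sup_j|\alpha_jg_j|\|_{L^1}\lesssim\sum_\ell2^\ell\sum_k|P_{\ell,k}|.
\]

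The main obstacle is the final step, namely showing
\[
\sum_k|P_{\ell,k}|\lesssim|\{G>2^\ell\}|,
\]
that is, that the maximal cubes have bounded overlap. In the dyadic case they are disjoint. Here the maximal cubes need not be unique, but distinct maximal cubes $P,P'$ with $|P\cap P'|>0$ and $\scale(P)<\scale(P')$ would give $P\preceq P'$, contradicting maximality. So overlapping maximal cubes have equal scale, and at a fixed scale the cubes of $\Q_j$ tile $\rn$. Hence $\{P_{\ell,k}\}_k$ consists of pairwise almost disjoint cubes, all contained in $\{G>2^\ell\}$. Then
\[
\sum_\ell2^\ell|\{G>2^\ell\}|\approx\|G\|_{L^1},
\]
which finishes the argument. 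The bookkeeping I expect to need most care is the claim that $j\ge-\scale(P_{\ell,k})$ (so that the hypothesis applies with the supremum over $j\ge-\scale(Q)$ only), together with replacing $P_{\ell,k}$ by its $\eta$-neighbourhood while keeping bounded overlap. This uses Lemma \ref{lemma use covered Q} and the fact that $\scale(Q)\le\scale(P)$ whenever $Q\preceq P$.
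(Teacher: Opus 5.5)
Your proof is correct. It is the stopping-time argument of \cite[Lemma 3.6]{FR21}, which the paper cites in place of a proof of its own, adapted to dilated cubes: $\preceq$-maximal cubes replace dyadic maximal cubes, Lemma~\ref{lemma stacked cube} gives $Q\subset\widetilde P_{\ell,k}$, and your observation that overlapping maximal cubes must have equal scale gives their disjointness. The hedges you flag are unnecessary, since $Q\preceq P_{\ell,k}$ already gives $j=-\scale(Q)\ge-\scale(P_{\ell,k})$ with no scale shift, the translates making up $\widetilde P_{\ell,k}$ have the same scale as $P_{\ell,k}$ so the hypothesis applies to each of them directly, and because you integrate term by term no bounded overlap of the sets $\widetilde P_{\ell,k}$ is needed.
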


\begin{lemma} \label{lemma gamma_j bound Lp ell q}
	Suppose $\{\gamma_j\}_{j \in \mathbb Z}$ is a sequence of non-negative measurable 	functions on $\rn$.
	
	{\rm (i)}
	 Suppose  $0 < q \le p <\infty$, and $\{\gamma_j\}_{j \in \mathbb Z}$ satisfies
	\begin{equation} \label{eq gamma_j 1}
		\sup_{Q \in \Q _j} \fint_Q \gamma_j^{p (1+\delta)} \d x \le c
	\end{equation} 
for some $c,\delta>0 $, independent of  $j \in \mathbb Z$. Then there exists  $C > 0$ such that for any
sequence  $\{ f_j \} _{j \in \mathbb Z  } $ of measurable functions on  $\rn$,
\begin{equation*}
	\| \{ \gamma_j E_j (f_j) \} \|_{ L^p (\ell^q)} \le  C \| \{  E_j (f_j) \} \|_{ L^p (\ell^q)} .
\end{equation*}

	{\rm (ii)} Suppose  $1 < p < \infty, 1  \le q \le \infty$,  and   $ \{ \gamma_j\}_{  j \in \mathbb Z}  $ satisfies
\begin{equation} \label{eq gamma_j P subset Q}
	\sup_{Q \in \Q} \fint_Q \sup_{j\in \mathbb Z:  j \ge - \scale (Q) }  \gamma_j^{p (1+\delta)} \d x <\infty .
\end{equation}
for some $\delta >0 $. Then there exists  $C > 0$ such that for any
sequence  $\{ f_j \} _{j \in \mathbb Z  } $ of measurable functions on  $\rn$,
\begin{equation*}
	\| \{ \gamma_j E_j (f_j) \} \|_{ L^p (\ell^q)} \le C \| \{  E_j (f_j) \} \|_{ L^p (\ell^q)} .
\end{equation*}
\end{lemma}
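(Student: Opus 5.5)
The plan follows the proof of \cite[Theorem 3.7]{FR21}, replacing dyadic cubes by the dilated cubes $\Q_j$ and the dyadic maximal function by $\M_{\rho_A}$. Both parts rest on the fact that $E_j(f_j)$ is constant on each $Q\in\Q_j$. This lets us move the weight $\gamma_j$ onto averages and control it by a reverse-H\"older-type bound.

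\emph{Part (i), $q\le p$.} Set $u=p/q\ge 1$. The case $q=p$ is direct: write $\|\{\gamma_jE_j(f_j)\}\|_{L^p(\ell^p)}^p=\sum_j\sum_{Q\in\Q_j}|E_j(f_j)|_Q^p\int_Q\gamma_j^p$. Then H\"older and (\ref{eq gamma_j 1}) give $\int_Q\gamma_j^p\le c^{1/(1+\delta)}|Q|$. For $q<p$, we dualize. We have $\|\{\gamma_jE_j f_j\}\|_{L^p(\ell^q)}^q=\|\sum_j\gamma_j^q|E_jf_j|^q\|_{L^u}$, which we test against $g\ge0$ with $\|g\|_{L^{u'}}\le1$. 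This gives
\begin{equation*}
\int\sum_j \gamma_j^q|E_jf_j|^q g=\sum_j\sum_{Q\in\Q_j}|E_jf_j|_Q^q\int_Q\gamma_j^q g .
\end{equation*}
By H\"older on $Q$ with exponents $(p(1+\delta)/q)$ and its conjugate, together with (\ref{eq gamma_j 1}), we get $\int_Q\gamma_j^q g\lesssim |Q|\,(\fint_Q g^{t})^{1/t}$, where $t=(p(1+\delta)/q)'<u'$. The last quantity is at most $\int_Q (\M_{\rho_A}(g^t))^{1/t}$. Summing, the expression is bounded by $\int\sum_j|E_jf_j|^q(\M_{\rho_A}g^t)^{1/t}$. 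H\"older with exponents $u,u'$ then reduces everything to $\|(\M_{\rho_A}g^t)^{1/t}\|_{L^{u'}}\lesssim1$. This holds by Lemma \ref{lemma M weak 1,1 strong p,p}, since $u'/t>1$. (When $u=1$ we have $u'=\infty$, and the bound is trivial.)

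\emph{Part (ii), $1<p<\infty$, $1\le q\le\infty$.} Here $q>p$ is allowed, so the pointwise duality above fails, and we must use the supremum condition (\ref{eq gamma_j P subset Q}). I would try a stopping-time argument. For $q=\infty$, apply Lemma \ref{lemma alpha g L1 le g L1} after an extrapolation/duality step: pair $\sup_j\gamma_j|E_jf_j|$ against $g\in L^{p'}$ by linearizing the supremum. For general $q$, dualize $L^p(\ell^q)$ against $L^{p'}(\ell^{q'})$ sequences $\{g_j\}$. Then
\begin{equation*}
\sum_j\int\gamma_j|E_jf_j|g_j=\sum_j\int|E_jf_j|\,E_j(\gamma_jg_j).
\end{equation*}
Writing $E_j(\gamma_j g_j)\le E_j(\gamma_j^{p(1+\delta)})^{1/(p(1+\delta))}\,E_j(|g_j|^{s})^{1/s}$ with $s=(p(1+\delta))'<p'$ reduces the claim to two bounds. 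The first is $\|\{E_j(|g_j|^s)^{1/s}\}\|_{L^{p'}(\ell^{q'})}\lesssim\|\{g_j\}\|_{L^{p'}(\ell^{q'})}$, which follows from Lemma \ref{lemma M rho A FS} applied to $|g_j|^s$ with exponents $p'/s>1$ and $q'/s$. The second is that the factor $\alpha_j:=E_j(\gamma_j^{p(1+\delta)})^{1/(p(1+\delta))}$ can be absorbed.

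\emph{Main obstacle.} The absorption of $\alpha_j$ is the hard step. The factor $\alpha_j$ is constant on $\Q_j$-cubes, and by (\ref{eq gamma_j P subset Q}) and Jensen, $\sup_{j\ge-\scale(Q)}\alpha_j^{p(1+\delta)}$ has bounded averages on each $Q$ (passing through $\M_{\rho_A}$ and the covering in Lemma \ref{lemma use covered Q}). Strictly, the condition controls the average of the supremum, so this step needs a Carleson/John--Nirenberg-type argument. Concretely, I would take maximal cubes (with respect to $\preceq$) where $\alpha_j>2^k$, use the bounded-average condition to get geometric decay of their total measure within each cube, and then sum over $k$. This is the analogue of the good-$\lambda$ step in \cite{FR21}, and it is where the non-nestedness of $\Q$ (handled by Lemma \ref{lemma stacked cube}) requires the most care.
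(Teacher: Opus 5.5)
Your part (i) is correct; it is the standard duality argument: pairing in $L^{p/q}$, H\"older on each $Q\in\Q_j$ with $t<(p/q)'$ thanks to $\delta>0$, and boundedness of $\M_{\rho_A}$ on $L^{(p/q)'/t}$. The paper itself gives no proof beyond the pointer to \cite[Theorem 3.7]{FR21}. Part (ii), however, is not proved, and your route breaks down exactly where (ii) says something new.

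\textbf{Why part (ii) fails.} Condition (\ref{eq gamma_j P subset Q}) implies (\ref{eq gamma_j 1}): for $Q\in\Q_j$ one has $-\scale(Q)=j$ and $\gamma_j\le\sup_{i\ge j}\gamma_i$. So the case $q\le p$ is already part (i), and what matters is $q>p$.
\begin{itemize}
\item Your step (a) applies Lemma \ref{lemma M rho A FS} to $\{|g_j|^s\}$ with inner exponent $q'/s$. That lemma requires $q'/s>1$, i.e.\ $q<p(1+\delta)$.
\item For $q>p(1+\delta)$, and in particular for $q=\infty$ where $q'=1<s$, the inequality $\|\{E_j(|g_j|^s)^{1/s}\}\|_{L^{p'}(\ell^{q'})}\lesssim\|\{g_j\}\|_{L^{p'}(\ell^{q'})}$ is false. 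Already for $A=2I_n$, take $P\in\Q_{j_0}$, $N$ levels $j_0\le j<j_0+N$, disjoint $F_j\subset P$ with $|F_j\cap Q|=|Q|/N$ for every $Q\in\Q_j$ inside $P$, and $g_j=\chi_{F_j}$. The left side is $N^{1/q'-1/s}|P|^{1/p'}$ while the right side is $|P|^{1/p'}$.
\item Step (b) is circular. Absorbing $\alpha_j$ means proving $\|\{\alpha_jE_j(f_j)\}\|_{L^p(\ell^q)}\lesssim\|\{E_j(f_j)\}\|_{L^p(\ell^q)}$, which is the lemma itself with $\gamma_j$ replaced by $\alpha_j$. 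The stopping-time/John--Nirenberg scheme you sketch for it is never carried out.
\end{itemize}

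\textbf{The missing idea.} No duality is needed. You mention Lemma \ref{lemma alpha g L1 le g L1} for $q=\infty$, but it applies directly.
\begin{itemize}
\item For $q=\infty$, write $\|\sup_j\gamma_j|E_j(f_j)|\|_{L^p}^p=\|\sup_j\gamma_j^p|E_j(f_j)|^p\|_{L^1}$. Apply that lemma with $\alpha_j=\gamma_j^p$ and $g_j=|E_j(f_j)|^p$, which is constant on the cubes of $\Q_j$. Its hypothesis $\sup_{Q\in\Q}\fint_Q\sup_{j\ge-\scale(Q)}\gamma_j^p\,\d x<\infty$ follows from (\ref{eq gamma_j P subset Q}) by H\"older. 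No passage to $\alpha_j=E_j(\gamma_j^{p(1+\delta)})^{1/(p(1+\delta))}$ and no John--Nirenberg argument is required.
\item For $p<q<\infty$, interpolate (Riesz--Thorin in the mixed-norm spaces $L^p(\ell^q)$) the linear operator $T\{h_j\}:=\{\gamma_jE_j(h_j)\}$ between $L^p(\ell^p)$ and $L^p(\ell^\infty)$.
\item On $L^p(\ell^p)$, $T$ is bounded because $\fint_Q\gamma_j^p\lesssim1$ for $Q\in\Q_j$ and $\|E_j(h_j)\|_{L^p}\le\|h_j\|_{L^p}$ for $p\ge1$.
\item On $L^p(\ell^\infty)$, $T$ is bounded by the $q=\infty$ estimate, together with $\sup_j|E_j(h_j)|\lesssim\M_{\rho_A}(\sup_j|h_j|)$ (Lemma \ref{lemma basic cube ball cover}(i)) and the $L^p$-boundedness of $\M_{\rho_A}$. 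This is where $p>1$ is used.
\item Apply the interpolated bound to $h_j=E_j(f_j)$ and use $E_jE_j=E_j$; this gives (ii). The $\delta>0$ in (\ref{eq gamma_j P subset Q}) is needed only for the range $q<p$, through part (i).
\end{itemize}
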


\begin{corollary} \label{cor gamma j E j Lp ell q}
		 Let $A $ be a dilation. 
Let  $p\in  (0,\infty)$,
$q \in (0,\infty]$.
	 Let  $W \in \A_p$ and $\{ A_Q\}_{Q\in \Q}$ be a sequence of reducing operators of order $p$ for $W$.		
	For $j\in \mathbb Z$, set
	\begin{equation} \label{eq gamma_j W a_q^ -1}
		\gamma_j = \sum_{Q\in \Q}  \|W^{1/p } A_Q^{-1} \|\chi_Q .
	\end{equation}
Then for any sequence $\{ f_j\}_{j \in \mathbb Z} $  of nonnegative measurable functions on $\rn$ or for any $\{ f_j\}_{j \in \mathbb Z}  \subset L^1_{\operatorname{loc}}$,
 \begin{equation} \label{eq gamma_j Lp ell q}
	\| \{ \gamma_j E_j (f_j)\}_{ L^p (\ell^q)}  \lesssim 	\| \{  E_j (f_j)\}_{ L^p (\ell^q)}  .
\end{equation}
\end{corollary}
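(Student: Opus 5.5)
The plan is to reduce Corollary \ref{cor gamma j E j Lp ell q} to Lemma \ref{lemma gamma_j bound Lp ell q}, taking the intended meaning of (\ref{eq gamma_j W a_q^ -1}) to be $\gamma_j=\sum_{Q\in\Q_j}\|W^{1/p}A_Q^{-1}\|\chi_Q$. Two hypotheses then have to be checked: (\ref{eq gamma_j 1}) for the range $q\le p$, and (\ref{eq gamma_j P subset Q}) for $1<p<\infty$, $q\ge 1$. The remaining range ($p\le 1$ with $q>p$) is not literally covered, so the argument splits into cases and the hypothesis verified for $\gamma_j$ must be strong enough to run the proof of Lemma \ref{lemma gamma_j bound Lp ell q} there as well.

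\emph{Step 1 (single-scale condition).} For $Q\in\Q_j$ we have $\gamma_j=\|W^{1/p}A_Q^{-1}\|$ on $Q$. By Lemma \ref{lemma reverse A_Q W} (ii), $\fint_Q\|W^{1/p}(x)A_Q^{-1}\|^{p(1+\delta)}\,\d x\le c$ uniformly in $Q$ whenever $p\delta<\delta_W$. This is exactly (\ref{eq gamma_j 1}), so Lemma \ref{lemma gamma_j bound Lp ell q} (i) gives (\ref{eq gamma_j Lp ell q}) for all $0<q\le p<\infty$.

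\emph{Step 2 (multi-scale condition).} Fix $Q\in\Q$ and $j\ge-\scale(Q)$. A point $x\in Q$ lies in a unique $P\in\Q_j$ with $|P|\le|Q|$ and $|P\cap Q|>0$. It should be checked that such $P$ belongs to $Q^\sharp$: a chain from $P$ to $Q$ through intermediate scales should exist, with the convention that $P=Q$ is allowed. Granting this, $\sup_{j\ge-\scale(Q)}\gamma_j(x)\le\sup_{P\in Q^\sharp,\,x\in P}\|W^{1/p}(x)A_P^{-1}\|$. Lemma \ref{lemma reverse A_Q W} (iii) for $p\le1$, and (v) for $p>1$, then bound the $L^{p(1+\delta)}$-average of this over $Q$ uniformly in $Q$. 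This yields (\ref{eq gamma_j P subset Q}), and Lemma \ref{lemma gamma_j bound Lp ell q} (ii) handles $1<p<\infty$, $1\le q\le\infty$.

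\emph{Step 3 (remaining ranges).} The cases left are $p>1$ with $p<q<1$ (impossible, since then $q>p>1$), and $p\le1$ with $q>p$ (including $q=\infty$), which is the main obstacle. Here I would rescale: pick $r<p$ and apply the $L^{p/r}(\ell^{q/r})$ version to $\gamma_j^r$ and $E_j(f_j)^r$, using $E_j(f_j)^r\le E_j(|f_j|^r)$-type pointwise comparisons. Because $E_j(f_j)$ is constant on cubes of $\Q_j$, we have $E_j(f_j)^r=E_j(E_j(f_j)^r)$ exactly, so no comparison is even needed. Then $\gamma_j^r$ satisfies (\ref{eq gamma_j P subset Q}) with exponent $(p/r)(1+\delta)$, again by Step 2, since $r\cdot(p/r)(1+\delta)=p(1+\delta)$. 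Choosing $r$ small enough that $p/r>1$ and $q/r\ge1$ brings us into case (ii), and undoing the powers gives (\ref{eq gamma_j Lp ell q}).

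This rescaling argument actually covers every case at once, so Step 1 is optional. The nonnegative-or-$L^1_{\rm loc}$ hypothesis on $f_j$ only ensures that $E_j(f_j)$ is well defined.
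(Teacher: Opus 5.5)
Your proposal is correct and follows the paper's proof essentially step for step: the case $q\le p$ via Lemma \ref{lemma reverse A_Q W} (ii) and Lemma \ref{lemma gamma_j bound Lp ell q} (i), the case $1<p<\infty$, $q\ge 1$ via the multi-scale bound from Lemma \ref{lemma reverse A_Q W} (v) and Lemma \ref{lemma gamma_j bound Lp ell q} (ii), and the case $p\le 1$ by raising to a small power $r$ and applying Lemma \ref{lemma gamma_j bound Lp ell q} (ii) to $\gamma_j^r$ in $L^{p/r}(\ell^{q/r})$. You also make explicit three points the paper leaves implicit: that $\gamma_j$ must be summed over $\Q_j$, that for a.e.\ $x\in Q$ the cube of $\Q_j$ containing $x$ lies in $Q^\sharp$, and that $|E_j(f_j)|^r=E_j(|E_j(f_j)|^r)$.
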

\begin{proof}
	Suppose first that  $0 < q\le p < \infty$. Then (\ref{eq gamma_j 1}) holds for some $ \delta > 0$ by Lemma \ref{lemma reverse A_Q W}. Then (\ref{eq gamma_j Lp ell q}) follows from  Lemma \ref{lemma gamma_j bound Lp ell q} (i). 
	
	If $1<p <\infty $, $ 1\le q\le \infty$,   then (\ref{eq gamma_j P subset Q})  holds by Lemma \ref{lemma reverse A_Q W}. Using  Lemma \ref{lemma gamma_j bound Lp ell q} (ii),  we obtain (\ref{eq gamma_j Lp ell q}) in this case.
 
It remains to prove (\ref{eq gamma_j Lp ell q})  for  $0< p \le 1 $, $ p \le q \le \infty$. Pick $ \epsilon >0 $ sufficiently small that  $p/\epsilon > 1$ (and hence, $q/\epsilon > 1$). Then
\begin{equation*}
	\| \{ \gamma_j E_j (f_j)\} \|_{ L^p  (  \ell^q ) } ^\epsilon  =  \| \{ \gamma_j ^\epsilon | E_j (f_j)|^\epsilon \} \|_{ L^{p/\epsilon}  (  \ell^{q/\epsilon} ) }  .
\end{equation*}

By Lemma \ref{lemma reverse A_Q W}, the sequence $\{  \gamma_j ^\epsilon \}_{j\in \mathbb Z} $ satisfies
\begin{equation*}
	\sup_{Q\in \Q}  \fint_Q \sup_{P\in Q ^\sharp, x\in P}	( \gamma_j^\epsilon) ^{ p (1+\delta) /\epsilon} \d x <\infty .
\end{equation*}
Thus, by    Lemma \ref{lemma gamma_j bound Lp ell q} (ii), we obtain
\begin{equation*}
  \| \{ \gamma_j ^\epsilon | E_j (f_j)|^\epsilon \} \|_{ L^{p/\epsilon}  (  \ell^{q/\epsilon} ) }  \lesssim   \| \{  | E_j (f_j)|^\epsilon \} \|_{ L^{p/\epsilon}  (  \ell^{q/\epsilon} ) }   =   \| \{ | E_j (f_j)| \} \|_{ L^{p}  (  \ell^{q} ) } ^\epsilon ,
\end{equation*}
as desired.
\end{proof}

\begin{remark} \label{remark gamma j Ej Lp Q}
		In Corollary \ref{cor gamma j E j Lp ell q}, if, for any $i \in \mathbb Z $, let $f_i = g$ if $i=j$ and let $f_i = 0$  if $ i \neq j$,
	where $g$ is a nonnegative measurable function on $\rn$  or $g \in L^1_{\operatorname{loc}}$, then, for any $j\in \mathbb Z$,
	\begin{equation*}
		\|   \gamma_j E_j (g)  \|_{L^p }  \lesssim  	\| E_j (g)  \|_{L^p},
	\end{equation*}  
	where the  implicit positive constant is independent of $g$ and $j$.
	
	Fix a dilated cube $Q\in \Q$. Let $h = g \chi_Q$ and  $j \in \mathbb Z $. Then 
	\begin{align*}
		\|   \gamma_j E_j (g)  \|_{L^p (Q) } =  \|   \gamma_j E_j (h)  \|_{L^p  } \lesssim  \|    E_j (h)  \|_{L^p  }  =  \|    E_j (g)  \|_{L^p  (Q)}, 
	\end{align*}
where the  implicit positive constant is independent of $g$, $j$ and $Q$.
\end{remark}

\begin{corollary} \label{cor gamma_j  W A_Q LF}
		 Let $A $ be a dilation. 
		Let  $W \in \A_p$ and $\{ A_Q\}_{Q\in \Q}$ be a sequence of reducing operators of order $p$ for $W$.
	Let $\tau \in [0,\infty) $, $p\in  (0,\infty)$,
	$q \in (0,\infty]$.		
	For $j\in \mathbb Z$, set
	\begin{equation*}
		\gamma_j = \sum_{Q\in \Q}  \|W^{1/p } A_Q^{-1} \|\chi_Q .
	\end{equation*}
Then for both choices of $ L \dot A _{ p,q }^\tau \in \{ L \dot B _{ p,q }^\tau, L \dot F _{ p,q }^\tau \}$,
\begin{equation*} 
	\| \{ \gamma_j E_j (f_j)\} \|_{ L \dot A _{ p,q }^\tau }  \lesssim 	\| \{  E_j (f_j)\}  \|_{  L \dot A_{ p,q }^\tau }  .
\end{equation*}
\end{corollary}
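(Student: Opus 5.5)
Fix $P\in\Q$ and put $j_P:=-\scale(P)$. The plan is to localize the quantity $|P|^{-\tau}\|\{\gamma_j E_j(f_j)\}\|_{L\dot A_{p,q}(\widehat P)}$ and then reduce it to the unlocalized inequality (\ref{eq gamma_j Lp ell q}) of Corollary \ref{cor gamma j E j Lp ell q}, or to Remark \ref{remark gamma j Ej Lp Q} in the Besov case. Taking the supremum over $P$ at the end then gives the claim for both choices of $L\dot A_{p,q}^\tau$.

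\emph{Besov case.} Each summand is handled separately. For every $j\ge j_P$, Remark \ref{remark gamma j Ej Lp Q} gives
\[
\|\gamma_j E_j(f_j)\|_{L^p(P)}\lesssim \|E_j(f_j)\|_{L^p(P)},
\]
with a constant independent of $j$ and $P$. We then take the $\ell^q$ sum over $j\ge j_P$, multiply by $|P|^{-\tau}$, and take the supremum over $P$.

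\emph{Triebel--Lizorkin case.} First note that for $j\ge j_P$ the cubes in $\Q_j$ have scale at most $\scale(P)$. The lattices $\Q_j$ need not be nested, but for $x\in P$ the value $E_j(f_j)(x)$ is determined by the cube $R\in\Q_j$ containing $x$. So we set
\[
\tilde f_j:=f_j\chi_{P^\ast}\ \text{for } j\ge j_P,\qquad \tilde f_j:=0\ \text{for } j<j_P,
\]
where $P^\ast$ is the union of the cubes of $\Q_j$ meeting $P$. By Lemma \ref{lemma stacked cube} (or Lemma \ref{lemma use covered Q}), $P^\ast$ is contained in a union of boundedly many cubes $P_1,\dots,P_{N}\in\Q_{j_P}$ near $P$, where $N$ depends only on $A$ and $n$. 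Then $E_j(\tilde f_j)=E_j(f_j)$ on $P$ for $j\ge j_P$. Applying (\ref{eq gamma_j Lp ell q}) to $\{\tilde f_j\}$ gives
\[
\Bigl\|\Bigl(\sum_{j\ge j_P}|\gamma_jE_j(f_j)|^q\Bigr)^{1/q}\Bigr\|_{L^p(P)}\le \|\{\gamma_jE_j(\tilde f_j)\}\|_{L^p(\ell^q)}\lesssim \|\{E_j(\tilde f_j)\}\|_{L^p(\ell^q)}.
\]
Since $E_j(\tilde f_j)$ is supported in $\bigcup_i P_i$ and agrees there with $E_j(f_j)$ (cubes of $\Q_j$ that only partially meet $P^\ast$ are a minor issue; see below), the right-hand side is at most
\[
\sum_{i=1}^N\|\{E_j(f_j)\}\|_{L\dot F_{p,q}(\widehat{P_i})}\le N|P|^{\tau}\|\{E_j(f_j)\}\|_{L\dot F^\tau_{p,q}},
\]
using $|P_i|=|P|$. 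Dividing by $|P|^\tau$ and taking the supremum over $P$ finishes this case.

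\emph{Main obstacle.} The delicate point is the geometric localization: a cube $R\in\Q_j$ with $j\ge j_P$ that meets $P$ need not lie inside $P$. I would handle this by defining $\tilde f_j:=f_j\chi_{\bigcup\{R\in\Q_j:R\cap P\neq\emptyset\}}$, so that $E_j(\tilde f_j)=E_j(f_j)\chi_{\text{that union}}$ exactly. Then I would argue that this union lies in $\bigcup_{i=1}^N P_i$ with $N$ uniform. This follows because each such $R$ satisfies $R\preceq P$ or is small and near $P$, which is exactly what Lemma \ref{lemma stacked cube} controls. The rest is bookkeeping with the $p\wedge1$-triangle inequality for the finite sum over $i$.
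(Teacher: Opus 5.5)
Your Triebel--Lizorkin case is correct and follows the paper's route: localize, apply the global estimate (\ref{eq gamma_j Lp ell q}), and cover by boundedly many cubes of scale $\scale(P)$ via Lemma \ref{lemma stacked cube}. Your $j$-dependent cut-off $P_j^\ast:=\bigcup\{R\in\Q_j:|R\cap P|>0\}$ is a union of $\Q_j$-cubes, so $E_j(f_j\chi_{P_j^\ast})=E_j(f_j)\chi_{P_j^\ast}$ holds exactly. This is cleaner than the paper's argument, which cuts off with one fixed set $Q^\ast$ and then bounds $E_j(f_j\chi_{Q^\ast})$ by $E_j(f_j)$ on $Q^\ast$ without justification. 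Define $P_j^\ast$ with $|R\cap P|>0$ rather than $R\cap P\neq\emptyset$. Then $R\preceq P$ and Lemma \ref{lemma stacked cube} applies verbatim, while the discarded cubes only affect a null subset of $P$.

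The gap is in the Besov case. There you use the localized half of Remark \ref{remark gamma j Ej Lp Q}, namely $\|\gamma_jE_j(f_j)\|_{L^p(P)}\lesssim\|E_j(f_j)\|_{L^p(P)}$ uniformly in $P$ and $j\ge j_P$. Its justification in the paper is exactly the identity $E_j(g\chi_P)=E_j(g)\chi_P$, which you rightly distrust in the $F$ case. Testing with $g=\chi_R$ shows this inequality is equivalent to $\fint_{R\cap P}\|W^{1/p}(x)A_R^{-1}\|^p\,\d x\lesssim1$ for every $R\in\Q_j$ straddling $\partial P$. That bound fails when $|R\cap P|/|R|$ can be arbitrarily small. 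For instance, take $A=\frac52 I_n$, $P\in\Q_0$ and $R\in\Q_\ell$; then $R\cap P$ can be a corner box of $R$ of relative side $2^{-\ell}$. A scalar $A_1$ weight $\sum_\ell|x-y_\ell|^{-a}$, with $0<a<n$ and widely separated $y_\ell$ placed at those corners, makes $\fint_{R\cap P}w/\fint_R w\gtrsim 2^{\ell a}$ unbounded. The paper's own proof has the same defect, since it too reduces the $B$ case to that Remark.

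The repair uses your own device. Apply the unlocalized half of the Remark to $g=f_j\chi_{P_j^\ast}$ to get
\[
\|\gamma_jE_j(f_j)\|_{L^p(P)}\lesssim\|E_j(f_j)\|_{L^p(P_1\cup\cdots\cup P_N)} .
\]
Split the right-hand side over $i$ with the $p$-triangle inequality. Then take the $\ell^q$ quasi-norm over $j\ge j_P$, using the quasi-triangle inequality in $\ell^q$ for the finite sum over $i$. Finally use $\scale(P_i)=\scale(P)$ to bound each piece by $|P|^\tau\|\{E_j(f_j)\}\|_{L\dot B^\tau_{p,q}}$.
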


\begin{proof}
	From Remark \ref{remark gamma j Ej Lp Q}, it suffices to prove 
	\begin{equation*} 
		\| \{ \gamma_j E_j (f_j)\} \|_{ L \dot F _{ p,q }^\tau }  \lesssim 	\| \{  E_j (f_j)\}  \|_{  L \dot F_{ p,q }^\tau }  .
	\end{equation*}
	By Lemma \ref{lemma use covered Q}, for any $Q\in \Q$, there exists $Q^\ast :=  \bigcup_{ \ell=1 }^{C_\Q}  Q_\ell  $ such that $Q^\sharp \subset Q^\ast $ and $|Q|\approx |Q^\ast| $. Here $Q ^\sharp : = \{P\in\Q :  P \preceq Q, |P\cap Q| >0 \} $.
Then
\begin{align*}
 \left \| \left(  \sum_{j \ge - \scale (Q)  }  |\gamma_j E_j (f_j)|^q \right)^{1/q}  \right\| _{L^p (Q)} 
	&\le \left \| \left(  \sum_{j \ge - \scale (Q)  }  |\gamma_j E_j (f_j \chi_{Q^\ast })|^q \right)^{1/q}  \right\| _{L^p } \\
	&\lesssim  \left\| \left(  \sum_{j \ge - \scale (Q)  }  |E_j (f_j )|^q \right)^{1/q}  \right\| _{L^p (Q^\ast) } \\
		&\lesssim \sum_{i=1}^{C_\Q} \left \| \left(  \sum_{j \ge - \scale (Q)  }  |E_j (f_j )|^q \right)^{1/q}  \right\| _{L^p (Q_i ) } \\
		&\lesssim |Q|^\tau \| \{  E_j (f_j) \} \|_{ L \dot F _{ p,q }^\tau } .
\end{align*}
Hence
\begin{align*}
		\| \{ \gamma_j E_j (f_j) \} \|_{ L \dot F _{ p,q }^\tau } = \sup_{  Q\in \Q } |Q|^{-\tau} \left \| \left(  \sum_{j \ge - \scale (Q)  }  |\gamma_j E_j (f_j)|^q \right)^{1/q}  \right\| _{L^p (Q)}  \lesssim \| \{  E_j (f_j) \} \|_{ L \dot F _{ p,q }^\tau } .
\end{align*}
The proof is finished.
\end{proof}

\begin{theorem} \label{theorem f W lesssim A_Q}
	 Let $A $ be a dilation. 
	 Let $s\in \mathbb R$, $\tau \in [0,\infty) $, $p\in  (0,\infty)$,
	 $q \in (0,\infty]$.
		Let  $W \in \A_p$ and $\{ A_Q\}_{Q\in \Q}$ be a sequence of reducing operators of order $p$ for $W$.
	 Then 
	\begin{equation*}
		\| \vec f\|_{\dot A_{p,q}^{s,\tau} (W ,A) } \lesssim 	\| \vec f\|_{\dot A_{p,q}^{s,\tau} (\{A_Q\}_{Q\in \Q} ,A) } .
	\end{equation*}
\end{theorem}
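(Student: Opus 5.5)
The plan is to follow Frazier--Roudenko: turn $|W^{1/p}\varphi_j*\vec f|$ into $\gamma_j$ times a function constant on cubes of $\Q_j$, then apply Corollary \ref{cor gamma_j  W A_Q LF}. For a.e. $x$ and each $j\in\mathbb Z$, with $Q\in\Q_j$ the cube containing $x$, write
\begin{equation*}
|W^{1/p}(x)\varphi_j*\vec f(x)| \le \|W^{1/p}(x)A_Q^{-1}\|\,|A_Q\varphi_j*\vec f(x)| \le \gamma_j(x)\sup_{y\in Q}|A_Q\varphi_j*\vec f(y)|,
\end{equation*}
where $\gamma_j=\sum_{Q\in\Q_j}\|W^{1/p}A_Q^{-1}\|\chi_Q$ is the function of (\ref{eq gamma_j W a_q^ -1}), restricted to $\Q_j$ as intended. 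Define
\begin{equation*}
G_j:=\sum_{Q\in\Q_j}\sup_{y\in Q}|A_Q\varphi_j*\vec f(y)|\,\chi_Q .
\end{equation*}
This function is constant on each $Q\in\Q_j$, so $E_j(G_j)=G_j$. Multiplying by $|\det A|^{js}$ and using the lattice property of $L\dot A_{p,q}^\tau$, Corollary \ref{cor gamma_j  W A_Q LF} (with $f_j=|\det A|^{js}G_j$) gives
\begin{equation*}
\|\vec f\|_{\dot A_{p,q}^{s,\tau}(W,A)} \le \big\|\{\gamma_j E_j(|\det A|^{js}G_j)\}_j\big\|_{L\dot A_{p,q}^\tau} \lesssim \big\|\{|\det A|^{js}G_j\}_j\big\|_{L\dot A_{p,q}^\tau}.
\end{equation*}

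Next I identify the right-hand side. Since $\sup_{\mathbb A,\varphi,Q}\vec f=|Q|^{1/2}\sup_{y\in Q}|A_Q\varphi_j*\vec f(y)|$ and $|Q|^{1/2}=|\det A|^{-j/2}$, the weight $|\det A|^{j(s+1/2)}$ in the definition of $\dot a_{p,q}^{s,\tau}(A)$ exactly cancels the $|Q|^{1/2}$. Hence
\begin{equation*}
\big\|\{|\det A|^{js}G_j\}_j\big\|_{L\dot A_{p,q}^\tau}=\big\|\sup_{\mathbb A,\varphi}\vec f\big\|_{\dot a_{p,q}^{s,\tau}(A)}.
\end{equation*}
Theorem \ref{theorem sup A_Q less inf} bounds this by $\lesssim\|\vec f\|_{\dot A_{p,q}^{s,\tau}(\{A_Q\}_{Q\in\Q},A)}$, which closes the chain.

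The only nontrivial step is Corollary \ref{cor gamma_j  W A_Q LF}, and it is assumed available. The remaining checks are bookkeeping. The pointwise bound only needs to hold a.e.\ where $W^{1/p}$ is defined. The sup over $y\in Q$ makes $G_j$ measurable and constant on cubes, so $E_j$ fixes it. The case $q=\infty$ is covered by the corollary as stated. If one worries that the corollary's hypotheses need $f_j\in L^1_{\mathrm{loc}}$, it suffices that $G_j$ be nonnegative measurable, which it is.
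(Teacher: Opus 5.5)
Your proposal is correct and matches the paper's own proof. You dominate $|\det A|^{js}|W^{1/p}\varphi_j*\vec f|$ pointwise by $\gamma_j$ times the function $|\det A|^{js}\sum_{Q\in\Q_j}\sup_{y\in Q}|A_Q\varphi_j*\vec f(y)|\chi_Q$, which is constant on each cube of $\Q_j$ and hence fixed by $E_j$; you then apply Corollary \ref{cor gamma_j  W A_Q LF} and finish with Theorem \ref{theorem sup A_Q less inf}. Two points the paper leaves implicit are handled correctly in your write-up: the identification of the resulting norm with $\|\sup_{\mathbb A,\varphi}\vec f\|_{\dot a^{s,\tau}_{p,q}(A)}$, and reading $\gamma_j$ as a sum over $\Q_j$ only.
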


\begin{proof}
	For $j\in \mathbb Z$, set
\begin{equation*}
	\gamma_j (x) = \sum_{Q\in \Q}  \|W^{1/p } (x) A_Q^{-1} \|\chi_Q  (x).
\end{equation*}
Define 
\begin{equation*}
	h_j (x) = |\det A|^{ j s} |W^{1/p} (x) \varphi_j * \vec f(x) | ,
\end{equation*}
and 
\begin{equation*}
	k_j (x) = \sum_{Q\in \Q_j} |\det A|^{ j s}  \chi_Q  (x) \left(  \sup_{y\in Q}  |A_Q \varphi_j * \vec f (y) | \right) .
\end{equation*} 
Note that each $k_j $ is a constant on $Q \in \Q_j$ and thus $E_j k_j = k_j$. Then
\begin{equation*}
	h_j  (x)\le \gamma_j (x)  k_j (x).
\end{equation*}
Hence by Corollary \ref{cor gamma_j  W A_Q LF},
\begin{align*}
	\| \vec f\|_{\dot F_{p,q}^{s,\tau} (W ,A)}   = \| \{ h_j \} \|_{  L \dot F _{ p,q }^\tau } \le  \| \{ \gamma_j k_j \} \|_{  L \dot F _{ p,q }^\tau }  \lesssim \| \{ k_j \} \|_{  L \dot F _{ p,q }^\tau }  \lesssim \| \vec f\|_{\dot F_{p,q}^{s,\tau} (\{A_Q\}_{Q\in \Q} ,A)},
\end{align*} 
where the last step is by Theorem \ref{theorem sup A_Q less inf}.
 \end{proof}

Theorems  \ref{theorem sup A_Q less inf},  \ref{theorem f W lesssim A_Q},  and  Lemma \ref{lemma AQ les W}  lead to Theorem \ref{theorem W app a app A_Q}.

\subsection{Properties of sequence spaces}

The following result gives a characterization of the  $\dot f ^{s,\tau}_{p,q}(A)$-norm via sequences of sparse sets.
\begin{lemma} \label{lemma sparse char dot f}
Let $A $ be a dilation. 
 Let $s\in \mathbb R$, $\tau \in [0,\infty) $, $p\in  (0,\infty)$,
$q \in (0,\infty]$.
Suppose that, for any $Q\in \Q$, $E_Q \subset Q$  is a measurable set with $|E_Q| \ge \delta |Q|$ for some $\delta \in (0,1)$. Then for $w =  \{ w_Q\}_{Q\in \Q} \subset \mathbb C$,
\begin{equation*}
	\| w \|_{\dot f ^{s,\tau}_{p,q}(A)  } \approx 	\left\| \left\{ |\det A|^{j (s +1/2)} \sum_{Q\in \Q_j} w_Q \chi_{E_Q} \right \}_{j\in \mathbb Z } \right\|_{ L \dot F _{ p,q }^\tau }  .
\end{equation*}
\end{lemma}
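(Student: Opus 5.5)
One inequality is immediate. Since $E_Q\subset Q$ and the cubes in $\Q_j$ are pairwise disjoint, for every $j$ and every $x$ we have
\begin{equation*}
\Big|\sum_{Q\in\Q_j} w_Q\chi_{E_Q}(x)\Big| \le \sum_{Q\in\Q_j}|w_Q|\chi_Q(x).
\end{equation*}
The $L\dot F^\tau_{p,q}$ quasi-norm is monotone in the pointwise size of each component. So the right-hand side of the claimed equivalence is at most $\|w\|_{\dot f^{s,\tau}_{p,q}(A)}$, with constant $1$.

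For the reverse inequality I will dominate $\chi_Q$ pointwise by a maximal function of $\chi_{E_Q}$. Fix $r\in(0,\min\{p,q\})$. For $Q\in\Q_j$ and $x\in Q$, Lemma \ref{lemma basic cube ball cover}(i) gives a $\rho_A$-ball $B_1\supset Q\ni x$ with $|B_1|\approx|Q|$. Hence
\begin{equation*}
\M_{\rho_A}(\chi_{E_Q})(x)\ge \frac{|E_Q|}{|B_1|}\gtrsim \delta .
\end{equation*}
Since the cubes of $\Q_j$ are disjoint, for every $x$ this yields
\begin{equation*}
\sum_{Q\in\Q_j}|w_Q|^r\chi_Q(x)\lesssim \delta^{-1}\,\M_{\rho_A}\Big(\sum_{Q\in\Q_j}|w_Q|^r\chi_{E_Q}\Big)(x).
\end{equation*}

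Rather than applying Lemma \ref{lemma M rho A FS} directly, which is awkward because of the localization to $Q$ and the $\tau$-supremum, I will reduce to Lemma \ref{lemma r trick all tau}. Write the maximal-function bound in integral form: for $x\in Q\in\Q_j$,
\begin{equation*}
|w_Q|^r \le \frac{C}{\delta}\,|\det A|^{j}\int_{E_Q}|w_Q|^r\,dz \le \frac{C}{\delta}\,|\det A|^{j}\int_{\rn}\frac{\big|\sum_{R\in\Q_j}w_R\chi_{E_R}(z)\big|^r}{(1+|\det A|^j\rho_A(x-z))^{M}}\,dz .
\end{equation*}
The first step uses $|E_Q|\ge\delta|Q|=\delta|\det A|^{-j}$. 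The second holds because for $z\in E_Q\subset Q$ and $x\in Q$ we have $|\det A|^j\rho_A(x-z)\lesssim 1$ by (\ref{eq baisic geo}), so the denominator is bounded.

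Now set
\begin{equation*}
g_j:=|\det A|^{j/2}\sum_{Q\in\Q_j}|w_Q|\chi_Q,\qquad h_j:=|\det A|^{j/2}\sum_{Q\in\Q_j}w_Q\chi_{E_Q},
\end{equation*}
and take any $M>1$. Then $|g_j|^r\le c\,|\det A|^j\int |h_j|^r(1+|\det A|^j\rho_A(\cdot-z))^{-M}dz$, which is exactly the hypothesis of Lemma \ref{lemma r trick all tau}. That lemma gives $\|\{|\det A|^{js}g_j\}\|_{L\dot F^\tau_{p,q}}\lesssim\|\{|\det A|^{js}h_j\}\|_{L\dot F^\tau_{p,q}}$, i.e., $\|w\|_{\dot f^{s,\tau}_{p,q}(A)}$ is bounded by the right-hand side of the claimed equivalence.

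The only delicate point is the pointwise lower bound $|w_Q|^r\lesssim \delta^{-1}|\det A|^j\int_{E_Q}|w_Q|^r$ uniformly in $Q$. This needs $|Q|=|\det A|^{-j}$ for $Q\in\Q_j$, and that the $\rho_A$-distance between any two points of $Q$ is comparable to $|Q|$, which follows from Lemma \ref{lemma basic cube ball cover}(i) together with the quasi-triangle inequality. The requirement $r<\min\{p,q\}$ is needed to apply Lemma \ref{lemma r trick all tau}, and it is also what covers the case $q=\infty$.
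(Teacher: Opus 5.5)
Your proof is correct, but it takes a different route from the paper's. The paper works directly with the maximal function. It uses $|w_Q|\chi_Q\lesssim\delta^{-1/\beta}\bigl(\M_{\rho_A}(|w_Q|^\beta\chi_{E_Q})\bigr)^{1/\beta}$ with $\beta<\min\{1,p,q\}$, and then applies the Fefferman--Stein inequality (Lemma~\ref{lemma M rho A FS}) in $L^{p/\beta}(\ell^{q/\beta})$.

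That route needs only the vector-valued maximal inequality, but it runs into exactly the localization issue you flagged. The paper's displayed chain replaces $L^p(P)$ by a global $L^p$ norm while keeping all cubes of $\Q_j$. Its final inequality is then justified only after two further steps, which the paper leaves implicit:
\begin{itemize}
\item discard the cubes that miss $P$;
\item note that the remaining ones (those with $\scale(Q)\le\scale(P)$) lie in boundedly many cubes of $\Q_{-\scale(P)}$ adjacent to $P$.
\end{itemize}

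You instead verify the kernel hypothesis of Lemma~\ref{lemma r trick all tau}. The only real input is that $|\det A|^j\rho_A(x-z)=\rho_A(A^jx-A^jz)$ is uniformly bounded for $x,z$ in the same cube of $\Q_j$. All of the $\tau$-localization is thereby delegated to a lemma already available.

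The cost is that you invoke a heavier tool than the situation needs. Your bound is purely local: $g_j(x)^r\le\delta^{-1}|Q|^{-1}\int_Q|h_j|^r$ for $x\in Q\in\Q_j$, so the off-diagonal decay built into that lemma is never used. The benefit is a short proof with no hidden localization step.
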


\begin{proof}
	Since  $E_Q \subset Q $, one direction is trivial. For another direction, we use
	the fact that for any  $ \beta >0   $,
	\begin{equation*}
		\chi_Q \lesssim \delta^{ -1/\beta}  ( \M_{\rho_A} ( \chi_{E_Q} ^ \beta ) )^{1/\beta} .
	\end{equation*}
Let $ \beta \in (0, \min\{ 1,p,q\} )$. Using Lemma \ref{lemma M rho A FS} and $E_Q \subset Q$, we obtain 
\begin{align*}
		\| w \|_{\dot f ^{s,\tau}_{p,q}(A)  }
		 & \lesssim  \delta^{ -1/\beta} \sup_{P\in \Q} |P|^{-\tau} \left\| \left(  \sum_{j\ge -\scale (P)}  |\det A|^{j q (s +1/2)} \sum_{Q\in \Q_j} w_Q ^q ( \M_{\rho_A} ( \chi_{E_Q} ^ \beta ) )^{q/\beta} \right) ^{1/q}  \right\|_{L^p } \\
		 & \lesssim \sup_{P\in \Q} |P|^{-\tau} \left\| \left(  \sum_{j\ge -\scale (P)}  |\det A|^{j q (s +1/2)} \sum_{Q\in \Q_j} w_Q ^q  \chi_{E_Q} \right) ^{1/q}  \right\|_{L^p } \\
		 & \le 	\left\| \left\{ |\det A|^{j (s +1/2)} \sum_{Q\in \Q_j} w_Q \chi_{E_Q} \right \}_{j\in \mathbb Z } \right\|_{ L \dot F _{ p,q }^\tau }.
\end{align*}
Thus we obtain the desired result.
\end{proof}

\begin{theorem} \label{theorem seq W approx A_Q}
	 Let $A $ be a dilation. 	
	 Let $s\in \mathbb R$, $\tau \in [0,\infty) $, $p\in  (0,\infty)$,
	$q \in (0,\infty]$.
	Let  $W \in \A_p$ and $\{ A_Q\}_{Q\in \Q}$ be a sequence of reducing operators of order $p$ for $W$.
	Then for any $\vec t =\{\vec t _Q \}_{Q\in \Q} \subset \mathbb C^m$
	\begin{equation*}
		\| \vec t \|_{ \dot a_{p,q}^{s,\tau} (W,A) }  \approx 	\| \vec t\|_{\dot a_{p,q}^{s,\tau} (\{A_Q\}_{Q\in \Q},A)  } ,
	\end{equation*}
where the positive equivalence constants are independent of $\vec t$.	
\end{theorem}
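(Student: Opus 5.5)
We prove the two inequalities separately, following the pattern of Theorems \ref{theorem sup A_Q less inf}, \ref{theorem f W lesssim A_Q} and Lemma \ref{lemma AQ les W}. The key simplification is that in the sequence setting the functions involved are already constant on each $Q\in\Q_j$. We write
\begin{equation*}
g_j:=|\det A|^{j(s+1/2)}\sum_{Q\in\Q_j}|A_Q\vec t_Q|\chi_Q
\quad\text{and}\quad
h_j:=|\det A|^{j(s+1/2)}\sum_{Q\in\Q_j}|W^{1/p}\vec t_Q|\chi_Q ,
\end{equation*}
so that $\|\vec t\|_{\dot a_{p,q}^{s,\tau}(\{A_Q\},A)}=\|\{g_j\}\|_{L\dot A^\tau_{p,q}}$ and $\|\vec t\|_{\dot a_{p,q}^{s,\tau}(W,A)}=\|\{h_j\}\|_{L\dot A^\tau_{p,q}}$.

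\textbf{Upper bound $\|\vec t\|_{\dot a(W)}\lesssim\|\vec t\|_{\dot a(\{A_Q\})}$.} For $x\in Q\in\Q_j$ we have
\begin{equation*}
|W^{1/p}(x)\vec t_Q|\le\|W^{1/p}(x)A_Q^{-1}\|\,|A_Q\vec t_Q| ,
\end{equation*}
hence $h_j\le\gamma_j g_j$ with $\gamma_j$ as in Corollary \ref{cor gamma_j  W A_Q LF}. Since $g_j$ is constant on the cubes of $\Q_j$, we have $E_j(g_j)=g_j$, and Corollary \ref{cor gamma_j  W A_Q LF} gives $\|\{\gamma_jg_j\}\|_{L\dot A^\tau_{p,q}}\lesssim\|\{g_j\}\|_{L\dot A^\tau_{p,q}}$. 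This proves the upper bound for both the $b$ and $f$ scales.

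\textbf{Lower bound $\|\vec t\|_{\dot a(\{A_Q\})}\lesssim\|\vec t\|_{\dot a(W)}$.} We use Lemma \ref{lemma sparse char dot f} together with a pointwise comparison on a large subset of each cube. For $Q\in\Q_j$, reducing-operator equivalence (\ref{eq reducing matrix}) gives
\begin{equation*}
|A_Q\vec t_Q|^p\approx\fint_Q|W^{1/p}(x)\vec t_Q|^p\,\d x .
\end{equation*}
The plan is to show that the set
\begin{equation*}
E_Q:=\{x\in Q:\ |W^{1/p}(x)\vec t_Q|\ge c\,|A_Q\vec t_Q|\}
\end{equation*}
satisfies $|E_Q|\ge\delta|Q|$, with $c,\delta$ independent of $Q$ and $\vec t$. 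This is the main obstacle, and we intend to obtain it from the reverse H\"older inequality. By Lemma \ref{lemma W 1/p e is scalar weight} and the proof of Lemma \ref{lemma reverse holder matrix}, the weight $w(x)=|W^{1/p}(x)\vec t_Q|^p$ satisfies
\begin{equation*}
\Big(\fint_Q w^{r}\Big)^{1/r}\lesssim\fint_Q w ,
\end{equation*}
with $r>1$ and constants uniform in $\vec t_Q$ and $Q$. Suppose $|E_Q|<\delta|Q|$. Then
\begin{equation*}
\fint_Q w\le c^p|A_Q\vec t_Q|^p+|Q|^{-1}\int_{E_Q}w\le c^p|A_Q\vec t_Q|^p+\delta^{1-1/r}\Big(\fint_Q w^r\Big)^{1/r}\lesssim\big(c^p+\delta^{1-1/r}\big)\fint_Q w ,
\end{equation*}
which is a contradiction once $c$ and $\delta$ are small. (Equivalently, one can use the scalar $A_\infty$ property of $w$.) If $A_Q\vec t_Q=0$, the cube contributes nothing and can be discarded.

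With such $E_Q$ in hand, we have pointwise
\begin{equation*}
|A_Q\vec t_Q|\chi_{E_Q}\lesssim|W^{1/p}\vec t_Q|\chi_Q .
\end{equation*}
In the $F$ case, Lemma \ref{lemma sparse char dot f}, applied to the scalar sequence $w_Q=|A_Q\vec t_Q|$, yields
\begin{equation*}
\|\vec t\|_{\dot f(\{A_Q\})}=\|\{w_Q\}\|_{\dot f^{s,\tau}_{p,q}(A)}\approx\Big\|\Big\{|\det A|^{j(s+1/2)}\sum_{Q\in\Q_j}w_Q\chi_{E_Q}\Big\}\Big\|_{L\dot F^\tau_{p,q}}\lesssim\|\vec t\|_{\dot f(W)} .
\end{equation*}
In the $B$ case no maximal function argument is needed. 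For each fixed $j$ and each $P\in\Q$ with $j\ge-\scale(P)$, the cubes $Q\in\Q_j$ contained in $P$ tile $P$; for such $Q$ we have $|Q|\le|E_Q|/\delta$, so
\begin{equation*}
\Big\|\sum_{Q\in\Q_j}w_Q\chi_Q\Big\|_{L^p(P)}\le\delta^{-1/p}\Big\|\sum_{Q\in\Q_j}w_Q\chi_{E_Q}\Big\|_{L^p(P)} .
\end{equation*}
Combining this with the pointwise bound and summing in $j$ gives the Besov estimate.

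Alternatively, for $p\in(0,1]$ the lower bound follows directly from Lemma \ref{lemma reverse A_Q W}(i), exactly as in the first case of Lemma \ref{lemma AQ les W}:
\begin{equation*}
|A_Q\vec t_Q|\le\|A_QW^{-1/p}(x)\|\,|W^{1/p}(x)\vec t_Q|\lesssim|W^{1/p}(x)\vec t_Q|\quad\text{for a.e. } x\in Q .
\end{equation*}
So the sparse-set argument is really needed only for $p>1$, but it works uniformly for all $p$.
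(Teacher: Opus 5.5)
Your proof is correct. The upper bound is the paper's argument ($h_j\le\gamma_j g_j$ with $g_j$ constant on the cubes of $\Q_j$, then Corollary~\ref{cor gamma_j  W A_Q LF}). Your alternative lower bound for $p\in(0,1]$ is also the paper's.

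The real difference is the lower bound for $p>1$. The paper uses Lemma~\ref{lemma reverse A_Q W}(iv), i.e.\ $\sup_{Q}\fint_Q\|A_QW^{-1/p}(x)\|\,\d x\le C$, which ultimately comes from the $\A_{p'}$ property of the dual weight $W^{-1/(p-1)}$. With Chebyshev's inequality it takes $E_Q:=\{x\in Q:\|A_QW^{-1/p}(x)\|\le 2C\}$, so that $|E_Q|\ge|Q|/2$. This set does not depend on $\vec t$, and it gives $|A_Q\vec z|\lesssim|W^{1/p}(x)\vec z|$ on $E_Q$ for all $\vec z$ simultaneously.

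You instead build, cube by cube, a $\vec t_Q$-dependent set from the reverse H\"older (equivalently $A_\infty$) property of the single scalar weight $|W^{1/p}\vec t_Q|^p$, uniformly in the vector. Lemma~\ref{lemma sparse char dot f} only needs $|E_Q|\ge\delta|Q|$ with a fixed $\delta$, so this dependence costs nothing. Your lower-bound argument is uniform in $p$ and uses only uniform scalar $A_\infty$ information, so it would survive under weaker, $A_\infty$-type matrix conditions. The paper's route gives a stronger, operator-level comparison on a common large set, but it needs the duality structure of $\A_p$. For $\dot b$, the paper gets both directions at once from (\ref{eq reducing matrix}) cube by cube, without sparse sets.

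One correction in your Besov step. For a general expansive $A$ the dilated cubes are not nested. A cube of $\Q_j$ with $j\ge-\scale(P)$ can meet $P$ in positive measure without being contained in it. So the cubes of $\Q_j$ inside $P$ need not tile $P$, and your displayed $L^p(P)$ inequality can fail: such a boundary cube may have $E_Q\cap P=\emptyset$.

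The repair is routine. Estimate
$\|\sum_{Q\in\Q_j}w_Q\chi_Q\|_{L^p(P)}^p\le\sum_{Q\in\Q_j,\,|Q\cap P|>0}|Q|\,w_Q^p\le\delta^{-1}\sum_{Q\in\Q_j,\,|Q\cap P|>0}|E_Q|\,w_Q^p$.
Then use Lemma~\ref{lemma use covered Q} to place all these cubes, hence their sets $E_Q$, inside boundedly many cubes $P_\ell$ with $|P_\ell|=|P|$; the supremum $\sup_P|P|^{-\tau}$ absorbs these. The paper's one-line Besov step tacitly relies on the same nesting, so this is a shared and easily repaired point, not a gap specific to your approach.
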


\begin{proof}
	Using (\ref{eq reducing matrix}), we obtain 	
	\begin{equation*}
		\| \vec t \|_{\dot b_{p,q}^{s,\tau} (\{A_Q\}_{Q\in \Q} ,A) } \approx	\| \vec t\|_{\dot b_{p,q}^{s,\tau} (W ,A) } .
	\end{equation*}	
It remains to show  $	\| \vec t \|_{ \dot f_{p,q}^{s,\tau} (\{A_Q\}_{Q\in \Q} ,A) } \approx	\| \vec t\|_{\dot f_{p,q}^{s,\tau} (W ,A) } $.
We first prove 
\begin{equation} \label{eq seq A_Q le W}
		\| \vec t \|_{ \dot f_{p,q}^{s,\tau} (\{A_Q\}_{Q\in \Q} ,A) } \lesssim	\| \vec t\|_{\dot f_{p,q}^{s,\tau} (W ,A) } .
\end{equation}
To do so, we consider it into  two cases  on $p$.

	Case $ p \in (0,1] $. 
	By Lemma \ref{lemma reverse A_Q W}, for any $j \in \mathbb Z$, and a.e. $x\in \rn$,
	\begin{equation*}
		| A_Q  \vec t_{Q}|  \le \|   A_Q  W^{-1/p} (x) \|  | W^{1/p} (x)  \vec t_{Q}|   \lesssim | W^{1/p} (x)  \vec t_{Q}|   .
	\end{equation*}
Thus we obtain (\ref{eq seq A_Q le W}) in the case $p \in (0,1]$.
	
	Case $p \in (1,\infty)$. By Lemma \ref{lemma reverse A_Q W} (iv), there exists  a constant $C>0$ such that
	\begin{equation} \label{eq A_Q W^ -1/p le 1}
		\sup_{Q\in \Q} \fint_Q \|A_Q  W^{-1/p} (x) \| \d x \le C.
	\end{equation}
For any $Q\in \Q$,  let  $E_Q : = \{x \in Q: \|A_Q  W^{-1/p} (x) \| \le   2C \}.$
By Chebyshev's inequality, and (\ref{eq A_Q W^ -1/p le 1}), we infer that, for any $Q\in \Q$,
\begin{equation*}
	|Q \backslash E_Q |\le \frac{1}{2C} \int_{Q \backslash E_Q } \|A_Q  W^{-1/p} (x) \| \d x \le \frac{1}{2|Q|}.
\end{equation*}
Hence $ |E_Q |\ge |Q|/2 $. By Lemma \ref{lemma sparse char dot f},
\begin{align*}
	\| \vec t \|_{ \dot f_{p,q}^{s,\tau} (\{A_Q\}_{Q\in \Q} ,A)  } &  =  	\left\| \left\{ |\det A|^{j (s +1/2)} |\mathbb A_j  \vec s_{Q,j} |\right \}_{j\in \mathbb Z } \right\|_{ L \dot F _{ p,q }^\tau }
	\\
	& \approx 	\left\| \left\{ |\det A|^{j (s +1/2)}    \sum_{Q\in \Q_j}  |A_Q \vec t_Q | \chi_{E_Q} \right \}_{j\in \mathbb Z } \right\|_{ L \dot F_{ p,q }^\tau }
	\\
	&\lesssim \left\| \left\{ |\det A|^{j (s +1/2)}    \sum_{Q\in \Q_j}  |W^{1/p}\vec t_Q | \chi_{E_Q} \right \}_{j\in \mathbb Z } \right\|_{ L \dot F_{ p,q }^\tau }
	\\
		& \le 	\left\| \left\{ |\det A|^{j (s +1/2)}    \sum_{Q\in \Q_j}  |W^{1/p} \vec t_Q |  \right \}_{j\in \mathbb Z } \right\|_{ L \dot F_{ p,q }^\tau } = 	\| \vec t \|_{ \dot f_{p,q}^{s,\tau } (W ,A)  } ,
\end{align*}
where $\mathbb A_j$ is  the same as in (\ref{eq def A_j}).
Thus we obtain (\ref{eq seq A_Q le W}) in the case $p \in (1,\infty)$.

 Next we prove
\begin{equation} \label{eq seq W le A_Q}
		\| \vec t\|_{\dot f_{p,q}^{s,\tau} (W ,A) }  \lesssim \| \vec t \|_{ \dot f_{p,q}^{s,\tau} (\{A_Q\}_{Q\in \Q} ,A) }.
\end{equation}
For each $j \in \mathbb Z$, set
\begin{equation*}
	f_j (x): = \sum_{Q \in \Q_j} |\det A|^{j (s +1/2)} |A_Q \vec t_Q | \chi_Q (x)
\end{equation*}
and 
\begin{equation*}
	g_j (x):= \sum_{Q \in \Q_j} |\det A|^{j (s +1/2)} |W^{1/p} (x) \vec t_Q | \chi_Q  (x) .
\end{equation*}
Note that $f_j$ is a constant on each $Q\in \Q_j$.
Then 
\begin{equation*}
	g_j (x) \le \sum_{Q \in \Q_j} |\det A|^{j (s +1/2)} |W^{1/p} (x) A_Q^{-1} \| \| A_Q \vec t_Q | \chi_Q  (x) = \gamma_j  (x) f_j (x),
\end{equation*}
where $\gamma_j$  is the same as in (\ref{eq gamma_j W a_q^ -1}). From this and Corollary \ref{cor gamma_j  W A_Q LF}, we obtain
\begin{align*}
		\| \vec t\|_{\dot f_{p,q}^{s,\tau} (W ,A) } 
		 = \| \{ g_j \}_{j \in \mathbb Z} \|_{ L \dot F_{p.q}^\tau } \le \| \{\gamma_j   f_j \}_{j \in \mathbb Z} \|_{ L \dot F_{p.q}^\tau } \lesssim \| \{  f_j \}_{j \in \mathbb Z} \|_{ L \dot F_{p.q}^\tau } 
	= \| \vec t \|_{ \dot f_{p,q}^{s,\tau} (\{A_Q\}_{Q\in \Q} ,A) }.
\end{align*}
This finishes the proof of (\ref{eq seq W le A_Q})  and Theorem \ref{theorem seq W approx A_Q}.
\end{proof}

\subsection{The $\varphi$-transform}
Suppose that $\varphi, \psi \in \S (\rn)$ are such that supp $\F \varphi$ and supp $\F \psi$ are compact and bounded away from the origin. 
The $\varphi$-transform was first introduced in \cite{FJ90}.
\begin{definition}
	The $\varphi$-transform $S_\varphi$, often called the analysis transform, is the map taking each $\vec f \in (\S_\infty ^\prime (\rn) )^m $ to the sequence $S_\varphi \vec f = \{ (S_\varphi \vec f )_Q \}_{Q \in \Q} $ defined by $(S_\varphi \vec f )_Q = \langle  \vec f , \varphi_Q \rangle$. This is well defined, since $\int_\rn x^\alpha \varphi_Q (x) \d x =0 $ for any multi-index $\alpha$. Here,  we follow the pairing convention which is consistent with the usual scalar product in $L^2 (\rn)$, i.e., $\langle  \vec f , \varphi \rangle = f (\bar \varphi)$ for $\vec f \in (\S ^\prime  (\rn) )^m$ and $ \varphi \in \S (\rn)$.
	
	The inverse $\varphi$-transform, $T_\psi$, often called the synthesis transform, is the map taking the sequence $ \vec s =\{ \vec s_Q\}_{Q\in \Q}$ to  $T_\psi \vec s = \sum_{Q\in \Q} \psi_Q  \vec s_Q $.
\end{definition}

For $\psi \in \mathcal S (\rn)$, let
$\|\psi\|_{\S_N} := \sup_{x\in \rn} \sup_{|\gamma|\le N}  (1+|x|)^N |\partial ^\gamma \psi (x)| $.

\begin{lemma}[(3.18), \cite{Bo07}] \label{lemma psi_Q varphi_P}
	Let $\varphi, \psi \in \S_\infty  (\rn)$.	
	For any $L>0$, there exist positive constants $N \in\mathbb N$ and $ C $ such that for all $P,Q\in \Q$,
	\begin{equation*}
		|\left\langle  \psi_Q , \varphi_P \right\rangle | \le C \|\psi\|_{\S_N} \|\varphi\|_{\S_N}  \left( 1+ \frac{\rho_A (x_Q -x_P  )}{ |P| \vee |Q|  } \right) ^{-L}  \min \left( \frac{|Q|}{|P|},   \frac{|P|}{|Q|} \right)^L.
	\end{equation*}
	Here the constant $C$ depends only on $L$.
\end{lemma}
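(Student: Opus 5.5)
To prove Lemma \ref{lemma psi_Q varphi_P}, I would first use the dilation structure to reduce to the case where one of the two cubes has scale $0$. Write $Q=Q_{i,k}$ and $P=Q_{j,l}$. The change of variables $x\mapsto A^{-j}x$ gives
\begin{equation*}
\langle \psi_Q,\varphi_P\rangle=|\det A|^{(i-j)/2}\int_\rn \psi\big(A^{i-j}x-k\big)\,\overline{\varphi(x-l)}\,\d x .
\end{equation*}
Both the right-hand side of the desired estimate and the left-hand side are symmetric in $(\psi,Q)\leftrightarrow(\varphi,P)$, up to conjugation. So it suffices to treat $|Q|\le|P|$, that is $\ell:=i-j\ge 0$. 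I would then translate so that $l=0$. Setting $g(x):=|\det A|^{\ell/2}\psi(A^{\ell}x-k)$, I must bound $|\langle g,\varphi\rangle|$ by a constant times
\begin{equation*}
|\det A|^{-\ell L}\big(1+\rho_A(A^{-\ell}k)\big)^{-L},
\end{equation*}
since $x_Q-x_P$ rescales to $A^{-\ell}k$ and $|P|\vee|Q|$ rescales to $1$.

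To gain the factor $|\det A|^{-\ell L}$ I would use the vanishing moments of $\psi$. Since $\psi\in\S_\infty(\rn)$, Taylor expansion of $\varphi$ about the point $x_0:=A^{-\ell}k$ to order $N_0$ lets me subtract the Taylor polynomial for free:
\begin{equation*}
\langle g,\varphi\rangle=\int g(x)\Big(\overline{\varphi(x)}-T_{N_0}\overline{\varphi}(x;x_0)\Big)\,\d x .
\end{equation*}
The remainder is bounded by $|x-x_0|^{N_0+1}\sup_{|\gamma|=N_0+1}|\partial^\gamma\varphi(\xi)|$ for some $\xi$ on the segment joining $x$ and $x_0$. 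By Lemma \ref{lemma rho A and |x|}, $|x-x_0|\lesssim \rho_A(x-x_0)^{\zeta_-}$ whenever $\rho_A(x-x_0)\le 1$. Moreover $g$ concentrates where $\rho_A(x-x_0)\lesssim |\det A|^{-\ell}$, and there $|x-x_0|\lesssim |\det A|^{-\ell\zeta_-}$.

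I would split the integral into two regions:
\begin{itemize}
\item \emph{Near region} $\rho_A(x-x_0)\le 1$: use the Taylor remainder together with the decay of $\varphi$ at $\xi$.
\item \emph{Far region}: the Schwartz decay of $g$ is already superpolynomially small in $|\det A|^{\ell}\rho_A(x-x_0)$, so no cancellation is needed.
\end{itemize}
Choosing $N_0$ with $(N_0+1)\zeta_->L+1$ produces the factor $|\det A|^{-\ell L}$. The factor $|\det A|^{\ell/2}$ is absorbed into the $L^1$ normalisation of $g$; precisely, $\int|g|=|\det A|^{-\ell/2}\int|\psi|$, which only helps.

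The spatial factor $(1+\rho_A(x_0))^{-L}$ comes from the decay of $\varphi$ near $x_0$. In the near region, $\rho_A(\xi)\gtrsim\rho_A(x_0)$ by the quasi-triangle inequality, and $\|\varphi\|_{\S_N}$ controls $(1+|\xi|)^{N}|\partial^\gamma\varphi(\xi)|$. By Lemma \ref{lemma rho A and |x|}, $(1+|\xi|)^{-N}\lesssim(1+\rho_A(\xi))^{-N\zeta_-}$, so I take $N\zeta_-\ge L$. In the far region I split further according to whether $\rho_A(x)\ge c\rho_A(x_0)$, using the decay of $\varphi$ or of $g$ respectively, via (\ref{eq  rho A B approx r}) and Lemma \ref{lemma a >1 less M}-type integral bounds. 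All constants depend only on finitely many seminorms $\|\psi\|_{\S_N},\|\varphi\|_{\S_N}$, with $N$ large in terms of $L,\zeta_\pm$.

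The main obstacle I expect is the bookkeeping between the Euclidean quantities coming out of Taylor's formula and Schwartz seminorms and the anisotropic quasi-norm. Lemma \ref{lemma rho A and |x|} gives different exponents $\zeta_\pm$ on the two sides of $\rho_A=1$, so each region needs its own exponent choice. I would handle this by choosing $N$ large relative to $L/\zeta_-$ throughout, since $\zeta_-\le\zeta_+$ makes that the worst case. This is the argument in \cite[(3.18)]{Bo07}, which I would follow.
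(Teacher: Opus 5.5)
Your proposal is correct and is essentially the standard proof of the estimate that the paper only quotes from \cite{Bo07}: reduce by dilation and translation to $|Q|\le|P|$, use the vanishing moments of the fine-scale $\psi$ to subtract a Taylor polynomial of the coarse-scale $\varphi$, and pass between $|\cdot|$ and $\rho_A$ via Lemma \ref{lemma rho A and |x|}. One bookkeeping fix is needed. Because $T_{N_0}\overline{\varphi}(\cdot;x_0)$ is subtracted on all of $\rn$, the far region $\rho_A(x-x_0)>1$ also contains $\int|g|\,|T_{N_0}\overline{\varphi}(\cdot;x_0)|$, which your remark ``no cancellation is needed'' skips; it is bounded by the same decay of $g$, using $|\partial^\alpha\varphi(x_0)|\le\|\varphi\|_{\S_N}(1+|x_0|)^{-N}$ and $|x-x_0|\lesssim\rho_A(x-x_0)^{\zeta_+}$ there.
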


The following lemma shows that the inverse $\varphi$-transform $T_\psi$ is well defined for any $ \vec s \in \dot a_{p,q}^{s,\tau} (\{A_Q\}_{Q\in \Q},A)   $.

\begin{lemma} \label{lemma well defined inverse varphi transform}
 Let $A $ be a dilation. 
Let  $W \in \A_p$ and $\{ A_Q\}_{Q\in \Q}$ be a sequence of reducing operators of order $p$ for $W$.
 Let $s\in \mathbb R$, $\tau \in [0,\infty) $, $p\in  (0,\infty)$,
$q \in (0,\infty]$. 
Let
\begin{equation*}
		L > \max\{ \tilde  d / p'  - \tau  - s + 1/2 + 1/p , d/p  + \tau  + s + 1/2 - 1/p, \Delta + 1 \}
\end{equation*}
where $\tilde d, d, \Delta $  are the same as in Lemma \ref{lemma cube AQ AR-1}.
Then there exist positive constants $N \in \mathbb N$ and $ C $ such that for any $\varphi, \psi \in \S_\infty  (\rn)$
\begin{equation*}
		\sum_{Q\in \Q} |  \langle \psi_Q, \varphi\rangle | | \vec s_Q| \le C  \| \vec s\|_{\dot a_{p,q}^{s,\tau} (W,A)}   \|\psi\|_{\S_N} \|\varphi\|_{\S_N}.
\end{equation*}
\end{lemma}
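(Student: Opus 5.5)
The plan is to reduce to a single-cube estimate for $|\vec s_Q|$, then sum using the decay of $\langle\psi_Q,\varphi\rangle$ from Lemma \ref{lemma psi_Q varphi_P}. Write $\varphi=\varphi_{P_0}$ with $P_0=Q_{0,0}=[0,1)^n$, so $\langle\psi_Q,\varphi\rangle=\langle\psi_Q,\varphi_{P_0}\rangle$. For $Q\in\Q_j$ this gives
\begin{equation*}
|\langle \psi_Q,\varphi\rangle|\lesssim \|\psi\|_{\S_N}\|\varphi\|_{\S_N}(1+\rho_A(x_Q)/(|Q|\vee 1))^{-L}\min(|Q|,|Q|^{-1})^{L}.
\end{equation*}
By Theorem \ref{theorem seq W approx A_Q} it suffices to bound the sum by $\|\vec s\|_{\dot a^{s,\tau}_{p,q}(\{A_Q\},A)}$. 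Since $\dot b^{s,\tau}_{p,\infty}$ is the largest of these spaces (the embeddings above show $\dot f_{p,q}\hookrightarrow\dot b_{p,\infty}$ and $\dot b_{p,q}\hookrightarrow\dot b_{p,\infty}$), it is enough to work with the $\dot b_{p,\infty}^{s,\tau}$ norm.

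Fix $Q\in\Q_j$. Testing the $L\dot B^\tau_{p,\infty}$ norm on the cube $Q$ itself, only the term $j=-\scale(Q)$ matters, and
\begin{equation*}
|A_Q\vec s_Q|\le |Q|^{\tau+s+1/2-1/p}\|\vec s\|_{\dot b^{s,\tau}_{p,\infty}(\{A_Q\})}.
\end{equation*}
Then $|\vec s_Q|\le \|A_Q^{-1}\|\,|A_Q\vec s_Q|$, and
\begin{equation*}
\|A_Q^{-1}\|\le \|A_{P_0}^{-1}\|\,\|A_{P_0}A_Q^{-1}\|.
\end{equation*}
The factor $\|A_{P_0}^{-1}\|$ is a constant depending only on $W$. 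Lemma \ref{lemma cube AQ AR-1} gives
\begin{equation*}
\|A_{P_0}A_Q^{-1}\|\lesssim \max\{|Q|^{d/p},|Q|^{-\tilde d/p'}\}\,(1+\rho_A(c_Q)/(|Q|\vee1))^{\Delta},
\end{equation*}
where $c_Q$ may be replaced by $x_Q$ up to constants by \eqref{eq baisic geo}.

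Next split the sum over $Q$ according to whether $j\ge0$ (small cubes, $|Q|=|\det A|^{-j}\le1$) or $j<0$.
\begin{itemize}
\item For $j\ge0$ the $Q$-dependent power is $|\det A|^{-j(L+\tau+s+1/2-1/p-\tilde d/p')}$. The spatial sum is $\sum_{Q\in\Q_j}(1+\rho_A(x_Q))^{-(L-\Delta)}$, which is a Riemann sum of $\int(1+\rho_A(x))^{-(L-\Delta)}\,\d x$, finite since $L-\Delta>1$ by \eqref{eq  rho A B approx r}. That Riemann sum carries a factor $|\det A|^{j}$.
\item For $j<0$ we have $|Q|=|\det A|^{|j|}$. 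The power is $|\det A|^{-|j|(L-\tau-s-1/2+1/p-d/p)}$. The spatial sum $\sum_{Q\in\Q_j}(1+\rho_A(x_Q)/|Q|)^{-(L-\Delta)}$ is bounded by a constant via the counting sets $E_i$ used in the proof of Lemma \ref{lemma r trick all  tau}.
\end{itemize}

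The main obstacle is the exponent bookkeeping in the $j\ge0$ branch, including the extra $|\det A|^{j}$ from counting cubes. With this bookkeeping the natural convergence condition is $L>\tilde d/p'-\tau-s+1/2+1/p$, which matches the hypothesis (the $1/2$ and $1/p$ come from the $\S_N$ normalization and the counting). If the exponents do not close exactly, I would first recheck the normalization $|Q|^{1/2}$ in $\varphi_Q$ versus $\varphi=\varphi_{P_0}$. Failing that, I would use the slack in Lemma \ref{lemma psi_Q varphi_P}, which allows any larger $L$ at the cost of a larger $N$. Finally, $N$ is taken from Lemma \ref{lemma psi_Q varphi_P} for this $L$, and both geometric series converge under the stated lower bounds on $L$.
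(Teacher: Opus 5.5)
Your proposal is correct and follows the paper's proof essentially step for step. It uses the single-cube bound $|A_Q\vec s_Q|\le |Q|^{\tau+s+1/2-1/p}\|\vec s\|$, the factorization $\|A_Q^{-1}\|\le \|A_{[0,1)^n}^{-1}\|\,\|A_{[0,1)^n}A_Q^{-1}\|$ with Lemma~\ref{lemma cube AQ AR-1}, and the decay from Lemma~\ref{lemma psi_Q varphi_P}. It then splits into $j\ge 0$ (where the Riemann sum contributes $|\det A|^{j}$) and $j<0$, with exactly the paper's exponent bookkeeping. So neither the detour through $\dot b^{s,\tau}_{p,\infty}$ (testing either norm on $P=Q$ gives the single-cube bound directly) nor the closing fallback is needed.

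One small point concerns $\sum_{k\in\mathbb Z^n}(1+\rho_A(k))^{-(L-\Delta)}<\infty$. Justify it via $\sharp\{k\in\mathbb Z^n:\rho_A(k)\le R\}\lesssim R$ for $R\ge 1$ together with $L-\Delta>1$. Do not rely on a uniform bound on $\sharp E_i$, which need not hold: for $A=2I_2$ and $\rho_A(x)=|x|^2$, $\sharp E_i$ counts the representations of $i+1$ as a sum of two squares, which is unbounded.
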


\begin{proof}
	Recall that 
	\begin{equation*}
		\| \vec s\|_{\dot f_{p,q}^{s,\tau} (\{A_Q\}_{Q\in \Q},A)} = \sup_{P \in \Q} |P|^{-\tau} \left\| \left(\sum_{ j \ge -\scale (P)} |\det A|^{jq (s+1/2)}  \sum_{Q \in \Q_j} |A_Q \vec s_Q |^q  \chi_Q   \right)^{1/q}  \right\|_{L^p (P)}    
	\end{equation*}
and 
	\begin{equation*}
	\| \vec s\|_{\dot b_{p,q}^{s,\tau} (\{A_Q\}_{Q\in \Q},A)} = \sup_{P \in \Q} |P|^{-\tau}  \left(\sum_{ j \ge -\scale (P)} |\det A|^{jq (s+1/2)}  \left\| \sum_{Q \in \Q_j} |A_Q \vec s_Q |  \chi_Q    \right\|_{L^p (P)} ^q    \right)^{1/q} .
\end{equation*}
	By Theorem \ref{theorem seq W approx A_Q}, we obtain, for each $Q\in \Q$, 
	\begin{align*}
		| \vec s_Q |  & \le \|A_Q ^{-1} \|  |A_Q \vec s_Q| \le  \|A_Q ^{-1} \|  |Q|^{\tau +  s +1/2  -1/p}   \| \vec s\|_{\dot a_{p,q}^{s,\tau} (\{A_Q\}_{Q\in \Q},A)} 
		\\
		&   \approx \|A_Q ^{-1} \|  |Q|^{\tau +  s +1/2  -1/p}  \| \vec s\|_{\dot a_{p,q}^{s,\tau} (W,A)} .
	\end{align*}
Hence for any $\varphi \in \S_\infty (\rn)$, 
\begin{equation*}
	\sum_{Q\in \Q} |  \langle \psi_Q, \varphi\rangle | | \vec s_Q| \lesssim  \| \vec s\|_{\dot a_{p,q}^{s,\tau} (W,A)}  \sum_{Q\in \Q}  |Q|^{ \tau + s+ 1/2 -1/p } \|A_Q ^{-1} \|   | \langle \psi_Q, \varphi \rangle|.
\end{equation*}
From Lemma \ref{lemma cube AQ AR-1}, we have
\begin{equation*}
	\| A_Q^{-1} \| \le \|A_{  [0,1)^n } ^{-1}\| \|    A_{  [0,1)^n }  A_Q^{-1} \|  \lesssim     \max \left( |Q|  ^{d/p}  , |Q| ^{ - \tilde  d/p'}  \right) \left( 1 +  \frac{ \rho_A ( c_{Q}   ) }{ |Q|  \vee 1 } \right)^{ \Delta }.
\end{equation*}
From Lemma \ref{lemma psi_Q varphi_P},
  we have that for any $\varphi,\psi \in  \S_\infty (\rn) $  and  $Q \in \Q $,
  \begin{align*}
  	| \langle \psi_Q, \varphi \rangle| = 	| \langle \psi_Q,\varphi_{[0,1)^n } \rangle| \lesssim  \|\psi\|_{\S_N} \|\varphi\|_{\S_N}  \left( 1+ \frac{\rho_A (x_Q   )}{ 1 \vee |Q|  } \right) ^{-L}  \min \left( |Q|,   |Q|^{-1}  \right)^L .
  \end{align*}
Hence
\begin{align*}
		\sum_{Q\in \Q} |  \langle \psi_Q, \varphi\rangle | | \vec s_Q| &  \lesssim  \| \vec s\|_{\dot a_{p,q}^{s,\tau} (W,A)}   \|\psi\|_{\S_N} \|\varphi\|_{\S_N}  \sum_{Q\in \Q}  |Q|^{ \tau + s+ 1/2 -1/p } \\
		& \quad \times   \min \left( |Q|  ^{d/p- L}  , |Q| ^{  L-\tilde  d/p'}  \right) \left( 1 +  \frac{ \rho_A ( c_{Q}   ) }{ |Q|  \vee 1 } \right)^{ -L + \Delta }   .
\end{align*}
Since 	$L > \max\{ \tilde  d / p'  - \tau  - s + 1/2 + 1/p , d/p  + \tau  + s + 1/2 - 1/p, \Delta + 1 \}$, we have
\begin{align*}
&	\sum_{j\ge 0} | \det A |^{ (-j) ( \tau + s+ 1/2 -1/p)  }  |\det A |^{-j (L - \tilde  d / p')}  \sum_{Q\in \Q_j }    \left( 1 +   \rho_A ( c_{Q} )  \right)^{ -L + \Delta } \\
& \lesssim 	\sum_{j\ge 0} | \det A |^{ (-j) ( \tau + s+ 1/2 -1/p)  }  |\det A |^{-j (L - \tilde  d / p')}   \sum_{k \in \mathbb Z^n} ( 1+ |\det A|^{-j}  \rho_A (k) ) ^{-L+\Delta} \\     
& \lesssim 	\sum_{j\ge 0} | \det A |^{ (-j) ( \tau + s+ 1/2 -1/p)  }  |\det A |^{-j (L - \tilde  d / p')}   |\det A|^j     <\infty,
\end{align*}
and 
\begin{align*}
 &	\sum_{j< 0} | \det A |^{ (-j) ( \tau  + s+ 1/2 -1/p)  }  |\det A |^{-j (d/p - L )}  \sum_{Q\in \Q_j }    \left( 1 +   |\det A|^j  \rho_A ( c_{Q} )  \right)^{ -L + \Delta } \\
 & \lesssim 	\sum_{j< 0} | \det A |^{ (-j) ( \tau  + s+ 1/2 -1/p)  }  |\det A |^{-j (d/p - L )}  \sum_{k \in \mathbb Z^n} (1+ \rho_A  (k)) ^{-L+\Delta} \\ 
	&\lesssim \sum_{j< 0} | \det A |^{ (-j) ( \tau  + s+ 1/2 -1/p)  }  |\det A |^{-j (d/p - L )}  <\infty .
\end{align*}
Thus the proof is complete.
\end{proof}

\begin{lemma}[Lemma 3.3, \cite{BH06}] \label{lemma seq peetre besov}
	Let $A $ be a dilation. Let $s\in \mathbb R$, $q\in (0,\infty] $, $p\in  (0,\infty)$. Then for any $r>0$
	and all $  u  = \{ u_Q\}_{Q\in \Q}$,
	
	{\rm (i)} if
	  $\lambda  >\max ( 1, r/q, r/p ) $, there exists a constant $C>0$ such that 
	\begin{equation*}
			\| u  \|_{ \dot f_{p,q}^{s} (A) } \le 	\| u ^\ast_{ r, \lambda} \|_{ \dot f_{p,q}^{s} (A) } \le C 	\| u  \|_{ \dot f_{p,q}^{s} (A) } ;
	\end{equation*}

	{\rm (ii)} if
  $\lambda  >\max ( 1,  r/p ) $, there exists a constant $C>0$ such that 
\begin{equation*}
	\| u  \|_{ \dot b_{p,q}^{s} (A) } \le 	\| u ^\ast_{ r, \lambda} \|_{ \dot b_{p,q}^{s} (A) } \le C	\| u  \|_{ \dot b_{p,q}^{s}(A) } .
\end{equation*}
\end{lemma}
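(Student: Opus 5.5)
The first inequality in both (i) and (ii) holds termwise, since $|u_Q| \le (u^\ast_{r,\lambda})_Q$ for every $Q\in\Q$. All the work is in the upper bounds. The plan is to pass from the discrete maximal sequence to the Hardy--Littlewood maximal operator $\M_{\rho_A}$ via Lemma \ref{lemma seq HL pointwise}, and then use the boundedness of $\M_{\rho_A}$ on $L^p$ spaces (Lemma \ref{lemma M weak 1,1 strong p,p}) or on vector-valued $L^p(\ell^q)$ spaces (Lemma \ref{lemma M rho A FS}). In both cases only $\lambda$ and $r$ enter the estimate; $s$ only multiplies each level by the constant $|\det A|^{j(s+1/2)}$, so it plays no role.

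For (i), fix $j$ and apply the second statement of Lemma \ref{lemma seq HL pointwise} (the case $i=j$) with some $a\in(0,r]$ satisfying $\lambda>r/a$. This gives the pointwise bound
\begin{equation*}
\sum_{Q\in\Q_j}(u^\ast_{r,\lambda})_Q\chi_Q \lesssim \Big(\M_{\rho_A}\Big(\sum_{Q\in\Q_j}|u_Q|\chi_Q\Big)^a\Big)^{1/a}.
\end{equation*}
In addition we need $a<\min(p,q)$. Since $\lambda>\max(1,r/p,r/q)$, we can choose $a$ with $r/\lambda<a<\min(r,p,q)$. If $r$ is the smallest of $r,p,q$, the condition $a\le r$ is harmless: the admissible interval is still nonempty, because $r/\lambda<r$ when $\lambda>1$.

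Next multiply by $|\det A|^{j(s+1/2)}$ and raise everything to the power $a$. The right-hand side becomes $\M_{\rho_A}(F_j^a)$, where $F_j:=|\det A|^{j(s+1/2)}\sum_{Q\in\Q_j}|u_Q|\chi_Q$. Then
\begin{equation*}
\| u^\ast_{r,\lambda}\|_{\dot f^s_{p,q}(A)}\lesssim \big\|\{\M_{\rho_A}(F_j^a)\}_j\big\|_{L^{p/a}(\ell^{q/a})}^{1/a}.
\end{equation*}
Here $p/a>1$ and $q/a>1$. Lemma \ref{lemma M rho A FS} then bounds this by $\|\{F_j^a\}\|_{L^{p/a}(\ell^{q/a})}^{1/a}=\|u\|_{\dot f^s_{p,q}(A)}$. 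If $q=\infty$, Lemma \ref{lemma M rho A FS} is still applicable with exponent $q/a=\infty$.

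For (ii), the Fefferman--Stein inequality is not needed. For each fixed $j$ it suffices to show
\begin{equation*}
\Big\|\sum_{Q\in\Q_j}(u^\ast_{r,\lambda})_Q\chi_Q\Big\|_{L^p}\lesssim\Big\|\sum_{Q\in\Q_j}|u_Q|\chi_Q\Big\|_{L^p}
\end{equation*}
with a constant independent of $j$, and then take the $\ell^q$ sum over $j$. We use the same pointwise bound, now choosing $a$ with $r/\lambda<a<\min(r,p)$, which is possible because $\lambda>\max(1,r/p)$. The scalar $L^{p/a}$-boundedness of $\M_{\rho_A}$ from Lemma \ref{lemma M weak 1,1 strong p,p} gives the estimate.

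The only delicate point is checking that a valid exponent $a$ exists in each regime. The conditions on $\lambda$ are designed exactly to make the required interval for $a$ nonempty.
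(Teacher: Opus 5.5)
Your proof is correct, and it is the standard argument behind \cite[Lemma 3.3]{BH06}, which the paper quotes without proof. You dominate $\sum_{Q\in\Q_j}(u^\ast_{r,\lambda})_Q\chi_Q$ pointwise by $(\M_{\rho_A}(F_j^a))^{1/a}$ via Lemma \ref{lemma seq HL pointwise}, with $r/\lambda<a<\min(r,p,q)$, and then apply Lemma \ref{lemma M rho A FS} (or the scalar maximal bound in the Besov case); the paper makes this same choice of $a$ in its proof of Lemma \ref{lemma unweighted sequence morrey peetre}.
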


\begin{remark}
	From the proof of \cite[Lemma 3.3]{BH06}, it is easy to obtain result of anisotropic Besov spaces.
\end{remark}

\begin{lemma} \label{lemma unweighted sequence morrey peetre}
	 Let $A $ be a dilation. 
	Let  $W \in \A_p$ and $\{ A_Q\}_{Q\in \Q}$ be a sequence of reducing operators of order $p$ for $W$.
 Let $s\in \mathbb R$, $\tau \in [0,\infty) $, $p\in  (0,\infty)$,
$q \in (0,\infty]$.  
	Then the following statements hold:
	
	{\rm (i)} 
		 if  $\lambda  >\max ( 1, r/q, r/p ) $, then 
	\begin{equation*}
		\| u  \|_{ \dot f_{p,q}^{s,\tau} (A) } \le 	\| u ^\ast_{ r, \lambda} \|_{ \dot f_{p,q}^{s,\tau} (A) } \lesssim 	\| u  \|_{ \dot f_{p,q}^{s,\tau}  (A) } ;
	\end{equation*}

	{\rm (ii)} if
	$\lambda  >\max ( 1,  r/p ) $, then 
\begin{equation*}
	\| u  \|_{ \dot b_{p,q}^{s,\tau}  (A) } \le 	\| u ^\ast_{ r, \lambda} \|_{ \dot b_{p,q}^{s,\tau}  (A) } \lesssim 	\| u  \|_{ \dot b_{p,q}^{s,\tau}  (A) } .
\end{equation*}	
\end{lemma}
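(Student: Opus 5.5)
The first inequality in both (i) and (ii) is immediate, because $|u_Q| \le (u^\ast_{r,\lambda})_Q$ for every $Q\in\Q$ and the norms are monotone in $|u_Q|$. For the second inequality I would localize: fix $P\in\Q$ with $\scale(P) = -j_P$, and reduce the $\tau$-norm on $\widehat P$ to the unweighted estimates of Lemma \ref{lemma seq peetre besov}, applied to suitably truncated sequences. The plan is to split, for each $Q\in\Q_j$ with $j\ge j_P$ and $Q\cap P\neq\emptyset$, the sum defining $(u^\ast_{r,\lambda})_Q$ into a \emph{near} part and a \emph{far} part. The near part runs over $R\in\Q_j$ with $R \subset P^\ast_k$, where $P^\ast_k$ is the union of the cubes of $\Q_{j_P}$ at anisotropic distance $\lesssim |\det A|^{k}|P|$ from $P$. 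The far part is organized dyadically in the shells $k\ge 1$, where $\rho_A(x_Q-x_R)\approx |\det A|^k |P|$.

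For $k=0$, let $v^{(0)} := u\,\chi_{\{R\subset P^\ast_0\}}$. Then $(u^\ast)_Q\le (v^{(0)\ast})_Q + (\text{far})$, and Lemma \ref{lemma seq peetre besov} gives
\[
\|v^{(0)\ast}\|_{\dot f^s_{p,q}(A)}\lesssim \|v^{(0)}\|_{\dot f^s_{p,q}(A)}.
\]
Since $P^\ast_0$ is covered by $\lesssim C_\Q$ cubes of $\Q_{j_P}$, this is $\lesssim |P|^\tau\|u\|_{\dot f^{s,\tau}_{p,q}(A)}$; only the scales $j\ge j_P$ enter, so the sum over $j<j_P$ should be truncated from the start. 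For the far shells $k\ge1$, each term carries the decay factor $(|\det A|^{j-j_P+k})^{-\lambda}$ relative to the local cube sizes. Using the $r$-triangle inequality, I would bound the far contribution by $\sum_{k\ge1}|\det A|^{-k\epsilon}$ times the norm of the shell sequence. The shell $k$ is covered by about $|\det A|^{k}$ cubes of $\Q_{j_P}$, each contributing at most $|P|^\tau\|u\|$ to the norm, so the $L^p$ sum over these cubes grows at most like $|\det A|^{k/p}$ (this is the case $\tau\ge0$; the Morrey scaling only helps since the cubes have the same size as $P$). The decay must therefore beat $|\det A|^{k/p}$ together with the counting loss $|\det A|^{k/r}$ in the $\ell^r$ sum.

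Concretely, after writing $(u^\ast)_Q^r$ restricted to shell $k$ and applying Lemma \ref{lemma seq HL pointwise} (the case $i=j$, with $a<\min(p,q)$, $r/a<\lambda$), I would aim to bound it pointwise by
\[
|\det A|^{-k(\lambda-r/a)}\,\M_{\rho_A}\Big(\sum |u_R|^a\chi_R\,\chi_{\text{shell }k}\Big)^{r/a}.
\]
I would then apply Lemma \ref{lemma M rho A FS} in $L^{p/a}(\ell^{q/a})$ and sum the geometric series in $k$, using $\lambda>\max(1,r/p,r/q)$ and choosing $a$ close to $\min(p,q)$. The Besov case (ii) follows identically with the $\ell^q(L^p)$ ordering, where only $\lambda>\max(1,r/p)$ is needed because no vector-valued maximal inequality is required.

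The main obstacle I expect is the exponent bookkeeping in the far shells: making sure the gain $\lambda - r/a$ strictly dominates the losses $1/p$ (from the shell's cube count) and the Morrey normalization, uniformly in $P$ and $\tau\ge0$. If this fails for large $\tau$, I would instead exploit that $|P_{\text{shell}}|^{\tau}$ is comparable to $|P|^\tau$ only for cubes of the same size, so that covering by same-size cubes avoids any $\tau$-dependent loss.
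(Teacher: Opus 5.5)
\textbf{There is a genuine gap in the far part.} Your first inequality and your near part are fine. They coincide with the paper's estimate of $I_P$: the paper splits $u=w+v$, with $w$ carried by the cubes inside the $\rho_A$-ball of radius $|\det A|^3|P|$ about $c_P$, and applies Lemma~\ref{lemma seq peetre besov} to $w$. The problem is the exponent bookkeeping you flagged yourself. Your pointwise shell bound is fine, but after Lemma~\ref{lemma M rho A FS} the $k$-th shell contributes at most
\[
|\det A|^{-k(\lambda-r/a)/r}\,|\det A|^{k/p}\,|P|^{\tau}\,\|u\|_{\dot f^{s,\tau}_{p,q}(A)} .
\]
So the series in $k$ converges only if $\lambda>r/a+r/p$. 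Since $a\le r$ and $a<\min\{p,q\}$, this forces at least $\lambda>1+r/p$, which does not follow from $\lambda>\max\{1,r/p,r/q\}$. For example, $p=q=2$, $r=1$, $\lambda=1.1$ satisfies the hypothesis, while your scheme needs $\lambda>3/2$.

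Covering the shell by boundedly many cubes of measure $\approx|\det A|^k|P|$ only improves the loss to $|\det A|^{k\min\{\tau,1/p\}}$. That still fails for every $\tau>0$. Your fallback (covering by same-size cubes) is exactly what produces the factor $|\det A|^{k/p}$, so it does not help.

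The loss is structural: you pay for the shell twice.
\begin{itemize}
\item First, you spend the spatial decay $|\det A|^{-k\lambda}$ on the $\ell^r\hookrightarrow\ell^a$ step and on averaging over a ball of measure $|\det A|^k|P|$, at a cost of $|\det A|^{kr/a}$.
\item Then you pay again for the $\approx|\det A|^k$ cubes of size $|P|$ in the shell when you pass to a global $L^p(\ell^q)$ norm.
\end{itemize}
As written, you would prove the lemma only for a strictly larger threshold on $\lambda$. That would suffice for the later use in the $\varphi$-transform theorem, where $L$ is free, but not for the statement.

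\textbf{The missing idea.} This is what the paper does: decompose the far region into same-scale translates $P(k)$, $k\in\mathbb Z^n\setminus\{0\}$, with $x_P-x_{P(k)}=A^{\scale(P)}k$. Attach to each $P(k)$ the cubes $R\in\Q_j$ with $R\preceq P(k)$ and $|R\cap P(k)|>0$; by Lemma~\ref{lemma stacked cube} these lie in $O(1)$ neighbours of $P(k)$. For $Q\in\Q_j$ near $P$ and such $R$, one has $1+|Q|^{-1}\rho_A(x_Q-x_R)\approx|\det A|^j|P|\,\rho_A(k)$.
\begin{itemize}
\item Apply $\ell^r\hookrightarrow\ell^a$ and average only over the $|P|$-sized neighbourhood of $P(k)$. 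This costs $(|\det A|^j|P|)^{r/a}$, which is absorbed by the \emph{scale} factor $(|\det A|^j|P|)^{-\lambda}$, because $\lambda\ge r/a$ and $|\det A|^j|P|\ge 1$. The full spatial factor $\rho_A(k)^{-\lambda}$ therefore survives.
\item The resulting maximal function must be localized at $P(k)$; the paper evaluates it at $x_{P(k)}$. Evaluating it at $x\in P$ would cost a further $\rho_A(k)^{r/a}$. Since the bound is constant in $x\in P$ and $|P(k)|=|P|$, the $L^p(P)$ norm transfers to $L^p(P(k))$.
\item Each translate involves only $O(1)$ cubes of size $|P|$. After Fefferman--Stein in $L^{p/a}(\ell^{q/a})$ it contributes $\lesssim|P|^\tau\|u\|_{\dot f^{s,\tau}_{p,q}(A)}$, with no counting loss.
\item Summing over $k$ with the $\theta$-triangle inequality in $L^{p/r}(\ell^{q/r})$, $\theta=\min\{1,p/r,q/r\}$, requires only $\sum_{k\neq0}\rho_A(k)^{-\lambda\theta}<\infty$. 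That holds exactly when $\lambda>\max\{1,r/p,r/q\}$.
\end{itemize}
The same correction applies to the Besov case (ii).
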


\begin{proof}
	Recall that 
		\begin{equation*}
		(u ^\ast_{ r, \lambda}  )_Q :=  \left( \sum_{R\in \Q, |R|=|Q| }  \frac{|u_R |^r   }{ (1+ |Q|^{-1} \rho_A (x_Q -x_R )  ) ^\lambda }   \right)^{1/r} .
	\end{equation*}

Fix a dilated cube $P \in \Q$.  Let  $ \tilde P := \{ x\in \rn : \rho_A (x - c_P)  < |\det A|^3 |P|  \} $. 
For $ Q\in \Q$, let $w_Q = u_Q$ if $Q \subset \tilde P$ and $w_Q =0 $ otherwise, and let $v_Q =  u_Q - w_Q$. Set $w= \{ w_Q\}_{Q\in \Q} $ and $ v = \{ v_Q\}_{Q\in \Q}$.
Then for all $Q$, we  have
\begin{equation*}
	(  u ^\ast_{  r , \lambda} )_Q^{r }  =  	(  w ^\ast_{  r , \lambda} )_Q^{r }   + 	(  v^\ast_{  r , \lambda} )_Q^{r }   .
\end{equation*}
	Using Lemma \ref{lemma seq peetre besov}, if $\lambda  >\max ( 1, r/q, r/p ) $,
we have
\begin{align*}
	I_P &: = |P|^{-\tau} \left\| \left( \sum _{ j \ge -\scale (P)} |\det A|^{j q(s +1/2)} \sum_{Q \in \Q_j}   	(  w ^\ast_{  r , \lambda} )_Q ^q  \chi_Q      \right) ^{1/q}      \right\|_{L^p (P)} \\
	&\le |P|^{-\tau} \| w ^\ast_{ r, \lambda} \|_{ \dot f_{p,q}^{s} (A) }  \\
	&\lesssim |P|^{-\tau} \| w  \|_{ \dot f_{p,q}^{s} (A) }  \\
	& \lesssim \| u  \|_{ \dot f_{p,q}^{s,\tau} }.
\end{align*}

On the other hand, let $ \tilde {  \tilde P } := \{ x\in \rn : \rho_A (x - c_P)  < |\det A|  |P|  \}.$
Let $P(k)  $  be the dilate cube such that $|P(k)| =|P| $ and $ x_P - x_{ P (k) }  = A^{\scale (P)} k $ for $k \in \mathbb Z^n$.

For $j \in \{  -\scale (P), -\scale (P) + 1, \ldots \}$, $k \in \mathbb Z^n \backslash \{0\}$,
 let 
 \begin{equation*}
 	A (j,k,P) = \{ R\in \Q_j :R \preceq P(k) , | R \cap P(k) |>0 ,  R \cap \tilde {  \tilde P }  = \emptyset \}.
 \end{equation*}
Then for $ R \in  A (j,k,P)$ and $Q \preceq P, Q \in \Q_j$, we have
\begin{equation*}
	1+ |R|^{-1} \rho_A (x_Q - x_R)  \approx 1 +   |P| |R|^{-1} \rho_A (k)  \approx |P| |R|^{-1} \rho_A (k).
\end{equation*}
Let $0 <a \le r$ and let $0< a < \min \{ p,q\} $. 
If $r \ge  \min \{ p,q\}$, since $\lambda  >\max ( 1, r/q, r/p )   = \frac{r}{ \min (r, q,p ) }$, choose $a$ sufficiently close $\min \{ p,q\}$ such that  $\lambda \ge r /a$.
If  $r <   \min \{ p,q\}$, since $\lambda  >\max ( 1, r/q, r/p )   = \frac{r}{ \min (r, q,p ) }$, choose $a$ sufficiently close $r  $  such that $a < \min \{ p,q\}$  and  $\lambda \ge r /a$.

Thus, for $j \in \{  -\scale (P), -\scale (P) + 1, \ldots \}$, $k \in \mathbb Z^n \backslash \{0\} $, we obtain
\begin{align*}
	&	\sum_{R \in 	A (j,k,P)}  \frac{|v_R |^r   }{ (1+ |R|^{-1} \rho_A (x_Q -x_P )  ) ^\lambda } \\
	& \lesssim (  |P| |R|^{-1} \rho_A (k))^{-\lambda} \sum_{R \in 	A (j,k,P)}  |v_R |^r  \\
	& \le (  |P| |R|^{-1} \rho_A (k))^{-\lambda} \left(  \sum_{R \in 	A (j,k,P)}  |v_R |^a  \right)^{r/a}  \\
	& = (  |P| |R|^{-1} \rho_A (k))^{-\lambda}  |\det A|^{ j   r /a  }  \left(  \sum_{R \in 	A (j,k,P)} \int_R |v_R |^a \chi_R \d x  \right)^{r/a} \\
	& \lesssim  (|P| |R|^{-1} \rho_A (k))^{-\lambda}  |\det A|^{ j  r /a  } |P| ^{r/a} \M_{\rho_A}    \left( \sum_{R \in 	A (j,k,P)} |v_R |^a \chi_R  \right) ^{r/a} (x_{P(k)})  \\
	& =  |P|^{ - \lambda +r/a }    |\det A|^{ -j \lambda + j  r /a }  \rho_A (k)^{-\lambda}  \M_{\rho_A}    \left( \sum_{R \in 	A (j,k,P)} |v_R |^a \chi_R  \right) ^{r/a} (x_{P(k)})   \\
	& \le   \rho_A (k)^{-\lambda}  \M_{\rho_A}    \left( \sum_{R \in 	A (j,k,P)} |v_R |^a \chi_R  \right) ^{r/a} (x_{P(k)}) 
	.
\end{align*}

Hence
\begin{align*}
|Q|^{ -s -1/2 }	(v ^\ast_{ r, \lambda}  )_Q  & :=  \left( \sum_{R\in \Q, |R|=|Q| }   \frac{ |R|^{ (-s -1/2) r } |v_R |^r   }{ (1+ |R|^{-1} \rho_A (x_Q -x_P )  ) ^\lambda }   \right)^{1/r} \\
	& \lesssim  \left( \sum_{k \in \mathbb Z^n \backslash \{0\} }  \sum_{R \in 	A (j,k,P)} \frac{ |R|^{ (-s -1/2) r } |v_R |^r   }{ (1+ |Q|^{-1} \rho_A (x_Q -x_P )  ) ^\lambda }   \right)^{1/r} \\
	& \lesssim   \left( \sum_{k \in \mathbb Z^n \backslash \{0\}}   \rho_A (k)^{-\lambda}  \M_{\rho_A}    \left( \sum_{R \in 	A (j,k,P)} |R|^{ (-s -1/2) a}  |v_R |^a \chi_R  \right) ^{r/a} (x_{P(k)})  \right)^{1/r}   .
\end{align*}
By Minkowski's inequality, Lemma \ref{lemma M rho A FS} and $\lambda >1$,
\begin{align*}
J_P := 	& |P|^{-\tau}  \left\| \left( \sum _{ j \ge -\scale (P)} |\det A|^{j q(s +1/2)} \sum_{Q \in \Q_j}   (v ^\ast_{r, \lambda } )_Q ^q  \chi_Q      \right) ^{1/q}      \right\|_{L^p (P)} \\
	&\lesssim |P|^{-\tau}     \left\| \left( \sum _{ j \ge -\scale (P)}     \left( \sum_{k \in \mathbb Z^n}   \rho_A (k)^{-\lambda}  \M_{\rho_A}    \left( \sum_{R \in 	A (j,k,P)} |R|^{ (-s -1/2) a}  |v_R |^a \chi_R  \right) ^{r/a}   \right)^{q/r}          \right) ^{1/q}      \right\|_{L^p (P)} \\
	&= |P|^{-\tau}  \left\| \left( \sum _{ j \ge -\scale (P)}     \left( \sum_{k \in \mathbb Z^n}   \rho_A (k)^{-\lambda}  \M_{\rho_A}    \left( \sum_{R \in 	A (j,k,P)} |R|^{ (-s -1/2) a}  |v_R |^a \chi_R  \right)^{r/a}  \right)^{q/r}          \right) ^{r/q}      \right\|_{L^{p/r} (P)} ^{1/r} \\
		&\le |P|^{-\tau}  \left( \sum_{k \in \mathbb Z^n}   \rho_A (k)^{-\lambda}  \left\| \left( \sum _{ j \ge -\scale (P)}     \left(   \M_{\rho_A}    \left( \sum_{R \in 	A (j,k,P)} |R|^{ (-s -1/2) a}  |v_R |^a \chi_R  \right) ^{r/a}  \right)^{q/r}          \right) ^{r/q}      \right\|_{L^{p/r} (P)}  \right) ^{1/r} \\
			&\lesssim |P|^{-\tau}  \left( \sum_{k \in \mathbb Z^n}   \rho_A (k)^{-\lambda}  \left\| \left( \sum _{ j \ge -\scale (P)}        \sum_{R \in 	A (j,k,P)} |R|^{ (-s -1/2)q}  |v_R |^q \chi_R             \right) ^{r/q}      \right\|_{L^{p/r}}  \right) ^{1/r} \\
				&\lesssim |P|^{-\tau}  \left( \sum_{k \in \mathbb Z^n}   \rho_A (k)^{-\lambda}  |P|^{\tau r } \|u\|_{\dot f_{p,q}^{s,\tau} (A) } ^r  \right) ^{1/r} \\
				& \lesssim \|u\|_{\dot f_{p,q}^{s,\tau} (A) }  .
\end{align*}

Therefore,
\begin{equation*}
	\| u ^\ast_{ r, \lambda} \|_{ \dot f_{p,q}^{s,\tau} (A) } \lesssim   \sup_{P \in \Q}  ( I_P +J_P ) \lesssim 	\| u  \|_{ \dot f_{p,q}^{s,\tau} (A) } .
\end{equation*}

(ii) The proof of (ii) is similar and we omit it here. 
\end{proof}

\begin{lemma} \label{lemma matrix weighted sequence morrey peetre}
	Let $A $ be a dilation. 
	Let  $W \in \A_p$ and $\{ A_Q\}_{Q\in \Q}$ be a sequence of reducing operators of order $p$ for $W$.
 Let $s\in \mathbb R$, $\tau \in [0,\infty) $, $p\in  (0,\infty)$,
$q \in (0,\infty]$.  
	Then

	{\rm (i)} 	if  $\lambda  >\max ( 1, r/q, r/p ) $, then 
	\begin{equation*}
		\| \vec u  \|_{ \dot f_{p,q}^{s,\tau}  (W, A)} \approx \left \| \left( \{  |A_Q \vec u_Q \}_{Q\in \Q} \right) ^\ast_{ r, \lambda}     \right \|_{ \dot f_{p,q}^{s,\tau} (A) } ;
	\end{equation*}
	
	{\rm (ii)} if
	$\lambda  >\max ( 1,  r/p ) $, then 
	\begin{equation*}
	\| \vec u  \|_{ \dot b_{p,q}^{s,\tau}  (W, A)} \approx \left \| \left( \{  |A_Q \vec u_Q \}_{Q\in \Q} \right) ^\ast_{ r, \lambda}     \right \|_{ \dot b_{p,q}^{s,\tau} (A) } .
\end{equation*}	
\end{lemma}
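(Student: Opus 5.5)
The plan is to reduce the statement to two results already available: Theorem \ref{theorem seq W approx A_Q}, which replaces the matrix weight by the reducing operators, and Lemma \ref{lemma unweighted sequence morrey peetre}, the unweighted Peetre-type estimate in the $\tau$-setting. Define the scalar sequence $u := \{ |A_Q \vec u_Q| \}_{Q\in\Q}$. Unwinding Definition \ref{def 8 func spaces}(iv), with $m=1$, $W\equiv 1$ and $A_Q\equiv 1$ on the scalar side, we get exactly
\begin{equation*}
\| \vec u\|_{\dot a_{p,q}^{s,\tau}(\{A_Q\}_{Q\in\Q},A)} = \| u \|_{\dot a_{p,q}^{s,\tau}(A)},
\end{equation*}
because $|\mathbb A_j \vec u_{Q,j}| = \sum_{Q\in\Q_j} |A_Q\vec u_Q|\chi_Q$ pointwise. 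This holds for both choices $\dot a\in\{\dot b,\dot f\}$.

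The argument then chains three comparisons. First, Theorem \ref{theorem seq W approx A_Q} gives $\|\vec u\|_{\dot a_{p,q}^{s,\tau}(W,A)} \approx \|\vec u\|_{\dot a_{p,q}^{s,\tau}(\{A_Q\}_{Q\in\Q},A)}$, with constants independent of $\vec u$. Second, the identity above turns the right-hand side into $\|u\|_{\dot a_{p,q}^{s,\tau}(A)}$. Third, Lemma \ref{lemma unweighted sequence morrey peetre} gives
\begin{equation*}
\|u\|_{\dot a_{p,q}^{s,\tau}(A)} \le \|u^\ast_{r,\lambda}\|_{\dot a_{p,q}^{s,\tau}(A)} \lesssim \|u\|_{\dot a_{p,q}^{s,\tau}(A)}.
\end{equation*}
Part (i) uses $\lambda>\max(1,r/q,r/p)$ and part (ii) uses $\lambda>\max(1,r/p)$, which are exactly the hypotheses of the two cases. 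The first inequality in this display is trivial, since $u_Q\le (u^\ast_{r,\lambda})_Q$ termwise and the quasi-norms are monotone. Combining the three steps yields both (i) and (ii).

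The only substantive input is therefore Lemma \ref{lemma unweighted sequence morrey peetre}, and its Besov part (ii) was only sketched there. If that part has to be supplied, I would follow the proof of part (i). Fix $P\in\Q$ and split $u=w+v$ according to whether $Q\subset\tilde P$. The near part $w$ is handled by Lemma \ref{lemma seq peetre besov}(ii). For the far part $v$, sort the cubes by the translates $P(k)$ and use the decay factor $\rho_A(k)^{-\lambda}$ together with the pointwise maximal bound of Lemma \ref{lemma seq HL pointwise}. Then apply the $L^{p/a}$-boundedness of $\M_{\rho_A}$ from Lemma \ref{lemma M weak 1,1 strong p,p} for each fixed $j$, rather than the Fefferman--Stein inequality of Lemma \ref{lemma M rho A FS}.

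The step I expect to be the main obstacle is summing over $k$ in the Besov far part. One needs an $\ell^q$-sum, over $j$, of $L^p(P)$-norms of a sum over $k$, so Minkowski's inequality has to be applied in the $\min(1,p/r,q/r)$-triangle sense. One also needs $\sum_k \rho_A(k)^{-\lambda}<\infty$ for $\lambda>1$, which follows from Lemma \ref{lemma rho A and |x|} and \eqref{eq rho A B approx r}. Finally, each translated piece must be bounded by $|P(k)|^{\tau}\|u\|_{\dot b^{s,\tau}_{p,q}(A)}$, using the supremum over $P(k)$ in the definition of $L\dot B^\tau_{p,q}$; since $|P(k)|=|P|$, the factor $|P|^{-\tau}$ cancels.
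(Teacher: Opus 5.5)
Your argument is exactly the paper's proof: it chains Theorem~\ref{theorem seq W approx A_Q}, the identity $\|\vec u\|_{\dot a^{s,\tau}_{p,q}(\{A_Q\}_{Q\in\Q},A)}=\|\{|A_Q\vec u_Q|\}_{Q\in\Q}\|_{\dot a^{s,\tau}_{p,q}(A)}$, and Lemma~\ref{lemma unweighted sequence morrey peetre}, and nothing further is needed.

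One caution about your optional sketch of the Besov far part. Pulling the $k$-sum out of the $\ell^{q/r}(L^{p/r})$ quasi-norm with the $\min(1,p/r,q/r)$-triangle inequality requires $\sum_{k}\rho_A(k)^{-\lambda\min(1,p/r,q/r)}<\infty$, that is, also $\lambda>r/q$. Hypothesis (ii) does not give this when $q<\min\{p,r\}$, so as written the sketch only proves (ii) under the stronger Triebel--Lizorkin hypothesis. To reach (ii) as stated, choose $a<\min\{p,r\}$ with $r/a<\lambda$. Then keep the factor $(|P|\,|\det A|^{j})^{r/a-\lambda}$, which the paper's Triebel--Lizorkin argument simply bounds by $1$. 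Its geometric decay in $j+\scale(P)$ lets you dominate the $\ell^q$-sum over $j$ by a supremum over $j$. The $k$-sum then only has to be pulled out of $L^{p/r}$, which needs just $\lambda\min(1,p/r)>1$.
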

\begin{proof}
	By Theorem \ref{theorem seq W approx A_Q} and  Lemma \ref{lemma unweighted sequence morrey peetre}, we have
	\begin{equation*}
			\| \vec u  \|_{ \dot f_{p,q}^{s,\tau}  (W, A)} \approx 	\| \vec u  \|_{ \dot f_{p,q}^{s,\tau}  ( \{ A_Q\}_{Q\in \Q} , A )}  = \| \{  |A_Q \vec u_Q \}_{Q\in \Q}            \|_{ \dot f_{p,q}^{s,\tau} (A) }  = \left \| \left( \{  |A_Q \vec u_Q \}_{Q\in \Q} \right) ^\ast_{ r, \lambda}     \right \|_{ \dot f_{p,q}^{s,\tau} (A) } .
	\end{equation*}
The proof of (ii) is similar.
Thus we finish the proof of Lemma \ref{lemma matrix weighted sequence morrey peetre}.
\end{proof}

\begin{theorem} \label{theorem wavelet trans}
		Let $A $ be a dilation. 
	 Let $s\in \mathbb R$, $\tau \in [0,\infty) $, $p\in  (0,\infty)$,
	$q \in (0,\infty]$.   
	Let  $W \in \A_p$.	
	Let $\varphi, \psi \in \mathcal S (\rn)$ satisfy (\ref{eq varphi supp}) and (\ref{eq varphi > 0}).
	Let $\tilde \varphi (x): = \overline{\varphi (-x)}$.
	Then the operators
	\begin{equation*}
		S_\varphi :  \dot A_{p,q}^{s,\tau}  (W, A, \tilde \varphi) \to \dot a_{p,q}^{s,\tau}  (W, A)  \; \operatorname{and } \;  T_\psi : \dot a_{p,q}^{s,\tau}  (W, A) \to \dot A_{p,q}^{s,\tau}  (W, A,  \varphi) 
	\end{equation*}
are bounded. Moreover, if $\varphi$ and $\psi $  satisfy (\ref{eq varphi psi =1}), then $T_\psi \circ S_\varphi $  is the identity on $ \dot A_{p,q}^{s,\tau}  (W, A, \tilde \varphi)$. 
\end{theorem}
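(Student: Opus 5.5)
The proof follows the Frazier--Jawerth scheme, transferred to the weighted setting through the reducing-operator machinery of Theorem \ref{theorem W app a app A_Q}, Theorem \ref{theorem seq W approx A_Q} and Lemma \ref{lemma matrix weighted sequence morrey peetre}. Throughout we fix a family $\{A_Q\}_{Q\in\Q}$ of reducing operators of order $p$ for $W$.

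\emph{Boundedness of $S_\varphi$.} For $Q\in\Q_j$ we have $(S_\varphi\vec f)_Q=\langle \vec f,\varphi_Q\rangle = |Q|^{1/2}\,\tilde\varphi_j*\vec f(x_Q)$. Consequently
\begin{equation*}
|A_Q (S_\varphi \vec f)_Q| \le |Q|^{1/2}\sup_{y\in Q}|A_Q\tilde\varphi_j*\vec f(y)| = \sup_{\mathbb A,\tilde\varphi,Q}\vec f .
\end{equation*}
The factor $|\det A|^{j(s+1/2)}$ in the $\dot a$-norm combines with $|Q|^{1/2}=|\det A|^{-j/2}$ to give $|\det A|^{js}$. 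Hence
\begin{equation*}
\|S_\varphi\vec f\|_{\dot a^{s,\tau}_{p,q}(\{A_Q\},A)}\le \|\sup_{\mathbb A,\tilde\varphi}\vec f\|_{\dot a^{s,\tau}_{p,q}(A)}.
\end{equation*}
Theorem \ref{theorem W app a app A_Q}, applied with $\tilde\varphi$ (which also satisfies (\ref{eq varphi supp}) and (\ref{eq varphi > 0})), bounds the right-hand side by $\|\vec f\|_{\dot A^{s,\tau}_{p,q}(W,A,\tilde\varphi)}$. Theorem \ref{theorem seq W approx A_Q} then converts the left-hand side into the $\dot a^{s,\tau}_{p,q}(W,A)$-norm. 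This step is routine.

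\emph{Boundedness of $T_\psi$.} By Lemma \ref{lemma well defined inverse varphi transform}, the series $T_\psi\vec s=\sum_Q \vec s_Q\psi_Q$ converges in $(\S_\infty'(\rn))^m$, so $T_\psi \vec s$ is well defined. By Theorem \ref{theorem W app a app A_Q} it suffices to bound $\|T_\psi\vec s\|_{\dot A(\{A_Q\},A,\varphi)}$. Fix $j$ and $x\in Q\in\Q_j$. The support conditions give $\varphi_j*\psi_P=0$ unless $|\scale(P)+j|\le c$ for some $c$ depending only on $A$, so
\begin{equation*}
A_Q\varphi_j*T_\psi\vec s(x)=\sum_{|i-j|\le c}\sum_{P\in\Q_i}A_QA_P^{-1}\,A_P\vec s_P\,\varphi_j*\psi_P(x).
\end{equation*}
The kernel estimate for $\psi$ (or Lemma \ref{lemma psi_Q varphi_P} applied pointwise) gives
\begin{equation*}
|\varphi_j*\psi_P(x)|\lesssim |P|^{-1/2}\bigl(1+|\det A|^j\rho_A(x-x_P)\bigr)^{-L}
\end{equation*}
for any $L$. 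Since $|P|\approx|Q|$, Lemma \ref{lemma cube AQ AR-1} gives $\|A_QA_P^{-1}\|\lesssim (1+|Q|^{-1}\rho_A(x_Q-x_P))^{\Delta}$. Choosing $L$ large and using (\ref{eq baisic geo}), we obtain
\begin{equation*}
|\det A|^{js}|A_Q\varphi_j*T_\psi\vec s(x)|\lesssim \sum_{|i-j|\le c}|\det A|^{i(s+1/2)}\,(u^\ast_{r,\lambda})_{Q^{(i)}},
\end{equation*}
where $u_P=|A_P\vec s_P|$, $Q^{(i)}$ is a cube of $\Q_i$ near $x$, and $\lambda=(L-\Delta)r$. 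The passage from an $\ell^1$ sum to $(\cdot)^\ast_{r,\lambda}$ uses $\ell^r\subset\ell^1$ for $r\le1$, or simply $r=1$ when that is allowed. Since $|i-j|\le c$, each shift $j\mapsto i$ changes the relevant cubes in the $L\dot A^\tau_{p,q}$ norm only by a bounded amount of scale, which costs a constant by covering with $C_\Q$ cubes. Taking the norm and applying Lemma \ref{lemma matrix weighted sequence morrey peetre} with $\lambda>\max(1,r/p,r/q)$ then yields $\|T_\psi\vec s\|_{\dot A(W,A,\varphi)}\lesssim\|\vec s\|_{\dot a(W,A)}$.

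\emph{Identity and main obstacle.} If (\ref{eq varphi psi =1}) holds, Lemma \ref{lemma identity} gives $\vec f=\sum_Q\langle\vec f,\varphi_Q\rangle\psi_Q=T_\psi S_\varphi\vec f$ in $(\S'_\infty)^m$ for each $\vec f$. The main obstacle is the $T_\psi$ bound: one must absorb the off-diagonal matrix factor $\|A_QA_P^{-1}\|$ through the $\A_p$-dimension growth $\Delta$ from Lemma \ref{lemma cube AQ AR-1}. One must also handle the small scale shift $|i-j|\le c$ inside the $\tau$-supremum, where the starting level $j\ge-\scale(P)$ changes by at most $c$ and the cube $P$ must be replaced by a boundedly larger one.
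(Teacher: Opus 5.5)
Your proposal is correct and follows the paper's proof essentially step by step. For $S_\varphi$ you use the pointwise bound $|A_Q(S_\varphi\vec f)_Q|\le \sup_{\mathbb A,\tilde\varphi,Q}\vec f$ together with Theorems~\ref{theorem W app a app A_Q} and~\ref{theorem seq W approx A_Q}. For $T_\psi$ you use the frequency localization $|i-j|\le c$, the kernel decay of $\varphi_j*\psi_P$ and Lemma~\ref{lemma cube AQ AR-1} to reduce to the Peetre-type sequence $(\{|A_P\vec s_P|\})^\ast_{r,\lambda}$, then apply Lemma~\ref{lemma matrix weighted sequence morrey peetre}; the reproducing identity comes from Lemma~\ref{lemma identity}. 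You also handle the bounded scale shift inside the $\tau$-supremum explicitly, covering $P$ by boundedly many larger dilated cubes since these cubes are not nested; the paper leaves this step implicit, and your treatment of it is correct.
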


\begin{proof}
	Let $\mathbb A:= \{ A_Q\}_{Q\in \Q}$ be a sequence of reducing operators of order $p$ for $W$.
	For any $\vec f \in  \dot A_{p,q}^{s,\tau}  (W, A) $, let 
\begin{equation*}
	\sup_{ \mathbb A ,\tilde \varphi} \vec f := \left\{  \sup_{ \mathbb A , \tilde \varphi, Q  } \vec f \right\}_{Q \in \Q}, 
\end{equation*}
where 
\begin{equation*}
	\sup_{ \mathbb A ,  \tilde \varphi, Q  } \vec f := \sup_{y\in Q} |A_Q   \tilde \varphi_Q * \vec f (y) |  =  |Q|^{1/2} \sup_{y\in Q} |A_Q  \tilde  \varphi_{ -\scale (Q) } * \vec f (y) |  .
\end{equation*}
For any $Q \in \Q_j$,
\begin{equation*}
	|A_Q ( S_\varphi \vec f )_Q | \le | A_Q \langle \vec f , \varphi_Q \rangle | = |Q|^{1/2} | A_Q   \tilde \varphi_{ j}  *\vec f  ( x_Q) | \le  \sup_{ \mathbb A ,  \tilde \varphi, Q  }  \vec f  .
\end{equation*}
By Theorems \ref{theorem W app a app A_Q} and \ref{theorem seq W approx A_Q}, we obtain 
\begin{equation*}
	\| S_\varphi \vec f \|_{  \dot a_{p,q}^{s,\tau}  (W, A) } \approx  	\| S_\varphi \vec f \|_{  \dot a_{p,q}^{s,\tau}  (  \{ A_Q \}_{ Q \in \Q}, A) } \le \left\|  \sup_{ \mathbb A ,  \tilde \varphi, Q  } \vec f  \right\|_{  \dot a_{p,q}^{s,\tau}  ( A) }  \approx \left\|   \vec f  \right\|_{  \dot A_{p,q}^{s,\tau}  (W, A, \tilde \varphi ) } .
\end{equation*}

Now we prove the boundedness of $T_\psi$.
Since the supports of $ \F \varphi$, $\F \psi$ are bounded and bounded away from the origin, there exists an integer $M$  such that supp  $ \F \varphi_j \cap $  supp $\F \psi_i = \emptyset$ for $|i-j| >M$. 
For any  $j \in \mathbb Z $, $Q \in \Q_j$  and $x\in Q$, 
\begin{align*}
	|  A_Q \varphi_j * (T_\psi  \vec t ) ( x) |   & = \left|  \sum_{ i  = j-M }^{j+M}   \sum_{P \in \Q_i}  A_Q \vec t_P  ( \varphi_j * \psi_P )  (x) \right| \\
	&  \le  \sum_{ i  = j-M }^{j+M}   \sum_{P \in \Q_i}  \|A_Q A_P^{-1} \|   |   A_P \vec t_P  |    |( \varphi_j * \psi_P )  (x) | .
\end{align*}
From \cite[p 1482]{BH06}, for any $ L>1$, there is a constant $C = C (L) >0$  such that 
\begin{equation*}
|	\varphi_j * \psi_P  (x)| \le C |P|^{-1/2}  (  1+ \rho_A ( A^i (x- x_P)  ) ) ^{-L}
\end{equation*}
for $P \in \Q_i$  and $ |i-j| \le M$.  
By Lemma  \ref{lemma cube AQ AR-1}, for any $j \in \mathbb Z $, $ |i-j|\le M $, $Q \in \Q_j$, $R\in \Q_i$, 
\begin{align*}
		\| A_{Q} A_{P}  ^{-1} \|  
		& \lesssim  \left( 1 +  \frac{ \rho_A ( c_{Q} -  c_{P}  ) }{  |P| } \right)^{ \Delta } ,
\end{align*}
where $\Delta$ is the same as in Lemma  \ref{lemma cube AQ AR-1}.
Given any $x\in \rn$,    for each $i \in \{ j-M , j-M +1, \ldots, j+M  \}$,   let  $Q^{ (i)} $ be the unique dilated cube such that $x \in Q^{(i)}$ and $Q^{ (i)}  \in \Q_i $.
Thus, for $x\in Q  \in \Q_j $,
\begin{align*}
	|  A_Q \varphi_j * (T_\psi  \vec t ) ( x) |  
	&  \le  \sum_{ i  = j-M }^{j+M}   \sum_{P \in \Q_i}  \|A_Q A_P^{-1} \|   |   A_P \vec t_P  |    |( \varphi_j * \psi_P )  (x) | \\
	& \lesssim  \sum_{ i  = j-M }^{j+M}   \sum_{P \in \Q_i}  \left( 1 +  \frac{ \rho_A ( c_{Q} -  c_{P}  ) }{  |P| } \right)^{ \Delta }  |   A_P \vec t_P  |  |P|^{-1/2}  (  1+ \rho_A ( A^i (x- x_P)  ) ) ^{-L} \\
	& \approx  \sum_{ i  = j-M }^{j+M}   \sum_{P \in \Q_i} |P|^{-1/2} \frac{  |  A_P \vec t_P   | }{ (  1+ \rho_A ( A^i (x_Q - x_P)  ) ) ^{L - \Delta }  } \\
	& \lesssim  \sum_{ i  = j-M }^{j+M}  |\det A|^{ i /2 }   \left (  \left( \{  |A_R \vec t_R \}_{R\in \Q} \right) ^\ast_{ 1, L-\Delta}    \right)_{Q^{ (i)}}  .
\end{align*}
By choosing $  L -\Delta >  \max \{ 1, 1/ q, 1/p \} = 1/\min\{1,p,q \}  $, Lemma \ref{lemma matrix weighted sequence morrey peetre} yields
\begin{align*}
		\| T_\psi \vec t \|_{ \dot A_{p,q}^{s,\tau}  ( \{ A_Q\}_{ Q\in \Q}, A, \varphi )} \lesssim   \left\| \left( \{  |A_R \vec t_R \}_{R\in \Q} \right) ^\ast_{ 1, L-\Delta}  \right\|_{ \dot a_{p,q}^{s,\tau} } \lesssim   \| \vec t  \|_{ \dot a_{p,q}^{s,\tau}  (W, A)} .
\end{align*}
By Theorem \ref{theorem W app a app A_Q}, we deduce that
\begin{equation*}
    \| T_\psi \vec t \|_{ \dot A_{p,q}^{s,\tau}  ( W, A, \varphi)} \approx 	\| T_\psi \vec t \|_{ \dot A_{p,q}^{s,\tau}  ( \{ A_Q\}_{ Q\in \Q}, A,\varphi  )} \lesssim \| \vec t  \|_{ \dot a_{p,q}^{s,\tau}  (W, A)} .
\end{equation*}
This finishes the proof of boundedness of  $T_\psi$.

Finally, if $\varphi$ and $\psi $  satisfy (\ref{eq varphi psi =1}),
then by Lemma \ref{lemma identity}, we see that
 $T_\psi \circ S_\varphi $  is the identity on $ \dot A_{p,q}^{s,\tau}  (W, A, \tilde \varphi)$. 
\end{proof}

The following result shows that $ \dot A_{p,q}^{s,\tau}  ( W, A) $ is independent of the choice of $\varphi$.
\begin{theorem} \label{theorem independent varphi}
		Let $A $ be a dilation. 
		 Let $s\in \mathbb R$, $\tau \in [0,\infty) $, $p\in  (0,\infty)$,
		$q \in (0,\infty]$.  
	Let  $W \in \A_p$ and $\{ A_Q\}_{Q\in \Q}$ be a sequence of reducing operators of order $p$ for $W$.
	Then $ \dot A_{p,q}^{s,\tau}  ( W, A)$ and  $\dot A_{p,q}^{s,\tau}  (  \{ A_Q\}_{ Q\in \Q}, A)$ are independent of the choice of $\varphi$.
\end{theorem}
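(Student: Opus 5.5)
Fix two admissible functions $\varphi^{(1)},\varphi^{(2)}\in\mathcal S(\rn)$, both satisfying (\ref{eq varphi supp}) and (\ref{eq varphi > 0}). By symmetry it suffices to show
\begin{equation*}
\|\vec f\|_{\dot A_{p,q}^{s,\tau}(W,A,\varphi^{(2)})}\lesssim \|\vec f\|_{\dot A_{p,q}^{s,\tau}(W,A,\varphi^{(1)})}
\end{equation*}
for every $\vec f\in(\S_\infty'(\rn))^m$. By Theorem \ref{theorem W app a app A_Q}, applied once for each $\varphi^{(i)}$, the same inequality then holds for the $\{A_Q\}_{Q\in\Q}$-spaces. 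The route is the $\varphi$-transform of Theorem \ref{theorem wavelet trans}, arranged so that the analysis window is $\varphi^{(1)}$ and the output is measured with $\varphi^{(2)}$.

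\textbf{Step 1: building the frame pair.} Set $\varphi:=\widetilde{\varphi^{(1)}}$, i.e.\ $\varphi(x)=\overline{\varphi^{(1)}(-x)}$, so that $\tilde\varphi=\varphi^{(1)}$. Then $\F\varphi=\overline{\F\varphi^{(1)}}$, so $\varphi$ again satisfies (\ref{eq varphi supp}) and (\ref{eq varphi > 0}). Next construct $\psi\in\S(\rn)$ with $\operatorname{supp}\F\psi\subset[-\pi,\pi]^n\backslash\{0\}$ such that the pair $(\varphi,\psi)$ satisfies (\ref{eq varphi psi =1}). The standard choice is
\begin{equation*}
\F\psi(\xi)=\F\varphi(\xi)\Big/\sum_{j\in\mathbb Z}|\F\varphi((A^\ast)^j\xi)|^2 .
\end{equation*}
The denominator is $(A^\ast)$-dilation invariant. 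It is positive on the annulus containing $\operatorname{supp}\F\varphi$ by (\ref{eq varphi > 0}). By compactness and because $A^\ast$ is expansive, only boundedly many $j$ contribute near any point and the sum is bounded below on $\operatorname{supp}\F\varphi$. Hence $\psi$ is Schwartz and the sum in (\ref{eq varphi psi =1}) equals $1$. (If (\ref{eq varphi > 0}) only gives nonvanishing at each $\xi$, I would invoke it together with compactness of the fundamental annulus and continuity to get a uniform lower bound; this is the standard Frazier--Jawerth / Bownik argument.)

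\textbf{Step 2: applying Theorem \ref{theorem wavelet trans}.} Use it with $\varphi$ as analysis window and $\psi$ as synthesis window, but measure the output of $T_\psi$ with $\varphi^{(2)}$. Looking at the proof of boundedness of $T_\psi$, the function used to measure the output only needs to be Schwartz with Fourier support compact and away from the origin. Only two facts about it enter: the finite-overlap index $M$, and the bound $|\varphi^{(2)}_j*\psi_P(x)|\lesssim|P|^{-1/2}(1+\rho_A(A^i(x-x_P)))^{-L}$ from \cite{BH06}. So
\begin{equation*}
T_\psi:\dot a_{p,q}^{s,\tau}(W,A)\to\dot A_{p,q}^{s,\tau}(W,A,\varphi^{(2)})
\end{equation*}
is bounded, while $S_\varphi:\dot A_{p,q}^{s,\tau}(W,A,\varphi^{(1)})\to\dot a_{p,q}^{s,\tau}(W,A)$ is bounded. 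Since $T_\psi S_\varphi\vec f=\vec f$ in $(\S_\infty')^m$ by Lemma \ref{lemma identity}, this gives
\begin{equation*}
\|\vec f\|_{\dot A(\varphi^{(2)})}=\|T_\psi S_\varphi\vec f\|_{\dot A(\varphi^{(2)})}\lesssim\|S_\varphi\vec f\|_{\dot a}\lesssim\|\vec f\|_{\dot A(\varphi^{(1)})}.
\end{equation*}

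\textbf{Main obstacle.} I expect the hard part to be justifying that $T_\psi S_\varphi\vec f=\vec f$ can be used inside $\|\cdot\|_{\dot A(\varphi^{(2)})}$. Concretely, one must show
\begin{equation*}
\varphi^{(2)}_j*\Big(\sum_Q\vec s_Q\psi_Q\Big)=\sum_Q\vec s_Q\,\varphi^{(2)}_j*\psi_Q
\end{equation*}
pointwise. Here I would use Lemma \ref{lemma well defined inverse varphi transform}: it gives absolute convergence of $\sum_Q|\langle\psi_Q,\phi\rangle||\vec s_Q|$ for each $\phi\in\S_\infty$. Applying it with $\phi=\varphi^{(2)}_j(x-\cdot)$ (which lies in $\S_\infty$) justifies the interchange, and convergence in $\S_\infty'$ makes the polynomial ambiguity in Lemma \ref{lemma identity} harmless. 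The remaining parts of the argument are routine once this is in place.
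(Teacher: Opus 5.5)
Your proposal is correct and follows the paper's own route. You analyze with $S$ using the reflected first window, resynthesize with a dual window $T_\psi$ measured against the second window (boundedness as in Theorem \ref{theorem wavelet trans}, identity from Lemma \ref{lemma identity}), then swap roles and pass to the $\{A_Q\}_{Q\in\Q}$-spaces via Theorem \ref{theorem W app a app A_Q}. Your explicit construction of the dual window, your observation that the measuring window in the $T_\psi$ estimate need not coincide with the analysis window, and your justification of the term-by-term convolution only make explicit what the paper uses tacitly (the paper's $\widetilde{\psi^{(1)}}$ should read $\widetilde{\psi^{(2)}}$).
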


\begin{proof}
	Let $\varphi^{(1)}, \varphi^{(2)},  \psi^{(2)} \in \mathcal S(\rn)$ satisfy  (\ref{eq varphi supp}) and  (\ref{eq varphi > 0})  and assume both $\psi^{(2)}$ and
	$\varphi^{(2)}$ satisfy (\ref{eq varphi psi =1}). Then, from both Lemma \ref{lemma identity} and Theorem \ref{theorem wavelet trans}, we infer that, for
	any $\vec f \in \dot A_{p,q}^{s,\tau}  ( W, A, \varphi^{(2)}  )$,
	\begin{align*}
		\| \vec f\|_{\dot A_{p,q}^{s,\tau}  ( W, A, \varphi^{(1)}  )} & = \| T_{\widetilde { \psi ^{(1)}  }  }  \circ S_{ \widetilde {   \varphi^{(2)} } }  \vec f \|_{ \dot A_{p,q}^{s,\tau}  ( W, A, \varphi^{(1)}  ) }  \\
		& \lesssim  \| S_{\widetilde {   \varphi^{(2)} }}  \vec f \|_{ \dot a_{p,q}^{s,\tau}  ( W ) }  \lesssim 	\| \vec f\|_{\dot A_{p,q}^{s,\tau}  ( W, A, \varphi^{(2)}  )} .
	\end{align*}
By symmetry, we also obtain the reverse inequality. 
By Theorem \ref{theorem W app a app A_Q}, $\dot A_{p,q}^{s,\tau}  (  \{ A_Q\}_{ Q\in \Q}, A)$ is also independent of the choice of $\varphi$.
This finishes the proof of Theorem \ref{theorem independent varphi}.
\end{proof}

\begin{proposition} \label{prop F embed S prime}
		Let $A $ be a dilation. 
 Let $s\in \mathbb R$, $\tau \in [0,\infty) $, $p\in  (0,\infty)$,
$q \in (0,\infty]$.  
	Let  $W \in \A_p$. Then 
	$\dot A_{p,q}^{s,\tau}  ( W, A)  \subset ( \S_\infty ^\prime  (\rn) )^m $.
\end{proposition}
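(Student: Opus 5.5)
The statement is essentially about continuity: since elements of $\dot A_{p,q}^{s,\tau}(W,A)$ are by definition in $(\S_\infty'(\rn))^m$, the content is that the inclusion is continuous. That is, there exist $N\in\mathbb N$ and $C>0$ such that for every $\phi\in\S_\infty(\rn)$,
\begin{equation*}
|\langle \vec f,\phi\rangle|\le C\|\vec f\|_{\dot A_{p,q}^{s,\tau}(W,A)}\|\phi\|_{\S_N}.
\end{equation*}
I will prove this via the $\varphi$-transform. I fix $\varphi,\psi$ satisfying (\ref{eq varphi supp}), (\ref{eq varphi > 0}) and (\ref{eq varphi psi =1}); by Theorem \ref{theorem independent varphi} the space does not depend on the choice of $\varphi$.

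First, by Lemma \ref{lemma identity}, $\vec f=\sum_{Q\in\Q}\langle \vec f,\varphi_Q\rangle\psi_Q$ in $(\S_\infty'(\rn))^m$. Pairing with $\phi\in\S_\infty(\rn)$ gives
\begin{equation*}
\langle \vec f,\phi\rangle=\sum_{Q\in\Q}(S_\varphi\vec f)_Q\,\langle\psi_Q,\phi\rangle .
\end{equation*}
The convergence here is in the weak-$\ast$ sense, and it becomes absolute once the bound below is established.

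Second, I apply Lemma \ref{lemma well defined inverse varphi transform} with $\vec s=S_\varphi\vec f$ and $L$ chosen large. This yields
\begin{equation*}
\sum_{Q}|\langle\psi_Q,\phi\rangle||(S_\varphi\vec f)_Q|\lesssim \|S_\varphi\vec f\|_{\dot a_{p,q}^{s,\tau}(W,A)}\|\psi\|_{\S_N}\|\phi\|_{\S_N}.
\end{equation*}

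Third, Theorem \ref{theorem wavelet trans} gives $\|S_\varphi\vec f\|_{\dot a_{p,q}^{s,\tau}(W,A)}\lesssim\|\vec f\|_{\dot A_{p,q}^{s,\tau}(W,A,\tilde\varphi)}$. Independence of $\varphi$ (Theorem \ref{theorem independent varphi}) then replaces the right-hand side by $\|\vec f\|_{\dot A_{p,q}^{s,\tau}(W,A)}$. Since $\|\psi\|_{\S_N}$ is a fixed constant, combining the three steps gives the desired continuity estimate.

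The main obstacle is justifying the interchange of the pairing with the series in the first step. Lemma \ref{lemma identity} gives convergence only modulo polynomials, via partial sums plus correction polynomials $P_k$. I handle this by noting that $\langle P_k,\phi\rangle=0$ for $\phi\in\S_\infty(\rn)$, so the partial-sum pairings converge to $\langle\vec f,\phi\rangle$. The absolute summability from the second step then identifies this limit with the full series.

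A secondary point is that Lemma \ref{lemma well defined inverse varphi transform} is stated with $\varphi\in\S_\infty(\rn)$ playing the role of the test function. This causes no difficulty: its proof only uses Lemma \ref{lemma psi_Q varphi_P} with the unit cube $[0,1)^n$, so it applies verbatim to a general $\phi\in\S_\infty(\rn)$ written as $\phi_{[0,1)^n}$.
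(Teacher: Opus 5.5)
Your proposal is correct and follows essentially the same route as the paper. Like the paper, you write $\langle \vec f,\phi\rangle=\langle T_\psi\circ S_\varphi\vec f,\phi\rangle$ via Lemma \ref{lemma identity}, bound the resulting series with Lemma \ref{lemma well defined inverse varphi transform}, and control $\|S_\varphi\vec f\|_{\dot a_{p,q}^{s,\tau}(W,A)}$ by Theorem \ref{theorem wavelet trans}; your extra remarks (the correction polynomials pair to zero against $\S_\infty(\rn)$, and Theorem \ref{theorem independent varphi} absorbs the $\tilde\varphi$ in Theorem \ref{theorem wavelet trans}) supply details the paper leaves implicit.
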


\begin{proof}
	Let $\varphi, \psi \in \S (\rn)$  satisfy  (\ref{eq varphi supp}),  (\ref{eq varphi > 0}) and (\ref{eq varphi psi =1}). By  Lemmas \ref{lemma identity} and \ref{lemma well defined inverse varphi transform},
	  and Theorem \ref{theorem wavelet trans}, there exists  constant $N \in \mathbb N $  such that
	\begin{align*}
		| \langle \vec f , \phi \rangle |   = 	| \langle T_\psi \circ S_\varphi \vec f , \phi \rangle | \le 
		\sum_{Q\in \Q}   | (S_\varphi \vec f )_Q |         |\langle \psi_Q , \phi \rangle | \lesssim \| S_\varphi \vec f \|_{\dot a_{p,q}^{s,\tau} (W,A)}   \|\phi\|_{\S_N} \lesssim \| \vec f \|_{\dot A_{p,q}^{s,\tau} (W,A)}   \|\phi\|_{\S_N} .
	\end{align*}
Thus the proof is finished.
\end{proof}

Using Proposition \ref{prop F embed S prime} and an argument similar to that used in the proof of \cite[Proposition 2.3.1]{Gra142}, we obtain the following result; we omit the details.

\begin{proposition} \label{prop complete}
		Let $A $ be a dilation. 
 Let $s\in \mathbb R$, $\tau \in [0,\infty) $, $p\in  (0,\infty)$,
$q \in (0,\infty]$. 
	Let  $W \in \A_p$. 
	Then $\dot A_{p,q}^{s,\tau}  ( W, A) $ is a  complete quasi-norm space.
\end{proposition}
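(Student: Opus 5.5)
We prove completeness by transferring the problem to the sequence side through the $\varphi$-transform of Theorem \ref{theorem wavelet trans}, following the pattern of \cite[Proposition 2.3.1]{Gra142}. First, $\|\cdot\|_{\dot A_{p,q}^{s,\tau}(W,A)}$ is a quasi-norm. Homogeneity is immediate. The quasi-triangle inequality, with constant depending on $p,q$, follows from $|W^{1/p}(x)(\vec f+\vec g)|\le |W^{1/p}(x)\vec f|+|W^{1/p}(x)\vec g|$ and the quasi-triangle inequality of $L^p(Q)$ and $\ell^q$. Definiteness works as follows. If the norm vanishes, then $W^{1/p}\varphi_j*\vec f=0$ a.e. for every $j$. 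Since $W$ is invertible a.e., this gives $\varphi_j*\vec f=0$ for all $j$. Choosing $\psi$ with (\ref{eq varphi psi =1}), Lemma \ref{lemma identity} then gives $\vec f=0$ in $(\S_\infty'(\rn))^m$.

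Now let $\{\vec f_k\}$ be Cauchy in $\dot A_{p,q}^{s,\tau}(W,A)$. By Proposition \ref{prop F embed S prime}, and more precisely by its proof,
\begin{equation*}
|\langle \vec f_k-\vec f_l,\phi\rangle|\lesssim \|\vec f_k-\vec f_l\|_{\dot A_{p,q}^{s,\tau}(W,A)}\|\phi\|_{\S_N}
\end{equation*}
for $\phi\in\S_\infty(\rn)$. Hence $\langle\vec f_k,\phi\rangle$ is Cauchy for each $\phi$. We define $\vec f(\phi):=\lim_k\langle \vec f_k,\phi\rangle$. The uniform bound $\sup_k\|\vec f_k\|<\infty$ together with the displayed estimate shows that $|\vec f(\phi)|\lesssim \|\phi\|_{\S_N}$, so $\vec f\in(\S_\infty'(\rn))^m$ and $\vec f_k\to\vec f$ weak-$\ast$.

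Next we show $\vec f\in\dot A_{p,q}^{s,\tau}(W,A)$ and $\vec f_k\to\vec f$ in norm by Fatou's lemma. For each $j$ and $x$, $\varphi_j*(\vec f_k-\vec f_l)(x)=\langle \vec f_k-\vec f_l,\overline{\varphi_j(x-\cdot)}\rangle$, and $\overline{\varphi_j(x-\cdot)}\in\S_\infty(\rn)$. So $\varphi_j*\vec f_l(x)\to\varphi_j*\vec f(x)$ pointwise as $l\to\infty$, and the same then holds after applying $W^{1/p}(x)$. Fix $P\in\Q$. Apply Fatou's lemma in $L^p(P)$ and in the sum over $j\ge-\scale(P)$, or in the $\ell^q$ norm inside for the $F$ case. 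This gives
\begin{equation*}
|P|^{-\tau}\big\|\{|\det A|^{js}W^{1/p}\varphi_j*(\vec f_k-\vec f)\}\big\|_{L\dot A_{p,q}(\widehat P)}\le\liminf_{l\to\infty}\|\vec f_k-\vec f_l\|_{\dot A_{p,q}^{s,\tau}(W,A)}.
\end{equation*}
Taking the supremum over $P$ and then letting $k\to\infty$ shows $\|\vec f_k-\vec f\|\to0$. By the quasi-triangle inequality, $\vec f\in\dot A_{p,q}^{s,\tau}(W,A)$.

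The main point to check is the pointwise convergence of $\varphi_j*\vec f_l$. This needs the functionals to be tested against $\varphi_j(x-\cdot)$, which lies in $\S_\infty$ because $\F\varphi$ vanishes near the origin by (\ref{eq varphi supp}). The case $q=\infty$ is handled the same way, with the supremum in place of the sum.
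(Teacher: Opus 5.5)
Your proposal is correct and takes essentially the same route as the paper. The paper omits the details, citing only Proposition \ref{prop F embed S prime} and a Grafakos-type completeness argument: the continuous embedding into $(\S_\infty'(\rn))^m$ gives a weak-$\ast$ limit of the Cauchy sequence, and Fatou's lemma, applied to the pointwise convergence of $W^{1/p}\varphi_j*\vec f_l$ in the $j$-sum and in $L^p(P)$ for each $P\in\Q$, shows that this limit lies in $\dot A_{p,q}^{s,\tau}(W,A)$ and is the norm limit. Your write-up supplies exactly those omitted details, together with the quasi-norm axioms.
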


Finally, we have the following lifting property. 
Let $\sigma\in \mathbb R$, $A$ be a dilation and $A^\ast$ be its conjugate transpose.
The lifting operator $ \dot I _\sigma  $  is definded by 
\begin{equation} \label{eq def I sigma}
	\dot I_\sigma f (x) : = \F^{-1} ( \rho_{A^\ast} (\cdot) ^\sigma \F f  ) (x) .
\end{equation}

\begin{proposition}\label{prop shfit}
	Let $A $ be a dilation. 
 Let $s\in \mathbb R$, $\tau \in [0,\infty) $, $p\in  (0,\infty)$,
$q \in (0,\infty]$.  
Let  $W \in \A_p$. 
Let $\sigma\in \mathbb R$. 
Then for any $\vec f \in  (\S_\infty ^\prime (\rn) )^m$, 
\begin{equation*}
	\|\dot I_\sigma \vec f \|_{\dot A_{p,q}^{s - \sigma ,\tau}  ( W, A )  }  \approx \| \vec f \|_{\dot A_{p,q}^{s  ,\tau}  ( W, A )  } .
\end{equation*}
\end{proposition}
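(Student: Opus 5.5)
The plan is to reduce the estimate to the $\varphi$-independence in Theorem \ref{theorem independent varphi}, using the homogeneity of $\rho_{A^\ast}$. Since $\dot I_\sigma\dot I_{-\sigma}$ is the identity on $(\S_\infty^\prime(\rn))^m$, it suffices to prove the one-sided bound
\begin{equation*}
\|\dot I_\sigma \vec f \|_{\dot A_{p,q}^{s - \sigma ,\tau}  ( W, A )  } \lesssim \| \vec f \|_{\dot A_{p,q}^{s  ,\tau}  ( W, A )  }
\end{equation*}
for every $\sigma\in\mathbb R$. The reverse inequality then follows by applying this bound to $\dot I_\sigma\vec f$ with $-\sigma$ and $s-\sigma$ in place of $\sigma$ and $s$.

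First, fix $\varphi$ satisfying (\ref{eq varphi supp}) and (\ref{eq varphi > 0}), with $\F\varphi$ supported in a compact annulus away from the origin. Define $\phi$ by
\begin{equation*}
\F\phi(\xi):=\rho_{A^\ast}(\xi)^\sigma\F\varphi(\xi).
\end{equation*}
Here I take $\rho_{A^\ast}$ to be the smooth quasi-norm away from the origin (cf.\ \cite{LP94}). This choice is harmless because changing the quasi-norm only changes $\dot I_\sigma$ by a smooth multiplier, and that change can be absorbed in the same way. With this choice, $\F\phi$ is $C^\infty$ with the same support as $\F\varphi$, so $\phi\in\S_\infty(\rn)$ and $\phi$ satisfies (\ref{eq varphi supp}) and (\ref{eq varphi > 0}).

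Second, I use the homogeneity $\rho_{A^\ast}((A^\ast)^j\eta)=|\det A|^j\rho_{A^\ast}(\eta)$. Since $\F\varphi_j(\xi)=\F\varphi((A^\ast)^{-j}\xi)$, this gives
\begin{equation*}
\F(\varphi_j*\dot I_\sigma\vec f)(\xi)=\rho_{A^\ast}(\xi)^\sigma\F\varphi((A^\ast)^{-j}\xi)\F\vec f(\xi)=|\det A|^{j\sigma}\F\phi((A^\ast)^{-j}\xi)\F\vec f(\xi).
\end{equation*}
Hence $\varphi_j*\dot I_\sigma\vec f=|\det A|^{j\sigma}\phi_j*\vec f$, which is an identity in $\S_\infty^\prime$. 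Both sides are functions of slow growth, since their Fourier transforms have compact support. Consequently
\begin{equation*}
|\det A|^{j(s-\sigma)}|W^{1/p}\varphi_j*\dot I_\sigma\vec f|=|\det A|^{js}|W^{1/p}\phi_j*\vec f|,
\end{equation*}
and therefore $\|\dot I_\sigma\vec f\|_{\dot A^{s-\sigma,\tau}_{p,q}(W,A,\varphi)}=\|\vec f\|_{\dot A^{s,\tau}_{p,q}(W,A,\phi)}$.

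Third, Theorem \ref{theorem independent varphi} gives $\|\vec f\|_{\dot A^{s,\tau}_{p,q}(W,A,\phi)}\approx\|\vec f\|_{\dot A^{s,\tau}_{p,q}(W,A,\varphi)}$, which yields the claimed bound. The one subtle point is the requirement in (\ref{eq varphi supp}) that the support lie inside $[-\pi,\pi]^n$. To meet it, I shrink the support of $\F\varphi$ from the start, which is allowed by the independence in Theorem \ref{theorem independent varphi}.

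The main obstacle is this application of Theorem \ref{theorem independent varphi}. Its proof rests on Theorem \ref{theorem wavelet trans}, and the Calderón-type pairing (\ref{eq varphi psi =1}) is needed only for the reference function $\varphi^{(2)}$ and $\psi^{(2)}$, not for $\phi$. I will therefore check that $\phi$ enters only as an analyzing function there, that is, as $\varphi^{(1)}$. If the estimates in that proof turn out to depend on $\phi$ beyond (\ref{eq varphi supp}) and (\ref{eq varphi > 0}), I would instead apply Theorem \ref{theorem wavelet trans} directly. In that route I would expand $\vec f=T_\psi S_\varphi\vec f$, note that $\dot I_\sigma\psi_Q=|Q|^{-\sigma}(\psi^{\sigma})_Q$ with $\F\psi^{\sigma}=\rho_{A^\ast}^\sigma\F\psi$, and use the boundedness of $T_{\psi^\sigma}$ between the sequence space with smoothness $s$ and the function space with smoothness $s-\sigma$.
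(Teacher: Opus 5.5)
Your proposal is correct and follows essentially the same route as the paper. Both arguments define $\phi$ via $\F\phi=\rho_{A^\ast}^\sigma\F\varphi$, use the homogeneity of $\rho_{A^\ast}$ to get $\varphi_j*\dot I_\sigma\vec f=|\det A|^{j\sigma}\phi_j*\vec f$, apply Theorem~\ref{theorem independent varphi}, and obtain the reverse bound from $\dot I_{-\sigma}\dot I_\sigma\vec f=\vec f$.

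Two points in your write-up clarify steps the paper passes over quickly. First, you choose a quasi-norm that is smooth away from the origin, so that $\phi$ really lies in $\S(\rn)$; the paper leaves $\rho_{A^\ast}$ unspecified, and the default quasi-norm in (\ref{eq rho_A baisic}) is piecewise constant, which would not give a Schwartz function. Second, you verify (\ref{eq varphi > 0}) for $\phi$ directly from the fact that $\F\phi$ and $\F\varphi$ have the same zero set, which is simpler than the paper's lower bound.
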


\begin{proof}
	Let $A$ be a dilation and $A^\ast$ be its conjugate transpose. Since $\sigma (A ^\ast)  =   \overline { \sigma ( A) } $.  $A^\ast$  is also a dilation. 
	Let $\psi = \F ^{-1}  ( \rho_{A^\ast} (\cdot)^\sigma \F \varphi ) $.
	Let 
	$\psi_j (x) = |\det A|^{j} \psi( A ^j x) $. Then  $\F \psi_j (\xi) = \F \psi ( (A^\ast)^{-j}  \xi)$ and supp $\F \psi_j \subset (A^\ast)^{j} [-\pi,\pi]^n  $. This shows $\psi$ also satisfies (\ref{eq varphi supp}).  
	By Lemma \ref{lemma rho A and |x|}, for any fixed $\xi \in \rn \backslash\{0\} $, there exists $ C >0$ 
	\begin{align*}
		| \F \psi (  (A^\ast) ^j \xi ) | &  =  | \rho_{A^*} ( (A^\ast) ^j \xi )^\sigma    \F \varphi (  (A^\ast) ^j \xi ) | = |\det A^*|^{j\sigma} | \rho_{A^*} ( \xi )^\sigma    \F \varphi (  (A^\ast) ^j \xi ) | \\
		&  \ge C  |\xi|^{c\sigma} |\det A^*|^{j\sigma} |    \F \varphi (  (A^\ast) ^j \xi ) | .
	\end{align*}
	Thus 
	\begin{equation*}  
		\sup_{j \in \mathbb Z}  | \F \psi (  (A^\ast) ^j \xi ) | \ge C
		\sup_{j \in \mathbb Z}  | \F \varphi (  (A^\ast) ^j \xi ) | >0.
	\end{equation*}
Thus $\psi$ satisfies  (\ref{eq varphi > 0}).	
	Note that
	\begin{align*}
		\psi_j * \vec f  & = \F^{-1}  (\F \psi_j \F \vec f  ) = \F^{-1}  (\F \psi ( (A^\ast)^{-j}  \cdot ) \F \vec f  )
		 \\ 
		 &=        \F^{-1}  ( \rho_{A^\ast}  ( (A^\ast)^{-j} \cdot   ) ^\sigma   \F \varphi ( (A^\ast)^{-j}  \cdot ) \F \vec f  ) \\
		 &=  |\det A^\ast |^{-j \sigma }  \F^{-1}  ( \rho_{A^\ast}  (  \cdot   ) ^\sigma   \F \varphi ( (A^\ast)^{-j}  \cdot ) \F \vec f  ) \\
		 & = |\det A  |^{-j \sigma }  \F^{-1}  ( \rho_{A^\ast}  (  \cdot   ) ^\sigma   \F \varphi_j (  \F \vec f  ) .
	\end{align*}
From this, we obtain
\begin{equation*}
	\| \dot I_\sigma \vec f \|_{ \dot A_{p,q}^{s - \sigma ,\tau}  ( W, A, \varphi) } \lesssim 	\|  \vec f \|_{ \dot A_{p,q}^{s  ,\tau}  ( W, A , \psi ) } .
\end{equation*}
 By Theorem \ref{theorem independent varphi}, we obtain
\begin{equation*}
		\| \dot I_\sigma \vec f \|_{ \dot A_{p,q}^{s - \sigma ,\tau}  ( W, A, \varphi) } \lesssim 	\| \vec f \|_{ \dot A_{p,q}^{s  ,\tau}  ( W, A, \varphi) } .
\end{equation*}
On the other hand, we obtain
\begin{equation*}
	\|  \vec f \|_{ \dot A_{p,q}^{s  ,\tau}  ( W, A) }  = 	\| \dot I_{ - \sigma} \dot I_\sigma  \vec f \|_{ \dot A_{p,q}^{s  ,\tau}  ( W, A) }   \lesssim 	\|  \dot I_\sigma \vec f \|_{ \dot A_{p,q}^{s -\sigma ,\tau}  ( W, A) } .
\end{equation*}
This finishes the proof.
\end{proof}

\section{Matrix-weighted anisotropic Triebel-Lizorkin spaces for $p=\infty$} \label{sec p infty}

The Triebel-Lizorkin spaces for $p=\infty$ were introduced in \cite{FJ90} and anisotropic Triebel-Lizorkin spaces for $p=\infty$ with $\rho_A$-doubling measures were introduced in \cite{Bo07}. In this section, we introduce matrix-weighted anisotropic  Triebel-Lizorkin spaces $\dot F _{ \infty ,q } ^{s} (\{A_Q\}_{Q\in \Q},A  ) $ for the endpoint exponent $p=\infty$ and obtain some results corresponding to Section \ref{sec BF spaces}.

\begin{definition}
		Let $A $ be a dilation. 
	Let $s\in \mathbb R$, $p\in (0,\infty)$,
	$q \in (0,\infty]$.  
	Let  $W \in \A_p$  and $\{ A_Q\}_{Q\in \Q}$ be a sequence of reducing operators of order $p$ for $W$. 
		Let $\varphi\in \mathcal S (\rn)$ satisfy (\ref{eq varphi supp}) and (\ref{eq varphi > 0}).
	The homogeneous averaging matrix-weighted anisotropic Triebel-Lizorkin space $\dot F _{ \infty ,q } ^{s} (\{A_Q\}_{Q\in \Q},A  ) $  is the set of all $\vec f\in (\S_\infty ^\prime (\rn) )^m$  such that
		\begin{equation}\label{eq def F A_Q A p infty}
		\|\vec f\|_{\dot F _{ \infty ,q } ^{s} (\{A_Q\}_{Q\in \Q},A  )  } :=  \sup_{P \in \Q} \left( \fint_P   \sum_{j = - \scale (P)} ^\infty  |\det A|^{jsq}  |\mathbb A_j \varphi_j *\vec f (x)| ^q \d x   \right) ^{1/q}<\infty ,
	\end{equation}
where $\mathbb A_j$ is  the same as in (\ref{eq def A_j}).

		The homogeneous averaging matrix-weighted anisotropic Triebel-Lizorkin space $\dot f _{ \infty, q} ^{s} (\{A_Q\}_{Q\in \Q},A  ) $  is the set of all $\vec t = \{ \vec t_Q\}_{ Q\in \Q} \subset (\mathbb C)^m$  such that
	\begin{equation} \label{eq def f A_Q A p infty}
		\|\vec t\|_{\dot f _{ \infty, q} ^{s} (\{A_Q\}_{Q\in \Q},A  ) } :=  \sup_{P \in \Q} \left( \fint_P   \sum_{  Q\in \Q, \scale (Q) \le  \scale (P)}   |Q|^{  (- s -1/2 )q }  |A_Q \vec t_Q| ^q  \chi_Q (x) \d x   \right) ^{1/q}<\infty .
	\end{equation}

	Naturally, if $q=\infty$, then (\ref{eq def F A_Q A p infty})  and (\ref{eq def f A_Q A p infty}) are interpreted as
	\begin{equation*}
	\|\vec f\|_{\dot F _{ \infty , \infty} ^{s} (\{A_Q\}_{Q\in \Q},A  )  } :=  \sup_{j\in \mathbb Z}   |\det A|^{js }  \|\mathbb A_j \varphi_j *\vec f  \|_{L^\infty}  , \quad 
	  \|\vec t\|_{\dot f _{ \infty, \infty } ^{s} (\{A_Q\}_{Q\in \Q},A  ) } :=  \sup_{P \in \Q}  |Q|^{  - s -1/2 } |A_Q \vec t_Q| .
\end{equation*}	
\end{definition}

\begin{remark}
	(i)
	Let  $ A_Q \equiv 1$ for $Q\in \Q$. Then we denote $ \dot f_{\infty,q}^{s} (A) := \dot f _{ \infty, q} ^{s} (\{A_Q\}_{Q\in \Q},A  )  $ for brevity.
	
	(ii) By definition, when $p=q$ and $\tau = 1/q $, we have
	\begin{equation*}
		\dot F _{ \infty ,q } ^{s} (\{A_Q\}_{Q\in \Q},A  )  = \dot F_{q,q}^{s,1/q}  ( \{ A_Q\}_{ Q\in \Q}, A) \; \operatorname{and} \; \dot f _{ \infty, q} ^{s} (\{A_Q\}_{Q\in \Q},A  )  = \dot f_{q,q}^{s,1/q}  ( \{ A_Q\}_{ Q\in \Q}, A) 
	\end{equation*}
where $ \dot F_{q,q}^{s,1/q}  ( \{ A_Q\}_{ Q\in \Q}, A)$ and $\dot f_{q,q}^{s,1/q}  ( \{ A_Q\}_{ Q\in \Q}, A) $  are the Triebel-Lizorkin-type spaces in Definition \ref{def 8 func spaces}.
	
	(iii) To emphasize the dependence on $\varphi$, we will use the notation $\dot F _{ \infty ,q } ^{s} (\{A_Q\}_{Q\in \Q},A ,\varphi )$ for (\ref{eq def F A_Q A p infty}). Later we will show that this definition is independent of $\varphi$.	
\end{remark}

\subsection{A characterization of the homogeneous  anisotropic Triebel-Lizorkin space}
The following characterization is the main result of this subsection, which will be used to show the relation between $	\dot F _{ \infty ,q } ^{s} (\{A_Q\}_{Q\in \Q},A  ) $  and $ \dot F_{q,q}^{s,1/q}  ( \{ A_Q\}_{ Q\in \Q}, A)$ when $p \neq q$.

\begin{proposition}  \label{prop seq p infty char}
	Let $A$ be a dilation. Let $s\in \mathbb R$, $q \in (0,\infty]$.  For each $ u \in (0,\infty) $, $r = \{ r_Q\}_{Q\in \Q} $, 
	\begin{equation}  \label{eq f s q infty equ}
		\sup_{P \in \Q} \left( \fint_P   \left(   \sum_{ \scale(Q) \le \scale (P) }   |Q|^{ - (s -1/2 )q }  |r_Q| ^q   \chi_Q (x) \right)^{u/q}  \d x   \right) ^{1/u}  \approx 	\| r\|_{  \dot f_{\infty,q}^{s} (A)  } .
	\end{equation}
\end{proposition}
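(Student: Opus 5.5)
The plan is to compare the left-hand side of (\ref{eq f s q infty equ}) for two different exponents. The case $u=q$ is the definition of $\|r\|_{\dot f_{\infty,q}^{s}(A)}$, up to the normalization $|Q|^{-(s\pm1/2)q}$ of the coefficients, which plays no role in the argument and which I simply absorb into $r_Q$.

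Fix $P\in\Q$ and put
\begin{equation*}
G_P(x):=\Big(\sum_{\scale(Q)\le\scale(P)}|Q|^{-(s-1/2)q}|r_Q|^q\chi_Q(x)\Big)^{1/q},\qquad N_u:=\sup_{P\in\Q}\Big(\fint_P G_P^u\Big)^{1/u}.
\end{equation*}
Since $u\mapsto(\fint_P G_P^u)^{1/u}$ is nondecreasing by H\"older's inequality, $N_u\le N_v$ whenever $u<v$. The whole statement therefore reduces to a reverse, John--Nirenberg type inequality: $N_v\lesssim N_u$ for all $0<u<v<\infty$. Applying it with $(u,v)=(u,q)$ when $u<q$, and with $(u,v)=(q,u)$ when $u>q$, gives (\ref{eq f s q infty equ}). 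The case $q=\infty$ is handled by the same scheme with sums replaced by suprema.

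To prove $N_v\lesssim N_u$, I will establish an exponential distribution estimate. There are constants $C_0$ and $\theta\in(0,1)$ such that, for all $P$ and all $k\in\mathbb N$,
\begin{equation*}
|\{x\in P: G_P(x)>C_0kN_u\}|\le\theta^k|P|.
\end{equation*}
Integrating this against $v\lambda^{v-1}\,\d\lambda$ then yields $\fint_PG_P^v\lesssim N_u^v$. The estimate is proved by a stopping time (Calder\'on--Zygmund type) argument in the tree of dilated cubes ordered by $\preceq$.

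First I fix $P$ and consider the cubes $Q$ with $\scale(Q)\le\scale(P)$ and $|Q\cap P|>0$. Among them I select the maximal ones, $\{R_i\}$, with respect to $\preceq$, for which the partial square function built from the cubes of scale strictly between $\scale(R_i)$ and $\scale(P)$ that contain points of $R_i$ exceeds $(C_0N_u)^q$. Being maximal, the $R_i$ are pairwise disjoint. On $P\setminus\bigcup_iR_i$ the stopping rule gives $G_P\lesssim C_0N_u$. By Chebyshev's inequality applied to the $u$-average over the enlarged set $P^\ast=\bigcup_{\ell}Q_\ell$ of Lemma \ref{lemma use covered Q}, together with Lemma \ref{lemma stacked cube}, we get
\begin{equation*}
\sum_i|R_i|\le C_0^{-u}\,C\,|P^\ast|N_u^u/N_u^u\le\tfrac12|P|
\end{equation*}
once $C_0$ is large. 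Inside each $R_i$ the part of $G_P$ coming from scales above $\scale(R_i)$ is at most about $C_0N_u$, because the selection was made at the maximal stage, and the remaining part is exactly $G_{R_i}$. Iterating inside each $R_i$ then gives the geometric decay with $\theta=1/2$.

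The main obstacle is the non-nestedness of anisotropic dilated cubes. A cube $Q\preceq P$ need not lie inside $P$, and two cubes of consecutive scales may overlap without containment, so the "upper part" of the sum at $R_i$ is not literally a function that is constant on $R_i$. I will handle this in two steps. First, I bound the contribution of cubes of a larger scale meeting $R_i$ using Lemma \ref{lemma basic cube ball cover}(ii): at each scale only $C_\Q$ such cubes meet a given cube. Second, I run all averages over the $\eta$-neighbourhoods $P^\ast$ of Lemma \ref{lemma stacked cube}, accepting a bounded overlap constant at each generation. This is exactly where the supremum over all $P\in\Q$ in $N_u$ (rather than a single fixed cube) is used, so that averages over each neighbour $Q_\ell$ are also controlled by $N_u$.
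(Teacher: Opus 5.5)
\textbf{Verdict: there is a genuine gap.} Reducing to $N_v\lesssim N_u$ via an exponential distribution bound is a legitimate strategy in a nested dyadic tree. Here, however, the stopping-time step fails, and the remedy in your last paragraph does not repair it.

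In a tree, two facts drive the iteration:
\begin{itemize}
\item[(a)] each stopping cube $R_i$ lies in $\{G_P>c\lambda\}$, so Chebyshev controls $\sum_i|R_i|$;
\item[(b)] by maximality, the contribution to $G_P^q$ from scales above $\scale(R_i)$ is at most about $\lambda^q$ on all of $R_i$.
\end{itemize}
Both rest on that upper contribution being constant on $R_i$. This fails for the non-nested family $\Q$.

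With your criterion, a sum over intermediate-scale cubes that merely \emph{meet} $R_i$, you can get (b) on $R_i$ itself. Every larger cube through a point of $R_i$ meets an unselected cube of scale $\scale(R_i)+1$ overlapping $R_i$. But (a) is not justified, because a larger cube meeting $R_i$ may cover only a small part of it. If you select instead by a pointwise lower bound on $R_i$, you recover (a) but lose (b), since non-selection of the next-scale cube then only controls an infimum over that cube.

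More seriously, your claim that $G_P\lesssim C_0N_u$ on $P\setminus\bigcup_iR_i$ is false. A selected but non-maximal cube $R\preceq R_i$ need not lie in $R_i$. Here is a counterexample with $n=1$ and $A=5/2$.
\begin{itemize}
\item The point $b=5^{T+M}2^{-T-1}$ is an endpoint of cubes of every scale $T+1,\dots,T+M$, and it is the midpoint of a cube $C$ of scale $T$.
\item Put unit coefficients on the cubes $[b,b+(5/2)^J)$ for $T<J\le T+M$.
\item Also put unit coefficients on a long chain of small cubes containing a point $x\in C$ with $x<b$.
\end{itemize}
Then $N_u$ is bounded independently of $M$ and of the chain length. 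Every selected cube containing $x$ is stacked below some $[b,b+(5/2)^{J^\ast})$, so $x\notin\bigcup_iR_i$. Yet $G_P(x)\gg\lambda$.

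Passing to the $\eta$-neighbourhoods $R_i^\ast$ does not help. On $R_i^\ast\setminus R_i$ the upper contribution is not controlled by the maximality of $R_i$. Lemmas \ref{lemma basic cube ball cover}(ii) and \ref{lemma stacked cube} only bound the number of cubes at a single scale, whereas the loss accumulates over all scales between $\scale(R_i)$ and $\scale(P)$. A minor point: for $q<1$ only $G_P^q$, not $G_P$, splits additively, so your distribution bound should be stated for $G_P^q$.

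\textbf{How the paper avoids this.} The paper uses no tree structure at all.
\begin{itemize}
\item It takes the $\frac14$-median $m^{s,q}_Q(r)$ of $G^{s,q}_Q(r)$ on $Q$. Chebyshev then gives $m^{s,q}_Q(r)\le 4^{1/u}N_u$ for every $u$.
\item The pointwise stopping scale $\nu(x)$ yields sets $E_Q\subset Q$ with $|E_Q|\ge\frac34|Q|$ and $\sum_Q(|Q|^{-s-1/2}|r_Q|)^q\chi_{E_Q}\le m^{s,q}(r)^q$ everywhere (Proposition \ref{prop f approx Lp}).
\item One returns from $\chi_{E_Q}$ to $\chi_Q$ via Lemmas \ref{lemma p = infty EQ} and \ref{lemma sparse char dot f}.
\end{itemize}
Everything is defined along the single column of cubes containing each $x$, so non-nestedness costs only a bounded enlargement of $P$. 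This pointwise construction is the idea your argument is missing.
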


We use the idea of  \cite[Corollary 5.7]{FJ90} to prove  Proposition \ref{prop seq p infty char}. We first need some technical lemmas. 
The following lemma is immediately obtained from the definition of $\dot f _{ \infty, q} ^{s} (\{A_Q\}_{Q\in \Q},A  )$
\begin{lemma} \label{lemma p = infty EQ}
	Let $A $ be a dilation. 
	Let $s\in \mathbb R$, $p\in (0,\infty)$,
	$q \in (0,\infty]$.  
	Let  $W \in \A_p$ and $\{ A_Q\}_{Q\in \Q}$ be a sequence of reducing operators of order $p$ for $W$. 	
	Let $\epsilon > 0$. Suppose that for each dilated cube $Q$ there is a set $E_Q \subset Q$ with $|E_Q| /|Q| >\epsilon$. Then for $r = \{ r_Q\} _{Q\in \Q}  \subset \mathbb C $,
	\begin{align*}
		& \sup_{P \in \Q} \left( \fint_P   \sum_{ \scale (Q) \le  \scale (P)}   |Q|^{ - (s -1/2 )q }  |\mathbb A_j \vec t_Q| ^q  \chi_{E_Q} (x) \d x   \right) ^{1/q} \le 	\|\vec t\|_{\dot f _{ \infty, q} ^{s} (\{A_Q\}_{Q\in \Q},A  ) } \\
		& \le \frac{1}{\epsilon}\sup_{P \in \Q} \left( \fint_P   \sum_{ \scale (Q) \le  \scale (P)}   |Q|^{ - (s -1/2 )q }  |\mathbb A_j \vec t_Q| ^q  \chi_{E_Q} (x) \d x   \right) ^{1/q} .
	\end{align*}
\end{lemma}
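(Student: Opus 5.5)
The left inequality is immediate. Since $E_Q\subset Q$, we have $\chi_{E_Q}\le\chi_Q$ pointwise for every $Q$. Hence, for each fixed $P\in\Q$, the integrand on the left is dominated by the integrand in (\ref{eq def f A_Q A p infty}). Integrating over $P$, taking the $1/q$ power and then the supremum over $P$ gives the first inequality; the ordering is preserved because all quantities are nonnegative. In what follows $\mathbb A_j$ is read as $A_Q$ on $Q\in\Q_j$, which is how the norm in (\ref{eq def f A_Q A p infty}) is written.

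For the right inequality I fix $P$ and set $c_Q:=|Q|^{-(s-1/2)q}|A_Q\vec t_Q|^q\ge 0$. Both sides are sums of nonnegative terms, so Tonelli's theorem lets me interchange sum and integral:
\begin{equation*}
\int_P\sum_{\scale(Q)\le\scale(P)}c_Q\chi_Q(x)\,\d x=\sum_{\scale(Q)\le\scale(P)}c_Q\,|Q\cap P| .
\end{equation*}
The right-hand side is the same sum with $|E_Q\cap P|$ in place of $|Q\cap P|$. The whole proof therefore reduces to comparing $|Q\cap P|$ with $|E_Q\cap P|$ term by term. For cubes with $Q\subset P$ this is exactly the hypothesis: $|Q\cap P|=|Q|<\epsilon^{-1}|E_Q|=\epsilon^{-1}|E_Q\cap P|$. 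This gives
\begin{equation*}
\fint_P\sum_{Q\subset P}c_Q\chi_Q\le \frac1\epsilon\fint_P\sum_{Q\subset P} c_Q\chi_{E_Q},
\end{equation*}
and then the upper bound follows after taking the $1/q$ power and the supremum over $P$. Here I use $\epsilon^{-1/q}\le\epsilon^{-1}$ when $q\ge1$, and I absorb the power when $q<1$ by the reading of the constant in the statement.

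The main obstacle is that in the anisotropic setting a cube $Q$ with $\scale(Q)\le\scale(P)$ need not lie inside $P$, since dilated cubes are not nested. For a cube $Q$ that straddles $\partial P$, the quantity $|Q\cap P|$ is not controlled by $|E_Q\cap P|$. To handle these cubes I would use Lemma \ref{lemma stacked cube} and Lemma \ref{lemma use covered Q}. Such a $Q$ is stacked below one of the boundedly many neighbours $P(k)$, $|k|<\eta$, of the same scale as $P$. Its full contribution $c_Q|Q|$ can then be charged to $\int_{P(k)}c_Q\chi_{E_Q}$, which is in turn controlled by the supremum on the right taken over $P(k)$.

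When $q=\infty$ the norm is $\sup_Q|Q|^{-s-1/2}|A_Q\vec t_Q|$. Each $E_Q$ has positive measure, so this supremum is attained on $E_Q$, and the two sides coincide with constant $1$.
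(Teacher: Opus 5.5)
Your proof is essentially right, and it is more careful than the paper's. The paper gives no argument beyond calling the lemma ``immediately obtained from the definition''. That amounts to your term-by-term comparison $|Q\cap P|\le |Q|<\epsilon^{-1}|E_Q|=\epsilon^{-1}|E_Q\cap P|$. This comparison is valid only when every $Q$ with $\scale(Q)\le\scale(P)$ that meets $P$ lies inside $P$, as for the dyadic cubes of Frazier--Jawerth. You correctly observe that dilated cubes are not nested for a general expansive $A$. Already for $A=3/2$ on $\mathbb R$, the cube $[2/3,4/3)$ of scale $-1$ straddles the right end of $[0,1)$. So $E_Q$ can lie entirely outside $P$, and some bounded-geometry input such as Lemma~\ref{lemma stacked cube} is genuinely needed. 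Your route gives a proof that is actually valid in the anisotropic setting, at the price of a geometric constant.

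Two points need tightening.

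First, charging $c_Q|Q|$ to $\int_{P(k)}c_Q\chi_{E_Q}$ for a \emph{single} neighbour fails as stated. The set $E_Q$ may be split among several $P(k)$, each receiving only a small part; a pigeonhole choice of $k$ would rescue it at the cost of an extra factor. It is cleaner to distribute over all neighbours. If $\scale(Q)\le\scale(P)$ and $|Q\cap P|>0$, then $Q=P$ or $Q\preceq P$ via a one-step chain. So Lemma~\ref{lemma stacked cube} gives $Q\subset\bigcup_{|k-k_0|<\eta}P(k)$, hence $c_Q|Q\cap P|<\epsilon^{-1}\sum_{|k-k_0|<\eta}c_Q|E_Q\cap P(k)|$. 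Summing over $Q$ and using $\scale(P(k))=\scale(P)$ gives
\[
\frac{1}{|P|}\int_P\sum_{\scale(Q)\le\scale(P)}c_Q\chi_Q\,\d x\le\frac{N_\eta}{\epsilon}\sup_{P'\in\Q}\frac{1}{|P'|}\int_{P'}\sum_{\scale(Q)\le\scale(P')}c_Q\chi_{E_Q}\,\d x ,
\]
with $N_\eta:=\#\{k\in\mathbb Z^n:|k-k_0|<\eta\}$. Since $P=P(k_0)$ is one of the neighbours, this treats interior and straddling cubes at once, so your split is unnecessary.

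Second, the resulting constant is $(N_\eta/\epsilon)^{1/q}$. The stated $1/\epsilon$ cannot be recovered by ``reading'' the statement generously: for $q<1$ it is false even in the nested case. Take one nonzero coefficient $c_{Q_0}$ and $|E_{Q_0}|=\theta|Q_0|$ with $\theta\in(\epsilon,\epsilon^q)$. Then the middle term equals $c_{Q_0}^{1/q}$, but the right-hand side equals $\epsilon^{-1}(c_{Q_0}\theta)^{1/q}<c_{Q_0}^{1/q}$. Propositions~\ref{prop f approx Lp} and~\ref{prop p = infty char E_Q} only use the lemma up to constants. So you should state it with a constant depending on $\epsilon$, $q$, $n$ and $A$.

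Your lower bound and the case $q=\infty$ are fine.
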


We now consider an operator $m^{s, q} $ on sequences.
First, for a sequence $r = \{r_Q\}_{Q \in \Q}$, we define
\begin{equation*}
	G^{s , q}( r ) (x) : = \left( \sum_{P \in \Q}    ( |P|^{-s -1/2 }  | r_P| )^q \chi_P (x)  \right)^{1/q}
\end{equation*}

and
\begin{equation*}
	G^{s , q}_Q ( r ) (x) : = \left( \sum_{P \in \Q, \scale (P) \le \scale (Q), P \cap Q \neq \emptyset }    ( |P|^{-s -1/2}  | r_P| )^q \chi_P (x)  \right)^{1/q} .
\end{equation*}

We let $m^{s, q}_Q (r) $ denote the ``$\frac{1}{4}$-median'' of $	G^{s , q}_Q ( r ) $ on $Q$, i.e.,
\begin{equation} \label{eq def m alpha q Q}
	m^{s, q}_Q (r) : = \inf\{ \epsilon : |\{ x \in Q : 	G^{s , q}_Q ( r ) (x)> \epsilon \}| < |Q|/4 \}.
\end{equation}
We also set
\begin{equation*}
	m^{s, q} (r)  (x)  :  =\sup_Q  m^{s, q}_Q (r) \chi_Q (x) .
\end{equation*}

\begin{proposition}\label{prop f approx Lp}
	Let $A$ be a dilation. Let $s\in \mathbb R$, 
	$p, q \in (0,\infty]$.  
	Then 
	for $r = \{ r_Q\} _{Q\in \Q}  \subset \mathbb C $,
	\begin{equation*}
		\| r\|_{  \dot f_{p,q}^{s} (A)  } \approx  \| m^{s, q} (r) \|_{L^p} .
	\end{equation*}
\end{proposition}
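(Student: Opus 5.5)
We prove the two inequalities separately, following the Frazier--Jawerth argument for the isotropic version of this statement. Here $\| r\|_{\dot f^s_{p,q}(A)} = \|G^{s,q}(r)\|_{L^p}$, since $|\det A|^{j(s+1/2)} = |Q|^{-s-1/2}$ for $Q\in\Q_j$.

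\emph{Upper bound for $m^{s,q}(r)$.} Since $G^{s,q}_Q(r)\le G^{s,q}(r)$ pointwise, the definition (\ref{eq def m alpha q Q}) gives
\[
|\{x\in Q: G^{s,q}(r)(x) \ge m^{s,q}_Q(r)\}| \ge |Q|/4 .
\]
Fix $x\in Q$ and $\beta\in(0,p)$. Applying Chebyshev's inequality to the average of $G^{s,q}(r)^\beta$ over $Q$ gives
\[
m^{s,q}_Q(r)\lesssim \left(\fint_Q G^{s,q}(r)^\beta\right)^{1/\beta}\lesssim \left(\M_{\rho_A}(G^{s,q}(r)^\beta)(x)\right)^{1/\beta},
\]
where the last step uses Lemma \ref{lemma basic cube ball cover}(i) to pass from cubes to $\rho_A$-balls. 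Taking the supremum over $Q\ni x$ bounds $m^{s,q}(r)(x)$ by the same quantity. Lemma \ref{lemma M weak 1,1 strong p,p} applied on $L^{p/\beta}$ with $p/\beta>1$ then yields $\|m^{s,q}(r)\|_{L^p}\lesssim \|G^{s,q}(r)\|_{L^p}$. For $p=\infty$ we simply use $m^{s,q}_Q(r)\le \|G^{s,q}(r)\|_{L^\infty}$.

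\emph{Lower bound for $m^{s,q}(r)$.} This is the main step. We aim for the pointwise domination
\[
G^{s,q}(r)\lesssim \left(\M_{\rho_A}\bigl(m^{s,q}(r)^\beta\bigr)\right)^{1/\beta}.
\]
For each cube $P$ set
\[
E_P:=\{x\in P: G^{s,q}_P(r)(x)\le m^{s,q}_P(r)\},
\]
so $|E_P|\ge \tfrac34|P|$. Since $G^{s,q}_P(r)(x)\ge |P|^{-s-1/2}|r_P|$ for $x\in P$, we get $|P|^{-s-1/2}|r_P|\le m^{s,q}_P(r)\le \inf_P m^{s,q}(r)$. Lemma \ref{lemma sparse char dot f} (with $\tau=0$) lets us replace $\chi_P$ by $\chi_{E_P}$ in $G^{s,q}(r)$ at the cost of a constant, so it suffices to control
\[
\Bigl\|\bigl(\textstyle\sum_P(|P|^{-s-1/2}|r_P|)^q\chi_{E_P}\bigr)^{1/q}\Bigr\|_{L^p}.
\]
The key is then to organise cubes by stopping time, as in \cite[Corollary 5.7]{FJ90}. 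Decompose
\[
\Omega_k:=\{m^{s,q}(r)>2^k\},
\]
and let $\{Q\}$ be the cubes maximal with respect to $\preceq$ among those with $m^{s,q}_Q(r)>2^k$; the Remark after the definition of $\preceq$ and Lemma \ref{lemma stacked cube} control their overlap. On $E_Q$ the contribution of all cubes stacked below $Q$ is at most $m^{s,q}_Q(r)$, and this is what gives $G\lesssim m$ on a set of proportional measure. The non-nested anisotropic cubes mean that the sets $E_Q$ must be intersected over the at most $C_\Q$ neighbours from Lemma \ref{lemma use covered Q}, and the median level $1/4$ may have to be replaced by a smaller level to keep positive measure. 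Summing over $k$ gives the distributional estimate
\[
|\{G^{s,q}(r)>C2^k\}\cap\Omega_{k}^c|\lesssim\dots
\]
and hence $\|G^{s,q}(r)\|_{L^p}\lesssim\|m^{s,q}(r)\|_{L^p}$.

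\emph{Expected obstacle.} I expect this stopping-time step in the non-dyadic anisotropic setting to be the hardest part. If it becomes too cumbersome, the fallback is a good-$\lambda$ inequality comparing $G^{s,q}(r)$ with $\M_{\rho_A}(m^{s,q}(r)^\beta)^{1/\beta}$, combined with Lemma \ref{lemma M rho A FS}. The case $p=\infty$ follows along the same lines with $L^\infty$ norms.
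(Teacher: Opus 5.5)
Your upper bound for $p<\infty$ is fine; it is a strong-type variant of the paper's weak-$(1,1)$ level-set argument. The converse, which you rightly call the main step, is not proved. The $\Omega_k$/maximal-cube scheme ends in an unspecified estimate, and your pointwise target $G^{s,q}(r)\lesssim(\M_{\rho_A}(m^{s,q}(r)^\beta))^{1/\beta}$ is false.

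For a counterexample, take $n=1$, $A=2$, $q<\infty$, and set $|Q|^{-s-1/2}|r_Q|=1$ for $Q=[0,2^{-k})$, $k\ge 0$, with $r_Q=0$ otherwise. Then $G^{s,q}(r)(x)^q=\lfloor\log_2(1/x)\rfloor+1$ on $(0,1)$, which is unbounded. A direct computation gives $m^{s,q}_Q(r)\le 3^{1/q}$ for every $Q$.

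The missing observation is elementary, and it makes the non-nestedness of anisotropic cubes irrelevant. For $x\in P$, the condition $P'\cap P\neq\emptyset$ in $G^{s,q}_P(r)$ is automatic, so
\[
G^{s,q}_P(r)(x)^q=\sum_{P'\ni x,\ \scale(P')\le\scale(P)}\bigl(|P'|^{-s-1/2}|r_{P'}|\bigr)^q .
\]
This depends only on $x$ and $\scale(P)$, and it is nondecreasing in $\scale(P)$.

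Now use your own sets $E_P=\{x\in P:G^{s,q}_P(r)(x)\le m^{s,q}_P(r)\}$. Fix $x$ and take $P_0$ of largest scale with $x\in E_{P_0}$; if these scales are unbounded, let the scale tend to infinity and use monotone convergence. Every $P$ with $x\in E_P$ contains $x$ and has $\scale(P)\le\scale(P_0)$. Hence $\sum_P(|P|^{-s-1/2}|r_P|)^q\chi_{E_P}(x)\le G^{s,q}_{P_0}(r)(x)^q\le m^{s,q}(r)(x)^q$.

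This pointwise bound together with Lemma \ref{lemma sparse char dot f} finishes the case $p<\infty$. You need no $\Omega_k$, no Lemma \ref{lemma stacked cube}, and no change of the median level. This is exactly the paper's stopping time $\nu(x)$. Its sets $E_Q=\{x\in Q:|\det A|^{-\nu(x)}\ge|Q|\}=\{x\in Q:G^{s,q}_Q(r)(x)\le m^{s,q}(r)(x)\}$ satisfy $|E_Q|\ge\frac34|Q|$ and (\ref{eq le m alpha q}).

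Your $p=\infty$ case also does not work. In this paper, for $q<\infty$, $\dot f^s_{\infty,q}(A)$ is the averaging norm $\sup_P(\fint_P G^{s,q}_P(r)^q)^{1/q}$, not $\|G^{s,q}(r)\|_{L^\infty}$. The sequence above has norm $2^{1/q}$ in $\dot f^s_{\infty,q}(A)$ but $\|G^{s,q}(r)\|_{L^\infty}=\infty$.

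So $m^{s,q}_Q(r)\le\|G^{s,q}(r)\|_{L^\infty}$ bounds $\|m^{s,q}(r)\|_{L^\infty}$ by the wrong, larger quantity. Instead, apply Chebyshev to $G^{s,q}_Q(r)$ itself: $|\{x\in Q:G^{s,q}_Q(r)(x)>\beta\}|\le\beta^{-q}|Q|\,\|r\|^q_{\dot f^s_{\infty,q}(A)}<|Q|/4$ once $\beta>4^{1/q}\|r\|_{\dot f^s_{\infty,q}(A)}$.

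Likewise, the reverse inequality cannot follow ``along the same lines with $L^\infty$ norms''. The example shows $\|G^{s,q}(r)\|_{L^\infty}\not\lesssim\|m^{s,q}(r)\|_{L^\infty}$. The correct route averages the pointwise bound above over $P$, restricted to $\scale(Q)\le\scale(P)$, and applies Lemma \ref{lemma p = infty EQ}.
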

\begin{proof}
	Observe that for $\beta >0$,
	\begin{equation} \label{eq m alpha q subset}
		\{ x\in \rn : m^{s, q} (r)  (x)  > \beta   \}  \subset  \left\{x\in \rn :  \M_{\rho_A} (  \chi_{ \{  y: G^{s , q}( r ) (y)  > \beta \}  } )   (x) \ge\frac{ 1 }{ 4 C_{n,A} }  \right \} .
	\end{equation}
	Indeed,  suppose that $ m^{s, q} (r)  (x_0) > \beta $. Then there exists a dilated cube $x_0 \in  Q\in \Q$ such that $ m^{s, q}_Q (r)   > \beta $. By the definition of $ m^{s, q}_Q (r)$, we get 
	\begin{equation*}
		|\{ x \in Q : 	G^{s , q}_Q ( r ) (x)> \beta \}| \ge  |Q|/4 .
	\end{equation*}
	Note that for all $x \in Q$,  $	G^{s , q}_Q ( r ) (x)  \le 	G^{s , q} ( r ) (x) $. Hence 
	\begin{equation*}
		\{ x \in Q : 	G^{s , q}_Q ( r ) (x)> \beta \}  \subset \{ x \in Q : 	G^{s , q} ( r ) (x)> \beta \} .
	\end{equation*}
	and 
	\begin{equation*}
		| \{ x \in Q : 	G^{s , q} ( r ) (x)> \beta \}  | \ge  | \{ x \in Q : 	G^{s , q}_Q ( r ) (x)> \beta \} | \ge |Q|/4.
	\end{equation*}
	For this $Q$, select a $\rho_A$-ball $B_Q \in \B$ such that $Q\subset B_Q$ and $ |B_Q|  \le C_{n,A} |Q|  $ where the constant $C_{n,A}$ only depends on $n$ and $A$. Then 
	\begin{align*}
		\M_{\rho_A} (  \chi_{ \{  y: G^{s , q}( r ) (y)  > \beta  \} } )   (x)  & = \sup_{B\in\B} \frac{1}{|B|} \int_B   \chi_{ \{ y: G^{s , q}( r ) (y)  > \beta \}  } (z)  \d z  \\
		& \ge \frac{1}{|B_Q|} \int_{B_Q}   \chi_{ \{ y: G^{s , q}( r ) (y)  > \beta \}  } (z)  \d z  \\
		& \ge \frac{1}{C_{n,A} |Q|} \int_{Q}   \chi_{ \{ y: G^{s , q}( r ) (y)  > \beta \}  } (z)  \d z \\
		&\ge \frac{1}{4C_{n,A}},
	\end{align*}
	which leads to (\ref{eq m alpha q subset}).
	Since $\M_{\rho_A} $  is of weak-type (1,1)(Lemma \ref{lemma M weak 1,1 strong p,p}), we obtain 
	\begin{equation*}
		|\{ x\in \rn : m^{s, q} (r)  (x)  >  \beta   \} |  \le c | \left\{ x\in \rn :   G^{s , q}( r ) (x)  > \beta      \right \} |,
	\end{equation*}
	for all $\beta >0$, and, hence
	\begin{equation*}
		\| m^{s, q} (r) \|_{L^p} \lesssim \|  G^{s , q}( r ) \|_{L^p} = c \|r\|_{  \dot f_{p,q}^{s} (A) } ,
	\end{equation*}
	for $0<p <\infty $. When $p =\infty$, use Chebyshev's inequality to get that
	\begin{equation*}
		|	\{ x \in Q : 	G^{s , q}_Q ( r ) (x)> \beta\} | \le \frac{1}{\beta ^q } \int_Q  (	G^{s , q}_Q ( r ) (x) )^q \d x  \le \frac{|Q|}{\beta ^q}  \| r\|_{ \dot f_{\infty ,q}^{s} (A) } ^q < \frac{1}{4} |Q|
	\end{equation*}
	if $ \beta > 4^{1/q} \| r\|_{ \dot f_{\infty ,q}^{s} (A) } $. Hence,  $\| m^{s, q} (r) \|_{L^\infty}    \le c \| r\|_{ \dot f_{\infty ,q}^{s} (A) }  $.
	
	Next we prove the converse inequality. Define the extended integer-valued stopping time $\nu(x)$, for $x\in \rn$, by
	\begin{equation*}
		\nu(x) : = \inf \left\{\nu \in \mathbb Z:   \left(  \sum_{ |Q| \le |\det A|^{-\nu} }  (  |Q| ^{-s -1/2} |r_Q| )^q  \chi_Q (x)    \right)^{1/q}    \le    m^{s, q} (r)  (x) \right\}.
	\end{equation*}
	By (\ref{eq def m alpha q Q}), 
	we have
	\begin{equation*}
		|\{ x \in Q : 	G^{s , q}_Q ( r ) (x)>  m^{s, q}_Q (r) + \epsilon \}| < |Q|/4 .
	\end{equation*}
	Letting $\epsilon \to 0^+$, we get 
	\begin{equation*}
		|\{ x \in Q : 	G^{s , q}_Q ( r ) (x) \le  m^{s, q}_Q (r)  \}| \ge  \frac{ 3 |Q| }{4} .
	\end{equation*}
	For all $x \in  \{ y \in Q : 	G^{s , q}_Q ( r ) (y) \le  m^{s, q}_Q (r)  \}$, since $ m^{s, q}_Q (r)  \le m^{s, q} (r) (x) $, we obtain   $G^{s , q}_Q ( r ) (x) \le m^{s, q} (r) (x) $. Thus
	\begin{equation*}
		\{ x\in Q:  	G^{s , q}_Q ( r ) (x) \le  m^{s, q} (r)  (x) \}  \ge \{ x\in Q:  	G^{s , q}_Q ( r ) (x) \le  m^{s, q}_Q (r)  \} \ge \frac{ 3 |Q| }{4} .
	\end{equation*}
	
	Set $ X := \{ x\in Q:   |\det A|^{ - \nu (x)}      \ge |Q| \}  $ and $Y := \{ x\in Q:  	G^{s , q}_Q ( r ) (x) \le  m^{s, q} (r)  (x) \}$, Next we prove that $X=Y$.

	Suppose that $x \in X$. That means $x\in Q $ and    $ |\det A|^{ - \nu (x)}      \ge |Q|$.
	Hence 
	\begin{align*}
		G^{s , q}_Q ( r ) (x) & = \left( \sum_{P \in \Q, \scale (P) \le \scale (Q), P \cap Q \neq \emptyset }    ( |P|^{-s -1/2}  | r_P| )^q \chi_P (x)  \right)^{1/q} \\
		&  \le  \left( \sum_{P \in \Q, \scale (P) \le - \nu(x)  }    ( |P|^{-s -1/2}  | r_P| )^q \chi_P (x)  \right)^{1/q} \\
		&  \le    m^{s, q} (r)  (x) .
	\end{align*}
	Thus $x\in Y$ and $X \subset Y$. On the other hand, suppose that $x\in Y$.  We proceed by contradiction.
	Suppose that $  |\det A|^{ - \nu (x)}    < |Q| $. Thus, $\scale (Q) \ge -\nu (x) +1$ and 
	\begin{equation*}
		\left(  \sum_{ |P| \le |\det A|^{-\nu(x) + 1} }  (  |P| ^{-s -1/2} |r_P| )^q  \chi_P (x)    \right)^{1/q}     >     m^{s, q} (r)  (x) .
	\end{equation*}
	But for $x\in Q$, we have
	\begin{align*}
		G^{s , q}_Q ( r ) (x) & = \left( \sum_{P \in \Q, \scale (P) \le \scale (Q), P \cap Q \neq \emptyset }    ( |P|^{-s -1/2}  | r_P| )^q \chi_P (x)  \right)^{1/q} \\
		& \ge \left( \sum_{P \in \Q, \scale (P) \le -\nu(x) + 1 , P \cap Q \neq \emptyset }    ( |P|^{-s -1/2}  | r_P| )^q \chi_P (x)  \right)^{1/q} \\
		& = \left( \sum_{P \in \Q, \scale (P) \le -\nu(x) + 1  }    ( |P|^{-s -1/2}  | r_P| )^q \chi_P (x)  \right)^{1/q} \\
		& >  m^{s, q} (r)  (x) .
	\end{align*}
	This  contradicts 	$G^{s , q}_Q ( r ) (x)  \le m^{s, q} (r)  (x) $. Hence $ |\det A|^{ - \nu (x)}   \ge |Q| \implies  x\in X  \implies  Y \subset X$. Thus $X=Y$.
	
	Now set
	\begin{equation*}
		E_Q := \{ x\in Q:   |\det A|^{ - \nu (x)}      \ge |Q| \} = \{ x\in Q:  	G^{s , q}_Q ( r ) (x) \le  m^{s, q} (r)  (x) \} 
	\end{equation*}
	for each $Q \in \Q$. Then $ |E_Q| \ge \frac{3|Q|}{4}$	
	and 
	\begin{equation} \label{eq le m alpha q}
		\left(  \sum_{ Q \in \Q  }  (  |Q| ^{-s -1/2} |r_Q| )^q  \chi_{E_Q} (x)    \right)^{1/q}   \le \left(  \sum_{ |Q| \le |\det A|^{-\nu (x)} }  (  |Q| ^{-s -1/2} |r_Q| )^q  \chi_Q (x)    \right)^{1/q}   \le   m^{s, q} (r)  (x) 
	\end{equation}
	for each $x\in \rn$.

	By Lemma \ref{lemma sparse char dot f}, for $0<p<\infty$, $\|r\|_{  \dot f_{p,q}^{s} (A) } \le c   \| m^{s, q} (r) \|_{L^p}$. Similarly, (\ref{eq le m alpha q}) and Lemma \ref{lemma p = infty EQ} yield $\|r\|_{  \dot f_{\infty,q}^{s} (A) } \le c   \| m^{s, q} (r) \|_{L^\infty}. $
\end{proof}

\begin{proposition} \label{prop p = infty char E_Q}
	Let $A$ be a dilation. Let $s\in \mathbb R$, $ q \in (0,\infty]$. 
	Then $r = \{ r_Q\}_{Q \in \Q}  \in \dot f_{\infty,q}^{s} (A) $  if and only if for each $Q$ there is a subset $E_Q \subset Q$ with $ |E_Q| / |Q| > 1/2$ (or any other, fixed, number $0<\epsilon <1$)  such that
	\begin{equation} \label{eq E_Q Lp}
		\left\|   \left(  \sum_{Q \in \Q}   \left(  |Q|^{ -s -1/2 } |s_Q| \right)^q \chi_{E_Q}   \right)^{1/q}    \right\|_{L^\infty} <\infty .
	\end{equation}
	Moreover, the infimum of this expression over all such collections $\{E_Q\} _{Q\in \Q} $  is equivalent to $ \dot f_{\infty,q}^{s} (A) $. 
\end{proposition}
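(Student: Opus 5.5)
The two directions are handled separately: the forward one by the median construction of Proposition \ref{prop f approx Lp} with $p=\infty$, the reverse one by the two-sided estimate of Lemma \ref{lemma p = infty EQ} (with $A_Q\equiv 1$). Throughout, $s_Q$ in (\ref{eq E_Q Lp}) is read as $r_Q$.

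\emph{Necessity and upper bound on the infimum.} Assume $r\in \dot f_{\infty,q}^{s}(A)$. Take the sets from the proof of Proposition \ref{prop f approx Lp}:
\begin{equation*}
E_Q:=\{x\in Q: G^{s,q}_Q(r)(x)\le m^{s,q}(r)(x)\}.
\end{equation*}
That proof shows $|E_Q|\ge \frac34|Q|>\frac12|Q|$. It also gives the pointwise bound (\ref{eq le m alpha q}), namely $\big(\sum_Q (|Q|^{-s-1/2}|r_Q|)^q\chi_{E_Q}\big)^{1/q}\le m^{s,q}(r)$. Taking $L^\infty$ norms and using the case $p=\infty$ of Proposition \ref{prop f approx Lp}, namely $\|m^{s,q}(r)\|_{L^\infty}\lesssim \|r\|_{\dot f_{\infty,q}^s(A)}$, the expression (\ref{eq E_Q Lp}) is finite and controlled by $\|r\|_{\dot f_{\infty,q}^s(A)}$. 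So the infimum is $\lesssim \|r\|_{\dot f^s_{\infty,q}(A)}$.

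For a general fixed threshold $\epsilon\in(0,1)$, the same argument works after replacing the $\frac14$-median in (\ref{eq def m alpha q Q}) by the level $(1-\epsilon')|Q|$ with $\epsilon<\epsilon'<1$. The weak type $(1,1)$ argument is unchanged, with $4C_{n,A}$ replaced by $C_{n,A}/(1-\epsilon')$. The bound $\|m\|_{L^\infty}\lesssim\|r\|$ then follows via Chebyshev exactly as before, with constants depending on $\epsilon$.

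\emph{Sufficiency and lower bound.} Suppose $\{E_Q\}$ with $|E_Q|>\epsilon|Q|$ makes (\ref{eq E_Q Lp}) finite, with value $K$. Fix $P\in\Q$. The aim is
\begin{equation*}
\fint_P \sum_{\scale(Q)\le \scale(P)} (|Q|^{-s-1/2}|r_Q|)^q\chi_{Q}\,\d x\lesssim K^q .
\end{equation*}
For $q<\infty$ this is the second inequality of Lemma \ref{lemma p = infty EQ} with $A_Q\equiv1$. Its justification is as follows. Interchange sum and integral and use $|Q|\le \epsilon^{-1}|E_Q|$ for each $Q\subset P$ (cubes of smaller scale meeting $P$ are, by the covering in Lemma \ref{lemma use covered Q}, contained in boundedly many neighbours of $P$ of the same scale). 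This gives
\begin{equation*}
\int_{P}\sum (|Q|^{-s-1/2}|r_Q|)^q\chi_Q \le \epsilon^{-1}\int_{P^\ast}\sum(|Q|^{-s-1/2}|r_Q|)^q\chi_{E_Q}\le \epsilon^{-1}|P^\ast|K^q,
\end{equation*}
with $|P^\ast|\approx|P|$. For $q=\infty$ each single term already satisfies $|Q|^{-s-1/2}|r_Q|\le K$, since $E_Q$ has positive measure.

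Combining both directions gives the stated equivalence of the infimum with the $\dot f^s_{\infty,q}(A)$ quasi-norm.

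The main obstacle is the bookkeeping that non-nested anisotropic cubes $Q$ of smaller scale may meet $P$ without lying inside it; Lemma \ref{lemma use covered Q} is what handles this, at the cost of a constant depending on $C_\Q$.
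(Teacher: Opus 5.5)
Your proposal is correct and follows the paper's proof. Necessity comes from the stopping-time sets $E_Q$ and the pointwise bound (\ref{eq le m alpha q}) in the proof of Proposition \ref{prop f approx Lp}, combined with $\|m^{s,q}(r)\|_{L^\infty}\lesssim\|r\|_{\dot f^s_{\infty,q}(A)}$, and sufficiency comes from Lemma \ref{lemma p = infty EQ}. Your enlargement of $P$ to the union $P^\ast$ of boundedly many same-scale neighbours (via Lemmas \ref{lemma stacked cube} and \ref{lemma use covered Q}) is a genuine improvement. In the anisotropic setting a smaller-scale $Q$ with $|Q\cap P|>0$ need not lie in $P$, so the per-cube comparison $\int_P\chi_Q\le\epsilon^{-1}\int_P\chi_{E_Q}$, which makes Lemma \ref{lemma p = infty EQ} ``immediate'' in the dyadic case, fails here; your covering argument supplies exactly this missing step, at the cost of a constant depending on $C_{\mathcal Q}$.
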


\begin{proof}
	Let $r \in \dot f_{\infty,q}^{s} (A)$. Then the $E_Q$'s chosen in the proof of Proposition \ref{prop f approx Lp} above yield (\ref{eq E_Q Lp}). 
	The converse follows from Lemma \ref{lemma p = infty EQ}.
\end{proof}

\begin{proof}[Proof of Proposition \ref{prop seq p infty char}]
	Case $q =\infty$. First note that
	\begin{equation*}
		\| r\|_{  \dot f_{\infty,\infty }^{s} (A)  }  
		=  \sup_{Q\in \Q} |Q|^{ - (s -1/2 )  }  |r_Q| .
	\end{equation*}
	Then the left-hand side of  (\ref{eq f s q infty equ}) is 
	\begin{align*}
		\sup_{P \in \Q} \left( \fint_P   \left(   \sup_{ \scale(Q) \le \scale (P)}    |Q|^{ - (s -1/2 ) }  |r_Q|   \chi_Q (x) \right)^{u}  \d x   \right) ^{1/u}  =  \sup_{Q\in \Q} |Q|^{ - (s -1/2 )  }  |r_Q|  .
	\end{align*}
	
	Case  $ 0<q<\infty  $ and  $\infty > u \ge q $. By H\"older's inequality, the right-hand side of (\ref{eq f s q infty equ}) is dominated by the left. On the other hand, if $P$ is fixed dilated cube and $E_Q$ are the subsets given by Proposition \ref{prop f approx Lp}, then by  Lemma \ref{lemma sparse char dot f}, we obtain
	\begin{align*}
		\fint_P  \left(   \sum_{ \scale(Q) \le \scale (P) }   |Q|^{ - (s -1/2 )q }  |r_Q| ^q   \chi_Q (x) \right)^{u/q}  \d x  \le c 	\fint_P  \left(   \sum_{ \scale(Q) \le \scale (P) }   |Q|^{ - (s -1/2 )q }  |r_Q| ^q   \chi_{E_Q} (x) \right)^{u/q}  \d x  .
	\end{align*}
	Now this is clearly less than
	\begin{equation*}
		c   \left\| \left( \sum_{ \scale(Q) \le \scale (P)}   |Q|^{ - (s -1/2 )q }  |r_Q| ^q   \chi_{E_Q}  \right)^{1/q}  \right\|_{L^\infty}^u,
	\end{equation*}
	and by Proposition \ref{prop p = infty char E_Q} this can be estimated by $c \| r\|_{ \dot f_{\infty,q}^{s} (A)}^u$.
	
	If $ 0<q<\infty  $ and  $ 0< u < q $, then H\"older's inequality shows that  the left-hand side of (\ref{eq f s q infty equ}) is dominated by the right. To prove the converse inequality, use Chebyshev's inequality and get
	\begin{align*}
		|	\{ x \in Q : 	G^{s , q}_Q ( r ) (x)> \beta\} | & \le \frac{1}{\beta ^u } \int_Q  (	G^{s , q}_Q ( r ) (x) )^u \d x  \\
		& \le \frac{1}{\beta ^u}  \int_Q   \left(   \sum_{ \scale(R) \le \scale (Q) }   |R|^{ - (s -1/2 )q }  |r_R| ^q   \chi_R (x) \right)^{u/q}  \d x    < \frac{1}{4} |Q|.
	\end{align*}
	if
	\begin{equation*}
		\beta > 4^{1/u} \sup_{P \in \Q} \left( \fint_P   \left(   \sum_{ \scale(Q) \le \scale (P) }   |Q|^{ - (s -1/2 )q }  |r_Q| ^q   \chi_Q (x) \right)^{u/q}  \d x   \right) ^{1/u} .
	\end{equation*} 
	Hence 
	\begin{equation*}
		\| m^{s, q} (r) \|_{L^\infty}  \le c \sup_{P \in \Q} \left( \fint_P   \left(   \sum_{ \scale(Q) \le \scale (P) }   |Q|^{ - (s -1/2 )q }  |r_Q| ^q   \chi_Q (x) \right)^{u/q}  \d x   \right) ^{1/u}  .
	\end{equation*}
	By Proposition \ref{prop f approx Lp}, we have
	\begin{equation*}
		\| r\|_{  \dot f_{\infty,q}^{s} (A)  } \approx  	\| m^{s, q} (r) \|_{L^\infty}   \lesssim \sup_{P \in \Q} \left( \fint_P   \left(   \sum_{ \scale(Q) \le \scale (P) }   |Q|^{ - (s -1/2 )q }  |r_Q| ^q   \chi_Q (x) \right)^{u/q}  \d x   \right) ^{1/u}  .
	\end{equation*}
	This completes the proof.
\end{proof}

\subsection{Properties of matrix-weighted anisotropic Triebel-Lizorkin spaces $p =\infty$}
Now we study some properties of $\dot F_{\infty,q}^{s} (\{A_Q\}_{Q\in \Q} ,A)$  and $ \dot f_{\infty,q}^{s} (\{A_Q\}_{Q\in \Q} ,A)$.

\begin{lemma} [Lemma 3.10, \cite{Bo07}] \label{lemma peetre p infty}
	Let $A $ be a dilation. 
		Let $s\in \mathbb R$, 
	$q \in (0,\infty]$.  (Note that Lebesgue measure is $\rho_A$-doubling measure with a constant $\beta =1$).
	Then for any $u=\{u_Q\}_{Q\in \Q} \subset \mathbb C$, $r>0$  and $\lambda > \max (1, r/q)$, 
	\begin{equation*}
		\|  u  \|_{ \dot f _{ \infty, q} ^{s} (A  )  } \approx \left \|  u^\ast_{ r, \lambda}     \right \|_{  \dot f _{ \infty, q} ^{s} (A  )} .
	\end{equation*}	
\end{lemma}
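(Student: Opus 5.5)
The lower bound $\|u\|_{\dot f^s_{\infty,q}(A)}\le \|u^\ast_{r,\lambda}\|_{\dot f^s_{\infty,q}(A)}$ is immediate, since $|u_Q|\le (u^\ast_{r,\lambda})_Q$ for every $Q$. All the work is in the upper bound. I would mimic the proof of Lemma \ref{lemma unweighted sequence morrey peetre}, which is the case $p=q$, $\tau=1/q$, with the averaging norm (\ref{eq def f A_Q A p infty}) in place of the Morrey-type norm. The new difficulty is that $p=\infty$ is not available in Lemma \ref{lemma M rho A FS}. I would get around this by first using Proposition \ref{prop seq p infty char} to change the integrability exponent to a convenient $u$.

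First, fix $P\in\Q$ and write $u=w+v$ as before: $w_R=u_R$ if $R\subset\tilde P$, where $\tilde P$ is a fixed $\rho_A$-dilate of $P$, and $w_R=0$ otherwise. Then $(u^\ast_{r,\lambda})_Q^r=(w^\ast_{r,\lambda})_Q^r+(v^\ast_{r,\lambda})_Q^r$. Since $\lambda>\max(1,r/q)$, I can pick $u_0$ large (e.g. $u_0\ge q$) so that $\lambda>\max(1,r/q,r/u_0)$, and also $0<a<\min(u_0,q)$ with $a\le r$ and $\lambda>r/a$.

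Local part. Apply Proposition \ref{prop seq p infty char} with exponent $u_0$ to $u^\ast_{r,\lambda}$, so the average over $P$ can be taken in $L^{u_0}$. Then
\[
\fint_P\Big(\sum_{\scale(Q)\le\scale(P)}\cdots(w^\ast)_Q\cdots\Big)^{u_0/q}\lesssim |P|^{-1}\|w^\ast_{r,\lambda}\|^{u_0}_{\dot f^s_{u_0,q}(A)}\lesssim |P|^{-1}\|w\|^{u_0}_{\dot f^s_{u_0,q}(A)}
\]
by Lemma \ref{lemma seq peetre besov}(i). The last quantity is $\lesssim\|u\|^{u_0}_{\dot f^s_{\infty,q}(A)}$. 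To see this, cover $\tilde P$ by boundedly many cubes of the scale of $P$ (Lemma \ref{lemma basic cube ball cover}) and use Proposition \ref{prop seq p infty char} again with exponent $u_0$. Here I must check that the cubes $R\subset\tilde P$ of larger scale than $P$ are also controlled. There are only boundedly many scales, each with boundedly many cubes, so I would cover them by a single cube of slightly larger scale.

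Far part. As in the earlier proof, split the cubes $R$ with $|R|=|Q|$, $R\cap\tilde{\tilde P}=\emptyset$, according to the translate $P(k)$, $k\neq0$, under which they are stacked. For $Q\subset P$ this gives $1+|R|^{-1}\rho_A(x_Q-x_R)\approx |P||R|^{-1}\rho_A(k)$. By the $\ell^a\subset\ell^r$ embedding and averaging over $P(k)$ (Lemma \ref{lemma seq HL pointwise}), the contribution is bounded by
\[
\sum_k\rho_A(k)^{-\lambda}(|P|/|R|)^{r/a-\lambda}\Big(\fint_{P(k)^\ast}\sum_R|R|^{-(s+1/2)a}|u_R|^a\chi_R\Big)^{r/a}.
\]
Since $\lambda>r/a$, the factor $(|P|/|R|)^{r/a-\lambda}\le1$. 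Jensen/Hölder (using $a<q$) then bounds the average by $\|u\|_{\dot f^s_{\infty,q}(A)}^a$, through Proposition \ref{prop seq p infty char} with exponent $u=q$ on the boundedly many cubes covering $P(k)^\ast$. Summing over $k$ converges because $\lambda>1$ and $\sum_{k\ne0}\rho_A(k)^{-\lambda}<\infty$ by Lemma \ref{lemma rho A and |x|} and (\ref{eq rho A B approx r}). This gives a pointwise bound $(v^\ast)_Q\lesssim |Q|^{s+1/2}\|u\|$ times a geometric factor. I still need this bound to be summable in $\ell^q$ over the scales $\scale(Q)\le\scale(P)$. I would recover this by keeping part of the decay, choosing $\lambda>r/a+\epsilon$ so that a factor $(|R|/|P|)^{\epsilon}$ survives and makes the sum over scales geometric.

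The main obstacle is this far-part summation over scales. In the $p<\infty$ case the vector-valued maximal inequality absorbed it; here I rely instead on the leftover decay $(|R|/|P|)^{\lambda-r/a}$. The choice of $a$ relative to $r$, $q$ and $\lambda$ must be tuned so that both $a<q$ and $\lambda>r/a$ hold strictly, which is exactly what $\lambda>\max(1,r/q)$ permits. Alternatively, the claim is a special case of \cite[Lemma 3.10]{Bo07} with Lebesgue measure as the doubling measure, and I would cite that as a fallback.
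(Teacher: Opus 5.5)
The paper gives no proof of this statement. It simply imports \cite[Lemma 3.10]{Bo07}, regarding Lebesgue measure as a $\rho_A$-doubling measure, which is your fallback. Your self-contained route is therefore genuinely different, and it is correct in substance.

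\textbf{The far part.} This is the real content. At a single scale, combine $\ell^a\subset\ell^r$, disjointness of the cubes $R$, Jensen's inequality (which needs $a\le q$), the inclusion $R\subset\bigcup_{|k'-k|<\eta}P(k')$ from Lemma \ref{lemma stacked cube}, and $\sum_{k\in\mathbb Z^n}(1+\rho_A(k))^{-\lambda}<\infty$. Together these give
\[
|Q|^{-s-1/2}(v^\ast_{r,\lambda})_Q\lesssim \left(\frac{|Q|}{|P|}\right)^{(\lambda-r/a)/r}\|u\|_{\dot f^{s}_{\infty,q}(A)} .
\]
With $a=\min(r,q)$, the requirement $\lambda>r/a$ is exactly $\lambda>\max(1,r/q)$. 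So the sum over $\scale(Q)\le\scale(P)$ is geometric, as you propose.

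\textbf{The near part can be simplified.} The detour through Proposition \ref{prop seq p infty char} is unnecessary, and so is your worry about $p=\infty$ in Lemma \ref{lemma M rho A FS}. For $q<\infty$, the $\dot f^s_{\infty,q}(A)$ quasi-norm is by definition that of $\dot f^{s,1/q}_{q,q}(A)$ (see the remark after (\ref{eq def f A_Q A p infty})). So take $u_0=q$. Then Lemma \ref{lemma seq peetre besov}(i) with $p=q$ needs exactly $\lambda>\max(1,r/q)$. The bound $\|w\|^q_{\dot f^s_{q,q}(A)}\lesssim|P|\,\|u\|^q_{\dot f^s_{\infty,q}(A)}$ follows directly from the definition.

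\textbf{Relation to the paper.} Within the paper, the statement is then just the case $(p,\tau)=(q,1/q)$ of Lemma \ref{lemma unweighted sequence morrey peetre}(i). Your far-part argument keeps the decay $(|Q|/|P|)^{\lambda-r/a}$. That is a more transparent way to sum over scales than the $J_P$ step there, where $\M_{\rho_A}(\cdot)$ is evaluated at the fixed point $x_{P(k)}$ before the Fefferman--Stein inequality is applied.

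\textbf{The case $q=\infty$.} This is immediate. We have $(u^\ast_{r,\lambda})_Q\le\big(\sum_{k\in\mathbb Z^n}(1+\rho_A(k))^{-\lambda}\big)^{1/r}\sup_{|R|=|Q|}|u_R|$, and the sum converges because $\lambda>1$.

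\textbf{Two small corrections.}

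First, a $\rho_A$-ball such as $\tilde P$ cannot in general be covered by a single dilated cube of slightly larger scale. Already for $A=2$ on $\mathbb R$, no dilated cube contains a neighbourhood of $0$. Instead, use the boundedly many cubes of a fixed larger scale that meet $\tilde P$ (Lemma \ref{lemma basic cube ball cover}(ii)). Alternatively, note that each of the boundedly many $R\subset\tilde P$ with $\scale(R)>\scale(P)$ satisfies $|R|^{-s-1/2}|u_R|\le\|u\|_{\dot f^s_{\infty,q}(A)}$.

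Second, dilated cubes are not nested. So the cubes that matter in the far part are those with $\scale(Q)\le\scale(P)$ and $Q\cap P\neq\emptyset$, not only $Q\subset P$. These still lie within $\rho_A$-distance $\lesssim|P|$ of $c_P$. Hence the bound $1+|Q|^{-1}\rho_A(x_Q-x_R)\gtrsim(|P|/|Q|)(1+\rho_A(k))$ still holds. The finitely many small $k$ are handled by $R\not\subset\tilde P$, once $\tilde P$ is taken to be a large enough dilate of $P$.
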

By Lemma \ref{lemma peetre p infty}, we obtain the following result.

\begin{lemma} \label{lemma f infty TL peetre seq}
	Let $A $ be a dilation. 	Let $s\in \mathbb R$, $p\in (0,\infty)$,
	$q \in (0,\infty]$.  
	Let  $W \in \A_p$ and $\{ A_Q\}_{Q\in \Q}$ be a sequence of reducing operators of order $p$ for $W$.
		Then for any $\vec u=\{\vec u_Q\}_{Q\in \Q} \subset \mathbb C^m$, $r>0$  and $\lambda > \max (1, r/q)$, 
	\begin{equation*}
		\| \vec u  \|_{ \dot f _{ \infty, q} ^{s} (\{A_Q\}_{Q\in \Q},A  )  } \approx \left \| \left( \{  |A_Q \vec u_Q | \}_{Q\in \Q} \right) ^\ast_{ r, \lambda}     \right \|_{ \dot f _{ \infty, q} ^{s} (A  ) } .
	\end{equation*}	 
\end{lemma}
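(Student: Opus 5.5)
This is essentially an unweighting step followed by Lemma \ref{lemma peetre p infty}. Set $u_Q := |A_Q \vec u_Q|$ for every $Q\in\Q$, so that $u=\{u_Q\}_{Q\in\Q}$ is a nonnegative scalar sequence. I would first check directly from (\ref{eq def f A_Q A p infty}) that
\begin{equation*}
\| \vec u \|_{\dot f_{\infty,q}^{s}(\{A_Q\}_{Q\in\Q},A)} = \| u\|_{\dot f_{\infty,q}^{s}(A)} .
\end{equation*}
This holds because the matrix $A_Q$ enters the definition only through the scalar quantity $|A_Q\vec u_Q|$ attached to the cube $Q$, and the unweighted space $\dot f_{\infty,q}^s(A)$ is the same expression with $A_Q\equiv 1$ (Remark (i) of this section). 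For $q=\infty$ the same identity holds for the interpreted supremum, read as $\sup_{Q}|Q|^{-s-1/2}|A_Q\vec u_Q|$. In that case both sides equal $\sup_Q |Q|^{-s-1/2}u_Q$.

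Next I would apply Lemma \ref{lemma peetre p infty} to the scalar sequence $u$, using the given $r>0$ and $\lambda>\max(1,r/q)$. This gives $\|u\|_{\dot f^s_{\infty,q}(A)}\approx \|u^\ast_{r,\lambda}\|_{\dot f^s_{\infty,q}(A)}$. By definition, $u^\ast_{r,\lambda}$ is exactly $(\{|A_Q\vec u_Q|\}_{Q\in\Q})^\ast_{r,\lambda}$. Combining this with the identity of the first paragraph yields the claim.

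The implicit constants are those of Lemma \ref{lemma peetre p infty}, so they depend only on $A,s,q,r,\lambda$. They do not depend on $W$ or on $\vec u$. The hypotheses $W\in\A_p$ and the reducing-operator property of $\{A_Q\}$ are not used here; only the fact that $\{A_Q\}$ is a fixed family of matrices matters.

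The only point needing care is the bookkeeping in the first step. One has to verify that the definition of $\dot f^s_{\infty,q}$ (the sum over $\scale(Q)\le\scale(P)$ with weight $|Q|^{(-s-1/2)q}$) agrees with the convention used in Lemma \ref{lemma peetre p infty} as cited from \cite{Bo07}, including the normalization $|Q|^{-s-1/2}$. Once this is confirmed, there is no real obstacle.
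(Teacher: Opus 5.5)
Your proposal is correct and is exactly the paper's argument: the paper obtains this lemma in one line by applying Lemma \ref{lemma peetre p infty} to the scalar sequence $\{|A_Q\vec u_Q|\}_{Q\in\Q}$. It uses that $\|\vec u\|_{\dot f^{s}_{\infty,q}(\{A_Q\}_{Q\in\Q},A)}$ is by definition the unweighted $\dot f^{s}_{\infty,q}(A)$ quasi-norm of that sequence, and your remark that neither $W\in\A_p$ nor the reducing-operator property is actually used is accurate.
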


\begin{theorem} \label{theo A_Q sup le inf} 
	Let $A $ be a dilation. 	Let $s\in \mathbb R$, $p\in (0,\infty)$,
	$q \in (0,\infty]$.
	Let  $W \in \A_p$ and $\mathbb A := \{ A_Q\}_{Q\in \Q}$ be a sequence of reducing operators of order $p$ for $W$. 
	Let $\varphi\in \mathcal S (\rn)$ satisfy (\ref{eq varphi supp}) and (\ref{eq varphi psi =1}).
	Then for any $\vec f \in (\S_\infty^\prime (\rn))^m $,
	\begin{equation} \label{eq F infty le f infty}
		\| \vec f\|_{ \dot F_{\infty,q}^{s} (\{A_Q\}_{Q\in \Q} ,A)  }  \le \left\|  \sup_{ \mathbb A , \varphi} \vec f      \right \|_{\dot f_{\infty,q}^{s} ( A) }  \lesssim 	\| \vec f\|_{  \dot F_{\infty,q}^{s} (\{A_Q\}_{Q\in \Q} ,A)  } .
	\end{equation}
\end{theorem}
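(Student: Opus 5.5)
The first inequality in (\ref{eq F infty le f infty}) should follow directly from the definitions. For $j\in\mathbb Z$, $Q\in\Q_j$ and $x\in Q$ we have
\[
|\det A|^{js}|\mathbb A_j\varphi_j*\vec f(x)| \le |\det A|^{js}\sup_{y\in Q}|A_Q\varphi_j*\vec f(y)| = |Q|^{-s-1/2}\sup_{\mathbb A,\varphi,Q}\vec f .
\]
Summing the $q$-th powers over $j\ge-\scale(P)$, averaging over $P$ and taking the supremum over $P$ then gives the $\dot f^s_{\infty,q}(A)$ norm of $\sup_{\mathbb A,\varphi}\vec f$. The normalization $|Q|^{-s-1/2}$ is the one in (\ref{eq def f A_Q A p infty}), with $|Q|=|\det A|^{-j}$. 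The case $q=\infty$ is analogous, with sums replaced by suprema.

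For the second inequality I would reuse the pointwise estimate from the proof of Theorem \ref{theorem sup A_Q less inf}, which was derived from (\ref{eq identity varphi_j f}) and Lemma \ref{lemma cube AQ AR-1}. Before averaging in $y$, it reads: for $Q\in\Q_j$, $x\in Q$ and every $y\in A^{-j}[0,1)^n$,
\[
|A_Q\varphi_j*\vec f(x)|^r\lesssim\sum_{R\in\Q_j}\frac{|A_R\varphi_j*\vec f(x_R+y)|^r}{(1+|\det A|^j\rho_A(x_Q-x_R))^{(M-\Delta)r}} .
\]
Here $r$ is small and $M$ is large. Averaging over $y$ gives
\[
\sup_{x\in Q}|A_Q\varphi_j*\vec f(x)|^r\lesssim\sum_{R\in\Q_j}\frac{\fint_R|\mathbb A_j\varphi_j*\vec f|^r}{(1+|\det A|^j\rho_A(x_Q-x_R))^{\lambda}},\qquad \lambda=(M-\Delta)r .
\]
Define the sequence $u_R:=|R|^{1/2}\bigl(\fint_R|\mathbb A_j\varphi_j*\vec f|^r\bigr)^{1/r}$ for $R\in\Q_j$. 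The estimate above then says $\sup_{\mathbb A,\varphi,Q}\vec f\lesssim(u^\ast_{r,\lambda})_Q$. Lemma \ref{lemma peetre p infty} applies once $\lambda>\max(1,r/q)$, which holds for $M$ large, and gives
\[
\Bigl\|\sup_{\mathbb A,\varphi}\vec f\Bigr\|_{\dot f^s_{\infty,q}(A)}\lesssim\|u^\ast_{r,\lambda}\|_{\dot f^s_{\infty,q}(A)}\lesssim\|u\|_{\dot f^s_{\infty,q}(A)} .
\]

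It remains to show $\|u\|_{\dot f^s_{\infty,q}(A)}\lesssim\|\vec f\|_{\dot F^s_{\infty,q}(\{A_Q\},A)}$, and I expect this to be the main obstacle. I would take $r<q$ and set $g_j:=|\det A|^{js}|\mathbb A_j\varphi_j*\vec f|$. Then $|R|^{-s-1/2}u_R\chi_R=\bigl(E_j(g_j^r)\bigr)^{1/r}\chi_R$, where $E_j(g)\le\M_{\rho_A}g$ pointwise. Fix $P$. I would split $g_j=g_j\chi_{\tilde P}+g_j\chi_{\tilde P^c}$, with $\tilde P$ a fixed enlargement of $P$ that contains every $R\preceq P$ with $R\cap P\neq\emptyset$ (Lemma \ref{lemma stacked cube}).

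The local piece is handled by the Fefferman--Stein inequality (Lemma \ref{lemma M rho A FS}) in $L^{q/r}(\ell^{q/r})$:
\[
\int_P\sum_{j\ge-\scale(P)}\M_{\rho_A}(g_j^r\chi_{\tilde P})^{q/r}\lesssim\int_{\tilde P}\sum_{j\ge-\scale(P)}g_j^q .
\]
Covering $\tilde P$ by boundedly many cubes of scale $\scale(P)$ (Lemma \ref{lemma use covered Q}) bounds this by $|P|\,\|\vec f\|^q_{\dot F^s_{\infty,q}}$.

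The far piece is where care is needed. The cubes $R$ with $R\cap P\neq\emptyset$ lie inside $\tilde P$, so the averages $E_j$ never see $\chi_{\tilde P^c}$. The far contributions therefore only enter through the Peetre step, and I would insert the splitting there instead. In the decomposition $u=w+v$ from the proof of Lemma \ref{lemma unweighted sequence morrey peetre}, the far part $v$ is controlled via the decay $\rho_A(k)^{-\lambda}$, summed over translates $P(k)$. Each translate is estimated by the same local Fefferman--Stein bound, with $\lambda>1$ ensuring the sum over $k$ converges. If this bookkeeping proves awkward, the alternative is the characterization in Proposition \ref{prop seq p infty char} with exponent $u=r<q$, which allows one to work with $L^r$-averages, where $E_j$ is trivially bounded.
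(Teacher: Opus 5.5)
Your argument is correct, but it is organized differently from the paper's.

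\textbf{The paper's route.} The paper never forms a discrete Peetre maximal sequence. It notes that, by definition, $\dot F^{s}_{\infty,q}(\{A_Q\}_{Q\in\Q},A)=\dot F^{s,1/q}_{q,q}(\{A_Q\}_{Q\in\Q},A)$ and $\dot f^{s}_{\infty,q}(A)=\dot f^{s,1/q}_{q,q}(A)$. It then feeds the already $y$-averaged continuous estimate (\ref{eq AQ r trick}) directly into Lemma \ref{lemma r trick all  tau}, with Lebesgue exponent $q$ and $\tau=1/q$. All nonlocal interactions are absorbed inside that lemma, via annuli at the scale of $P$ plus Fefferman--Stein. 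The case $q=\infty$ is handled by the trivial bound $\sup_{x\in Q}|A_Q\varphi_j*\vec f(x)|\le\|\mathbb A_j\varphi_j*\vec f\|_{L^\infty}$.

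\textbf{Your route.} You average the discrete identity to get $\sup_{\mathbb A,\varphi,Q}\vec f\lesssim (u^\ast_{r,\lambda})_Q$, where $u_R$ are the $L^r$-means of $\mathbb A_j\varphi_j*\vec f$ over $R$. You then hand every nonlocal interaction to Bownik's discrete lemma (Lemma \ref{lemma peetre p infty}). What remains is the purely local comparison $\|u\|_{\dot f^s_{\infty,q}(A)}\lesssim\|\vec f\|_{\dot F^s_{\infty,q}(\{A_Q\}_{Q\in\Q},A)}$. This follows from $E_j(g_j^r)\lesssim\M_{\rho_A}(g_j^r\chi_{\tilde P})$ on $P$ and the scalar $L^{q/r}$-boundedness of $\M_{\rho_A}$. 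Since the integrability and summability exponents coincide, Lemma \ref{lemma M weak 1,1 strong p,p} already suffices; the vector-valued inequality is not needed.

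\textbf{Comparison.} The paper's route is a two-line reduction once Lemma \ref{lemma r trick all  tau} is available. Yours is more modular: it avoids both that lemma and the identification with the $\tau=1/q$ spaces. It also makes transparent that the only genuinely nonlocal input is the sequence-space Peetre inequality.

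\textbf{Your last paragraph is unnecessary.} Once Lemma \ref{lemma peetre p infty} is used as a black box, there is no far piece left, exactly as you observed for $E_j$. So re-running the $w+v$ splitting from the proof of Lemma \ref{lemma unweighted sequence morrey peetre} is not needed. The proposed fallback through Proposition \ref{prop seq p infty char} with exponent $r<q$ is also not as easy as you suggest. It would require bounding $\{E_j\}$ on $L^1(\ell^{q/r})$, and that fails for $q/r>1$.
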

\begin{proof}
	The first inequality is trivial. To prove the second inequality, we consider it into two cases on $q$.
	
		Case $q\in (0,\infty)$. 	
	Fix $r \in  (0,  \min\{1,p,q\} )$. Let $M > \Delta + 1 /r $ where $\Delta$ is the same as in Lemma \ref{lemma cube AQ AR-1}.	
	From (\ref{eq AQ r trick}), we have that for $x\in Q \in \Q_j$, 
\begin{equation*}
		|A_Q (\varphi_j * \vec f ) (x) |^r  \lesssim  |\det A|^j\int_\rn  |\mathbb A_j \varphi_j * \vec f (z) |^r   \frac{1}{  (1 + |\det A|^j \rho_A  ( x- z ) ) ^{(M -\Delta) r} }  \d z 
\end{equation*}
where  $\mathbb A_j$ is the same as in (\ref{eq def A_j}).
For each $j \in \mathbb Z$, 
let 
\begin{equation*}
	g_j := \sum_{Q \in \Q_j } \sup_{ \mathbb A , \varphi, Q  } \vec f \chi_Q  \quad \operatorname{and} \quad 	h_j = \mathbb A_j \varphi_j *\vec f .
\end{equation*}
By Lemma \ref{lemma r trick all  tau}, we obtain
\begin{align*}
\left\|  \sup_{ \mathbb A , \varphi} \vec f      \right\|_{\dot f_{\infty,q}^{s} ( A) } &  = 	\left\|  \sup_{ \mathbb A , \varphi} \vec f      \right\|_{\dot f_{q,q}^{s,1/q} ( A) } =  \left\| 	\{ |\det A|^{js} g_j  \}_{j\in\mathbb Z}   \right\|_{L \dot F_{q,q}^{1/q} }  \\
&  \lesssim \left\| 	\{ |\det A|^{js} h_j  \}_{j\in\mathbb Z}   \right\|_{L \dot A_{q,q}^{1/q} }
	=  \left\|   \vec f     \right  \|_{\dot F_{\infty,q}^{s} (\{A_Q\}_{Q\in \Q} ,A) }  .
\end{align*}
	
	Case $q=\infty$. It easy to see that
	\begin{align*}
		\sup_{ \mathbb A , \varphi, Q  } 	|A_Q (\varphi_j * \vec f ) (x) | \le \| \mathbb A_j \varphi_j * \vec f \|_{L^\infty} .
	\end{align*}
where   $\mathbb A_j$ is the same as in (\ref{eq def A_j}).
 Then 
\begin{align*}
	\|  \sup_{ \mathbb A , \varphi} \vec f       \|_{\dot f_{\infty,\infty}^{s} ( A) } =  	\| \vec f\|_{  \dot F_{\infty, \infty }^{s} (\{A_Q\}_{Q\in \Q} ,A)  }  = \| \sup_{j\in \mathbb Z}  \mathbb A_j \varphi_j * \vec f  \|_{L^\infty} .
\end{align*}
Thus the proof is complete.
\end{proof}

The following lemma shows that the inverse $\varphi$-transform $T_\psi$ is well defined for any $ \vec s \in \dot f_{\infty,q}^{s} (\{A_Q\}_{Q\in \Q},A)   $.

\begin{lemma}
	Let $A $ be a dilation. 	Let $s\in \mathbb R$, $p\in (0,\infty)$,
	$q \in (0,\infty]$.
	Let  $W \in \A_p$ and $\{ A_Q\}_{Q\in \Q}$ be a sequence of reducing operators of order $p$ for $W$.
Let
\begin{equation*}
	L > \max\{ \tilde  d / p'   - s +  1/2  , d/p    + s + 1/2 , \Delta + 1 \}
\end{equation*}
where $\tilde d, d, \Delta $  is the same as in Lemma \ref{lemma cube AQ AR-1}.
	Then there exist constants $N, C > 0$ such that for
	\begin{equation*}
		\sum_{Q\in \Q} |  \langle \psi_Q, \phi\rangle | | \vec s_Q| \le C  \| \vec s\|_{\dot f_{\infty,q}^{s} (\{A_Q\}_{Q\in \Q},A)  }   \|\psi\|_{\S_N} \|\varphi\|_{\S_N}.
	\end{equation*}
\end{lemma}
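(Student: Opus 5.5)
The plan is to copy the proof of Lemma \ref{lemma well defined inverse varphi transform}, with $\tau=1/q$ and $p=q$ in the exponent bookkeeping. The first step is a pointwise bound on each coefficient in terms of the $\dot f_{\infty,q}^{s}$ norm. Fix $Q\in\Q$ and take $P=Q$ in (\ref{eq def f A_Q A p infty}). The sum over cubes $R$ with $\scale(R)\le\scale(Q)$ contains the single term $R=Q$, so
\begin{equation*}
|Q|^{-s-1/2}|A_Q\vec s_Q|\le \Big(\fint_Q |Q|^{(-s-1/2)q}|A_Q\vec s_Q|^q\chi_Q(x)\,\d x\Big)^{1/q}\le \|\vec s\|_{\dot f_{\infty,q}^{s}(\{A_Q\}_{Q\in\Q},A)}.
\end{equation*}
For $q=\infty$ this bound is immediate from the definition. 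Hence
\begin{equation*}
|\vec s_Q|\le \|A_Q^{-1}\|\,|Q|^{s+1/2}\,\|\vec s\|_{\dot f_{\infty,q}^{s}(\{A_Q\}_{Q\in\Q},A)}.
\end{equation*}
This agrees with the exponent $\tau+s+1/2-1/p$ from the earlier lemma when $\tau=1/p$, and it explains why the conditions on $L$ in the statement contain no $\tau$ or $1/p$.

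The second step controls $\|A_Q^{-1}\|$ by comparing with the unit cube. Exactly as before, write $\|A_Q^{-1}\|\le \|A_{[0,1)^n}^{-1}\|\,\|A_{[0,1)^n}A_Q^{-1}\|$. Lemma \ref{lemma cube AQ AR-1} with $R$ replaced by $Q$ and $Q$ replaced by $[0,1)^n$ then gives
\begin{equation*}
\|A_Q^{-1}\|\lesssim \max\big(|Q|^{d/p},|Q|^{-\tilde d/p'}\big)\Big(1+\frac{\rho_A(c_Q)}{|Q|\vee 1}\Big)^{\Delta}.
\end{equation*}
The factor $\|A_{[0,1)^n}^{-1}\|$ is a fixed constant depending on $W$.

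The third step estimates the pairing. Lemma \ref{lemma psi_Q varphi_P}, applied with $P=[0,1)^n$, gives
\begin{equation*}
|\langle\psi_Q,\phi\rangle|\lesssim \|\psi\|_{\S_N}\|\phi\|_{\S_N}\Big(1+\frac{\rho_A(x_Q)}{1\vee|Q|}\Big)^{-L}\min(|Q|,|Q|^{-1})^L.
\end{equation*}
Multiplying the three bounds reduces the claim to showing that
\begin{equation*}
\sum_{Q\in\Q}|Q|^{s+1/2}\min\big(|Q|^{d/p-L},|Q|^{L-\tilde d/p'}\big)\Big(1+\frac{\rho_A(c_Q)}{|Q|\vee1}\Big)^{\Delta-L}<\infty.
\end{equation*}
To get this, I split the sum into $j\ge0$ (small cubes, $|Q|=|\det A|^{-j}$) and $j<0$ (large cubes). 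In each piece, replace $\rho_A(c_Q)$ by $\rho_A(x_Q)$ using (\ref{eq baisic geo}), at the cost of a constant.

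The main obstacle is the counting in each piece, and it is where the three conditions on $L$ enter. For $j\ge0$, the spatial sum $\sum_{k\in\mathbb Z^n}(1+\rho_A(A^{-j}k))^{\Delta-L}$ is comparable to $|\det A|^j$. This follows from (\ref{eq rho A B approx r}) by comparing the sum with the integral $|\det A|^{j}\int(1+\rho_A(y))^{\Delta-L}\,\d y$, which is finite because $L-\Delta>1$. The resulting geometric series in $j$ converges exactly when $L>\tilde d/p'-s+1/2$; the exponent bookkeeping here should be rechecked against the $\tau$-version. For $j<0$, the spatial sum $\sum_k(1+\rho_A(k))^{\Delta-L}$ is uniformly bounded, again because $L-\Delta>1$. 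The series in $j$ then converges when $L>d/p+s+1/2$. These two conditions, together with $L>\Delta+1$, are precisely the hypotheses of the lemma. Finally, I choose $N$ as supplied by Lemma \ref{lemma psi_Q varphi_P} for this $L$.
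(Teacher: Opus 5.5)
Your proposal is correct and follows the route the paper intends; the paper omits the proof and defers to Lemma \ref{lemma well defined inverse varphi transform}. Testing the norm at $P=Q$ gives $|A_Q\vec s_Q|\le |Q|^{s+1/2}\|\vec s\|_{\dot f^{s}_{\infty,q}(\{A_Q\}_{Q\in\Q},A)}$, which is the earlier coefficient bound with $\tau=1/p$, and your exponent bookkeeping checks out. The $j\ge 0$ part reduces to $\sum_{j\ge0}|\det A|^{-j(L-\tilde d/p'+s-1/2)}$, and the $j<0$ part to $\sum_{i>0}|\det A|^{-i(L-d/p-s-1/2)}$, which matches the stated conditions on $L$ exactly.
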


\begin{proof}
 The proof  is similar to that of  Lemma \ref{lemma well defined inverse varphi transform} and we omit it here.
\end{proof}

\begin{theorem} \label{theorem wavelet trans p infty TL}
	Let $A $ be a dilation. 
	Let $s\in \mathbb R$, $p\in  (0,\infty)$,
	$q \in (0,\infty]$.  
	Let  $W \in \A_p$ and $\{ A_Q\}_{Q\in \Q}$ be a sequence of reducing operators of order $p$ for $W$.
		Let $\varphi, \psi \in \mathcal S (\rn)$ satisfy (\ref{eq varphi supp}) and (\ref{eq varphi > 0}).
			Let $\tilde \varphi (x): = \overline{\varphi (-x)}$.
		Then the operators
	\begin{equation*}
		S_\varphi :  \dot F_{ \infty, q} ^{s} (\{A_Q\}_{Q\in \Q},A,  \tilde \varphi  ) \to \dot f _{ \infty, q} ^{s} (\{A_Q\}_{Q\in \Q},A  ) \; \operatorname{and } \;  T_\psi : \dot f _{ \infty, q} ^{s} (\{A_Q\}_{Q\in \Q},A  ) \to  \dot F_{ \infty, q} ^{s} (\{A_Q\}_{Q\in \Q},A, \varphi  )
	\end{equation*}
	are bounded.	 Moreover, if $\varphi$ and $\psi $  satisfy (\ref{eq varphi psi =1}), then $T_\psi \circ S_\varphi $  is the identity on	 $  \dot F_{ \infty, q} ^{s} (\{A_Q\}_{Q\in \Q},A , \tilde \varphi )$.  
\end{theorem}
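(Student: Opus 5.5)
The plan is to copy the proof of Theorem \ref{theorem wavelet trans}, replacing each $p<\infty$ ingredient with its $p=\infty$ counterpart from this section. For $S_\varphi$, fix $\vec f\in \dot F_{\infty,q}^{s}(\{A_Q\}_{Q\in\Q},A,\tilde\varphi)$ and $Q\in\Q_j$. Exactly as before,
\begin{equation*}
|A_Q (S_\varphi\vec f)_Q| = |Q|^{1/2}|A_Q\,\tilde\varphi_j*\vec f(x_Q)|\le \sup_{\mathbb A,\tilde\varphi,Q}\vec f .
\end{equation*}
Since the $\dot f_{\infty,q}^{s}$ quasi-norm is monotone in $|A_Q\vec t_Q|$, this gives
\begin{equation*}
\|S_\varphi \vec f\|_{\dot f_{\infty,q}^{s}(\{A_Q\},A)}\le \Big\|\sup_{\mathbb A,\tilde\varphi}\vec f\Big\|_{\dot f_{\infty,q}^{s}(A)}\lesssim \|\vec f\|_{\dot F_{\infty,q}^{s}(\{A_Q\},A,\tilde\varphi)},
\end{equation*}
where the last step is Theorem \ref{theo A_Q sup le inf} applied to $\tilde\varphi$. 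That theorem is stated under (\ref{eq varphi psi =1}), but its proof only uses the Fourier support condition (\ref{eq varphi supp}), which $\tilde\varphi$ satisfies.

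For $T_\psi$, the first step is to pick $M$ with $\operatorname{supp}\F\varphi_j\cap\operatorname{supp}\F\psi_i=\emptyset$ when $|i-j|>M$. Then, for $x\in Q\in\Q_j$,
\begin{equation*}
|A_Q\varphi_j*(T_\psi\vec t)(x)|\le\sum_{i=j-M}^{j+M}\sum_{P\in\Q_i}\|A_QA_P^{-1}\|\,|A_P\vec t_P|\,|\varphi_j*\psi_P(x)|.
\end{equation*}
Next I will use the decay bound $|\varphi_j*\psi_P(x)|\lesssim |P|^{-1/2}(1+\rho_A(A^i(x-x_P)))^{-L}$ together with Lemma \ref{lemma cube AQ AR-1}; since $|i-j|\le M$, the latter gives $\|A_QA_P^{-1}\|\lesssim (1+\rho_A(c_Q-c_P)/|P|)^{\Delta}$. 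These bound the left side by $\sum_{|i-j|\le M}|\det A|^{i/2}\big((\{|A_R\vec t_R|\})^\ast_{1,L-\Delta}\big)_{Q^{(i)}}$, where $Q^{(i)}\in\Q_i$ contains $x$. I will choose $L-\Delta>\max(1,1/q)$.

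Then, multiplying by $|\det A|^{js}$ and taking the $\sup_P(\fint_P\sum_{j\ge-\scale(P)}\cdot^q)^{1/q}$ average, the shifts $i=j\pm k$ with $|k|\le M$ change $|\det A|^{js}$ only by a constant factor. They do, however, shift the range of scales by at most $M$. I will handle this by covering each $P$ by boundedly many cubes of scale $\scale(P)+M$ and then rescaling, which loses only a constant (via Lemma \ref{lemma basic cube ball cover}). The result is
\begin{equation*}
\|T_\psi\vec t\|_{\dot F_{\infty,q}^{s}(\{A_Q\},A,\varphi)}\lesssim \big\|(\{|A_R\vec t_R|\})^\ast_{1,L-\Delta}\big\|_{\dot f_{\infty,q}^{s}(A)}\approx\|\vec t\|_{\dot f_{\infty,q}^{s}(\{A_Q\},A)},
\end{equation*}
where the last equivalence is Lemma \ref{lemma f infty TL peetre seq} with $r=1$, $\lambda=L-\Delta$. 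For $q=\infty$ the same pointwise bound works directly with suprema. Convergence of $T_\psi\vec t$ in $(\S_\infty')^m$ comes from the preceding well-definedness lemma for $\dot f_{\infty,q}^s$.

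Finally, when (\ref{eq varphi psi =1}) holds, $T_\psi\circ S_\varphi=\mathrm{id}$ follows from Lemma \ref{lemma identity}, since every element is in $(\S_\infty')^m$. I expect the main obstacle to be the scale-shift bookkeeping in the $T_\psi$ step, namely passing from the $\Q_i$ Peetre maximal sequence to the $\Q_j$ indexing inside the averaged-over-$P$ norm. If the covering argument gets messy, I will fall back on Remark (ii): for $p=q$ the norm coincides with $\dot F^{s,1/q}_{q,q}$, whose scale-shift invariance is already implicit in Section \ref{sec BF spaces}.
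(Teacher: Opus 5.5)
Your proposal is correct and matches the paper's proof step for step. For $S_\varphi$ you use the pointwise domination $|A_Q(S_\varphi\vec f)_Q|\le \sup_{\mathbb A,\tilde\varphi,Q}\vec f$ together with Theorem~\ref{theo A_Q sup le inf}; for $T_\psi$ you use the almost-diagonal bound by $(\{|A_R\vec t_R|\})^\ast_{1,L-\Delta}$ together with Lemma~\ref{lemma f infty TL peetre seq}; and Lemma~\ref{lemma identity} gives the reproducing identity. Your covering of $P$ by boundedly many cubes of scale $\scale(P)+M$ to absorb the index shift $|i-j|\le M$, which the paper passes over silently, is handled correctly.
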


\begin{proof}
	We first prove the boundedness of $S_\varphi:  \dot F_{ \infty, q} ^{s} (\{A_Q\}_{Q\in \Q},A ,\tilde \varphi ) \to \dot f _{ \infty, q} ^{s} (\{A_Q\}_{Q\in \Q},A  ) $.
	For any $\vec f \in  \dot F_{ \infty, q} ^{s} (\{A_Q\}_{Q\in \Q},A, \tilde \varphi  ) $, let 
	\begin{equation*}
		\sup_{ \mathbb A ,\tilde \varphi} \vec f := \left\{  \sup_{ \mathbb A , \tilde \varphi, Q  } \vec f \right\}_{Q \in \Q}, 
	\end{equation*}
	where 
	\begin{equation*}
		\sup_{ \mathbb A ,  \tilde \varphi, Q  } \vec f := \sup_{y\in Q} |A_Q   \tilde \varphi_Q * \vec f (y) |  =  |Q|^{1/2} \sup_{y\in Q} |A_Q   \tilde \varphi_{ -\scale (Q) } * \vec f (y) |  .
	\end{equation*}
	For any $Q \in \Q_j$,
	\begin{equation*}
		|A_Q ( S_\varphi \vec f )_Q | \le | A_Q \langle \vec f , \varphi_Q \rangle | = |Q|^{1/2} | A_Q   \tilde \varphi_{ j}  *\vec f  ( x_Q) | \le \sup_{ \mathbb A ,  \tilde \varphi, Q  } \vec f  .
	\end{equation*}
	By Theorem \ref{theo A_Q sup le inf},
	\begin{equation*}
		\| S_\varphi \vec f \|_{  \dot f _{ \infty, q} ^{s} (\{A_Q\}_{Q\in \Q},A  )  } \le \left\|  \sup_{ \mathbb A ,  \tilde \varphi, Q  } \vec f  \right\|_{ \dot f _{ \infty, q} ^{s} (\{A_Q\}_{Q\in \Q},A  )  }  \approx \left\|   \vec f  \right\|_{   \dot F_{ \infty, q} ^{s} (\{A_Q\}_{Q\in \Q},A , \tilde \varphi )} .
	\end{equation*}
	
	Now we prove the boundedness of $T_\psi : \dot f _{ \infty, q} ^{s} (\{A_Q\}_{Q\in \Q},A  )  \to
	 \dot F_{ \infty, q} ^{s} (\{A_Q\}_{Q\in \Q},A,\varphi   ) $. 	 
	 Given any $x\in \rn$,    for each $i \in \{ j-M , j-M +1, \ldots, j+M  \}$,   let  $Q^{ (i)} $ be the unique dilated cube such that $x \in Q^{(i)}$ and $Q^{ (i)}  \in \Q_i $.
	  From the proof of Theorem \ref{theorem wavelet trans}, we have
	 for $x\in Q  \in \Q_j $,
	 \begin{align*}
	 	|  A_Q \varphi_j * (T_\psi  \vec t ) ( x) |  
	  \lesssim  \sum_{ i  = j-M }^{j+M}  |\det A|^{ i /2 }   \left (  \left( \{  |A_R \vec t_R \}_{R\in \Q} \right) ^\ast_{ 1, L-\Delta}    \right)_{Q^{ (i)}}  .
	 \end{align*}
where $\Delta$ is the same as in Lemma \ref{lemma cube AQ AR-1}.	 
	By choosing $  L -\Delta > 1/\min\{1,q \}  $, Lemma \ref{lemma f infty TL peetre seq} yields
	\begin{align*}
		\| T_\psi \vec t \|_{\dot F_{ \infty, q} ^{s} (\{A_Q\}_{Q\in \Q},A , \varphi   ) } \lesssim   \left\| \left( \{  |A_R \vec t_R \}_{R\in \Q} \right) ^\ast_{ 1, L-\Delta}  \right\|_{\dot f _{ \infty, q} ^{s} (\{A_Q\}_{Q\in \Q},A  ) } \lesssim   \| \vec t  \|_{ \dot f _{ \infty, q} ^{s} (\{A_Q\}_{Q\in \Q},A  ) } .
	\end{align*}

	Finally, if $\varphi$ and $\psi $  satisfy (\ref{eq varphi psi =1}), then, by Lemma \ref{lemma identity}, we find that $T_\psi \circ S_\varphi $  is the identity on	 $  \dot F_{ \infty, q} ^{s} (\{A_Q\}_{Q\in \Q},A , \tilde \varphi )$. 
\end{proof}

By an argument similar to that used in the proof of Theorem \ref{theorem independent varphi}, we obtain the following result that space $  \dot F_{ \infty, q} ^{s} (\{A_Q\}_{Q\in \Q},A  )$ is independent of the choice of $\varphi$; we omit the details.
\begin{theorem}\label{theorem indepen varphi p infty}
	Let $A $ be a dilation. 
	Let $s\in \mathbb R$, $p\in (0,\infty)$,
	$q \in (0,\infty]$.  
	Let  $W \in \A_p$ and $\{ A_Q\}_{Q\in \Q}$ be a sequence of reducing operators of order $p$ for $W$.
	Then $  \dot F_{ \infty, q} ^{s} (\{A_Q\}_{Q\in \Q},A  )$ is independent of the choice of $\varphi$.
\end{theorem}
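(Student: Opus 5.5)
We follow the argument of Theorem \ref{theorem independent varphi}, replacing the $p<\infty$ machinery with its $p=\infty$ counterpart, Theorem \ref{theorem wavelet trans p infty TL}. Let $\varphi^{(1)}$ and $\varphi^{(2)}$ be two admissible functions, i.e.\ both in $\mathcal S(\rn)$ and satisfying (\ref{eq varphi supp}) and (\ref{eq varphi > 0}). The aim is
\begin{equation*}
\|\vec f\|_{\dot F_{\infty,q}^{s}(\{A_Q\}_{Q\in\Q},A,\varphi^{(1)})}\lesssim \|\vec f\|_{\dot F_{\infty,q}^{s}(\{A_Q\}_{Q\in\Q},A,\varphi^{(2)})};
\end{equation*}
the reverse inequality then follows by symmetry.

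\textbf{Step 1 (dual pair for $\varphi^{(2)}$).} Write $\varphi^{(2)}=\widetilde{\theta}$ with $\theta(x):=\overline{\varphi^{(2)}(-x)}$. Then $\theta$ again satisfies (\ref{eq varphi supp}) and (\ref{eq varphi > 0}), since $\F\theta=\overline{\F\varphi^{(2)}}$. Using the standard construction as in \cite{BH06}, choose $\psi\in\mathcal S(\rn)$ with the same support condition such that $\theta,\psi$ satisfy the reproducing identity (\ref{eq varphi psi =1}). Concretely, set
\begin{equation*}
\F\psi(\xi):=\frac{\F\theta(\xi)}{\sum_{j\in\mathbb Z}|\F\theta((A^\ast)^j\xi)|^2}.
\end{equation*}
The denominator is bounded below on the support of $\F\theta$ by (\ref{eq varphi > 0}) together with compactness of the annulus. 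This $\psi$ satisfies (\ref{eq varphi > 0}) as well.

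\textbf{Step 2 (reproduce and transfer).} By Theorem \ref{theorem wavelet trans p infty TL}, applied with the pair $(\theta,\psi)$, the composition $T_\psi\circ S_\theta$ is the identity on $\dot F_{\infty,q}^{s}(\{A_Q\}_{Q\in\Q},A,\widetilde\theta)=\dot F_{\infty,q}^{s}(\{A_Q\}_{Q\in\Q},A,\varphi^{(2)})$. Hence $\vec f=T_\psi S_\theta\vec f$ in $(\S_\infty'(\rn))^m$, using Lemma \ref{lemma identity}.

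Next apply the boundedness of $T_\psi:\dot f_{\infty,q}^{s}(\{A_Q\}_{Q\in\Q},A)\to \dot F_{\infty,q}^{s}(\{A_Q\}_{Q\in\Q},A,\varphi^{(1)})$. This holds for any admissible $\varphi^{(1)}$ and $\psi$, since the $T_\psi$ part of Theorem \ref{theorem wavelet trans p infty TL} uses only the support conditions of $\varphi^{(1)}$ and $\psi$ and the almost-orthogonality estimate from \cite{BH06}. Combining this with the boundedness of $S_\theta$ from $\dot F_{\infty,q}^{s}(\{A_Q\}_{Q\in\Q},A,\widetilde\theta)$ gives
\begin{equation*}
\|\vec f\|_{\dot F_{\infty,q}^{s}(\{A_Q\},A,\varphi^{(1)})}=\|T_\psi S_\theta\vec f\|_{\dot F_{\infty,q}^{s}(\{A_Q\},A,\varphi^{(1)})}\lesssim\|S_\theta\vec f\|_{\dot f_{\infty,q}^{s}(\{A_Q\},A)}\lesssim\|\vec f\|_{\dot F_{\infty,q}^{s}(\{A_Q\},A,\varphi^{(2)})}.
\end{equation*}

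\textbf{Main obstacle.} The delicate point is the identity $\vec f=T_\psi S_\theta\vec f$ for an arbitrary $\vec f$ in the $\varphi^{(2)}$-space. This requires the synthesis series $\sum_Q\langle\vec f,\theta_Q\rangle\psi_Q$ to converge in $(\S_\infty')^m$. That convergence is guaranteed by the lemma preceding Theorem \ref{theorem wavelet trans p infty TL}: it shows $T_\psi$ is well defined on $\dot f_{\infty,q}^{s}(\{A_Q\},A)$, with $L$ chosen large relative to $d,\tilde d,\Delta$. Lemma \ref{lemma identity} then identifies the limit with $\vec f$ modulo polynomials, which is exactly equality in $\S_\infty'$. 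A second check is that the implicit constants depend only on $\varphi^{(1)},\psi,\theta$ and on $W$, and not on $\vec f$; this holds because the $T_\psi$ bound is obtained through Lemma \ref{lemma f infty TL peetre seq} with $L-\Delta>1/\min\{1,q\}$.
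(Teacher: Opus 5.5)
Your proposal is correct and is essentially the paper's argument: the paper omits this proof, referring to the proof of Theorem \ref{theorem independent varphi}, which uses exactly the same reproduce-then-transfer chain $\|\vec f\|_{\varphi^{(1)}}=\|T_\psi S_\theta\vec f\|_{\varphi^{(1)}}\lesssim\|S_\theta\vec f\|_{\dot f}\lesssim\|\vec f\|_{\varphi^{(2)}}$, with Theorem \ref{theorem wavelet trans p infty TL} replacing Theorem \ref{theorem wavelet trans}. Your choice $\theta=\widetilde{\varphi^{(2)}}$ and your explicit construction of the dual $\psi$ make precise what the paper leaves implicit, and they sort out the tilde bookkeeping that the paper's written version garbles.
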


The following conclusion is the main result of this section.
\begin{theorem}\label{theorem f infty f 1/p}
		Let $A $ be a dilation. 
	Let $s\in \mathbb R$, 
	$q \in (0,\infty]$, and  $p\in (0,\infty)$.  
	Let  $W \in \A_p$ and $\{ A_Q\}_{Q\in \Q}$ be a sequence of reducing operators of order $p$ for $W$.
	Then $  \dot F_{ \infty, q} ^{s} (\{A_Q\}_{Q\in \Q},A  ) = \dot F_{ p, q} ^{s, 1/p} (\{A_Q\}_{Q\in \Q},A  )$ and $  \dot f_{ \infty, q} ^{s} (\{A_Q\}_{Q\in \Q},A  ) = \dot f_{ p, q} ^{s, 1/p} (\{A_Q\}_{Q\in \Q},A  )$.
\end{theorem}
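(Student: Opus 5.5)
The plan is to prove the sequence-space identity first and then transfer it to the distribution spaces through the $\varphi$-transform. The idea is that, for $\tau=1/p$, the $\dot f_{p,q}^{s,1/p}$-quasi-norm is an $L^p$-average version of the $\dot f_{\infty,q}^{s}$-quasi-norm, and Proposition \ref{prop seq p infty char} says that the exponent of averaging $u$ is irrelevant.

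\emph{Step 1 (sequence spaces).} Fix $\vec t=\{\vec t_Q\}_{Q\in\Q}$ and put $r_Q:=|A_Q\vec t_Q|$, a scalar nonnegative sequence. By definition, $\|\vec t\|_{\dot f_{\infty,q}^{s}(\{A_Q\}_{Q\in\Q},A)}=\|r\|_{\dot f_{\infty,q}^{s}(A)}$. Writing out the $\dot f_{p,q}^{s,1/p}$ quasi-norm, one has $|\det A|^{j(s+1/2)}=|Q|^{-s-1/2}$ for $Q\in\Q_j$. The condition $j\ge-\scale(P)$ is exactly $\scale(Q)\le\scale(P)$, and $|P|^{-1/p}\|\cdot\|_{L^p(P)}=(\fint_P|\cdot|^p)^{1/p}$. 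Hence
\begin{equation*}
\|\vec t\|_{\dot f_{p,q}^{s,1/p}(\{A_Q\}_{Q\in\Q},A)}=\sup_{P\in\Q}\left(\fint_P\left(\sum_{\scale(Q)\le\scale(P)}|Q|^{(-s-1/2)q}r_Q^q\chi_Q(x)\right)^{p/q}\d x\right)^{1/p}.
\end{equation*}
This is exactly the left-hand side of (\ref{eq f s q infty equ}) with $u=p$. I read the exponent there as $-s-1/2$, consistent with the definition of $\dot f_{\infty,q}^{s}$; the proof of Proposition \ref{prop seq p infty char} only uses the weights $|Q|^{-s-1/2}$ through $G^{s,q}$. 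Only cubes $Q$ meeting $P$ contribute, so the sum agrees with the one in $G^{s,q}_P$. Proposition \ref{prop seq p infty char} then gives $\|\vec t\|_{\dot f_{p,q}^{s,1/p}(\{A_Q\},A)}\approx\|\vec t\|_{\dot f_{\infty,q}^{s}(\{A_Q\},A)}$. For $q=\infty$ the first case of that proposition applies directly, both sides reducing to $\sup_Q|Q|^{-s-1/2}r_Q$.

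\emph{Step 2 (function spaces).} Choose $\varphi,\psi$ satisfying (\ref{eq varphi supp}), (\ref{eq varphi > 0}) and (\ref{eq varphi psi =1}). By Theorem \ref{theorem independent varphi} and Theorem \ref{theorem indepen varphi p infty}, it is harmless to use $\tilde\varphi$ in the definitions of both spaces. Take $\vec f\in\dot F_{\infty,q}^{s}(\{A_Q\},A)$. Theorem \ref{theorem wavelet trans p infty TL} gives $S_\varphi\vec f\in\dot f_{\infty,q}^{s}(\{A_Q\},A)$, and by Step 1 this sequence also lies in $\dot f_{p,q}^{s,1/p}(\{A_Q\},A)$. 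Theorem \ref{theorem seq W approx A_Q} identifies that space with $\dot f_{p,q}^{s,1/p}(W,A)$. Theorem \ref{theorem wavelet trans} then yields $T_\psi S_\varphi\vec f\in\dot F_{p,q}^{s,1/p}(W,A)$, and Theorem \ref{theorem W app a app A_Q} converts this into $\dot F_{p,q}^{s,1/p}(\{A_Q\},A)$. The identity $T_\psi\circ S_\varphi=\mathrm{id}$, valid in $(\S_\infty'(\rn))^m$ by Lemma \ref{lemma identity}, gives $\vec f=T_\psi S_\varphi\vec f$ together with the norm bound. The reverse inclusion runs the same chain backwards: $S_\varphi$ on $\dot F_{p,q}^{s,1/p}$ by Theorem \ref{theorem wavelet trans}, then Step 1, then $T_\psi$ on $\dot f_{\infty,q}^{s}$ by Theorem \ref{theorem wavelet trans p infty TL}.

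\emph{Main obstacle.} The substantive content is Step 1, namely the application of Proposition \ref{prop seq p infty char}. The care needed there is the bookkeeping: matching the range $\scale(Q)\le\scale(P)$, including cubes $Q$ that merely meet $P$, with $j\ge-\scale(P)$, and confirming that the weighted sequence $r_Q=|A_Q\vec t_Q|$ fits the scalar statement. The latter holds because the reducing operators enter only through $r_Q$. Step 2 is a routine composition of the $\varphi$-transform results already proved.
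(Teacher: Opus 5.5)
Your proposal is correct and follows the paper's proof. The sequence-space identity comes from Proposition \ref{prop seq p infty char} with $u=p$, applied to the scalar sequence $u_Q=|A_Q\vec t_Q|$; the function-space identity is then obtained from $T_\psi\circ S_\varphi=\mathrm{id}$ and the boundedness of $S_\varphi$ and $T_\psi$ on both the $p=\infty$ scale and the $\tau=1/p$ scale. Your extra bookkeeping makes explicit what the paper uses implicitly: reading the exponent in (\ref{eq f s q infty equ}) as $-s-1/2$, handling $\tilde\varphi$ via independence of $\varphi$, and passing between the $W$- and $\{A_Q\}$-versions via Theorems \ref{theorem seq W approx A_Q} and \ref{theorem W app a app A_Q}.
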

\begin{proof}
	We first show that  $ \dot f_{ \infty, q} ^{s} (\{A_Q\}_{Q\in \Q},A  ) =\dot f_{ p, q} ^{s, 1/p} (\{A_Q\}_{Q\in \Q},A  )$.
	For $\vec t =\{ \vec t_Q\}_{Q\in \Q} \subset \mathbb C^m$, define $ u : = \{u_Q \}_{ Q\in \Q}$ by setting $u_Q := |A_Q \vec t_Q|$ for each $Q\in \Q$.
	By Proposition \ref{prop seq p infty char}, we obtain
	\begin{equation*}
		\| \vec  t\|_{\dot f_{ \infty, q} ^{s} (\{A_Q\}_{Q\in \Q},A  ) } = \| u \|_{\dot f_{ \infty, q} ^{s} (A  ) } \approx  \| u \|_{\dot f_{p, q} ^{s, 1/p } (A  ) }   =  \| \vec  t\|_{\dot f_{ p, q} ^{s, 1/p} (\{A_Q\}_{Q\in \Q},A  )  }.
	\end{equation*}
	
Next we prove that $ \dot F_{ \infty, q} ^{s} (\{A_Q\}_{Q\in \Q},A  ) =\dot F_{ p, q} ^{s, 1/p} (\{A_Q\}_{Q\in \Q},A  ) $.
From Theorems \ref{theorem wavelet trans p infty TL} and \ref{theorem wavelet trans} and the above result,  we have
\begin{align} \label{eq F infty F 1/p}
	\nonumber 
	\| \vec f\|_{\dot F_{ \infty, q} ^{s} (\{A_Q\}_{Q\in \Q},A  )  } & = \left\|  T_\psi \circ S_\varphi \vec f \right\|_{\dot F_{ \infty, q} ^{s} (\{A_Q\}_{Q\in \Q},A  )} \lesssim \left\|   S_\varphi \vec f \right\|_{\dot f_{ \infty, q} ^{s} (\{A_Q\}_{Q\in \Q},A  )}  \\
	& \approx \left\|   S_\varphi \vec f \right\|_{\dot f_{ p, q} ^{s, 1/p} (\{A_Q\}_{Q\in \Q},A  )} \lesssim \left\|   \vec f \right\|_{\dot F_{ p, q} ^{s, 1/p} (\{A_Q\}_{Q\in \Q},A  )} .
\end{align}
Applying an argument similar to that used in the estimation of (\ref{eq F infty F 1/p}), we obtain the reverse inequality. Thus, $\dot F_{ \infty, q} ^{s} (\{A_Q\}_{Q\in \Q},A  ) =\dot F_{ p, q} ^{s, 1/p} (\{A_Q\}_{Q\in \Q},A  )  $.
\end{proof}

Applying Propositions \ref{prop F embed S prime}, \ref{prop complete} and \ref{prop shfit}, we obtain the following results; we omit the details.

\begin{proposition} 
	Let $A $ be a dilation. 
	Let $s\in \mathbb R$, $p \in (0,\infty)$,
	$q \in (0,\infty]$.  
	Let  $W \in \A_p$ and $\{ A_Q\}_{Q\in \Q}$ be a sequence of reducing operators of order $p$ for $W$. Then 
	$ \dot F_{ \infty, q} ^{s} (\{A_Q\}_{Q\in \Q},A  ) \subset ( \S_\infty ^\prime )^m $,
	and $\dot F_{ \infty, q} ^{s} (\{A_Q\}_{Q\in \Q},A  ) $  is a   complete quasi-norm space.
\end{proposition}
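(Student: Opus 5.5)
Both assertions will be reduced to results already established for the Triebel-Lizorkin-type spaces via Theorem \ref{theorem f infty F 1/p}. The embedding rests on the identification $\dot F_{\infty,q}^{s}(\{A_Q\}_{Q\in\Q},A)=\dot F_{p,q}^{s,1/p}(\{A_Q\}_{Q\in\Q},A)$ with equivalent quasi-norms. Combined with Theorem \ref{theorem W app a app A_Q}, applied with $\tau=1/p$, this gives
\begin{equation*}
\|\vec f\|_{\dot F_{\infty,q}^{s}(\{A_Q\}_{Q\in\Q},A)}\approx \|\vec f\|_{\dot F_{p,q}^{s,1/p}(W,A)} .
\end{equation*}
Proposition \ref{prop F embed S prime}, with $\tau=1/p$, then yields $\dot F_{\infty,q}^{s}(\{A_Q\}_{Q\in\Q},A)\subset(\S_\infty'(\rn))^m$. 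It also gives the quantitative bound
\begin{equation*}
|\langle\vec f,\phi\rangle|\lesssim\|\vec f\|_{\dot F_{\infty,q}^{s}(\{A_Q\}_{Q\in\Q},A)}\|\phi\|_{\S_N}
\end{equation*}
for some $N\in\mathbb N$ and all $\phi\in\S_\infty(\rn)$.

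Alternatively, I could argue directly, in parallel with the proof of Proposition \ref{prop F embed S prime}. Take $\varphi,\psi$ satisfying (\ref{eq varphi supp}), (\ref{eq varphi > 0}) and (\ref{eq varphi psi =1}) and write $\vec f=T_\psi S_\varphi\vec f$ by Lemma \ref{lemma identity}. Then bound $\sum_Q|(S_\varphi\vec f)_Q||\langle\psi_Q,\phi\rangle|$ by the $p=\infty$ well-definedness lemma preceding Theorem \ref{theorem wavelet trans p infty TL}. Finally, use the boundedness of $S_\varphi$ from Theorem \ref{theorem wavelet trans p infty TL} to control $\|S_\varphi\vec f\|_{\dot f_{\infty,q}^s(\{A_Q\},A)}$ by the function-space quasi-norm.

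For completeness, I would follow \cite[Proposition 2.3.1]{Gra142}, as in Proposition \ref{prop complete}. Let $\{\vec f_k\}$ be Cauchy in $\dot F_{\infty,q}^{s}(\{A_Q\}_{Q\in\Q},A)$. By the estimate above it is Cauchy in $(\S_\infty')^m$, hence converges weak-$\ast$ to some $\vec f\in(\S_\infty')^m$. Convolution with $\varphi_j\in\S_\infty$ commutes with this limit, so $\varphi_j*\vec f_k(x)\to\varphi_j*\vec f(x)$ pointwise for every $j$ and $x$. Fix $P$ and apply Fatou's lemma to the integral $\fint_P\sum_{j\ge-\scale(P)}|\det A|^{jsq}|\mathbb A_j\varphi_j*(\vec f-\vec f_k)|^q$, letting $\ell\to\infty$ in $\vec f_\ell-\vec f_k$. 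This gives $\|\vec f-\vec f_k\|\le\liminf_\ell\|\vec f_\ell-\vec f_k\|$, which tends to $0$ as $k\to\infty$, so $\vec f$ lies in the space and $\vec f_k\to\vec f$. The case $q=\infty$ is handled the same way with sup-norms. Alternatively, completeness transfers at once from Proposition \ref{prop complete} through the equivalence of quasi-norms with $\dot F_{p,q}^{s,1/p}(W,A)$.

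The main obstacle is bookkeeping rather than substance. The identification in Theorem \ref{theorem f infty F 1/p} relies on the $\varphi$-transform, so the same $\varphi$ (up to $\tilde\varphi$) must be used throughout; Theorem \ref{theorem indepen varphi p infty} removes this dependence. One must also check that the pointwise limit passes through $\mathbb A_j$; this holds because $\mathbb A_j$ is a fixed matrix on each cube in $\Q_j$.
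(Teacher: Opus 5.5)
Your proposal is correct and follows essentially the paper's route. The paper disposes of this statement by invoking Propositions \ref{prop F embed S prime} and \ref{prop complete}, which amounts to your reduction via Theorem \ref{theorem f infty f 1/p} and Theorem \ref{theorem W app a app A_Q}. Your Fatou-type completeness argument is just the omitted Grafakos-style proof written out, and its substantive content is the continuity estimate $|\langle\vec f,\phi\rangle|\lesssim\|\vec f\|_{\dot F_{\infty,q}^{s}(\{A_Q\}_{Q\in\Q},A)}\|\phi\|_{\S_N}$, since membership in $(\S_\infty'(\rn))^m$ is already built into the definition of the space.
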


\begin{proposition}
	Let $A $ be a dilation. 
	Let $s\in \mathbb R$, $p \in (0,\infty)$,
	$q \in (0,\infty]$.  
Let  $W \in \A_p$ and $\{ A_Q\}_{Q\in \Q}$ be a sequence of reducing operators of order $p$ for $W$.
	Let $\sigma\in \mathbb R$. 
	Then for any $\vec f \in  (\S_\infty ^\prime (\rn) )^m$, 
	\begin{equation*}
		\|\dot I_\sigma \vec f \|_{\dot F_{ \infty, q} ^{s -\sigma} (\{A_Q\}_{Q\in \Q},A  )   }  \approx \| \vec f \|_{\dot F_{ \infty, q} ^{s} (\{A_Q\}_{Q\in \Q},A  )   } .
	\end{equation*}
where $\dot I_\sigma $  is defined in (\ref{eq def I sigma}).
\end{proposition}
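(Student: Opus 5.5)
The plan is to transfer the lifting property from Proposition \ref{prop shfit} to the endpoint space. We use Theorem \ref{theorem f infty f 1/p}, which identifies $\dot F_{\infty,q}^{s}(\{A_Q\}_{Q\in\Q},A)$ with $\dot F_{p,q}^{s,1/p}(\{A_Q\}_{Q\in\Q},A)$ for every $s\in\mathbb R$, with equivalent quasi-norms. We then combine this with Theorem \ref{theorem W app a app A_Q}, which gives $\dot F_{p,q}^{s,\tau}(\{A_Q\}_{Q\in\Q},A)$ the same quasi-norm, up to constants, as $\dot F_{p,q}^{s,\tau}(W,A)$.

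The chain of estimates is the following. For $\vec f\in(\S_\infty'(\rn))^m$, applying both identifications once with smoothness $s-\sigma$ and once with $s$, we get
\begin{align*}
\|\dot I_\sigma\vec f\|_{\dot F_{\infty,q}^{s-\sigma}(\{A_Q\}_{Q\in\Q},A)}
&\approx \|\dot I_\sigma\vec f\|_{\dot F_{p,q}^{s-\sigma,1/p}(\{A_Q\}_{Q\in\Q},A)}
\approx \|\dot I_\sigma\vec f\|_{\dot F_{p,q}^{s-\sigma,1/p}(W,A)}\\
&\approx \|\vec f\|_{\dot F_{p,q}^{s,1/p}(W,A)}
\approx \|\vec f\|_{\dot F_{p,q}^{s,1/p}(\{A_Q\}_{Q\in\Q},A)}
\approx \|\vec f\|_{\dot F_{\infty,q}^{s}(\{A_Q\}_{Q\in\Q},A)},
\end{align*}
where the middle step is Proposition \ref{prop shfit} with $\tau=1/p$. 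The constants do not depend on $\vec f$, because each cited result gives constants independent of $\vec f$ and $s$ only enters through fixed parameters.

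Two points need checking.

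\textbf{Validity for all $\vec f$.} Theorem \ref{theorem f infty f 1/p} is stated as an equality of sets, but its proof gives norm equivalence. The inequality (\ref{eq F infty F 1/p}) and its reverse hold for any $\vec f\in(\S_\infty')^m$, with both sides possibly infinite. This is because the identity $T_\psi\circ S_\varphi\vec f=\vec f$ from Lemma \ref{lemma identity} holds in $\S_\infty'$ for every such $\vec f$. So the equivalence is valid even when one side is infinite, and the statement holds for all $\vec f$ rather than only for elements of the space.

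\textbf{Choice of $\varphi$.} The function $\varphi$ used in the definitions must be allowed to vary. This is covered by Theorem \ref{theorem indepen varphi p infty} and Theorem \ref{theorem independent varphi}.

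\textbf{Main obstacle and fallback.} The main obstacle is the step inside Proposition \ref{prop shfit}. It uses that $\psi=\F^{-1}(\rho_{A^\ast}^\sigma\F\varphi)$ is an admissible Schwartz function satisfying (\ref{eq varphi supp}) and (\ref{eq varphi > 0}). This holds because $\F\varphi$ is supported away from the origin, so $\rho_{A^\ast}^\sigma$ is bounded above and below there. However, $\rho_{A^\ast}$ is not smooth in general, so one should take a smooth quasi-norm, which exists by \cite{LP94}. If one prefers to avoid going through $W$, the fallback is to argue directly: the identity $\varphi_j*\dot I_\sigma\vec f=|\det A|^{j\sigma}\psi_j*\vec f$ shows
\[
\|\dot I_\sigma\vec f\|_{\dot F_{\infty,q}^{s-\sigma}(\{A_Q\}_{Q\in\Q},A,\varphi)}=\|\vec f\|_{\dot F_{\infty,q}^{s}(\{A_Q\}_{Q\in\Q},A,\psi)},
\]
and Theorem \ref{theorem indepen varphi p infty} then finishes the argument. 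The reverse inequality follows by applying the same argument to $\dot I_{-\sigma}$.
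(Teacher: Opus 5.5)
Your argument is correct and is the route the paper intends. The paper proves this statement only by saying it follows by applying Proposition \ref{prop shfit}, with details omitted. Your transfer through Theorem \ref{theorem f infty f 1/p} and Theorem \ref{theorem W app a app A_Q} at $\tau=1/p$ supplies exactly those details, and your direct fallback mirrors the proof of Proposition \ref{prop shfit} itself.

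Your caveat that $\rho_{A^\ast}$ must be taken smooth away from the origin is a real point that the paper's proof of Proposition \ref{prop shfit} passes over. Otherwise $\psi$ is not Schwartz and $\dot I_\sigma$ is not even defined on $\S_\infty'$.
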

Next we obtain an embedding 	$ \dot A_{ p, q} ^{s, \tau} (\{A_Q\}_{Q\in \Q},A  )  \hookrightarrow \dot F_{ \infty, \infty } ^{s +\tau -1/p } (\{A_Q\}_{Q\in \Q},A  ) $ by the following proposition.
\begin{proposition}
		Let $A $ be a dilation. 
	Let $s\in \mathbb R$, $\tau \in [0,\infty) $,  $p \in (0,\infty)$,
$q \in (0,\infty]$. 
	Let  $W \in \A_p$ and $\{ A_Q\}_{Q\in \Q}$ be a sequence of reducing operators of order $p$ for $W$.
	Then \begin{equation*}
	 \dot A_{ p, q} ^{s, \tau} (\{A_Q\}_{Q\in \Q},A  )  \hookrightarrow \dot F_{ \infty, \infty } ^{s +\tau -1/p  } (\{A_Q\}_{Q\in \Q},A  ) .
	\end{equation*}
\end{proposition}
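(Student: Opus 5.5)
The plan is to reduce the embedding to a pointwise bound for each single level $j$, obtained by controlling a value of $\mathbb A_j \varphi_j * \vec f$ through the $L^p$ norm on the dilated cube of scale $-j$ that contains the point. Since $\dot B_{p,q}^{s,\tau} \hookrightarrow \dot B_{p,\infty}^{s,\tau}$ and $\dot F_{p,q}^{s,\tau}\hookrightarrow \dot F_{p,\infty}^{s,\tau}\hookrightarrow \dot B_{p,\infty}^{s,\tau}$ (by the monotonicity of $\ell^q$ and the embedding lemma after Definition \ref{def 8 func spaces}, which also holds for the $\{A_Q\}$ spaces with the same proof), it suffices to treat $\dot B_{p,\infty}^{s,\tau}(\{A_Q\}_{Q\in\Q},A)$. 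In that case the norm dominates, for every $Q\in\Q_j$,
\begin{equation*}
|Q|^{-\tau}|\det A|^{js}\|\mathbb A_j\varphi_j*\vec f\|_{L^p(Q)} ,
\end{equation*}
because $j=-\scale(Q)$ is admissible in $\widehat Q$.

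The key step is the pointwise inequality (\ref{eq AQ r trick}) from the proof of Theorem \ref{theorem sup A_Q less inf}. Fix $r\in(0,\min\{1,p\})$ and $M$ large, with $(M-\Delta)r>1$. For $x\in Q\in\Q_j$ that inequality gives
\begin{equation*}
|A_Q\varphi_j*\vec f(x)|^r\lesssim \sum_{R\in\Q_j}\frac{|\det A|^j\int_R|A_R\varphi_j*\vec f(z)|^r\,\d z}{(1+|\det A|^j\rho_A(x_Q-x_R))^{(M-\Delta)r}} .
\end{equation*}
Apply H\"older's inequality with exponent $p/r$ on each $R$:
\begin{equation*}
|\det A|^j\int_R|\mathbb A_j\varphi_j*\vec f|^r \le |R|^{-r/p}\|\mathbb A_j\varphi_j*\vec f\|_{L^p(R)}^r \le |R|^{-r/p}\left(|R|^{\tau}|\det A|^{-js}\|\vec f\|\right)^r .
\end{equation*}
With $|R|=|\det A|^{-j}$, the sum over $R$ is uniformly finite because $(M-\Delta)r>1$; this is the counting argument used in Lemma \ref{lemma well defined inverse varphi transform}, via Lemma \ref{lemma rho A and |x|}.

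The result is
\begin{equation*}
|\det A|^{j(s+\tau-1/p)}\,|\mathbb A_j\varphi_j*\vec f(x)|\lesssim \|\vec f\|_{\dot B_{p,\infty}^{s,\tau}(\{A_Q\}_{Q\in \Q},A)}
\end{equation*}
for all $x$ and $j$. Taking the supremum over $x$ and $j$ gives the $\dot F_{\infty,\infty}^{s+\tau-1/p}(\{A_Q\}_{Q\in \Q},A)$ norm, by its definition with $q=\infty$. One can check that the exponent $-j\tau+j/p$ from $|R|^{\tau-1/p}$ combines with $|\det A|^{js}$ to give exactly $s+\tau-1/p$.

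The main obstacle is the step where the weight at $Q$ is moved to the weights at the neighbouring cubes $R$. Here the polynomial loss $(1+\cdot)^{\Delta}$ from Lemma \ref{lemma cube AQ AR-1} has to be absorbed, and that absorption is exactly what (\ref{eq AQ r trick}) already encodes, so I would cite it directly rather than redo it. The remaining point to check is that the constant in (\ref{eq AQ r trick}) does not depend on $j$; it does not, since it comes only from $\gamma\in\S$ and from Lemma \ref{lemma cube AQ AR-1}.
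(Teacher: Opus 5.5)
Your proof is correct and follows essentially the same route as the paper: both start from the pointwise estimate (\ref{eq AQ r trick}), bound each $\int_R |A_R\varphi_j*\vec f|^{(\cdot)}$ using only the single-level term $|R|^{\tau}|\det A|^{-js}\|\vec f\|$ of the norm (admissible since $j=-\scale(R)$), and sum the decaying factor over $R\in\Q_j$. The one difference is in the exponent. The paper applies (\ref{eq AQ r trick}) directly with exponent $p$, whereas you take $r<\min\{1,p\}$ and insert H\"older on each $R$. Your version is slightly more careful, because (\ref{eq AQ r trick}) was derived via the $r$-triangle inequality, which needs $r\le 1$.
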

\begin{proof}
	From (\ref{eq AQ r trick}), we have that for any $p \in (0,\infty) $,
\begin{equation*}
	|A_Q (\varphi_j * \vec f ) (x) |^p \lesssim   \sum_{R \in \Q_j }   |\det A|^j \int_R  |A_R \varphi_j * \vec f (z) |^p   \frac{1}{  (1 + |\det A|^j \rho_A  ( x_Q- x_R ) ) ^{(M -\Delta) p} }  \d z .
\end{equation*}
where $\Delta$ is the same as in Lemma \ref{lemma cube AQ AR-1} and $(M -\Delta) p > 1 $.

For $i \in \mathbb N $, let $  E_i := \{ R\in \Q_j :  i  <  |\det A|^{j} \rho_A  ( x_Q- x_R )   \le  i + 1 \} $ and $ E_0 : = \{ R\in \Q_j :    \rho_A  ( x_Q- x_R )  \le |\det A|^{-j}  \}$.
Then 
\begin{align*}
|A_Q (\varphi_j * \vec f ) (x) |^p  & 	\lesssim  \sum_{i =0 }^\infty  \sum_{R \in E_i }   |\det A|^j   (1+ i )^{- (M -\Delta) p }   \int_R  |A_R \varphi_j * \vec f (z) |^p     \d z  \\
	&\le \| \vec f\|_{  \dot A_{ p, q} ^{s, \tau} (\{A_Q\}_{Q\in \Q},A  ) } ^p  \sum_{i =0 }^\infty   \sum_{R \in E_i }   |\det A|^j   (1+ i )^{- (M -\Delta) p }    |\det A|^{-jsp}  |\det A|^{ - j \tau p }  \\
	& = \| \vec f\|_{  \dot A_{ p, q} ^{s, \tau} (\{A_Q\}_{Q\in \Q},A  ) } ^p  |\det A|^{j  (1+ p ( -s-\tau ) )  }   \sum_{i =0 }^\infty  \sum_{R \in E_i }       (1+ i )^{- (M -\Delta) p }     \\
	&\lesssim \| \vec f\|_{  \dot A_{ p, q} ^{s, \tau} (\{A_Q\}_{Q\in \Q},A  ) } ^p  |\det A|^{j  (1+ p ( -s-\tau ) )}  .
\end{align*}
Thus
\begin{align*}
	\| \vec f\|_{ \dot F_{ \infty, \infty } ^{s +\tau -1/p } (\{A_Q\}_{Q\in \Q},A  )  }   = \sup_{j\in\mathbb Z} \sup_{Q\in \Q_j}     |\det A|^{j  (s +\tau -1/p )   } |A_Q (\varphi_j * \vec f ) (x) | 
	 \lesssim \| \vec f\|_{  \dot A_{ p, q} ^{s, \tau} (\{A_Q\}_{Q\in \Q},A  ) }.
\end{align*}
Hence the proof is complete.
\end{proof}

\begin{theorem}\label{theorem a tau supercritical}
		Let $A $ be a dilation. Let $\mathbb A = \{ A_Q\}_{Q\in \Q}$ be a family of positive definite matrices.
	Let $s\in \mathbb R$, $p\in (0,\infty)$, 
	$q \in (0,\infty]$. Suppose that   $q\in (0,\infty)$, $\tau \in (1/p, \infty) <0$ or $ q=\infty $, $\tau \in [1/p, \infty)$.
	Then for $r=\{r_Q\}_{\Q\in \Q} \subset \mathbb  C $,
	\begin{equation*}
		\| r \|_{ \dot a_{ p, q} ^{s, \tau} ( A  ) } \approx \| r\|_{ \dot f_{\infty, \infty} ^{ s +\tau  - 1/p  } (A) } . 
	\end{equation*}
and  for $\vec t =\{ \vec t_Q\}_{\Q\in \Q} \subset  \mathbb  C^m $
\begin{equation*}
		\| \vec t \|_{ \dot a_{ p, q} ^{s, \tau} ( \{A_Q\}_{Q\in \Q}, A  ) } \approx \| \vec t \|_{ \dot f_{\infty, \infty} ^{ s +\tau  - 1/p  } (\{A_Q\}_{Q\in \Q} , A) } . 
\end{equation*}
\end{theorem}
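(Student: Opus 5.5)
Since $\|\vec t\|_{\dot a^{s,\tau}_{p,q}(\{A_Q\},A)}=\|u\|_{\dot a^{s,\tau}_{p,q}(A)}$ with $u_Q:=|A_Q\vec t_Q|$, and likewise for $\dot f^{s+\tau-1/p}_{\infty,\infty}$, I will reduce the matrix statement to the scalar one. The scalar claim has two inequalities, which I plan to prove directly from the definitions; no weights enter. Recall that $\|r\|_{\dot f^{\sigma}_{\infty,\infty}(A)}=\sup_{Q}|Q|^{-\sigma-1/2}|r_Q|$ with $\sigma=s+\tau-1/p$.

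\emph{Lower bound (valid for all $\tau\ge0$).} Fix $Q\in\Q_j$ and take $P=Q$ in the supremum defining $\dot a^{s,\tau}_{p,q}(A)$. Keep only the single term $j=-\scale(Q)$ and only the cube $Q$ itself. In both the $\dot b$ and $\dot f$ cases this gives
\[
|Q|^{-\tau}\,|Q|^{-s-1/2}|r_Q|\,|Q|^{1/p}\le \|r\|_{\dot a^{s,\tau}_{p,q}(A)}.
\]
That is exactly $|Q|^{-(s+\tau-1/p)-1/2}|r_Q|\lesssim\|r\|_{\dot a^{s,\tau}_{p,q}(A)}$. Taking the supremum over $Q$ yields $\|r\|_{\dot f^{\sigma}_{\infty,\infty}}\lesssim\|r\|_{\dot a^{s,\tau}_{p,q}}$.

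\emph{Upper bound.} Assume $\|r\|_{\dot f^{\sigma}_{\infty,\infty}}=1$, so that $|Q|^{-s-1/2}|r_Q|\le|Q|^{\tau-1/p}$. Fix $P$ with $\scale(P)=-j_0$. For $j\ge j_0$ the function $\sum_{Q\in\Q_j}|Q|^{-s-1/2}|r_Q|\chi_Q$ is bounded on $P$ by $|\det A|^{-j(\tau-1/p)}$. The cubes of $\Q_j$ meeting $P$ are essentially contained in a fixed enlargement of $P$ by Lemma \ref{lemma stacked cube}, but I only need the pointwise bound restricted to $P$, so this does not matter.

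In the Besov case, the $L^p(P)$ norm of the level-$j$ function is at most $|P|^{1/p}|\det A|^{-j(\tau-1/p)}$. Since $\tau-1/p>0$ in the case $q<\infty$, the geometric sum $\sum_{j\ge j_0}|\det A|^{-jq(\tau-1/p)}$ converges and is comparable to $|P|^{q(\tau-1/p)}$. The total is therefore $\lesssim|P|^{\tau}$, which after multiplying by $|P|^{-\tau}$ is $O(1)$. In the Triebel case the same geometric series is summed pointwise in $\ell^q$ before taking the $L^p(P)$ norm, and again gives $|P|^{\tau-1/p}\cdot|P|^{1/p}$.

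When $q=\infty$ and $\tau=1/p$ no summation is needed: the supremum over $j$ is bounded by $1$, and the $L^p(P)$ norm contributes $|P|^{1/p}=|P|^{\tau}$. For $q=\infty$ and $\tau>1/p$ the supremum over $j\ge j_0$ is attained at $j_0$ and again gives $|P|^{\tau-1/p}$.

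The only real point of care is the convergence of the geometric series in $j$. This is exactly where the hypothesis $\tau>1/p$ is needed for $q<\infty$ and where equality $\tau=1/p$ is allowed only for $q=\infty$; that distinction is the expected main obstacle. I read the statement's condition ``$\tau\in(1/p,\infty)<0$'' as a typo for $\tau\in(1/p,\infty)$.

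The matrix-weighted equivalence then follows immediately by applying the scalar result to $u_Q=|A_Q\vec t_Q|$, using only that the $A_Q$ are positive definite. No $\A_p$ property is required.
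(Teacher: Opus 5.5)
Your proposal is correct and is essentially the paper's own proof. You reduce to the scalar sequence $u_Q=|A_Q\vec t_Q|$ and get the lower bound from the single term $P=Q$, $j=-\scale(Q)$. You get the upper bound by dominating each level function on $P$ by $\|r\|_{\dot f^{s+\tau-1/p}_{\infty,\infty}(A)}|\det A|^{j(1/p-\tau)}$ and summing the geometric series over $j\ge-\scale(P)$; this is exactly where $\tau>1/p$ (or $\tau=1/p$ with $q=\infty$) enters. Your reading of ``$\tau\in(1/p,\infty)<0$'' as a typo for $\tau\in(1/p,\infty)$ is also correct.
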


\begin{proof}
	It suffices to show  $	\| r \|_{ \dot a_{ p, q} ^{s, \tau} ( A  ) } \approx \| r\|_{ \dot f_{\infty, \infty} ^{ s +\tau  - 1/p  } (A) } $.
	For $r = \{ r_{ Q,j } \}_{j \in \mathbb Z, Q\in \mathcal Q_j}   $, 
	since $r_P $  is constant on $P$, we have
	\begin{align*}
		\| r \|_{ \dot a_{ p, q} ^{s, \tau} ( A  ) } & =  	\left\| \left\{ |\det A|^{j (s +1/2)} |r_{Q,j} |\right \}_{j\in \mathbb Z } \right\|_{ L \dot A _{ p,q }^\tau }  \\
		& =\sup_{P\in \Q} |P|^{-\tau}   \left\|  \left\{ |\det A|^{j (s +1/2)} |r_{Q,j} |\right \}_{j\in \mathbb Z } \right  \|_{ L \dot A _{ p,q }  (\widehat P)}  \\
		& \ge \sup_{P\in \Q} |P|^{-\tau}  \|   |\det A|^{ -\scale (P) (s +1/2)}  r_{P}  \|_{ L ^p  ( P)}  \\
		& = \sup_{P\in \Q} |P|^{-\tau +1/p }  \|   |\det A|^{ -\scale (P) (s +1/2)}  r_{P}  \|_{ L ^\infty   ( P)}  \\
		& = \| r\|_{ \dot f_{\infty, \infty} ^{ s +\tau  - 1/p } (A) } .
	\end{align*}
For the other direction, note that 
\begin{align*}
& \left\|  \left\{ |\det A|^{j (s +1/2)} |r_{Q,j} |\right \}_{j\in \mathbb Z } \right\|_{ L \dot A _{ p,q }  (\widehat P)} \\
&  \le  \left\|  \left\{ |\det A|^{j (s +1/2)}   \|r_{Q,j}  \|_{\ell^\infty (Q\in \Q_j) } \right \}_{j \ge -\scale (P) } \chi_P \right\|_{ L \dot A _{ p,q }   } \\
 & \le  \| r\|_{ \dot f_{\infty, \infty} ^{ s +\tau  - 1/p  } (A) }  \left\|  \left\{ |\det A|^{j (1/p-\tau) }   \right \}_{j \ge -\scale (P) } \chi_P \right\|_{ L \dot A _{ p,q }   } .
\end{align*}
Suppose that   $q\in (0,\infty)$, $\tau \in (1/p, \infty) $ or $ q=\infty $, $\tau \in [1/p, \infty)$. Then 
\begin{align*}
	 \left\|  \left\{ |\det A|^{j (1/p-\tau) }   \right \}_{j \ge -\scale (P) } \chi_P \right\|_{ L \dot A _{ p,q }   }  = \| \chi_P \|_{L^p }  \left( \sum_{j \ge -\scale (P) }  |\det A|^{ jq (1/p-\tau)} \right)^{1/q} \approx  |P|^{ \tau }.
\end{align*}
Thus 
\begin{equation*}
	\| r \|_{ \dot a_{ p, q} ^{s, \tau} ( A  ) } = \sup_{P\in \Q} |P|^{-\tau}   \left\|  \left\{ |\det A|^{j (s +1/2)} |r_{Q,j} |\right \}_{j\in \mathbb Z } \right  \|_{ L \dot A _{ p,q }  (\widehat P)} \lesssim \| r\|_{ \dot f_{\infty, \infty} ^{ s +\tau  - 1/p  } (A) } .
\end{equation*}
Thus the proof is complete.
\end{proof}

\begin{corollary} \label{cor p infty with type W}
		Let $A $ be a dilation. 
	Let $s\in \mathbb R$, $\tau \in [0,\infty) $,  $p \in (0,\infty)$,
$q \in (0,\infty]$. 
Let  $W \in \A_p$ and $\{ A_Q\}_{Q\in \Q}$ be a sequence of reducing operators of order $p$ for $W$.
	Then we have the following identifications of spaces with equivalent quasi-norms:
	
	{\rm (i)} if $\tau = 1/p$, then $ \dot f_{ p, q} ^{s, \tau} (W, A  )  = \dot f_{ \infty, q} ^{s} (\{A_Q\}_{Q\in \Q},A  ) $;
	
	{\rm (ii)} if $\tau > 1/p$  or $ (\tau,q) = (1/p,\infty)$, then $ \dot a_{ p, q} ^{s, \tau} (W, A  )  = \dot f_{ \infty, \infty} ^{ s +\tau  - 1/p} (\{A_Q\}_{Q\in \Q},A  ) $.

\end{corollary}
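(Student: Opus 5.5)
The plan is to reduce both identifications to the results already proved for the $\{A_Q\}$-sequence spaces, with Theorem \ref{theorem seq W approx A_Q} as the bridge. That theorem gives, for every $\vec t=\{\vec t_Q\}_{Q\in\Q}\subset\mathbb C^m$,
\begin{equation*}
\|\vec t\|_{\dot a_{p,q}^{s,\tau}(W,A)}\approx\|\vec t\|_{\dot a_{p,q}^{s,\tau}(\{A_Q\}_{Q\in\Q},A)} .
\end{equation*}
The constants depend only on $W$, $p$, $q$, $s$, $\tau$ and $A$. So in both (i) and (ii) it suffices to identify $\dot a_{p,q}^{s,\tau}(\{A_Q\}_{Q\in\Q},A)$ with the relevant $p=\infty$ space.

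For (i), take $\tau=1/p$. Theorem \ref{theorem f infty f 1/p} states
\begin{equation*}
\dot f_{p,q}^{s,1/p}(\{A_Q\}_{Q\in\Q},A)=\dot f_{\infty,q}^{s}(\{A_Q\}_{Q\in\Q},A)
\end{equation*}
with equivalent quasi-norms. Its proof sets $u_Q:=|A_Q\vec t_Q|$ and applies Proposition \ref{prop seq p infty char} with $u=p$. Chaining this with the $W$ versus $\{A_Q\}$ equivalence gives (i) directly.

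For (ii), assume $\tau>1/p$, or $(\tau,q)=(1/p,\infty)$. Theorem \ref{theorem a tau supercritical} gives
\begin{equation*}
\|\vec t\|_{\dot a_{p,q}^{s,\tau}(\{A_Q\}_{Q\in\Q},A)}\approx\|\vec t\|_{\dot f_{\infty,\infty}^{s+\tau-1/p}(\{A_Q\}_{Q\in\Q},A)} .
\end{equation*}
That theorem is stated for an arbitrary family of positive definite matrices, and its proof reduces to the scalar sequence $r_Q=|A_Q\vec t_Q|$. Composing once more with Theorem \ref{theorem seq W approx A_Q} gives (ii) for both choices $\dot a\in\{\dot b,\dot f\}$.

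I expect the main obstacle to be checking that the hypotheses in Theorem \ref{theorem a tau supercritical} match exactly. The statement as printed contains the typo ``$\tau\in(1/p,\infty)<0$''; the intended condition is that $\tau>1/p$ for every $q$, and that $\tau=1/p$ is allowed only when $q=\infty$. I would recheck the one computation this needs,
\begin{equation*}
\Bigl(\sum_{j\ge-\scale(P)}|\det A|^{jq(1/p-\tau)}\Bigr)^{1/q}\approx|P|^{\tau-1/p},
\end{equation*}
which combined with $\|\chi_P\|_{L^p}=|P|^{1/p}$ gives the factor $|P|^{\tau}$. For the Besov case the order of the $L^p$ norm and the $\ell^q$ sum is reversed, but the bound is the same because the summand does not depend on $x$.

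In the endpoint case $(\tau,q)=(1/p,\infty)$, (i) and (ii) must agree. This is consistent, since $\dot f^{s}_{\infty,\infty}$ equals $\dot f^{s+\tau-1/p}_{\infty,\infty}$ when $\tau=1/p$.
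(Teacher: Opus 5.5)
Your proposal is correct and follows the paper's proof: reduce to the $\{A_Q\}$-sequence spaces via Theorem \ref{theorem seq W approx A_Q}, then apply Theorem \ref{theorem f infty f 1/p} for (i) and Theorem \ref{theorem a tau supercritical} for (ii). Your check of the geometric-sum factor $|P|^{\tau}$ is accurate, and so is your reading of the typo in the hypothesis of Theorem \ref{theorem a tau supercritical}.
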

\begin{proof}
	By Theorem \ref{theorem seq W approx A_Q}, we have $ \dot a_{ p, q} ^{s, \tau} (W, A  ) =  \dot a_{ p, q} ^{s, \tau} (\{A_Q\}_{Q\in \Q}, A  ) $ in both cases under consideration.
	 
	(i) From Theorem \ref{theorem f infty f 1/p}, we obtain $  \dot f_{ \infty, q} ^{s} (\{A_Q\}_{Q\in \Q},A  )  =  \dot f_{ p, q} ^{s, 1/p} (\{A_Q\}_{Q\in \Q},A  ) $. Thus $ \dot f_{ p, q} ^{s, \tau} (W, A  )  = \dot f_{ \infty, q} ^{s} (\{A_Q\}_{Q\in \Q},A  ) $.
	
	(ii) From Theorem \ref{theorem a tau supercritical}, we obtain $ \dot f_{\infty, \infty} ^{ s +\tau  - 1/p  } (\{A_Q\}_{Q\in \Q} , A) =  \dot a_{ p, q} ^{s, \tau} (\{A_Q\}_{Q\in \Q},  A  )$. Thus  $ \dot a_{ p, q} ^{s, \tau} (W, A  ) =\dot f_{\infty, \infty} ^{ s +\tau  - 1/p  } (\{A_Q\}_{Q\in \Q} , A) $.
\end{proof}

\begin{corollary}
	Let $A $ be a dilation. 
	Let $s\in \mathbb R$, $\tau \in [0,\infty) $,  $p \in (0,\infty)$,
	$q \in (0,\infty]$. 
	Let  $W \in \A_p$ and $\{ A_Q\}_{Q\in \Q}$ be a sequence of reducing operators of order $p$ for $W$.	
	Then we have the following identifications of spaces with equivalent quasi-norms:
	
	{\rm (i)} if $\tau = 1/p$, then $ \dot F_{ p, q} ^{s, \tau} (W, A  )  = \dot F_{ \infty, q} ^{s} (\{A_Q\}_{Q\in \Q},A  ) $;
	
	{\rm (ii)} if $\tau > 1/p$  or $ (\tau,q) = (1/p,\infty)$, then $ \dot A_{ p, q} ^{s, \tau} (W, A  )  = \dot F_{ \infty, \infty } ^{ s +\tau  - 1/p} (\{A_Q\}_{Q\in \Q},A  ) $.
	
\end{corollary}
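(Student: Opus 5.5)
The plan is to transfer the sequence-space identifications of Corollary \ref{cor p infty with type W} to the function spaces, using the $\varphi$-transform of Theorems \ref{theorem wavelet trans} and \ref{theorem wavelet trans p infty TL}. The first step is to replace $W$ by the reducing operators. By Theorem \ref{theorem W app a app A_Q}, $\dot A_{p,q}^{s,\tau}(W,A)=\dot A_{p,q}^{s,\tau}(\{A_Q\}_{Q\in\Q},A)$ with equivalent quasi-norms. Hence it suffices to identify $\dot A_{p,q}^{s,\tau}(\{A_Q\}_{Q\in\Q},A)$ with the relevant $\dot F_{\infty,q}^{s}$ or $\dot F_{\infty,\infty}^{s+\tau-1/p}$ space.

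For (i), $\tau=1/p$, and the statement is exactly the function-space part of Theorem \ref{theorem f infty f 1/p}, which already gives $\dot F_{\infty,q}^{s}(\{A_Q\}_{Q\in\Q},A)=\dot F_{p,q}^{s,1/p}(\{A_Q\}_{Q\in\Q},A)$. Composing this with Theorem \ref{theorem W app a app A_Q} finishes (i).

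For (ii), fix $\varphi,\psi$ satisfying (\ref{eq varphi supp})--(\ref{eq varphi psi =1}). Take $\vec f\in\dot A_{p,q}^{s,\tau}(W,A)$. Then $\vec f=T_\psi S_\varphi\vec f$ by Theorem \ref{theorem wavelet trans}, and the $\varphi$-transform gives $\|S_\varphi\vec f\|_{\dot a_{p,q}^{s,\tau}(W,A)}\lesssim\|\vec f\|_{\dot A_{p,q}^{s,\tau}(W,A)}$. Corollary \ref{cor p infty with type W}(ii) converts this to $\|S_\varphi\vec f\|_{\dot f_{\infty,\infty}^{s+\tau-1/p}(\{A_Q\}_{Q\in\Q},A)}$. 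Then the $T_\psi$ bound in Theorem \ref{theorem wavelet trans p infty TL}, with $q=\infty$ and smoothness $s+\tau-1/p$, gives $\|\vec f\|_{\dot F_{\infty,\infty}^{s+\tau-1/p}(\{A_Q\}_{Q\in\Q},A)}\lesssim\|\vec f\|_{\dot A_{p,q}^{s,\tau}(W,A)}$.

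The reverse inequality runs the same chain backwards. Start with $\vec f\in\dot F_{\infty,\infty}^{s+\tau-1/p}(\{A_Q\}_{Q\in\Q},A)$. Apply $S_\varphi$ from Theorem \ref{theorem wavelet trans p infty TL} into $\dot f_{\infty,\infty}^{s+\tau-1/p}$, then use the sequence identification, then $T_\psi$ from Theorem \ref{theorem wavelet trans} into $\dot A_{p,q}^{s,\tau}(W,A)$. The identity $T_\psi\circ S_\varphi=\mathrm{id}$ holds on the $\dot F_{\infty,\infty}$ side by Theorem \ref{theorem wavelet trans p infty TL}. The roles of $\varphi$ and $\tilde\varphi$ can be interchanged freely because both spaces are independent of the choice of $\varphi$ (Theorems \ref{theorem independent varphi} and \ref{theorem indepen varphi p infty}). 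One could also get one direction of (ii) more directly from the embedding proposition $\dot A_{p,q}^{s,\tau}\hookrightarrow\dot F_{\infty,\infty}^{s+\tau-1/p}$ proved just above.

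The main obstacle is bookkeeping rather than new estimates. The step to check carefully is that the $\varphi$-transform identity and boundedness on the $\dot F_{\infty,\infty}$ side are valid for the shifted smoothness $s+\tau-1/p$, with the same $W$ and reducing operators. Once that is confirmed, each step is an application of a previously stated result.
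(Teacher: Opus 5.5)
Your proposal is correct and follows essentially the same route as the paper. Both transfer the sequence-space identifications of Corollary \ref{cor p infty with type W} to the function spaces via the bounded maps $S_\varphi$, $T_\psi$ and the identity $T_\psi\circ S_\varphi=\mathrm{id}$, with independence of $\varphi$ absorbing the $\tilde\varphi$ issue; the only difference is that for (i) you cite Theorem \ref{theorem f infty f 1/p} together with Theorem \ref{theorem W app a app A_Q} instead of rerunning the diagram, which is equivalent since that theorem was proved by the same transference argument.
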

\begin{proof}
	Let $\varphi,\psi$ satisfy (\ref{eq varphi supp}), (\ref{eq varphi > 0}) and (\ref{eq varphi psi =1}). By Theorems \ref{theorem wavelet trans}, \ref{theorem independent varphi}, we obtain
	\begin{equation*}
			S_\varphi :  \dot A_{p,q}^{s,\tau}  (W, A) \to \dot a_{p,q}^{s,\tau}  (W, A)  \; \operatorname{and } \;  T_\psi : \dot a_{p,q}^{s,\tau}  (W, A) \to \dot A_{p,q}^{s,\tau}  (W, A) 
	\end{equation*}
are bound and $T_\psi \circ S_\varphi $ is the identity on $\dot A_{p,q}^{s,\tau}  (W, A)$.Similarly, from Theorem \ref{theorem wavelet trans p infty TL}, \ref{theorem indepen varphi p infty}, we get
	\begin{equation*}
	S_\varphi :  \dot F_{ \infty, q} ^{s} (\{A_Q\}_{Q\in \Q},A  ) \to \dot f _{ \infty, q} ^{s} (\{A_Q\}_{Q\in \Q},A  ) \; \operatorname{and } \;  T_\psi : \dot f _{ \infty, q} ^{s} (\{A_Q\}_{Q\in \Q},A  ) \to  \dot F_{ \infty, q} ^{s} (\{A_Q\}_{Q\in \Q}, A )
\end{equation*}
are bounded and  $T_\psi \circ S_\varphi $  is the identity on	 $  \dot F_{ \infty, q} ^{s} (\{A_Q\}_{Q\in \Q},A  )$.  By these results and Corollary \ref{cor p infty with type W}, we have
\begin{equation*}
  	T_\psi  \circ   S_\varphi : \dot X \xrightarrow[]{S_\varphi} \dot x = \dot y \xrightarrow[]{T_\psi} \dot Y
\end{equation*}
is bounded whenever
\begin{equation*}
	\{ ( \dot X, \dot x  ) ,  (\dot Y , \dot y)  \}  \subset \left\{  \left(  \dot A_{p,q}^{s,\tau}  (W, A), \dot a_{p,q}^{s,\tau}  (W, A) \right ),  \left( \dot F_{ \infty, \infty} ^{ s +\tau  - 1/p} (\{A_Q\}_{Q\in \Q},A  ), \dot f_{ \infty, \infty} ^{ s +\tau  - 1/p} (\{A_Q\}_{Q\in \Q},A  )  \right)     \right\}
\end{equation*}
where $ \tau > 1/p$  or $ (\tau, q) = (1/p,\infty) $, or 
\begin{equation*}
	\{ ( \dot X, \dot x  ) ,  (\dot Y , \dot y)  \}  \subset \left\{  \left(  \dot F_{p,q}^{s,\tau}  (W, A), \dot f_{p,q}^{s,\tau}  (W, A) \right ),  \left( \dot F_{ \infty, q} ^{ s } (\{A_Q\}_{Q\in \Q},A  ), \dot f_{ \infty, q} ^{ s} (\{A_Q\}_{Q\in \Q},A  )  \right)     \right\} .
\end{equation*}
On the other hand, $T_\psi \circ S_\varphi $   is the identity on each such $\dot X$. It follows that the identity is bounded from $ \dot X$ to $\dot Y$ for each pair $  (\dot X, \dot Y) $  as above. Since the roles of $\dot X$  and $\dot Y$  are exchangeable, we have $\dot X \subset \dot Y \subset \dot X $, and hence $ \dot X = \dot Y $. Thus the proof is finished.
\end{proof}

\noindent\textbf{Author Contributions}\quad 
All authors developed and discussed the results and contributed to the final
manuscript.

\medskip

\noindent\textbf{Data Availability}\quad Data sharing is not applicable 
to this article as no data sets were generated or analyzed.

\section*{Declarations}

\noindent\textbf{Conflict of interest}\quad All authors state no conflict of interest.

\medskip

\noindent\textbf{Informed Consent}\quad Informed consent has been obtained 
from all individuals included in this research work.


\providecommand{\bysame}{\leavevmode\hbox to3em{\hrulefill}\thinspace}
\providecommand{\MR}{\relax\ifhmode\unskip\space\fi MR }
\providecommand{\MRhref}[2]{%
	\href{http://www.ams.org/mathscinet-getitem?mr=#1}{#2}
}
\providecommand{\href}[2]{#2}

\bigskip

\noindent   Tengfei Bai, Pengfei Guo

\medskip

\noindent College of Mathematics and Statistics, Hainan Normal University, Haikou, Hainan 571158,
China

\medskip

\noindent Jingshi Xu (Corresponding author)

\medskip

\noindent 
School of Mathematics and Computing Science, Guangxi Colleges and Universities Key Laboratory of Data Analysis and Computation, Guilin University of Electronic Technology, Guilin, 541004, China 

\noindent Center for Applied Mathematics of Guangxi (GUET), Guilin, 541004, China

\smallskip

\noindent {\it E-mails}:
\texttt{202311070100007@hainnu.edu.cn} (T. Bai)

\noindent\phantom{{\it E-mails:}}
\texttt{guopf999@163.com} (P. Guo)

\noindent\phantom{{\it E-mails:}}
\texttt{jingshixu@126.com} (J. Xu)

\end{document}